

\documentclass[a4paper,12pt]{article}

\usepackage{amsmath}
\usepackage{multirow,bigdelim}
\usepackage{amscd}
\usepackage{amssymb,amsfonts,amsmath,amsthm}
\usepackage{verbatim}

\usepackage{amsmath}
\usepackage{amsthm}
\usepackage{amssymb}

\usepackage{latexsym}
\usepackage{array}
\usepackage{enumerate}

\pagestyle{plain}

\setlength{\topmargin}{-45pt}
\setlength{\oddsidemargin}{0cm}
\setlength{\evensidemargin}{0cm}
\setlength{\textheight}{23.7cm}
\setlength{\textwidth}{16cm}

\numberwithin{equation}{section}

\usepackage{amsmath,amssymb,mathrsfs,amsthm}
\usepackage[all]{xy}
\usepackage{graphicx}
\usepackage{ulem}
\usepackage{ascmac}
\usepackage{mathtools}


\newcommand{\rHom}{\mathrm{RHom}}
\newcommand{\BDC}{{\mathbf{D}}^{\mathrm{b}}}

\newcommand{\Mod}{\mathrm{Mod}}

\newcommand{\shom}{{\mathcal{H}}om}

\newcommand{\CC}{\mathbb{C}}

\newcommand{\RR}{\mathbb{R}}
\newcommand{\QQ}{\mathbb{Q}}
\newcommand{\ZZ}{\mathbb{Z}}
\newcommand{\D}{\mathcal{D}}
\newcommand{\E}{\mathcal{E}}
\newcommand{\F}{\mathcal{F}}
\newcommand{\G}{\mathcal{G}}
\newcommand{\M}{\mathcal{M}}
\newcommand{\N}{\mathcal{N}}

\newcommand{\PP}{{\mathbb P}}

\newcommand{\LL}{{\mathbb L}}

\newcommand{\an}{{\rm an}}

\renewcommand{\dim}{{\rm dim}}

\newcommand{\e}{\varepsilon}

\newcommand{\supp}{{\rm supp}}

\newcommand{\Int}{{\rm Int}}

\newcommand{\tl}[1]{\widetilde{#1}}
\newcommand{\uotimes}[1]{\otimes_{#1}}

\newcommand{\simto}{\overset{\sim}{\longrightarrow}}

\newcommand{\op}{\mbox{\scriptsize op}}
\newcommand{\SD}{\mathcal{D}}
\newcommand{\SDop}{\mathcal{D}^{\mbox{\scriptsize op}}}
\newcommand{\SO}{\mathcal{O}}
\newcommand{\SA}{\mathcal{A}}
\newcommand{\SM}{\mathcal{M}}
\newcommand{\SN}{\mathcal{N}}
\newcommand{\SL}{\mathcal{L}}
\newcommand{\SK}{\mathcal{K}}
\newcommand{\SE}{\mathcal{E}}
\newcommand{\SF}{\mathcal{F}}
\newcommand{\SG}{\mathcal{G}}

\newcommand{\Modcoh}{\mathrm{Mod}_{\mbox{\scriptsize coh}}}
\newcommand{\Modhol}{\mathrm{Mod}_{\mbox{\scriptsize hol}}}
\newcommand{\Modrh}{\mathrm{Mod}_{\mbox{\scriptsize rh}}}

\newcommand{\BDCcoh}{{\mathbf{D}}^{\mathrm{b}}_{\mbox{\scriptsize coh}}}

\newcommand{\BDChol}{{\mathbf{D}}^{\mathrm{b}}_{\mbox{\scriptsize hol}}}
\newcommand{\BDCrh}{{\mathbf{D}}^{\mathrm{b}}_{\mbox{\scriptsize rh}}}
\newcommand{\DD}{\mathbb{D}}
\newcommand{\Lotimes}[1]{\overset{L}{\otimes}_{#1}}
\newcommand{\Dotimes}{\overset{D}{\otimes}}

\newcommand{\Potimes}{\overset{+}{\otimes}}
\newcommand{\Pboxtimes}{\overset{+}{\boxtimes}}
\newcommand{\rhom}{{\rm R}{\mathcal{H}}om}
\newcommand{\rihom}{{\rm R}{\mathcal{I}}hom}
\newcommand{\Prihom}{{\rm R}{\mathcal{I}}hom^+}
\newcommand{\Prhom}{\rhom^+}
\newcommand{\I}{{\rm I}}
\newcommand{\che}[1]{\overset{\vee}{#1}}
\newcommand{\var}[1]{\overline{#1}}
\newcommand{\BEC}{{\mathbf{E}}^{\mathrm{b}}}
\newcommand{\Q}{\mathbf{Q}}
\newcommand{\EE}{\mathbb{E}}
 
\newcommand{\bs}{\backslash}
\newcommand{\ch}{\rm char}
\newcommand{\inj}{``\varinjlim"}
\newcommand{\ms}[1]{{\rm SS}({#1})}
\newcommand{\T}{{\rm T}}
\newcommand{\bfR}{\mathbf{R}}
\newcommand{\bfL}{\mathbf{L}}
\newcommand{\bfD}{\mathbf{D}}
\newcommand{\rmR}{{\rm R}}
\newcommand{\rmE}{{\rm E}}
\newcommand{\rmD}{{\rm D}}
\newcommand{\rmt}{{\rm t}}
\newcommand{\bfE}{\mathbf{E}}
\newcommand{\rmL}{{\rm L}}
\renewcommand{\Re}{\rm Re}
\renewcommand{\ch}{{\rm char}}
\newcommand{\mi}{{n_i}}


\newtheorem{theorem}{Theorem}[section]

\newtheorem{corollary}[theorem]{Corollary}
\newtheorem{lemma}[theorem]{Lemma}
\newtheorem{proposition}[theorem]{Proposition}

\theoremstyle{definition}
\newtheorem{definition}[theorem]{Definition}
\theoremstyle{remark}
\newtheorem{remark}[theorem]{\sc Remark}
\newtheorem{example}[theorem]{\sc Example}




\title{On Irregularities of Fourier Transforms\\ of 
Regular Holonomic $\SD$-Modules 
\footnote{{\bf 2010 Mathematics 
Subject Classification: }32C38, 32S60, 34M35, 35A27}}

\author{ Yohei ITO
\footnote{Graduate School of Mathematical Science, The University of 
Tokyo, 3-8-1, Komaba, 
Meguro, Tokyo, 153-8914, Japan. 
E-mail: yitoh@ms.u-tokyo.ac.jp } 
and Kiyoshi TAKEUCHI 
\footnote{Institute of Mathematics, University  of 
Tsukuba, 1-1-1, Tennodai, 
Tsukuba, Ibaraki, 305-8571, Japan. 
E-mail: takemicro@nifty.com} }

\date{}

\sloppy

\begin{document}
\maketitle

\begin{abstract}
We study Fourier transforms of regular holonomic 
$\SD$-modules. By using the 
theory of Fourier-Sato transforms of enhanced 
ind-sheaves developed by Kashiwara-Schapira and 
D'Agnolo-Kashiwara, a formula for 
their enhanced solution complexes will be 
obtained. Moreover we show that some 
parts of their characteristic cycles and 
irregularities are expressed by 
the geometries of the original $\SD$-modules. 
\end{abstract}

\section{Introduction}\label{sec:1}
The theory of Fourier transforms of 
$\SD$-modules is a beautiful subject in algebraic analysis. 
They interchange algebraic $\SD$-modules on complex 
vector spaces $\CC^N$ with those on their duals. 
Especially, the case $N=1$ has been studied precisely by many 
mathematicians such as Bloch-Esnault \cite{BE04}, 
Malgrange \cite{Mal91}, Mochizuki \cite{Mochi10}, Sabbah \cite{Sab08} etc. 
On the other hand, after a groundbreaking 
development in the theory of irregular meromorphic 
connections by Kedlaya \cite{Ked10, Ked11} and Mochizuki 
\cite{Mochi09, Mochi11}, in \cite{DK16} D'Agnolo and Kashiwara 
established the Riemann-Hilbert correspondence for 
irregular holonomic $\SD$-modules. For this purpose, 
they introduced enhanced ind-sheaves extending the 
classical notion of ind-sheaves introduced by 
Kashiwara-Schapira \cite{KS01}. 
Subsequently, in \cite{KS16-2} 
Kashiwara and Schapira adapted this new notion to 
the Fourier-Sato transforms of Tamarkin \cite{Tama08} 
and developed a new theory of Fourier-Sato transforms 
for enhanced ind-sheaves which correspond to those 
for algebraic holonomic $\SD$-modules. 
Recently, in \cite{DK17} by making use of 
these results effectively, D'Agnolo and Kashiwara 
succeeded in studying Fourier transforms of 
holonomic $\SD$-modules on $\CC$ very precisely. 
Note that in this particular case $N=1$ 
Fourier transforms of regular holonomic 
$\SD$-modules were studied successfully 
in the previous paper D'Agnolo-Hien-Morando-Sabbah 
\cite{DHMS17} by a different method. In conclusion, 
thanks to the new theory of enhanced ind-sheaves 
of \cite{DK16}, we now understand the 
Fourier transforms of $\SD$-modules on $\CC$ 
more clearly than before. However, 
we know only little in the 
higher-dimensional case $N \geq 2$. 
The following beautiful theorem 
is due to Brylinski 
\cite{Bry86}. Set $X= \CC^N$ and 
recall that a constructible sheaf 
$\SF  \in\BDC_{\CC-c}(\CC_{X})$ on it 
is called monodromic if 
its cohomology sheaves are locally constant 
on each $\CC^*$-orbit in $\CC^N$. 
Let $Y \simeq \CC^N$ be the dual 
of $X= \CC^N$. 

\begin{theorem}[{\rm (Brylinski \cite{Bry86})}]\label{th-Bry}
Let $\SM$ be an algebraic regular holonomic 
$\SD$-module on $X= \CC^N$. Assume that its 
solution complex $Sol_{X}(\SM)$ is 
monodromic. Then its 
Fourier transform $\SM^\wedge$ is regular 
and $Sol_{Y}(\SM^\wedge)$ is monodromic. 
\end{theorem}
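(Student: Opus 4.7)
The plan is to exploit the compatibility between the Fourier transform of algebraic holonomic $\SD$-modules and the enhanced Fourier-Sato transform of their solution complexes, developed by Kashiwara-Schapira \cite{KS16-2} and D'Agnolo-Kashiwara \cite{DK16, DK17}. Writing $\mathcal{F}^{\mathrm{E}}$ for the enhanced Fourier-Sato transform and $Sol^{\mathrm{E}}$ for the enhanced solution functor, this compatibility yields, for any algebraic holonomic $\SM$ on $X=\CC^N$, a natural isomorphism $Sol^{\mathrm{E}}_Y(\SM^\wedge) \simeq \mathcal{F}^{\mathrm{E}}(Sol^{\mathrm{E}}_X(\SM))$ (up to a cohomological shift). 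The idea is to analyze the right-hand side under the monodromic and regular hypothesis and deduce both conclusions of the theorem simultaneously.

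Since $\SM$ is regular holonomic, $Sol^{\mathrm{E}}_X(\SM)$ reduces, via the natural embedding $\BDC_{\CC-c}(\CC_X) \hookrightarrow \BEC(\I\CC_X)$, to the ordinary constructible complex $Sol_X(\SM)$. Next, I would show that for this monodromic regular input the enhanced Fourier-Sato transform $\mathcal{F}^{\mathrm{E}}(Sol_X(\SM))$ coincides with the classical Fourier-Sato transform of Verdier/Kashiwara-Schapira, i.e.\ that no exponential twist is produced. Granting this, the classical Fourier-Sato transform of a monodromic constructible complex is again a monodromic constructible complex on the dual; hence $Sol^{\mathrm{E}}_Y(\SM^\wedge)$ lies in the essential image of $\BDC_{\CC-c}(\CC_Y)$ and is monodromic. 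Regularity of $\SM^\wedge$ then follows from the D'Agnolo-Kashiwara Riemann-Hilbert correspondence, since the exponential part of $Sol^{\mathrm{E}}$ measures irregularity; its vanishing forces $\SM^\wedge$ to be regular, so both conclusions fall out together.

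The main obstacle is the key reduction above: showing that, under the monodromic and regular hypothesis, the enhanced Fourier-Sato transform of $Sol_X(\SM)$ carries no exponential component. Heuristically, the Fourier kernel $e^{-\langle x,y\rangle}$ paired with an integrand homogeneous along every $\CC^*$-orbit should produce no new irregular behavior after integration, but extracting this rigorously requires either a direct computation from the definition of $\mathcal{F}^{\mathrm{E}}$ as an enhanced integral transform, or a combination of the symplectic description of the characteristic variety of a monodromic $\SD$-module (invariance under the Euler scaling on $T^*X$) with the fact that the Fourier transform on characteristic cycles acts by $(x,\xi)\mapsto(-\xi,x)$, which preserves the class of conormal varieties to $\CC^*$-invariant subvarieties of $Y$.
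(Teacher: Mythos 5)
The paper does not actually prove Theorem~\ref{th-Bry}: it is quoted with attribution to Brylinski \cite{Bry86} and used as known background. Its monodromicity conclusion is also noted (after Theorem~\ref{th-A}) to follow from the machinery of the paper, but with the proof deferred to the companion paper \cite{IT18}. So there is no in-paper proof to compare your argument against; what follows is an assessment of your proposal on its own.

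Your general strategy (pass through the enhanced Fourier--Sato transform $Sol_{\var Y}^\rmE(\tl{\SM^\wedge})\simeq {}^\rmL Sol_{\var X}^\rmE(\tl\SM)$, show the result is again a stabilized constructible object with no exponential twist, then invoke the irregular Riemann--Hilbert correspondence) is indeed the modern route one would take, and it is close in spirit to how the rest of this paper attacks Fourier transforms. But the crucial step, which you flag yourself, is precisely where the proposal is not yet a proof, and your two suggested repairs are not equally viable. Option (b) cannot close the gap: knowing $\ch(\SM^\wedge)$ (via the geometric Fourier map $(z,\xi)\mapsto(\xi,-z)$ on $T^*X\simeq X\times Y\simeq T^*Y$) does show that $\ch(\SM^\wedge)$ is invariant under the monodromic $\CC^*$-action, and since $SS(Sol_Y(\SM^\wedge))=\ch(\SM^\wedge)$ for a holonomic module this yields monodromicity of $Sol_Y(\SM^\wedge)$. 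However, the characteristic variety (or characteristic cycle) of a holonomic module does not detect irregularity: an irregular module can have the same $\CC^*$-conic characteristic variety as a regular one, so $\CC^*$-conicness of $\ch(\SM^\wedge)$ cannot by itself rule out an exponential component in $Sol^\rmE$, which is exactly what you need to conclude that $\SM^\wedge$ is regular.

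So the real content is option (a): a direct identification, on the image of the embedding $\BDC_{\CC\text{-}c}\hookrightarrow\BEC(\I\CC)$, of the enhanced Fourier--Sato transform of a $\CC^*$-conic (monodromic) constructible complex with the classical Sato Fourier transform, with no residual exponential. Heuristically this works because, with $\SF$ monodromic, the stalk
$\bigl(L(Sol_X(\SM))\bigr)_{(w,t)}\simeq \rmR\Gamma_c(\{\Re\langle z,w\rangle\le t\};\SF)$
is $\RR_{>0}$-invariant in $t$, hence constant for $t>0$ and for $t<0$, with a single jump at $t=0$; this is precisely what forces ${}^\rmL(\e(i_{X!}\SF))$ to be of the form $\e(\,\cdot\,)$ of a constructible sheaf rather than $\e$ of an exponential. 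But that computation is not a reduction to an already-cited theorem: it has to be carried out (with attention to the compactification $\var X=\PP^N$, properness, and the $[N]$ shift), and it is the entire mathematical content of Brylinski's theorem. As written, your proposal names the obstacle accurately but leaves it open, and the suggested shortcut (b) would give monodromicity but would not prove regularity.
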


To the best of our knowledge, except for this theorem 
not much is known for Fourier transforms of 
$\SD$-modules on $\CC^N$ for $N \geq 2$. 
Recall also that some topological results 
related to them were obtained in 
Kashiwara-Schapira \cite{KS97, KS16-2} and D'Agnolo \cite{D'Agnolo}.
Thus the higher-dimensional case $N \geq 2$ still remains very mysterious. 

In this paper, we 
clarify this situation in the light of 
the more sophisticated theories of 
Kashiwara-Schapira \cite{KS01} and 
D'Agnolo-Kashiwara \cite{DK16}. 
Especially we study 
the Fourier transforms of regular holonomic 
$\SD$-modules on $X= \CC^N$ for $N \geq 2$. 
For this purpose, we make use of the 
theory of Fourier-Sato transforms of enhanced 
ind-sheaves developed by Kashiwara-Schapira \cite{KS16-2} 
and D'Agnolo-Kashiwara \cite{DK17}. In particular, 
we obtain the following result. 
For an algebraic regular holonomic $\SD$-module $\SM$  
on $X= \CC^N_z$ 
denote by $\ch(\SM)\subset T^\ast X\simeq X\times Y$
its characteristic variety. Let 
$p : X\times Y\to X$ and 
$q : X\times Y\to Y$ 
be the projections. Then we define 
a (Zariski) open subset $\Omega \subset Y=\CC_w^N$ by:
\begin{equation*}
w\in\Omega \quad \Longleftrightarrow \quad 
\begin{cases}
\mbox{
there exists an open neighborhood $U$ of $w$ in Y}\\
\mbox{such that the restriction $q^{-1}(U)\cap\ch(\SM)\to U$}\\
\mbox{of $q : X\times Y\to Y$ is an unramified covering.
}
\end{cases}
\end{equation*}

\noindent Since $\ch(\SM)$ is $\CC^\ast$-conic,
$\Omega\subset Y=\CC_w^N$ is also $\CC^\ast$-conic.
Denote by $k\geq0$ the degree of the covering
$q^{-1}(\Omega)\cap\ch(\SM)\to\Omega$.
For a point $w\in\Omega\subset Y=\CC^N$,
let $\{\mu_1(w), \ldots, \mu_k(w)\} =
q^{-1}(w)\cap\ch(\SM)\subset T^\ast X$ be its
fiber by $q^{-1}(\Omega)\cap\ch(\SM)\to \Omega$. 
For $1\leq i\leq k$ set
\[\alpha_i(w) := p(\mu_i(w))\in X=\CC^N.\]
and denote by 
$m_i>0$ the multiplicity of $\SM$ at 
$\mu_i(w)\in\ch(\SM)$. 
Let $i_Y : 
Y=\CC^N\xhookrightarrow{\ \ \ }\var{Y}=\PP^N$ 
be the projective compactification of $Y$. 
We extend the Fourier transform $\SM^{\wedge} \in\Modhol(\SD_Y)$ 
to a holonomic $\SD$-module 
$\tl{\SM^{\wedge}} := i_{Y \ast} ( \SM^{\wedge} ) 
\simeq \bfD i_{Y\ast} ( \SM^{\wedge} )$ 
on $\var{Y}$. 
Let $\var{Y}^{\an}$ be the underlying complex manifold 
of $\var{Y}$ and 
define the analytification 
$\tl{\SM^{\wedge}}^{\an}\in\Modhol(\SD_{\var{Y}^{\an}})$ 
of $\tl{\SM^{\wedge}}$ by 
$\tl{\SM^{\wedge}}^{\an} = \SO_{\var{Y}^{\an}}
\otimes_{\SO_{\var{Y}}}\tl{\SM^{\wedge}}$. 
Then we have the following 
formula for the enhanced solution complex 
\[Sol_{\var Y}^\rmE(\tl{\SM^{\wedge}}) := 
Sol_{\var{Y}^{\an}}^\rmE( \tl{\SM^{\wedge}}^{\an} )
\in \BEC(\I\CC_{\var{Y}^{\an}})\]
of $\tl{\SM^{\wedge}}^{\an}$. 

\begin{theorem}\label{th-A} 
Let $U\subset\Omega\subset Y=\CC^N$
be a connected and simply connected open subset of $\Omega$.
Then we have an isomorphism
\[\pi^{-1}\CC_U\otimes
\Big(Sol_{\var Y}^\rmE( \tl{\SM^\wedge} )\Big)
\simeq
\bigoplus_{i=1}^k
\pi^{-1}\CC_U\otimes
\Big(\underset{a\to+\infty}{\inj}\ 
\CC_{\{t\geq {\Re} \langle \alpha_i(w), 
w \rangle +a\}}^{\oplus m_i}\Big)\]
in $\BEC(\I\CC_{\var{Y}^{\an}})$, where 
$\langle \cdot ,  \cdot \rangle : X\times Y\to\CC$ 
is the canonical
paring. In particular, the restriction 
$\SM^\wedge |_{\Omega}$ of the Fourier 
transform $\SM^\wedge$ to $\Omega$ is an 
algebraic integrable connection of rank 
$\sum_{i=1}^k m_i$. 
\end{theorem}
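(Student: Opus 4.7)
The plan is to interpret $Sol_{\var Y}^\rmE(\tl{\SM^\wedge})$ as an enhanced Fourier--Sato transform, exploit the regular holonomicity of $\SM$ to reduce it to a classical constructible-sheaf computation, and then run a local Morse / stationary-phase argument over the good locus $\Omega$.

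First, I would invoke the compatibility between the $\SD$-module Fourier transform and the enhanced Fourier--Sato transform for ind-sheaves of Kashiwara--Schapira \cite{KS16-2} and D'Agnolo--Kashiwara \cite{DK17} to identify $Sol_{\var Y}^\rmE(\tl{\SM^\wedge})$ with the enhanced Fourier--Sato transform of $Sol_{\var X}^\rmE(\tl\SM)$. Because $\SM$ (hence $\tl\SM$) is regular holonomic, the latter lies in the image of the natural embedding $\BDC_{\CC-c}(\CC_{\var X^{\an}})\hookrightarrow \BEC(\I\CC_{\var X^{\an}})$ applied to the classical solution complex $Sol_{\var X}(\tl\SM)$, which is itself the extension by zero of $Sol_X(\SM)$ from $X$ to $\var X$. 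The Fourier--Sato transform can then be written, up to shifts, as the proper push-forward along $q$ of the enhanced tensor product $p^{-1}Sol_X(\SM)\Potimes \CC^{\rmE}_{\{t+\Re\langle z,w\rangle\geq 0\}}$ on $X\times Y$, and this is the object I have to compute in a neighborhood of a point $w_0\in U$.

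Second, I would exploit the covering hypothesis on $\Omega$ together with the microsupport estimates for enhanced ind-sheaves from \cite{KS16-2} to localize this integrand. Over $q^{-1}(U)$, the only points of $X$ that can contribute non-trivially to the stalk at $w$ are those at which the phase $z\mapsto \Re\langle z,w\rangle$ is characteristic for $Sol_X(\SM)$, i.e.\ points whose conormal direction lies in $q^{-1}(w)\cap\ch(\SM)$. By assumption this fibre consists of exactly the $k$ points $\mu_i(w)=(\alpha_i(w),w)$, and the simple connectedness of $U$ promotes the local choices $w\mapsto \alpha_i(w)$ to well-defined global sections over $U$ for $1\leq i\leq k$.

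Third, I would carry out a local stationary-phase computation near each $\alpha_i(w_0)$. On a sufficiently small contractible neighborhood adapted to a Whitney stratification realizing $\ch(\SM)$, the local model of $Sol_X(\SM)$ is (up to shift) $m_i$ copies of the constant sheaf on an open half-space whose boundary hyperplane is normal to $w_0$ at $\alpha_i(w_0)$. Applying $\bfR q_\ast$ to the convolution of this local model with the enhanced exponential kernel yields, on $U$, exactly $m_i$ copies of $\underset{a\to+\infty}{\inj}\CC_{\{t\geq \Re\langle\alpha_i(w),w\rangle+a\}}$, since the critical value of the phase at $\alpha_i(w)$ equals $\Re\langle\alpha_i(w),w\rangle$. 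Summing over $i=1,\ldots,k$ yields the displayed isomorphism, and the second assertion follows because the $k$ rank-one exponential factors $e^{\langle\alpha_i(w),w\rangle}$ are regular on $Y$ (the irregularity sits only at $\infty\in\var Y\setminus Y$), making $\SM^\wedge|_\Omega$ an integrable connection of rank $\sum_{i=1}^k m_i$.

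The main obstacle is the local Morse-theoretic step: one must verify, in the enhanced ind-sheaf setting, that a regular holonomic $\SD$-module whose characteristic cycle has multiplicity $m_i$ at a conormal point with direction $w$ contributes exactly $m_i$ copies of the enhanced exponential $e^{\langle\alpha_i(w),w\rangle}$ to the Fourier--Sato transform, with no spurious lower-order terms. This combines microlocal vanishing for the convolution kernel, the already-enhanced form of the exponential kernel, and crucially the regularity of $\SM$, which rules out any additional irregular phases from appearing in the answer.
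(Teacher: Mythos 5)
Your high-level strategy matches the paper's: pass to the enhanced Fourier--Sato transform of $\e(i_{X!}Sol_X(\SM))$ (valid since $\SM$ is regular), compute the stalk of the transform at a point $(w,t)$, and use the non-degenerate Morse structure of $\Re\langle\cdot,w\rangle$ near the $k$ points $\alpha_i(w)$ to read off $m_i$ copies of the corresponding exponential. The local computation you describe at each $\alpha_i(w)$ is exactly the one the paper performs, using \cite[Theorem 9.5.2]{KS85} to get the microlocal normal form $\CC_{S_i}^{\oplus m_i}[d_{S_i}]$ and the fact that $\Re(\ell(w))|_{S_i}$ has a real Morse point of index $d_{S_i}$ at $\alpha_i(w)$.

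However, you have misidentified where the real difficulty lies. You flag the ``local Morse-theoretic step'' as the main obstacle; in fact, once the critical points are isolated and non-degenerate that step is routine. The genuine gap in your argument is the \emph{behavior at infinity}: you must show that the stalk $\big(L(Sol_X(\SM))\big)_{(w,t)} \simeq \rmR\Gamma_c\big(\{\Re\langle z,w\rangle\le t\};Sol_X(\SM)[N]\big)$ is locally constant in $t$ away from the finitely many values $\Re\langle\alpha_i(w),w\rangle$ and vanishes for $t\ll 0$, even though $X=\CC^N$ is noncompact and $Sol_X(\SM)$ has no a priori moderate behavior along $H_\infty$. Your appeal to ``microsupport estimates for enhanced ind-sheaves'' does not address this: one needs a concrete geometric mechanism to control the restriction of $Sol_X(\SM)$ near the hyperplane at infinity along the level sets of $\Re\langle z,w\rangle$. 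The paper's solution is to blow up $\var{X}=\PP^N$ along the indeterminacy locus $H(w)=\var{\ell(w)^{-1}(0)}\cap H_\infty$, extend $\ell(w)$ to a proper map $f:\var{X}^{H(w)}\to\PP^1$, push the perverse sheaf to the real line via $\Re g$, and then invoke Kashiwara's non-characteristic deformation lemma; the crucial microlocal input that the exceptional divisor contributes nothing is supplied by Lemma~\ref{newlea}, whose hypothesis is verified using the condition $w\in\Omega$ (so that $\Lambda_{\ell(w)}\simeq X\times\{w\}$ meets $\ch\SM$ only over finitely many points). Without an argument of this sort your proof does not close: the ``only the $k$ conormal directions contribute'' claim is precisely what must be established, and a Whitney-stratification cover of a noncompact $X$ does not by itself exclude escape to infinity.

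A secondary, smaller issue: the passage from an isomorphism fibrewise over $w$ to an isomorphism of enhanced (ind-)sheaves over $U\times\RR$ needs an argument (the paper uses that $(U\times\RR)\cap\{t\geq\Re\langle\alpha_i(w),w\rangle\}$ is connected and simply connected); you invoke simple connectedness only to globalize the $\alpha_i$, not the sheaf isomorphism itself.

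Your derivation of the second assertion (integrable connection of rank $\sum m_i$ on $\Omega$) is fine and matches the paper's Corollary \ref{cor-3}: applying $\alpha_{\var{Y}^{\an}}i_0^!\bfR^\rmE$ to the displayed isomorphism shows $Sol_Y(\SM^\wedge)$ is locally constant of rank $\sum m_i$ on $\Omega$, so $\ch(\SM^\wedge)$ lies in the zero section over $\Omega$.
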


\noindent Namely the regularity of $\SM$ implies the 
$\CC^*$-conicness of the smooth locus $\Omega$ of 
the Fourier transform $\SM^\wedge$
(in fact, we can also show that $Sol_Y(\M^\wedge$) is monodromic
as we see in \cite{IT18}). 
Note that a different expression 
for the rank of  $\SM^\wedge$ 
at generic points of $Y= \CC^N$ 
was given also 
by Brylinski in \cite[Corollaire 8.6]{Bry86}. 
Our proof of Theorem \ref{th-A} is based on 
the arguments in that of 
Esterov-Takeuchi \cite[Theorem 5.5]{ET15}
(for their applications see \cite{AET15}) 
and relies on some careful observation of 
the geometric situation at infinity of 
the perverse sheaf 
$\SF := Sol_X(\SM)[N]\in\BDC_{\CC-c}(\CC_{X^\an})$ 
(see the proof of Theorem \ref{thm-2} 
and Lemma \ref{newlea}). 
Recall that in \cite{Ado94} Adolphson introduced 
confluent (i.e. irregular) 
$A$-hypergeometric systems on $\CC^N$ 
by extending the classical notion of 
non-confluent (i.e. regular) 
ones of Gelfand-Kapranov-Zelevinsky 
\cite{GKZ89}, \cite{GKZ90}. He showed also that 
they have (non-empty) 
$\CC^*$-conic smooth loci in 
$\CC^N$. Recently, in 
Saito \cite{Saito11}, Schultz-Walther \cite{SW09}, 
\cite{SW12} and Esterov-Takeuchi \cite{ET15}, 
the authors found that Adolphson's 
confluent $A$-hypergeometric systems 
are Fourier transforms of some special regular 
holonomic $\SD$-modules. This motivated us 
to formulate Theorem \ref{th-A} for 
Fourier transforms of general regular 
holonomic $\SD$-modules. In the proof of 
Theorem \ref{th-A}, we first rewrite 
the Fourier-Sato transforms of enhanced ind-sheaves 
as in D'Agnolo-Kashiwara \cite{DK17} to obtain 
the geometric situation 
similar to the one in the proof of 
Esterov-Takeuchi \cite[Theorem 5.5]{ET15}. 
Then we apply the Morse theoretical 
argument in it to the solution complex 
of $\SM$. By Theorem \ref{th-A}, at generic 
points $v \in Y \setminus \Omega$ where 
$D:=Y \setminus \Omega$ is a smooth hypersurface 
we obtain also the irregularity and the exponential 
factors of $\SM^\wedge$ along it. 
More precisely, let $D_{\rm reg} \subset D$ be 
the smooth part of $D$ and $v \in D_{\rm reg}$ 
such a generic point. 
Take a subvariety $M\subset Y$ of $Y= \CC^N$ which
intersects $D_{\rm reg}$ at $v$ transversally. 
We call it a normal slice of $D$ at $v$. 
By definition $M$ is smooth and of dimension $1$ 
on a neighborhood of $v$. 
Let $i_M : M\xhookrightarrow{\ \ \ } Y=\CC^N$ be 
the inclusion map and set 
$\SK = {\bfD}i_M^\ast\SM^\wedge\in\Modhol(\SD_M)$. 
Then we can describe the irregularity 
${\rm irr} ( \SK(\ast\{v\}) )$ of the 
meromorphic connection 
$\SK(\ast\{v\})$ on $M$ along $\{ v \} \subset M$ 
as follows. Shrinking the normal slice $M$ 
if necessary we may assume that 
$M= \{ u \in \CC \ | \ | u | < 
\varepsilon \}$ for some $\varepsilon >0$, 
$\{ v \} = \{ u =0 \}$ and 
$M \setminus \{ v \} \subset \Omega$. 
Let $i_0 : 
M \setminus \{v\} \hookrightarrow \Omega$ 
be the inclusion map and define 
(possibly multi-valued) holomorphic 
functions $\varphi_i: M \setminus \{ v \} 
\rightarrow \CC$ ($1 \leq i \leq k$) by 
\[ \varphi_i ( u )= \langle 
\alpha_i ( i_0(u) ), i_0(u) \rangle. \]
Then it is easy to see that $\varphi_i ( u )$ 
are Laurent Puiseux series of $u$ 
(see Kirwan \cite[Section 7.2]{Kir92} etc.). 
For each Laurent Puiseux series 
\[ \varphi_i ( u )= 
\sum_{a \in \QQ} c_{i,a} u^a \qquad (c_{i,a} \in \CC ) \]
set $r_i= \min \{ a \in \QQ \ | \ c_{i,a} \not= 0 \}$ and 
define its pole order 
${\rm ord}_{\{ v \}}( \varphi_i ) \geq 0$ by 
\[
{\rm ord}_{\{ v \}}( \varphi_i )
=
\begin{cases}
- r_i
 & ( r_i <0)\\
\\
0 & (\mbox{\rm otherwise}).
\end{cases}
\]
In \cite[p35]{Sab93} Sabbah introduced the 
classical  Hukuhara-Levelt-Turrittin theorem as
\[\mbox{`` This result is analogous to 
Puiseux theorem for plane algebraic
(or algebroid) curves ".}\]
It is indeed the case for Fourier transforms of regular holonomic
$\D$-modules as we see in the following theorem.
\begin{theorem}\label{thm-neww}
The exponential factors appearing in the 
Hukuhara-Levelt-Turrittin decomposition
of the meromorphic connection $\SK(\ast\{v\})$ at $v\in M$
are the pole parts of $-\varphi_i\ (1\leq i \leq k)$.
Moreover for any $1\leq i\leq k$ the 
multiplicity of the pole part of $-\varphi_i$
is equal to $m_i$.
In particular we have
\[ {\rm irr} ( \SK(\ast\{v\}) ) = 
\sum_{i=1}^k m_i \cdot 
{\rm ord}_{\{ v \}}( \varphi_i ). \]
\end{theorem}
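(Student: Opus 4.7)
The plan is to combine Theorem \ref{th-A} with the standard dictionary between enhanced ind-sheaves and the Hukuhara-Levelt-Turrittin decomposition of meromorphic connections on a disk, as developed in D'Agnolo-Kashiwara \cite{DK16} and used in the one-dimensional analyses of \cite{DK17} and \cite{DHMS17}. First I would observe that since $v \in \CC^N$ is a finite point, the projective compactification $\tl{\SM^\wedge}$ plays no role locally near $v$; so the enhanced solution complex of $\SK(\ast\{v\})$ on a small disk around $v$ in $M$ can be identified, via the compatibility of enhanced solutions with the inverse image $\bfD i_M^\ast$ and with localization along $\{v\}$, with the restriction to $M$ of $Sol^{\rmE}_{\var Y}(\tl{\SM^\wedge})$.

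Next I would apply Theorem \ref{th-A} to compute this restriction. Since $U := M \setminus \{v\}$ is a punctured disk and therefore not simply connected, I would pass to a ramified cover $\rho : \tilde U \to U$, $u = \tilde u^d$, with $d$ large enough so that every Puiseux series $\varphi_i$ becomes a single-valued Laurent series in $\tilde u$. After further shrinking $\tilde U$ is simply connected and contained in $\Omega$ (more precisely, $\rho$ factors through the relevant simply connected open subset of $\Omega$), so Theorem \ref{th-A} yields
\[
\pi^{-1}\CC_{\tilde U}\otimes \rho^{-1}\bigl( Sol^\rmE_M(\SK(\ast\{v\})) |_U \bigr)
\simeq
\bigoplus_{i=1}^k \pi^{-1}\CC_{\tilde U}\otimes
\Bigl(\underset{a\to+\infty}{\inj}\ \CC_{\{t\geq \Re(\varphi_i\circ\rho) + a\}}\Bigr)^{\oplus m_i}
\]
in $\BEC(\I\CC_{\tilde U})$.

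Now I would invoke the explicit description of enhanced solutions of meromorphic connections on a disk: a meromorphic connection whose enhanced solution complex has the form on the right above is precisely one whose Hukuhara-Levelt-Turrittin decomposition (after the ramification $\rho$) has exponential factors equal to the polar parts of $-\varphi_i\circ\rho$, each with multiplicity $m_i$. The sign arises because the enhanced ind-sheaf $\CC^\rmE_{\{t\geq \Re\psi\}}$ is the enhanced solution complex of the meromorphic connection $\SO(\ast)e^{-\psi}$. Descending the decomposition through the Galois action of $\rho$, the exponential factors of $\SK(\ast\{v\})$ at $v$ are exactly the polar parts of the multi-valued Puiseux series $-\varphi_i$, each with multiplicity $m_i$. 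The irregularity formula then follows immediately from the standard identity $\mathrm{irr}(\SK(\ast\{v\})) = \sum_i m_i \cdot \mathrm{ord}_{\{v\}}(\varphi_i)$ for meromorphic connections on a disk expressed in Hukuhara-Levelt-Turrittin form.

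The main obstacle is the passage to the ramified cover and the careful bookkeeping it entails: one must verify that the direct-sum decomposition provided by Theorem \ref{th-A} is compatible with the Galois action of $\rho$ (so that the summands reassemble into the correct multi-valued exponential factors downstairs), and one must check that the ranks $m_i$ are preserved under pullback. A secondary technical point is the verification that $Sol^\rmE$ commutes with $\bfD i_M^\ast$ and with the localization $(\cdot)(\ast\{v\})$ in the present holonomic setting, but both compatibilities are standard in the D'Agnolo-Kashiwara framework.
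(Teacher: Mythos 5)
Your broad strategy—restrict $Sol^{\rmE}_{\var Y}(\tl{\SM^\wedge})$ along $i_M$, use Theorem \ref{th-A} on a small piece of $M\setminus\{v\}\subset\Omega$, read off exponential factors from the resulting explicit direct sum of exponential enhanced ind-sheaves—is the same as the paper's. But there are two real problems with how you fill in the details.

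First, the ramified cover step does not do what you want: if $U$ is a punctured disk and $\rho:\tilde U\to U$ is $u=\tilde u^{\,d}$, then $\tilde U$ is again a punctured disk and remains non-simply-connected no matter how you shrink it. What one actually takes is a \emph{sector} $V_\theta\subset M\setminus\{v\}$: a sector is simply connected and contained in $\Omega$, and on a fixed sector the Puiseux series $\varphi_i$ are genuine single-valued holomorphic functions (after choosing a branch). This is exactly what the paper does: applying Theorems \ref{thm-2} and \ref{thm-4}(ii) yields, on every sector $V_\theta$, an isomorphism $\pi^{-1}\CC_{V_\theta}\otimes Sol_M^\rmE(\SK(\ast\{v\}))\simeq\bigoplus_i(\EE_{V_\theta|M}^{-\Re\varphi_i})^{\oplus m_i}$. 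The ramification and the subsequent Galois descent are a detour that introduces exactly the bookkeeping problems you flag at the end, and they are unnecessary once one works sector by sector.

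Second, and more seriously, the ``standard dictionary'' you invoke—that a sectorial isomorphism between $Sol^\rmE(\SN)$ and a direct sum of exponential enhanced ind-sheaves $\EE^{-\Re\varphi_i}$ forces the Hukuhara–Levelt–Turrittin exponential factors of $\SN$ to be the pole parts of $-\varphi_i$ with the stated multiplicities—is not an off-the-shelf result. It is precisely Theorem \ref{new-thm} of the paper, whose proof requires two nontrivial ingredients established here: (i) the sectorial refinement of the irregular Riemann–Hilbert correspondence (Proposition \ref{prop-2} and Corollary \ref{cor-2}), which upgrades the sectorial isomorphism of enhanced solution complexes to an isomorphism of $\SD_{\tl X}^\SA$-modules on the real blow-up; and (ii) the uniqueness of the exponential decomposition of such $\SD_{\tl X}^\SA$-modules (Proposition \ref{new prop}, proved by a careful block-triangular matrix argument in $\SA_{\tl X}$). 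Without those two steps there is no way to pass from a statement about enhanced solutions on a sector to a statement about the formal HLT decomposition at the point; treating this as known is the gap in your argument. Everything else—the compatibility of $Sol^\rmE$ with $\bfD i_M^\ast$ (Theorem \ref{thm-4}(ii)) and with localization (Theorem \ref{thm-4}(v)), and the sign coming from $Sol^\rmE(\SE^{-\psi})\simeq\EE^{-\Re\psi}$—is correct.
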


This result would be useful for the 
study of the irregularities of 
confluent $A$-hypergeometric functions. 
Recall that the irregularity 
${\rm irr} ( \SK(\ast\{v\}) )$ of 
$\SK(\ast\{v\})$ is a non-negative integer and 
equal to $- \chi_v \big(Sol_M ( \SK(\ast\{v\}) ) \big)$, 
where 
\[\chi_v\big(Sol_M( \SK(\ast\{v\}) )\big) :=
\sum_{j\in\ZZ}(-1)^j\dim H^jSol_M ( \SK(\ast\{v\}) )_v\]
is the local Euler-Poincar\'e index of 
$Sol_M( \SK(\ast\{v\}) )$ at the point $v \in M$ 
(see Sabbah \cite{Sab93} etc.). Moreover by 
Theorem \ref{th-A}, for linear subspaces 
$\LL \simeq \CC$ of the dual $Y= \CC^N$ such that 
$\LL \cap \Omega = \LL \setminus \{ 0 \}$ 
we obtain a 
formula for the exponential factors at infinity 
of the restrictions $\SM^\wedge |_{\LL}$ 
of $\SM^\wedge$ to them. 
See Theorem \ref{thm-3} for the details. 
This result extends 
(some part of) our previous one 
\cite[Theorem 5.5]{ET15} for 
confluent $A$-hypergeometric systems 
to Fourier transforms of general regular 
holonomic $\SD$-modules $\SM$. 
In the course of the proof of 
these results, we use the 
following technical result which may be 
of independent interest. 

\begin{proposition}\label{prop-A}
Let $X$ be a complex manifold, $D\subset X$
a normal crossing divisor in it and
$\SM_i\ (i=1, 2)$ $($analytic$)$ holonomic $\D_X$-modules. 
Denote by $\varpi_X : \tl{X}\to X$ 
the real blow-up of $X$ along $D$.
Let $V\subset X\bs D$ be an open sector in $X$ along $D$ 
and assume that we have an isomorphism
\begin{equation*}
\pi^{-1}\CC_V\otimes Sol_X^{\rmE}(\SM_1)
\simeq
\pi^{-1}\CC_V\otimes Sol_X^{\rmE}(\SM_2).
\end{equation*} 
Let $W\subset \tl{X}$ be an open subset of $\tl{X}$
such that $W\cap\varpi_X^{-1}(D)\neq\emptyset,
\var{W}\subset{\rm Int}\Big(\var{\varpi^{-1}_XV}\Big)$. 
Then we have an isomorphism
\begin{equation*}
\SM_1^\SA|_W
\simeq
\SM_2^\SA|_W
\end{equation*}
 of $\SD_{\tl{X}}^\SA$-modules on $W$ 
{\rm (}for the definition of $\SD_{\tl{X}}^\SA$ 
etc., see Section \ref{sec:8}{\rm )}. 
\end{proposition}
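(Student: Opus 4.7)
The plan is to realize $\SM_i^{\SA}$ as the image of $Sol_X^{\rmE}(\SM_i)$ under an appropriate ``inverse'' functor, and then use the topological hypothesis on $W$ to argue that this functor, restricted to $W$, only depends on the enhanced solution complex after tensoring with $\pi^{-1}\CC_V$. In more detail, for a holonomic $\SD_X$-module $\SN$ the sheaf $\SN^{\SA}$ on $\tl{X}$ admits a description of the form
\[ \SN^{\SA} \simeq \varpi_X^{-1}\rihom\bigl(Sol^{\rmE}_X(\SN),\, \SO^{\SA,\rmE}_{\tl{X}}\bigr), \]
where $\SO^{\SA,\rmE}_{\tl{X}}$ denotes an enhanced version of the sheaf of asymptotically developable functions on the real blow-up (this is essentially a reformulation, in the enhanced ind-sheaf language of \cite{DK16}, of the classical results of Majima and Sabbah on asymptotic analysis of irregular holonomic $\SD$-modules). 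The precise form of this presentation, together with the definition of $\SD_{\tl{X}}^{\SA}$, will be recalled in Section \ref{sec:8}.

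First, I would rewrite both $\SM_1^{\SA}|_W$ and $\SM_2^{\SA}|_W$ using the above formula. This reduces the problem to comparing $\rihom\bigl(Sol^{\rmE}_X(\SM_i),\, \SO^{\SA,\rmE}_{\tl{X}}\bigr)$ on $W$. Second, I would use the hypothesis $\var{W}\subset\Int\bigl(\var{\varpi_X^{-1}V}\bigr)$ to show that the stalks of this $\rihom$ at points of $W$ depend only on the restriction of $Sol^{\rmE}_X(\SM_i)$ to arbitrarily small neighborhoods inside $\varpi_X^{-1}V$; consequently, the natural morphism
\[ \rihom\bigl(\pi^{-1}\CC_V\otimes Sol^{\rmE}_X(\SM_i),\, \SO^{\SA,\rmE}_{\tl{X}}\bigr)\big|_W \;\simeq\; \rihom\bigl(Sol^{\rmE}_X(\SM_i),\, \SO^{\SA,\rmE}_{\tl{X}}\bigr)\big|_W \]
is an isomorphism. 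This step uses the adjunction between $\pi^{-1}\CC_V\otimes(\cdot)$ and the fact that sections of $\SO^{\SA,\rmE}$ on subsets of $W$ are, by the topological hypothesis, concentrated on sectors lying inside $V$.

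Finally, applying this to both $\SM_1$ and $\SM_2$ and invoking the assumed isomorphism $\pi^{-1}\CC_V\otimes Sol^{\rmE}_X(\SM_1)\simeq \pi^{-1}\CC_V\otimes Sol^{\rmE}_X(\SM_2)$, we obtain the desired isomorphism $\SM_1^{\SA}|_W\simeq \SM_2^{\SA}|_W$ as sheaves. One then checks that this isomorphism is compatible with the $\SD_{\tl{X}}^{\SA}$-action, which follows automatically from the construction since all the functors involved are $\SD$-linear.

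The main obstacle will be the second step: verifying precisely that the $\rihom$ functor into $\SO^{\SA,\rmE}$, restricted to $W$, is insensitive to modification of $Sol^{\rmE}_X(\SM_i)$ outside $\pi^{-1}\CC_V$. Intuitively, this is the statement that asymptotic sections at a boundary point $\tl{x}\in W\cap\varpi_X^{-1}(D)$ only probe the behavior on germs of open sectors at $\tl{x}$, and by the hypothesis $\var{W}\subset\Int\bigl(\var{\varpi_X^{-1}V}\bigr)$ such germs are contained in $V$; the delicate part is to make this rigorous in the ind-sheaf-theoretic framework, controlling both the sheaf-theoretic and the ``$t$-direction'' behavior of enhanced ind-sheaves simultaneously.
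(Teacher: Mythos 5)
Your overall strategy -- reconstruct $\SM_i^{\SA}$ from $Sol_X^{\rmE}(\SM_i)$ via an appropriate $\rihom$ and then use the inclusion $\var{W}\subset\Int(\var{\varpi_X^{-1}V})$ to show that this $\rihom$ on $W$ is insensitive to the behavior of $Sol_X^{\rmE}(\SM_i)$ outside $\pi^{-1}\CC_V$ -- is the right one, and it is essentially what the paper does. However, there are two concrete problems.

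First, the reconstruction formula you propose,
\[ \SM^{\SA}\;\simeq\;\varpi_X^{-1}\rihom\bigl(Sol_X^{\rmE}(\SM),\,\SO^{\SA,\rmE}_{\tl X}\bigr), \]
is not well-typed: $Sol_X^{\rmE}(\SM)$ lives on $X$ while $\SO^{\SA,\rmE}_{\tl X}$ (which you introduce ad hoc) would live on $\tl X$, so the internal hom does not make sense, and an exterior $\varpi_X^{-1}$ does not repair this. The correct statement (Theorem \ref{prop-3} in the paper, a sectorial refinement of Kashiwara--Schapira's extended Riemann--Hilbert correspondence) is
\[ \SM^{\SA}\;\simeq\;\rhom^{\rmE}\Bigl(Sol_{\tl X}^{\rmE}(\SM^{\SA}),\,\SO_{\tl X}^{\rmE}\Bigr), \]
and, crucially, one has $Sol_{\tl X}^{\rmE}(\SM^{\SA})\simeq \bfE\varpi^!\,\rihom\bigl(\pi^{-1}\CC_{X\setminus D},\,Sol_X^{\rmE}(\SM)\bigr)$. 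The inner $\rihom(\pi^{-1}\CC_{X\setminus D},\,\cdot\,)$ is not cosmetic: it is what encodes the moderate-growth condition at the boundary, and omitting it gives the wrong object.

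Second, you correctly identify the hard step -- showing that the $\rihom$ on $W$ depends only on $\pi^{-1}\CC_V\otimes Sol_X^{\rmE}(\SM_i)$ -- but you do not carry it out, and it cannot be waved through as a stalkwise statement because the growth condition couples the sheaf-theoretic direction to the enhanced $t$-direction. The paper handles this by a Yoneda-type argument: using Theorem \ref{prop-3} it suffices to show, for every $G\in\BEC(\I\CC_W)$, an isomorphism of
\[ {\rm Hom}_{\BEC(\I\CC_W)}\bigl(G,\,DR_{\tl X}^{\rmE}(\SM_i^{\SA})|_{\pi^{-1}(W)}\bigr), \]
and a chain of adjunctions ($\bfE j_{!!}$, $\bfE\varpi_{!!}$, $\bfE\varpi^!$) together with the observation that $G':=\bfE\varpi_{!!}\bfE j_{!!}G$ satisfies $G'\simeq\pi^{-1}\CC_{\var{\varpi(W)}}\otimes G'$ and $(X\setminus D)\cap\var{\varpi(W)}=V\cap\var{\varpi(W)}$ reduces this to an isomorphism of $\rihom(\pi^{-1}\CC_V,\,DR_X^{\rmE}(\SM_i))$, which follows from the hypothesis by applying the Verdier dual functor $\rmD_X^{\rmE}$ to $\pi^{-1}\CC_V\otimes Sol_X^{\rmE}(\SM_i)$. (Alternatively, Theorem \ref{thm-9} gives directly $\SM^{\SA}|_W\simeq\rhom^{\rmE}\bigl((\bfE\varpi^!\rihom(\pi^{-1}\CC_{X\setminus D},K))|_W,\,\SO_{\tl X}^{\rmE}|_W\bigr)$ with $K=\pi^{-1}\CC_V\otimes Sol_X^{\rmE}(\SM)$, which is precisely the rigorous form of your step two.) So your proposal identifies the right landmarks but leaves the essential content unproved, and as written the central formula is not correct.
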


This result follows from a more essential one in 
Theorem \ref{thm-9}. 
Namely we can reconstruct the $\SD_{\tl{X}}^\SA$-module 
structure of $\SM^\SA$ on $W\subset \tl{X}$ 
by the enhanced ind-sheaf 
$\pi^{-1}\CC_V \otimes Sol_X^{\rmE}(\SM )$. 
We regard it as a directional (or sectorial) 
refinement of the irregular Riemann-Hilbert 
correspondence of \cite{DK16}. In fact 
Proposition \ref{prop-A} is a consequence of 
the extended Riemann-Hilbert 
correspondence of Kashiwara-Schapira 
\cite[Theorem 4.5]{KS16}.
For the converse of Proposition \ref{prop-A} see Theorem \ref{thm-10}.
If a meromorphic 
connection $\SM$ along $D$ admits a good 
lattice in the sense of Mochizuki 
\cite{Mochi11} we can also know 
its exponential factors from 
$\pi^{-1}\CC_V \otimes Sol_X^{\rmE}(\SM )$. 
See Theorem \ref{new-new-thm} for the details. 

 Let $\SM$ be an algebraic regular holonomic 
$\SD$-module on $X= \CC^N$. Then 
by our formula for the enhanced solution complex 
$Sol_{\var Y}^{\rmE}( \tl{\SM^\wedge} )$ 
we can calculate also some part of 
the characteristic cycle 
of the Fourier transform $\SM^\wedge$. 
To explain a special case of this result, first 
we define a ``conification" of the perverse sheaf 
$\SF = Sol_X(\SM)[N]\in\BDC_{\CC-c}(\CC_{X^\an})$ as follows. 
Let $j=i_X 
: X=\CC^N\xhookrightarrow{\ \ \ }\var{X}=\PP^N$ be the 
projective compactification 
of $X= \CC^N$ and $h$ the (local) defining equation of
the hyperplane at infinity 
$H_\infty :=\var{X}\bs X \simeq \PP^{N-1}$ in $\var{X}$
such that $H_\infty = h^{-1}(0)$. 
Moreover let $\gamma : X\bs\{0\}=
\CC^N\bs\{0\}\twoheadrightarrow
H_\infty=\PP^{N-1}$ be the canonical projection.
Then
\[\SG := \gamma^{-1} \psi_h(
j_! \SF)\in\BDC_{\CC-c}(\CC_{X\bs\{0\}})\]
is a perverse sheaf on $X\bs\{0\}$. 
More precisely, the nearby cycle sheaf 
$\psi_h( j_! \SF)$ is defined globally on $H_{\infty}$ 
by the corresponding $\SD$-module on it. 
We call it the conification of $\SF$. 
In particular, $\SG$ is monodromic. 
We extend it to a monodromic perverse sheaf 
on the whole $X= \CC^N$ and denote it also by $\SG$. 
Let $\SN\in\Modrh(\SD_X)$ be the regular holonomic 
$\SD_X$-module such that $Sol_X(\SN)[N]\simeq\SG$. 
Now we recall the well-known relationship between 
the characteristic cycle ${\rm CC}(\SN)$ of $\SN$ 
and that of its Fourier transform $\SN^\wedge$. 
Take $\CC^\ast$-conic subvarieties $V_i\subset X$ 
of $X$ and positive integer $n_i>0\ 
(1\leq i\leq r)$ such that 
\[{\rm CC}(\SN) = \sum_{i=1}^r 
n_i\cdot[T^\ast_{V_i}X].\]
Then by the natural identification 
$T^\ast X \simeq X \times Y \simeq T^\ast Y$ for any
$1\leq i\leq r$ there exists a 
$\CC^\ast$-conic subvariety 
$W_i\subset Y$ of $Y=\CC^N$ 
such that $T^\ast_{V_i}X=T^\ast_{W_i}Y$ 
(see Gelfand-Kapranov-Zelevinsky 
\cite[\S1.3]{GKZ94}). 
In this situation it is well-known that 
\[{\rm CC}(\SN^\wedge) = \sum_{i=1}^r n_i
\cdot[T^\ast_{W_i}Y].\]
For the Fourier transform $\SM^\wedge$ 
of the original $\SM$ we obtain the 
following result. 

\begin{theorem}\label{th-B} 
Assume that $d_{W_i}=N-1$ and 
$\F=Sol_X(\M)[N]\in\BDC_{\CC-c}(\CC_{X^{\an}})$ is moderate 
at infinity over a neighborhood of 
a generic point  $v\in (W_i)_{\rm reg}$ 
in $Y \setminus \{ 0 \}$ 
{\rm (}see Definition \ref{new-defc}{\rm )}. Then the multiplicity 
${\rm mult}_{T^\ast_{W_i}Y}\SM^\wedge \geq0$
of the Fourier transform $\SM^\wedge$ along $T^\ast_{W_i}Y$
is given by
\begin{align*}
{\rm mult}_{T^\ast_{W_i}Y}\SM^\wedge
&=
{\rm mult}_{T^\ast_{V_i}X}\SN
+ {\rm irr} ( \SK(\ast\{v\}) ) 
\\
(&\geq {\rm mult}_{T^\ast_{V_i}X}\SN =\mi > 0).
\end{align*}
In particular, the conormal bundle $T^\ast_{W_i}Y$ is
contained in the characteristic variety $\ch(\SM^\wedge)$ 
of $\SM^\wedge$ and we have $W_i\subset D= Y\bs\Omega$. 
\end{theorem}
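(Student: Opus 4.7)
The plan is to reduce the problem to a one-dimensional computation on a normal slice and then combine Theorem \ref{th-A}, Theorem \ref{thm-neww}, and the Kashiwara local index formula for holonomic $\SD$-modules on a curve. First I would take a normal slice $M\subset Y$ to $W_i$ at the generic point $v\in(W_i)_{\rm reg}$, shrunk so that $M\simeq\{u\in\CC\mid |u|<\varepsilon\}$, $v=\{u=0\}$, and $M\setminus\{v\}\subset\Omega$. Since $M$ meets $W_i$ transversally, the inclusion $i_M$ is non-characteristic with respect to $T^\ast_{W_i}Y$ and the pullback formula for characteristic cycles gives
\[{\rm mult}_{T^\ast_{W_i}Y}(\SM^\wedge)={\rm mult}_{T^\ast_v M}(\SK),\qquad \SK={\bfD}i_M^\ast\SM^\wedge.\]

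For a holonomic $\SD_M$-module on the disk $M$, the Kashiwara--Deligne--Malgrange local index formula yields
\[{\rm mult}_{T^\ast_v M}(\SK)=(\text{generic rank of }\SK\text{ on }M\setminus\{v\})+{\rm irr}(\SK(\ast\{v\})).\]
Theorem \ref{thm-neww} already computes the irregularity term, so what remains is to identify the generic rank of $\SK$ with $n_i={\rm mult}_{T^\ast_{V_i}X}\SN$. Because $M\setminus\{v\}\subset\Omega$, Theorem \ref{th-A} tells us that $\SM^\wedge$ is an integrable connection on a neighborhood of $M\setminus\{v\}$ of rank $\sum_{j=1}^k m_j$, so the rank identification is the statement $\sum_{j=1}^k m_j=n_i$.

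The proof of $\sum_{j}m_j=n_i$ is the main obstacle and is where the moderateness-at-infinity hypothesis on $\SF=Sol_X(\SM)[N]$ enters. The idea is that moderateness forces each sheet $\mu_j(w)$ appearing in the description of $Sol_{\var{Y}}^{\rmE}(\tl{\SM^\wedge})$ supplied by Theorem \ref{th-A} to extend in a tame (regular) manner across $v$ and to match, through the Fourier--Sato transform of enhanced ind-sheaves \cite{KS16-2,DK17}, the enhanced solutions of the (regular) Fourier transform $\SN^\wedge$ of the conification $\SN$. By Brylinski's theorem (Theorem \ref{th-Bry}), $\SN^\wedge$ is regular holonomic with ${\rm CC}(\SN^\wedge)=\sum_i n_i[T^\ast_{W_i}Y]$, so its generic rank on the transverse slice $M$ equals $n_i$; matching this with the sheet count yields $\sum_j m_j=n_i$. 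Assembling these ingredients gives ${\rm mult}_{T^\ast_{W_i}Y}(\SM^\wedge)=n_i+{\rm irr}(\SK(\ast\{v\}))$. Positivity (since $n_i\geq 1$) shows that $T^\ast_{W_i}Y\subset\ch(\SM^\wedge)$, and because $\SM^\wedge|_\Omega$ is an integrable connection by Theorem \ref{th-A}, this forces $W_i\subset D=Y\setminus\Omega$.
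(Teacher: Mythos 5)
Your reduction to the normal slice $M$, via non-characteristic pullback, is correct: $\mathrm{mult}_{T^\ast_{W_i}Y}(\SM^\wedge)=\mathrm{mult}_{T^\ast_v M}(\SK)$ for $\SK={\bfD}i_M^\ast\SM^\wedge$. But the next step is where the argument breaks. The formula
\[\mathrm{mult}_{T^\ast_v M}(\SK)=(\text{generic rank})+\mathrm{irr}(\SK(\ast\{v\}))\]
is the Deligne--Malgrange index formula for a \emph{meromorphic connection}; it is not valid for a general holonomic $\SD_M$-module, and there is no reason for $\SK={\bfD}i_M^\ast\SM^\wedge$ to be one. What the local index theorem actually gives is $\mathrm{mult}_{T^\ast_v M}(\SK)=(\text{generic rank})-\chi_v(Sol_M(\SK))$, and $\chi_v(Sol_M(\SK))$ has \emph{two} parts coming from the distinguished triangle $\rmR\Gamma_{\{v\}}(\SK)\to\SK\to\SK(\ast\{v\})\overset{+1}{\to}$: the irregularity of $\SK(\ast\{v\})$ \emph{and} the contribution $\chi_v(Sol_M(\rmR\Gamma_{\{v\}}\SK))=\chi_v(Sol_{\{v\}}({\bfD}i_{\{v\}}^\ast\SM^\wedge))$ supported at $v$. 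The paper handles exactly this second term via Lemma \ref{lemma-conica} and Proposition \ref{prop-4}, where it is expressed as the difference $\chi(\rmR\Gamma_c(X;\SF))-\chi(\rmR\Gamma_c(\ell(v)^{-1}(\tau);\SF))$, and it is \emph{not} zero in general.

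This omission then forces the second error. Your closing identity $\sum_j m_j=n_i$ is false: in Example \ref{nexam-2} one has $\sum_j m_j=7$ (the generic rank of $\SM^\wedge|_\Omega$, Corollary \ref{cor-3}), while $n_1=n_2=5$ and $n_3=4$; the irregularity vanishes there, so your formula would yield $\mathrm{mult}_{T^\ast_{W_i}Y}\SM^\wedge=7$ for all $i$, contradicting the paper's answer. The underlying confusion is between the generic rank of $\SN^\wedge$ on a punctured slice $M\setminus\{v\}$ (the coefficient of the zero-section $T^\ast_YY$ in $\mathrm{CC}(\SN^\wedge)$) and the multiplicity along $T^\ast_{W_i}Y$ (which is what $n_i$ actually is); Brylinski's theorem gives you the latter, not the former. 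The correct chain of reasoning in the paper is to compute $\chi_{v_0}-\chi_v$ for $\SM^\wedge$ via Kashiwara's local index formula (Theorem \ref{new-thm-7}), rewrite the global terms using the moderateness hypothesis and Lemma \ref{l-conic} in terms of $\psi_h(j_!\F)$, and then observe that the same computation for the regular module $\SN^\wedge$ gives precisely $n_i$ (with zero irregularity), so that subtracting yields the formula. The quantity $n_i$ is then \emph{not} a generic rank but rather the discrepancy $\chi_{v_0}-\chi_v$ for the conified module. Your two errors happen to cancel, but the argument as written is not a proof.
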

For a more general formula, 
see Theorem \ref{new-thm-7}.

\section{Preliminary Notions and Results}\label{uni-sec:2}
In this section, we briefly recall some basic notions 
and results which will be used in this paper. 
We assume here that the reader is familiar with the 
theory of sheaves and functors in the framework of 
derived categories. For them we follow the terminologies 
in \cite{KS90} etc. For a topological space $M$ 
denote by $\BDC(\CC_M)$ the derived category 
consisting of bounded 
complexes of sheaves of $\CC$-vector spaces on it.

\subsection{Ind-sheaves}\label{sec:3}
We recall some basic notions 
and results on ind-sheaves. References are made to 
Kashiwara-Schapira \cite{KS01} and \cite{KS06}. 
Let $M$ be a good topological space (which is locally compact, 
Hausdorff, countable at infinity and has finite soft dimension). 
We denote by $\Mod(\CC_M)$ the abelian category of sheaves 
of $\CC$-vector spaces on it and by $\I\CC_M$ 
that of ind-sheaves. Then there exists a 
natural exact embedding $\iota_M : \Mod(\CC_M)\to\I\CC_M$ 
of categories. We sometimes omit it.
It has an exact left adjoint $\alpha_M$, 
that has in turn an exact fully faithful
left adjoint functor $\beta_M$: 
 \[\xymatrix@C=60pt{\Mod(\CC_M)  \ar@<1.0ex>[r]^-{\iota_{M}} 
 \ar@<-1.0ex>[r]_- {\beta_{M}} & \I\CC_M 
\ar@<0.0ex>[l]|-{\alpha_{M}}}.\]

The category $\I\CC_M$ does not have enough injectives. 
Nevertheless, we can construct the derived category $\BDC(\I\CC_M)$ 
for ind-sheaves and the Grothendieck six operations among them. 
We denote by $\otimes$ and $\rihom$ the operations 
of tensor products and internal homs respectively. 
If $f : M\to N$ be a continuous map, we denote 
by $f^{-1}, \rmR f_\ast, f^!$ and $\rmR f_{!!}$ the 
operations of inverse images,
direct images, proper inverse images and proper direct images 
respectively. 
We set also $\rhom := \alpha_M\circ\rihom$. 
We thus obtain the functors
\begin{align*}
\iota_M &: \BDC(\CC_M)\to \BDC(\I\CC_M),\\
\alpha_M &: \BDC(\I\CC_M)\to \BDC(\CC_M),\\
\beta_M &: \BDC(\CC_M)\to \BDC(\I\CC_M),\\
\otimes &: \BDC(\I\CC_M)\times\BDC(\I\CC_M)\to\BDC(\I\CC_M), \\
\rihom &: \BDC(\I\CC_M)^{\op}\times\BDC(\I\CC_M)\to\BDC(\I\CC_M), \\
\rhom &: \BDC(\I\CC_M)^{\op}\times\BDC(\I\CC_M)\to\BDC(\CC_M), \\
\rmR f_\ast &: \BDC(\I\CC_M)\to\BDC(\I\CC_N),\\
f^{-1} &: \BDC(\I\CC_N)\to\BDC(\I\CC_M),\\
\rmR f_{!!} &: \BDC(\I\CC_M)\to\BDC(\I\CC_N),\\
f^! &: \BDC(\I\CC_N)\to\BDC(\I\CC_M).
\end{align*}

Note that $(f^{-1}, \rmR f_\ast)$ and 
$(\rmR f_{!!}, f^!)$ are pairs of adjoint functors.
We may summarize the commutativity of the various functors 
we have introduced in the table below. 
Here, $``\circ"$ means that the functors commute,
and $``\times"$ they do not.
\begin{table}[h]
\begin{equation*}
   \begin{tabular}{l||c|c|c|c|c|c|c}
    {} & $\otimes$ & $f^{-1}$ & $\rmR f_\ast$ & $f^!$ & 
$\rmR f_{!!}$ & $\underset{}{\varinjlim}$ &  $\varprojlim$ \\ \hline \hline 
    $\underset{}{\iota}$ & $\circ$ & $\circ$ & $\circ$ & 
$\circ$ & $\times$ & $\times$ & $\circ$  \\ \hline
    $\underset{}{\alpha}$ & $\circ$ & $\circ$ & $\circ$ & 
$\times$ & $\circ$ & $\circ$ & $\circ$ \\ \hline
    $\underset{}{\beta}$ & $\circ$ & $\circ$ & $\times$  
& $\times$ & $\times$ & $\circ$& $\times$\\ \hline 
     $\underset{}{\varinjlim}$ & $\circ$ & $\circ$ & 
$\times$ & $\circ$ & $\circ$ &\multicolumn{2}{|c}{} \\\cline{1-6}
     $\underset{}{\varprojlim}$ & $\times$ & $\times$ & 
$\circ$ & $\times$ & $\times$ &\multicolumn{2}{|c}{} \\\cline{1-6}
   \end{tabular}
   \end{equation*}
\end{table}

\subsection{Ind-sheaves on Bordered Spaces}\label{sec:4} 
For the results in this subsection, we refer to 
D'Agnolo-Kashiwara \cite{DK16}. 
A bordered space is a pair $M_{\infty} = (M, \che{M})$ of
a good topological space $\che{M}$ and an open subset $M\subset\che{M}$.
A morphism $f : (M, \che{M})\to (N, \che{N})$ of bordered spaces
is a continuous map $f : M\to N$ such that the first projection
$\che{M}\times\che{N}\to\che{M}$ is proper on
the closure $\var{\Gamma}_f$ of the graph $\Gamma_f$ of $f$ 
in $\che{M}\times\che{N}$.
If also the second projection $\var{\Gamma}_f\to\che{N}$ is proper, 
we say that $f$ is semi-proper. 
The category of good topological spaces embeds into that
of bordered spaces by the identification $M = (M, M)$. 
We define the triangulated category of ind-sheaves on 
$M_{\infty} = (M, \che{M})$ by 
\[\BDC(\I\CC_{M_\infty}) := 
\BDC(\I\CC_{\che{M}})/\BDC(\I\CC_{\che{M}\backslash M}).\]
The quotient functor
\[\mathbf{q} : \BDC(\I\CC_{\che{M}})\to\BDC(\I\CC_{M_\infty})\]
has a left adjoint $\mathbf{l}$ and a right 
adjoint $\mathbf{r}$, both fully faithful, defined by 
\[\mathbf{l}(\mathbf{q} F) := \CC_M\otimes F,\hspace{25pt} 
\mathbf{r}(\mathbf{q} F) := \rihom(\CC_M, F). \]
For a morphism $f : M_\infty\to N_\infty$ 
of bordered spaces, 
the Grothendieck's operations 
\begin{align*} 
\otimes &: \BDC(\I\CC_{M_\infty})\times
\BDC(\I\CC_{M_\infty})\to\BDC(\I\CC_{M_\infty}), \\
\rihom &: \BDC(\I\CC_{M_\infty})^{\op}\times
\BDC(\I\CC_{M_\infty})\to\BDC(\I\CC_{M_\infty}), \\
\rmR f_\ast &: \BDC(\I\CC_{M_\infty})\to\BDC(\I\CC_{N_\infty}),\\
f^{-1} &: \BDC(\I\CC_{N_\infty})\to\BDC(\I\CC_{M_\infty}),\\
\rmR f_{!!} &: \BDC(\I\CC_{M_\infty})\to\BDC(\I\CC_{N_\infty}),\\
f^! &: \BDC(\I\CC_{N_\infty})\to\BDC(\I\CC_{M_\infty}) \\
\end{align*}
are defined by
\begin{align*} 
\mathbf{q}(F)\otimes\mathbf{q}(G) &:=
\mathbf{q}(F\otimes G), \\
\rihom(\mathbf{q}(F), \mathbf{q}(G)) &:=
\mathbf{q}\big(\rihom(F, G)\big), \\
\rmR f_\ast(\mathbf{q}(F)) &:=
\mathbf{q}\big(\rmR {\rm pr}_{2\ast}\rihom(
\CC_{\Gamma_f}, {\rm pr}_1^!F)\big),\\
f^{-1} (\mathbf{q}(G))&:=\mathbf{q}\big(\rmR{\rm pr_{1!!}}
(\CC_{\Gamma_f}\otimes {\rm pr_2}^{-1}G)\big),\\
\rmR f_{!!}(\mathbf{q}(F)) &:=
\mathbf{q}\big(\rmR{\rm pr_2}_{!!}(
\CC_{\Gamma_f}\otimes{\rm pr_1}^{-1}F)\big),\\
f^!(\mathbf{q}(G)) &:=\mathbf{q}
\big(\rmR {\rm pr}_{1\ast}\rihom(\CC_{\Gamma_f}, {\rm pr}_2^!G)\big)
\end{align*}
respectively, where ${\rm pr_1} : \che{M}\times\che{N}\to\che{M}$ and 
${\rm pr_2} : \che{M}\times\che{N}\to\che{N}$ 
are the projections. Moreover, there exists a natural embedding 
\[\xymatrix@C=30pt@M=10pt{\BDC(\CC_M)\ar@{^{(}->}[r] & 
\BDC(\I\CC_{M_\infty}).}\]

\subsection{Enhanced Sheaves}\label{sec:5}
For the results in this subsection, see 
Tamarkin \cite{Tama08},
Kashiwara-Schapira \cite{KS16-2} and 
D'Agnolo-Kashiwara \cite{DK17}. 
Let $M$ be a good topological space. 
We consider the maps 
\[M\times\RR^2\xrightarrow{p_1, p_2, \mu}M
\times\RR\overset{\pi}{\longrightarrow}M\]
where $p_1, p_2$ are the first and the second projections 
and we set $\pi (x,t):=x$ and 
$\mu(x, t_1, t_2) := (x, t_1+t_2)$. 
Then the convolution functors for 
sheaves on $M \times \RR$ are defined by
\begin{align*}
F_1\Potimes F_2 &:= \rmR \mu_!(p_1^{-1}F_1\otimes p_2^{-1}F_2),\\
\Prhom(F_1, F_2) &:= \rmR p_{1\ast}\rhom(p_2^{-1}F_1, \mu^!F_2).
\end{align*}
We define the triangulated category of enhanced sheaves on $M$ by 
\[\BEC(\CC_M) := \BDC(\CC_{M\times\RR})/\pi^{-1}\BDC(\CC_M). \] 
Then the quotient functor
\[\Q : \BDC(\CC_{M \times \RR} )  \to\BEC(\CC_M)\]
has fully faithful left and right adjoints $\bfL^\rmE, \bfR^\rmE$ defined by 
\[\bfL^\rmE(\Q F) := (\CC_{\{t\geq0\}}\oplus\CC_{\{t\leq 0\}})
\Potimes F ,\hspace{20pt} \bfR^\rmE(\Q G) :
=\Prhom(\CC_{\{t\geq0\}}\oplus\CC_{\{t\leq 0\}}, G), \]
where $\{t\geq0\}$ stands for $\{(x, t)\in 
M\times\RR\ |\ t\geq0\}$ and $\{t\leq0\}$ is 
defined similarly. The convolution functors 
are defined also for enhanced sheaves. We denote them 
by the same symbols $\Potimes$, $\Prhom$. 
For a continuous map $f : M \to N $, we 
can define naturally the operations 
$\bfE f^{-1}$, $\bfE f_\ast$, $\bfE f^!$, $\bfE f_{!}$ 
for enhanced sheaves. 
We have also a 
natural embedding $\e : \BDC( \CC_M) \to \BEC( \CC_M)$ defined by 
\[ \e(F) := \Q(\CC_{\{t\geq0\}}\otimes\pi^{-1}F). \] 
For a continuous function $\varphi : U\to \RR$ 
defined on an open subset $U \subset M$ of $M$  we define 
the exponential enhanced sheaf by 
\[ {\rm E}_{U|M}^\varphi := 
\Q(\CC_{\{t+\varphi\geq0\}} ), \]
where $\{t+\varphi\geq0\}$ stands for 
$\{(x, t)\in M\times{\RR}\ |\ x\in U, t+\varphi(x)\geq0\}$.

\subsection{Enhanced Ind-sheaves}\label{sec:6}
We recall some basic notions 
and results on enhanced ind-sheaves. References are made to 
D'Agnolo-Kashiwara \cite{DK16} and 
Kashiwara-Schapira \cite{KS16}. 
Let $M$ be a good topological space.
Set $\RR_\infty := (\RR, \var{\RR})$ for 
$\var{\RR} := \RR\sqcup\{-\infty, +\infty\}$,
and let $t\in\RR$ be the affine coordinate. 
We consider the maps 
\[M\times\RR_\infty^2\xrightarrow{p_1, p_2, \mu}M
\times\RR_\infty\overset{\pi}{\longrightarrow}M\]
where $p_1, p_2$ and $\pi$ are morphisms
of bordered spaces induced by the projections.
And $\mu$ is a morphism of bordered spaces induced by the map 
$M\times\RR^2\ni(x, t_1, t_2)\mapsto(x, t_1+t_2)\in M\times\RR$.
Then the convolution functors for 
ind-sheaves on $M \times \RR_\infty$ are defined by
\begin{align*}
F_1\Potimes F_2 &:= \rmR\mu_{!!}(p_1^{-1}F_1\otimes p_2^{-1}F_2),\\
\Prihom(F_1, F_2) &:= \rmR p_{1\ast}\rihom(p_2^{-1}F_1, \mu^!F_2).
\end{align*}
Now we define the triangulated category 
of enhanced ind-sheaves on $M$ by 
\[\BEC(\I\CC_M) := \BDC(\I\CC_{M 
\times\RR_\infty})/\pi^{-1}\BDC(\I\CC_M).\]
Note that we have a natural embedding of categories
\[\BEC(\CC_M) \xhookrightarrow{\ \ \ }\BEC(\I\CC_M).\]
The quotient functor
\[\Q : \BDC(\I\CC_{M\times\RR_\infty})\to\BEC(\I\CC_M)\]
has fully faithful left and right adjoints $\bfL^\rmE,\bfR^\rmE$ defined by 
\[\bfL^\rmE(\Q K) := (\CC_{\{t\geq0\}}\oplus\CC_{\{t\leq 0\}})
\Potimes K ,\hspace{20pt} \bfR^\rmE(\Q K) :
=\Prihom(\CC_{\{t\geq0\}}\oplus\CC_{\{t\leq 0\}}, K), \]
where $\{t\geq0\}$ stands for 
$\{(x, t)\in M\times\var{\RR}\ |\ t\in\RR, t\geq0\}$ 
and $\{t\leq0\}$ is defined similarly.

The convolution functors 
are defined also for enhanced ind-sheaves. We denote them 
by the same symbols $\Potimes$, $\Prihom$. 
For a continuous map $f : M \to N $, we 
can define also the operations 
$\bfE f^{-1}$, $\bfE f_\ast$, $\bfE f^!$, $\bfE f_{!!}$ 
for enhanced ind-sheaves. For example, 
by the natural morphism $\tl{f}: M \times \RR_{\infty} \to 
N \times \RR_{\infty}$ of bordered spaces associated to 
$f$ we set $\bfE f_\ast ( \Q K)= \Q(\rmR \tl{f}_{\ast}(K))$. 
The other operations are defined similarly. 
We thus obtain the six operations $\Potimes$, $\Prihom$,
$\bfE f^{-1}$, $\bfE f_\ast$, $\bfE f^!$, $\bfE f_{!!}$ 
for enhanced ind-sheaves .
Moreover we denote by $\rmD_M^\rmE$ 
the Verdier duality functor for enhanced ind-sheaves.
We have outer hom functors
\begin{align*}
\rihom^\rmE(K_1, K_2) &:= \rmR\pi_\ast\rihom(\bfL^\rmE K_1, \bfL^\rmE K_2)
\simeq \rmR\pi_\ast\rihom(\bfL^\rmE K_1, \bfR^\rmE K_2),\\
\rhom^\rmE(K_1, K_2) &:= \alpha_M\rihom^\rmE(K_1, K_2),\\
\rHom^\rmE(K_1, K_2) &:=\rmR\Gamma(M; \rhom^\rmE(K_1, K_2)),
\end{align*}
with values in $\BDC(\I\CC_M), 
\BDC(\CC_M)$ and $\BDC(\CC)$, respectively. 
Moreover for $F\in\BDC(\I\CC_M)$ and $K\in\BEC(\I\CC_M)$ the objects 
\begin{align*}
\pi^{-1}F\otimes K &:=\Q(\pi^{-1}F\otimes \bfL^\rmE K),\\
\rihom(\pi^{-1}F, K) &:=\Q\big(\rihom(\pi^{-1}F, \bfR^\rmE K)\big). 
\end{align*}
in $\BEC(\I\CC_M)$ are well-defined. 
Set $\CC_M^\rmE := \Q 
\Bigl(``\underset{a\to +\infty}{\varinjlim}"\ \CC_{\{t\geq a\}}
\Bigr)\in\BEC(\I\CC_M)$. 
Then we have 
natural embeddings $\e, e : \BDC(\I\CC_M) \to \BEC(\I\CC_M)$ defined by 
\begin{align*}
\e(F) & := \Q(\CC_{\{t\geq0\}}\otimes\pi^{-1}F) \\ 
e(F) &:=  \CC_M^\rmE\otimes\pi^{-1}F
\simeq \CC_M^\rmE\Potimes\e(F). 
\end{align*}

For a continuous function $\varphi : U\to \RR$ 
defined on an open subset $U \subset M$ of $M$  we define 
the exponential enhanced ind-sheaf by 
\[\EE_{U|M}^\varphi := 
\CC_M^\rmE\Potimes {\rm E}_{U|M}^\varphi
=
\CC_M^\rmE\Potimes
\Q\CC_{\{t+\varphi\geq0\}}
=
 \Q 
\Bigl(``\underset{a\to +\infty}{\varinjlim}"\ \CC_{\{t+\varphi\geq a\}}
\Bigr)\]
where $\{t+\varphi\geq0\}$ stands for 
$\{(x, t)\in M\times\var{\RR}\ |\ t\in\RR, x\in U, t+\varphi(x)\geq0\}$.

\subsection{$\SD$-Modules}\label{sec:7}
In this subsection we recall some basic notions 
and results on $\SD$-modules. 
References are made to \cite{Bjo93}, 
\cite{HTT08}, \cite[\S 7]{KS01}, 
\cite[\S 8, 9]{DK16},
\cite[\S 3, 4, 7]{KS16} and 
\cite[\S 4, 5, 6, 7, 8]{Kas16}. 
For a complex manifold $X$ we denote by 
$d_X$ its complex dimension. 
Denote by $\SO_X, \Omega_X$ and $\SD_X$ 
the sheaves of holomorphic functions, 
holomorphic differential forms of top degree 
and holomorphic differential operators, respectively. 
Let $\BDC(\SD_X)$ be the bounded derived category 
of left $\SD_X$-modules and $\BDC(\SDop_X)$ 
be that of right $\SD_X$-modules. 
Moreover we denote by $\BDCcoh(\SD_X)$, $\BDC_{\rm good}(\SD_X)$,
$\BDChol(\SD_X)$ and $\BDCrh(\SD_X)$ the full triangulated subcategories
of $\BDC(\SD_X)$ consisting of objects with coherent, good, 
holonomic and regular holonomic cohomologies, 
respectively.
For a morphism $f : X\to Y$ of complex manifolds, 
denote by $\Dotimes, \rhom_{\SD_X}, \bfD f_\ast, \bfD f^\ast$ 
the standard operations for $\SD$-modules. 
We define also the duality functor $\DD_X : \BDCcoh(\SD_X)^{\op} 
\simto \BDCcoh(\SD_X)$ 
by 
\[\DD_X(\SM):=\rhom_{\SD_X}(\SM, \SD_X)
\uotimes{\SO_X}\Omega^{\otimes-1}_X[d_X].\]
Note that there exists 
an equivalence of categories 
$( \cdot )^{\rm r} : \Mod(\SD_X)\simto\Mod(\SDop_X)$ given by 
\[\SM^{\rm r}:=\Omega_X\uotimes{\SO_X}\SM.\]
The classical de Rham and solution functors are defined by  
\begin{align*}
DR_X &:  \BDCcoh (\SD_X)\to\BDC(\CC_X),
\hspace{40pt}\SM \longmapsto \Omega_X\Lotimes{\SD_X}\SM, \\
Sol_X &: \BDCcoh (\SD_X)^{\op}\to\BDC(\CC_X),
\hspace{30pt}\SM \longmapsto \rhom_{\SD_X}(\SM, \SO_X).
\end{align*}
Then for $\SM\in\BDCcoh(\SD_X)$ 
we have an isomorphism $Sol_X(\SM)[d_X]\simeq DR_X(\DD_X\SM)$. 
For a closed hypersurface $D\subset X$ 
in $X$ we denote by $\SO_X(\ast D)$ 
the sheaf of meromorphic functions on $X$ with poles in $D$. 
Then for $\SM\in\BDC(\SD_X)$ we set 
\[\SM(\ast D) := \SM\Dotimes\SO_X(\ast D).\]
For $f\in\SO_X(\ast D)$ and $U := X\bs D$, set 
\begin{align*}
\SD_Xe^f &:= \SD_X/\{P \in \SD_X  \ |\ Pe^f|_U = 0\}, \\
\SE_{U|X}^f &:= \SD_Xe^f(\ast D).
\end{align*}
Note that $\SE_{U|X}^f$ is holonomic and there exists an isomorphism 
\[\DD_X(\SE_{U|X}^f)(\ast D) \simeq \SE_{U|X}^{-f}.\]
Namely $\SE_{U|X}^f$ is a meromorphic connection associated 
to $d+df$. 

One defines the ind-sheaf $\SO_X^\rmt$ of tempered 
holomorphic functions 
as the Dolbeault complex with coefficients in the ind-sheaf 
of tempered distributions. 
More precisely, denoting by $\overline{X}$ the complex conjugate manifold 
to $X$ and by $X_{\RR}$ the underlying real analytic manifold of $X$,
we set
\[\SO_X^\rmt := \rihom_{\SD_{\overline{X}}}(
\SO_{\overline{X}}, \mathcal{D}b_{X_\RR}^\rmt), \]
where $\mathcal{D}b_{X_\RR}^\rmt$ is the ind-sheaf of 
tempered distributions on $X_\RR$
(for the definition see \cite[Definition 7.2.5]{KS01}). 
Moreover, we set 
 \[\Omega_X^\rmt := \beta_X\Omega_X\otimes_{\beta_X\SO_X}\SO_X^\rmt.\] 
Then the tempered de Rham and solution functors 
are defined by 
\begin{align*}
DR_X^\rmt &: \BDCcoh (\SD_X)\to\BDC(\I\CC_X),
\hspace{40pt} 
\SM \longmapsto \Omega_X^\rmt\Lotimes{\SD_X}\SM, 
\\
Sol_X^\rmt &: \BDCcoh (\SD_X)^{\op}\to\BDC(\I\CC_X), 
\hspace{40pt}
\SM \longmapsto \rihom_{\SD_X}(\SM, \SO_X^\rmt). 
\end{align*}
Note that we have isomorphisms
\begin{align*}
Sol_X(\SM) &\simeq \alpha_XSol_X^\rmt(\SM), \\
DR_X(\SM) &\simeq \alpha_XDR_X^\rmt(\SM), \\
Sol_X^\rmt(\SM)[d_X] &\simeq DR_X^\rmt(\DD_X\SM).
\end{align*}

Let $i : X\times\RR_\infty\to X\times\PP$ be
the natural morphism of bordered spaces and
$\tau\in\CC\subset\PP$ the affine coordinate 
such that $\tau|_\RR$ is that of $\RR$. 
We then define objects $\SO_X^\rmE\in\BEC(\I\SD_X)$ and 
$\Omega_X^\rmE\in\BEC(\I\SD_X^{\op})$ by 
\begin{align*}
\SO_X^\rmE &:= \rihom_{\SD_{\overline{X}}}(
\SO_{\overline{X}}, \mathcal{D}b_{X_\RR}^{\T})\\
&\simeq i^!\bigl((\SE_{\CC|\PP}^{-\tau})^{\rm r} 
\Lotimes{\SD_\PP}\SO_{X\times\PP}^\rmt\bigr)[1]
\simeq i^!\rihom_{\SD_\PP}(\SE_{\CC|\PP}^{\tau}, 
\SO_{X\times\PP}^\rmt)[2],\\
\Omega_X^\rmE &:= \Omega_X\Lotimes{\SO_X}\SO_X^\rmE\simeq 
i^!(\Omega_{X\times\PP}^\rmt\Lotimes{\SD_\PP}
\SE_{\CC|\PP}^{-\tau})[1], 
\end{align*}
where $\mathcal{D}b_{X_\RR}^{\T}$ 
stand for the enhanced ind-sheaf 
of tempered distributions on $X_\RR$ 
(for the definition see \cite[Definition 8.1.1]{DK16}). 
We call $\SO_X^\rmE$ the enhanced ind-sheaf 
of tempered holomorphic functions. 
Note that there exists an isomorphism
\[i_0^!\bfR^\rmE\SO_X^\rmE\simeq\SO_X^\rmt, \]
where $i_0 : X\to X\times\RR_\infty$ is the 
inclusion map of bordered spaces induced by $x\mapsto (x, 0)$. 
The enhanced de Rham and solution functors 
are defined by 
\begin{align*}
DR_X^\rmE &: \BDCcoh (\SD_X)\to\BEC(\I\CC_X),
\hspace{40pt} 
\SM \longmapsto \Omega_X^\rmE\Lotimes{\SD_X}\SM,
\\
Sol_X^\rmE &: \BDCcoh (\SD_X)^{\op}\to\BEC(\I\CC_X), 
\hspace{40pt} 
\SM \longmapsto \rihom_{\SD_X}(\SM, \SO_X^\rmE). 
\end{align*}
Then for $\SM\in\BDCcoh(\SD_X)$ 
we have isomorphism $Sol_X^\rmE(\SM)[d_X]\simeq DR_X^\rmE(\DD_X\SM)$ 
and 
$Sol^{\rmt}_X(\M)\simeq
i_0^!{\bfR^\rmE}Sol_X^{\rmE}(\M).$ 
We recall the following results of \cite{DK16}.
\begin{theorem}
\begin{enumerate}\label{thm-4}
\item[\rm{(i)}] For $\SM\in\BDC_{\rm hol}(\SD_X)$ there
is an isomorphism in $\BEC(\I\CC_X)$
\[\rmD_X^\rmE\bigl(DR_X^\rmE(\SM)\bigr) \simeq Sol_X^\rmE(\SM)[d_X].\]

\item[\rm{(ii)}] Let $f : X\to Y$ be a morphism of complex manifolds.
Then for $\SN\in\BDC_{\rm hol}(\SD_Y)$ 
there is an isomorphism in $\BEC(\I\CC_X)$
\[Sol_X^\rmE({\bfD} f^\ast\SN) \simeq \bfE f^{-1}Sol_Y^\rmE(\SN).\]

\item[\rm{(iii)}] Let $f : X\to Y$ be a morphism of complex manifolds
and $\SM\in\BDC_{\rm good}(\SD_X)\cap\BDC_{\rm hol}(\SD_X)$.
If $\supp(\SM)$ is proper over Y then there 
is an isomorphism in $\BEC(\I\CC_Y)$
\[Sol_Y^\rmE({\bfD} f_\ast\SM)[d_Y] \simeq 
\bfE f_\ast Sol_X^\rmE(\SM )[d_X].\]

\item[\rm{(iv)}] For $\SM_1, \SM_2\in\BDC_{\rm hol}(\SD_X)$, 
there exists an isomorphism in $\BEC(\I\CC_X)$
\[Sol_X^\rmE(\SM_1\Dotimes\SM_2)\simeq Sol_X^\rmE(\SM_1)
\Potimes Sol_X^\rmE(\SM_2).\]

\item[\rm{(v)}] If $\SM\in\BDC_{\rm hol}(\SD_X)$ and $D\subset X$
is a closed hypersurface, then there 
are isomorphisms in $\BEC(\I\CC_X)$
\begin{align*}
Sol_X^\rmE(\SM(\ast D)) &\simeq \pi^{-1}
\CC_{X\bs D}\otimes Sol_X^\rmE(\SM),\\
DR_X^\rmE(\SM(\ast D)) &\simeq \rihom(
\pi^{-1}\CC_{X\bs D}, DR_X^\rmE(\SM)).
\end{align*}

\item[\rm{(vi)}] Let $D$ be a closed hypersurface in $X$ and 
$f\in\SO_X(\ast D)$ a meromorphic function along $D$.
Then there exists an isomorphism in $\BEC(\I\CC_X)$
\[Sol_X^\rmE\big(\mathscr{E}_{X\backslash D | X}^\varphi\big) 
\simeq \EE_{X\backslash D | X}^{\Re\varphi}.\]
\end{enumerate}
\end{theorem}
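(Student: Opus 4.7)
The plan is to reduce all six statements to structural properties of the enhanced ind-sheaf $\SO_X^\rmE$ of tempered holomorphic functions, exploiting the integral representation
\[
\SO_X^\rmE\simeq i^!\rihom_{\SD_\PP}\bigl(\SE_{\CC|\PP}^{\tau},\SO_{X\times\PP}^\rmt\bigr)[2]
\]
that connects $\SO_X^\rmE$ to the tempered object $\SO_{X\times\PP}^\rmt$. Each compatibility statement for $Sol_X^\rmE$ will then follow by combining (a) the corresponding already known compatibility statement for $Sol_X^\rmt$ (established by Kashiwara--Schapira in \cite{KS01}) with (b) standard functorial manipulations for the six operations on bordered spaces recalled in Sections \ref{sec:4}--\ref{sec:6}.

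The functorial parts (ii), (iii), (iv) form the core. For (ii) I would write $\bfD f^\ast\SN=\SD_{X\to Y}\Lotimes{f^{-1}\SD_Y}f^{-1}\SN$, apply $\rihom_{\SD_X}(-,\SO_X^\rmE)$, and then move $f^{-1}$ outside using the adjunction $(\bfE f^{-1},\bfE f_\ast)$ together with the key compatibility $\bfE f^{-1}\SO_Y^\rmE\simeq\SO_X^\rmE[?]$ coming from the tempered version. For (iii), the good/holonomic hypothesis together with properness of $\supp(\SM)$ over $Y$ allows the projection formula on bordered spaces to commute the push-forward past $\rihom_{\SD_X}$, after which one uses the direct-image compatibility of $\SO^\rmt$. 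For (iv), the tensor product formula reduces via pullback along the diagonal $\Delta_X$ to the external product case, where the statement becomes the isomorphism
\[
\SO_X^\rmE\boxtimes_{\CC}\SO_X^\rmE\;\xrightarrow{\ \sim\ }\;(\SO_X^\rmE)\Potimes(\SO_X^\rmE),
\]
which is a computation at the level of tempered distributions combined with the definition of the convolution $\Potimes$.

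The localization formulas (v) and (vi) I would handle together at the end. For (v), one has $\SM(\ast D)=\SM\Dotimes\SO_X(\ast D)$, and applying (iv) reduces everything to computing $Sol_X^\rmE(\SO_X(\ast D))\simeq\pi^{-1}\CC_{X\setminus D}\otimes\CC_X^\rmE$, which is a direct verification from the definition of $\SO_X^\rmE$ and the ind-object $``\varinjlim"\CC_{\{t\ge a\}}$. The second isomorphism in (v) follows by adjunction using $\pi^{-1}\CC_{X\setminus D}\otimes K\simeq K\Potimes\e(\CC_{X\setminus D})$. Then (vi) is obtained from (v) applied to $\SE_{X\setminus D|X}^\varphi=\SD_Xe^\varphi(\ast D)$, together with the explicit description of $\EE^{\Re\varphi}_{X\setminus D|X}$; the sign convention comes from the duality $\DD_X(\SE_{X\setminus D|X}^\varphi)(\ast D)\simeq\SE_{X\setminus D|X}^{-\varphi}$.

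The main obstacle will be part (i), the enhanced Verdier duality $\rmD_X^\rmE DR_X^\rmE(\SM)\simeq Sol_X^\rmE(\SM)[d_X]$. Unlike the other parts, this is not purely a formal functorial manipulation: it rests on establishing an intrinsic self-duality-type isomorphism between $\Omega_X^\rmE$ and $\SO_X^\rmE[d_X]$ in the enhanced derived category, which in turn needs the biduality theorem for holonomic $\SD$-modules and a careful analysis of the Verdier dual $\rmD_M^\rmE$ on bordered spaces $M\times\RR_\infty$. Concretely, I would first prove the tempered analogue $\rmD_X DR_X^\rmt(\SM)\simeq Sol_X^\rmt(\SM)[d_X]$ (known by \cite{KS01}), then lift it to the enhanced setting through the presentation $\SO_X^\rmE\simeq i^!(\SE_{\CC|\PP}^{-\tau})^{\rm r}\Lotimes{\SD_\PP}\SO_{X\times\PP}^\rmt[1]$, using holonomicity of $\SM$ to justify commuting duality with $i^!$ and with the tensor product by $\SE^{-\tau}_{\CC|\PP}$; this latter step is the technical heart and is precisely where the arguments of \cite{DK16} must be invoked in detail.
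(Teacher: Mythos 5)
The paper does not actually prove this theorem: it is introduced with the remark ``We recall the following results of \cite{DK16}'' and all six parts are cited verbatim from D'Agnolo--Kashiwara. So your sketch is a reconstruction of the arguments in that reference rather than of anything written in the present paper; that is worth keeping in mind when you judge how much detail you need to supply.

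Within the sketch, the real gap is in parts (ii) and (iii). For (ii) you invoke an isomorphism $\bfE f^{-1}\SO_Y^\rmE\simeq\SO_X^\rmE[?]$, but no such isomorphism exists for a general holomorphic map $f$ — it already fails for a closed embedding $f$, where $\bfE f^{-1}\SO_Y^\rmE$ and $\SO_X^\rmE$ are not identified after any shift. The proof in \cite{DK16} instead factors $f$ as a closed embedding followed by a smooth submersion and treats the two cases separately: for closed embeddings one combines Kashiwara's equivalence with the tempered pullback compatibility; for smooth morphisms one compares $\bfE f^!\SO_Y^\rmE$ with $\SO_X^\rmE$ up to the correct differential-form twist and shift. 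Your single adjunction step would need to be replaced by this two-case argument. Similarly, for (iii) the projection formula by itself does not suffice: the standard route reduces a proper map to a closed embedding into $\PP^n\times Y$ followed by the projection, and for the projection one performs an explicit Laplace-kernel computation on $\PP^1$ which is the substantive analytic input; your sketch elides this. The treatment of (i), and the broad strategy for (iv), (v), (vi) — reduction through the diagonal, tensoring with $\SO_X(\ast D)$, and the explicit computation for $\SE^\varphi_{X\setminus D\mid X}$ — are in line with \cite{DK16}, though the display in (iv) should really compare $\SO_{X\times X}^\rmE$ with an external convolution $\SO_X^\rmE\Pboxtimes\SO_X^\rmE$ rather than two copies of $\SO_X^\rmE$ on $X$.
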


Finally, we also recall the 
following theorem of \cite{KS16}
\begin{theorem}[{\cite[Theorem 4.5 (Extended Riemann-Hilbert 
Correspondence)]{KS16}}]\label{thm-5}
There exists an isomorphism functorial 
with respect to $\SM\in\BDC_{\rm hol}(\SD_X) : $
\[\SM\Lotimes{\SO_X}\SO_X^\rmE
\simto
\rihom^+(Sol_X^\rmE(\SM), \SO_X^\rmE) \]
in $\BEC(\I\SD_X)$.
Moreover, there exists an isomorphism functorial
with respect to $\SM\in\BDC_{\rm hol}(\SD_X) :$
\[
\SM\Dotimes\SO_X^\rmt
\simto \rihom^\rmE(Sol_X^\rmE(\SM), \SO_X^\rmE)
\]
in $\BDC(\I\SD_X)$.
\end{theorem}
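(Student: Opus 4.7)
The plan is to prove the first isomorphism by constructing a canonical comparison morphism and verifying it is an isomorphism via a dévissage to elementary holonomic building blocks. The second isomorphism will then be deduced from the first by applying the appropriate bordered-space section functor, which converts $\Prihom$ into $\rihom^\rmE$ and $\SO_X^\rmE$ into $\SO_X^\rmt$.

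\textbf{Construction of the morphism.} Starting from the tautological evaluation morphism
\[Sol_X^\rmE(\SM) \Potimes \bigl(\SM \Lotimes{\SO_X} \SO_X^\rmE\bigr) \longrightarrow \SO_X^\rmE,\]
which is $\SD_X$-linear with respect to the action of $\SD_X$ on $\SO_X^\rmE$, the $(\Potimes, \Prihom)$-adjunction produces a canonical morphism
\[\eta_\SM \colon \SM \Lotimes{\SO_X} \SO_X^\rmE \longrightarrow \Prihom\bigl(Sol_X^\rmE(\SM), \SO_X^\rmE\bigr)\]
in $\BEC(\I\SD_X)$, functorial in $\SM$.

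\textbf{Dévissage and verification on building blocks.} Both sides of $\eta_\SM$ are triangulated functors of $\SM \in \BDChol(\SD_X)$. They commute with proper direct images (on the right via Theorem \ref{thm-4}(iii) applied to $Sol_X^\rmE$, on the left by the base-change properties of $\SO_X^\rmE$) and are compatible with tensor products (Theorem \ref{thm-4}(iv)). By the Kedlaya--Mochizuki resolution of turning points \cite{Ked10, Ked11, Mochi09, Mochi11}, it then suffices to verify $\eta_\SM$ on modules of the form $\SE_{U|X}^\varphi \Dotimes \SR$, where $\SR$ is regular holonomic, $U = X \bs D$ with $D$ a normal crossing divisor, and $\varphi \in \SO_X(\ast D)$ is a good meromorphic function along $D$. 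For regular $\SR$, the statement reduces to the tempered Riemann--Hilbert correspondence for ind-sheaves established in \cite{KS01}, invoked via the embedding $\e \colon \BDC(\I\CC_X) \to \BEC(\I\CC_X)$. For the exponential module, Theorem \ref{thm-4}(vi) identifies $Sol_X^\rmE(\SE_{U|X}^\varphi) \simeq \EE_{U|X}^{\Re\varphi}$, and an explicit computation of $\Prihom(\EE_{U|X}^{\Re\varphi}, \SO_X^\rmE)$ via convolution with the dual exponential $\EE_{U|X}^{-\Re\varphi}$ reduces the claim again to the regular case. The tensor-product building block is then handled by Theorem \ref{thm-4}(iv) combined with the regular and exponential cases.

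\textbf{Second isomorphism and main obstacle.} The second assertion is obtained from the first by applying the functor $i_0^! \bfR^\rmE$ and invoking $i_0^! \bfR^\rmE \SO_X^\rmE \simeq \SO_X^\rmt$ on the left, together with the standard identification of $i_0^! \bfR^\rmE \Prihom(-,-)$ with $\rihom^\rmE(-,-)$ on the right. The main obstacle is the dévissage step: Kedlaya--Mochizuki's theorem is itself highly nontrivial, and ensuring that each operation (blow-up, ramified cover, formal completion) in their structure theorem is compatible with the enhanced bordered-space formalism --- in particular, with the functors $\Potimes$, $\Prihom$, $\bfE f_\ast$, $\bfE f^{-1}$ and with the $\SD_X$-module structures carried by both sides of $\eta_\SM$ --- requires significant care. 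A secondary technical difficulty is the explicit convolution calculation for exponential modules, which relies on a precise analysis of the asymptotic geometry of the sublevel sets $\{t + \Re\varphi \geq a\}$ as $a \to +\infty$ in neighborhoods of the divisor $D$.
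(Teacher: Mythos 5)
This statement is not proved in the paper; it is quoted verbatim from Kashiwara--Schapira \cite[Theorem 4.5]{KS16} (the citation appearing in the theorem header), and the present paper uses it as a black box (most directly in the proof of Theorem \ref{prop-3}). There is therefore no internal proof against which to compare your attempt.

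That said, a few remarks on your sketch. The overall strategy --- build a canonical morphism $\eta_\SM$ by $(\Potimes,\Prihom)$-adjunction and reduce to normal forms via Kedlaya--Mochizuki --- is indeed the standard line of argument in \cite{KS16} and \cite{DK16}, and the deduction of the second isomorphism from the first by applying $i_0^!\bfR^\rmE$ is correct: one checks directly from the definitions that $i_0^!\bfR^\rmE\Prihom(K_1,K_2)\simeq\rihom^\rmE(K_1,K_2)$ and that $i_0^!\bfR^\rmE(\SM\Lotimes{\SO_X}\SO_X^\rmE)\simeq\SM\Dotimes\SO_X^\rmt$ using $i_0^!\bfR^\rmE\SO_X^\rmE\simeq\SO_X^\rmt$. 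However, the dévissage step is considerably more delicate than your description suggests, in three respects. First, the Kedlaya--Mochizuki structure theorem only produces a quasi-normal form for $(\bfD f^\ast\SM)(\ast D)$ after a projective modification $f\colon X'\to U$ and a further ramified cover; descending the isomorphism from $X'$ back to $U$ uses that $\SM|_U(\ast Y)$ is a direct summand of $\bfD f_\ast\bfD f^\ast(\SM|_U)(\ast D)$, and one must separately handle the $\RR\Gamma_D$-supported part. Second, the compatibility of both sides of $\eta_\SM$ with $\bfD f_\ast$ is not a formal consequence of Theorem \ref{thm-4}(iii) alone: for the right-hand side one needs a Poincar\'e--Verdier-type duality relating $\Prihom$ to push-forward, and for the left-hand side a nontrivial base-change theorem for $\SO^\rmE$. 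Third, your assertion that the building block $\SE_{U|X}^\varphi\Dotimes\SR$ reduces, via an ``explicit convolution computation,'' to the regular case is essentially \cite[Corollary 9.4.12]{DK16} plus a duality statement for $\EE^{\pm\Re\varphi}_{U|X}$; this is correct but is itself one of the harder computations in the theory and should not be waved off as elementary. None of these points is fatal --- they are known results being invoked --- but a reader should be warned that the technical weight of the proof lies precisely in the steps you compress into single sentences.
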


\begin{corollary}[{\cite[Theorem 9.5.3 (Irregular 
Riemann-Hilbert Correspondence)]{DK16}}]\label{cor-4}
There exists an isomorphism functorial 
with respect to $\SM\in\BDC_{\rm hol}(\SD_X) :$
\[\SM\simto \rhom^\rmE(Sol_X^\rmE(\SM), \SO_X^\rmE)\]
in $\BDC_{\rm hol}(\SD_X)$.
\end{corollary}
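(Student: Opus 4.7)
The plan is to derive this corollary directly from the second displayed isomorphism in Theorem \ref{thm-5} by applying the functor $\alpha_X$. Concretely, starting from
\[
\SM\Dotimes\SO_X^\rmt
\simto
\rihom^\rmE(Sol_X^\rmE(\SM), \SO_X^\rmE)
\quad\text{in }\BDC(\I\SD_X),
\]
I would apply $\alpha_X$ to both sides. On the right-hand side this yields $\rhom^\rmE(Sol_X^\rmE(\SM),\SO_X^\rmE)$ tautologically, since by the definition recalled in Section \ref{sec:6} we have $\rhom^\rmE=\alpha_X\circ\rihom^\rmE$, and moreover this isomorphism is functorial in $\SM$.

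For the left-hand side I would use two standard facts: first, the classical identification $\alpha_X\SO_X^\rmt\simeq\SO_X$, expressing that the sheaf of holomorphic functions is recovered from its tempered refinement; and second, the compatibility $\alpha_X(-\otimes -)\simeq \alpha_X(-)\otimes\alpha_X(-)$ recorded in the commutativity table of Section \ref{sec:3}. Together with the identity $\SM\Dotimes\SO_X\simeq\SM$, these give
\[
\alpha_X(\SM\Dotimes\SO_X^\rmt)\simeq \SM\Dotimes\alpha_X\SO_X^\rmt\simeq \SM\Dotimes\SO_X\simeq\SM
\]
in $\BDC(\SD_X)$, and since $\SM\in\BDC_{\rm hol}(\SD_X)$ the target of the resulting arrow lies in $\BDC_{\rm hol}(\SD_X)$ as required.

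Chaining these two computations produces the desired functorial isomorphism $\SM\simto \rhom^\rmE(Sol_X^\rmE(\SM),\SO_X^\rmE)$. The step I expect to require the most care is checking that the $\SD_X$-module structures match under $\alpha_X$: one must verify that $\alpha_X$, applied to an object of $\BDC(\I\SD_X)$, naturally carries a $\SD_X$-action and that the identifications $\alpha_X\SO_X^\rmt\simeq\SO_X$ and $\alpha_X(\SM\Dotimes\SO_X^\rmt)\simeq\SM$ are isomorphisms in $\BDC(\SD_X)$, not merely in $\BDC(\CC_X)$. Once this compatibility is in place, the corollary follows immediately from Theorem \ref{thm-5} with no further input.
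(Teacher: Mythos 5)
Your approach is correct and matches what the paper implicitly intends: since $\rhom^\rmE=\alpha_X\circ\rihom^\rmE$ and $\alpha_X\SO_X^\rmt\simeq\SO_X$, applying $\alpha_X$ to the second isomorphism of Theorem~\ref{thm-5} recovers $\SM\simto\rhom^\rmE(Sol_X^\rmE(\SM),\SO_X^\rmE)$, and the paper simply states this as a corollary of Theorem~\ref{thm-5} while citing \cite{DK16} for the original statement. The point you flag — that $\alpha_X$ must be compatible with the $\SD_X$-module structure and with $\Dotimes$ (i.e.\ with the tensor over $\beta_X\SO_X$, using exactness of $\alpha_X$ and $\alpha_X\beta_X\simeq\mathrm{id}$) — is exactly the only care needed, and it is standard in \cite{KS01}.
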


\section{Some Auxiliary Results on Meromorphic 
Connections etc.}\label{sec:8}
In this section we prepare some 
auxiliary results on meromorphic connections etc. 
First we recall some notions and results on 
$\SD_{\tl{X}}^\SA$ in \cite[\S 7]{DK16}. 
Let $X$ be a complex manifold and $D \subset X$ a 
normal crossing divisor in it. Denote by 
$\varpi_X : \tl{X}\to X$ the real blow-up of $X$ along 
$D$ (sometimes we denote it simply by $\varpi$). 
Then we set
\begin{align*}
\SO_{\tl{X}}^\rmt &:= \rhom_{\varpi^{-1}
\SD_{\overline{X}}}(\varpi^{-1}\SO_{\overline{X}}, 
\mathcal{D}b_{\tl{X_\RR}}^\rmt),\\
\SA_{\tl{X}} &:= \alpha_{\tl{X}}\SO_{\tl{X}}^\rmt,\\
\SD_{\tl{X}}^\SA &:= \SA_{\tl{X}}\otimes_{\varpi^{-1}\SO_X}
\varpi^{-1}\SD_X, 
\end{align*}
where $\mathcal{D}b_{\tl{X}}^\rmt$ stands for the ind-sheaf 
of tempered distributions on $\tl{X}$ 
(for the definition see \cite[Notation 7.2.4]{DK16}). 
Recall that a section of $\SA_{\tl X}$ is a holomorphic function having
moderate growth at $\varpi_X^{-1}(D)$.
Note that $\SA_{\tl{X}}$ and 
$\SD_{\tl{X}}^\SA$ are sheaves of rings on $\tl{X}$. 
We define also enhanced ind-sheaves 
$\SO_{\tl{X}}^\rmE\in\BEC(\I\SD_{\tl{X}}^{\SA})$
and $\Omega_{\tl{X}}^\rmE\in\BEC(\I(\SD_{\tl{X}}^{\SA})^{\op})$ by 
\begin{align*}
\SO_{\tl{X}}^\rmE &:= \rihom_{\varpi^{-1}
\SD_{\overline{X}}}(\varpi^{-1}\SO_{\overline{X}},
 \mathcal{D}b_{\tl{X_\RR}}^{\T})\\
&\simeq k^!\bigl((\SE_{\CC|\PP}^{-\tau})^{\rm r} 
\Lotimes{\SD_\PP}\SO_{\tl{X}\times\PP}^\rmt\bigr)[1]
\simeq k^!\rihom_{\SD_\PP}(\SE_{\CC|\PP}^{\tau}, 
\SO_{\tl{X}\times\PP}^\rmt)[2],\\
\Omega_{\tl{X}}^\rmE &:= \varpi^{-1}\Omega_X
\Lotimes{\varpi^{-1}\SO_X}\SO_{\tl{X}}^\rmE
\simeq k^!(\Omega_{\tl{X}\times\PP}^\rmt
\Lotimes{\SD_\PP}\SE_{\CC|\PP}^{-\tau})[1],
\end{align*}
where $k : \tl{X}\times\RR_\infty\to\tl{X}\times\PP$ is 
the natural morphism of bordered spaces
and $\mathcal{D}b_{\tl{X_\RR}}^{\T}$ 
stands for the enhanced ind-sheaf 
of tempered distributions on $\tl{X_\RR}$ 
(for the definition see \cite[(7.6.1)]{KS16}). 

For $\SM\in\BDC(\SD_X)$ we define 
an object $\SM^{\SA}\in\BDC(\SD_{\tl{X}}^\SA)$ by 
\begin{align*}
\SM^{\SA} &:= \SD_{\tl{X}}^\SA\Lotimes
{\varpi^{-1}\SD_X}\varpi^{-1}\SM
\simeq\SA_{\tl{X}}
\Lotimes{\varpi^{-1}\SO_X}\varpi^{-1}\SM.
\end{align*}
Note that if $\SM$ is a holonomic $\SD_X$-module such that 
$\SM\simto\SM(\ast D)$ and 
${\rm sing.supp} (\SM)\subset D$,
then one has $\SM^\SA\simeq
\SD_{\tl{X}}^\SA\otimes_{\varpi^{-1}\SD_X}\varpi^{-1}\SM$
(see \cite[Lemma 7.3.2]{DK16}).
Moreover we have an isomorphism $\M^\SA\simto \M(\ast D)^\SA$
for any holonomic $\D_X$-module $\M$ (see \cite[Lemma 7.2.2]{DK16}).
For $\mathscr{M}\in\BDC(\SD_{\tl{X}}^\SA)$ we define
the enhanced de Rham and solution functors on $\tl{X}$ by 
\begin{align*}
Sol_{\tl{X}}^\rmE(\mathscr{M}) &:= 
\rihom_{\SD_{\tl{X}}^\SA}(\mathscr{M}, \SO_{\tl{X}}^\rmE), \\
DR_{\tl{X}}^\rmE(\mathscr{M}) &:= 
\Omega_{\tl{X}}^\rmE\Lotimes{\SD_{\tl{X}}^\SA}\mathscr{M}
\end{align*}
respectively. 
Recall that we have isomorphisms
\begin{align*}
DR_{\tl{X}}^\rmE(\SM^\SA) &\simeq
\bfE\varpi^!DR_X^\rmE\bigl(\SM(\ast D)\bigr)
\simeq
\bfE\varpi^! \rihom(\pi^{-1}\CC_{X\setminus D}, DR_X(\M)),\\
Sol_{\tl{X}}^\rmE(\SM^\SA)&\simeq
\bfE\varpi^! \rihom(\pi^{-1}\CC_{X\setminus D}, Sol_X^\rmE(\M))
\end{align*}
and
\begin{align*}
\bfE\varpi_\ast DR_{\tl{X}}^\rmE(\SM^\SA)&\simeq
DR_X^\rmE\bigl(\SM(\ast D)\bigr)\simeq
\rihom(\pi^{-1}\CC_{X\setminus D}, DR_X^\rmE(\M)),\\
\bfE\varpi_\ast Sol_{\tl{X}}^\rmE(\SM^\SA)&\simeq
\rihom(\pi^{-1}\CC_{X\setminus D}, Sol_X^\rmE(\M))
\end{align*}
for $\SM\in\BDChol(\SD_X)$
(see \cite[Corollary 9.2.3, p191 and Theorem 9.2.2]{DK16}).

\begin{definition}\label{def-A}
Let $X$ be a complex manifold and $D \subset X$ a 
normal crossing divisor in it. 
Then we say that a holonomic $\SD_X$-module 
$\SM$ has a normal form along $D$ if\\
(i) $\SM\simto\SM(\ast D)$\\
(ii) $\rm{sing.supp}(\SM)\subset D$\\
(iii) for any $\theta\in\varpi^{-1}(D)\subset\tl{X}$,
there exist an open neighborhood $U\subset X$
of $\varpi({\theta})$,
finitely many $\varphi_i\in\Gamma(U;\SO_X(\ast D))$
and an open neighborhood $V$ of $\theta$ 
with $V\subset\varpi^{-1}(U)$
such that
\[
\SM^\SA|_V
\simeq
\Bigl(
\bigoplus_i\bigl(\SE_{U\bs D|U}^{\varphi_i}\bigr)^\SA
\Bigr)
|_V.\]
\end{definition}

\begin{lemma}\label{lem-1}
Let $X$ be a complex manifold and 
$D\subset X$ a normal crossing divisor in it
and $\M$ a holonomic $\SD_X$-module.
Then for the dual $(\SM^\SA)^\ast := 
\rhom_{\SD_{\tl{X}}^\SA}(\SM^\SA, \SD_{\tl{X}}^\SA)
\otimes_{\varpi^{-1} \SO_X }\varpi^{-1}\Omega_X^{\otimes-1}[d_X]$
of the $\SD_{\tl{X}}^\SA$-modules $\SM^\SA$ 
and the holonomic $\SD_X$-module $\DD_X(\SM)(\ast D)$ 
we have an isomorphism
\[(\SM^\SA)^\ast\simeq \Bigl(\DD_X(\SM)(\ast D)\Bigr)^\SA.\]
In particular, there exists an isomorphism
\[DR_{\tl{X}}^\rmE\Bigl(\bigl(\DD_X(\SM)(\ast D)\bigr)^\SA\Bigr) \simeq 
Sol_{\tl{X}}^\rmE(\SM^{\SA})[d_X].\]
If moreover $\SM$ has a normal form along $D$, then 
the holonomic $\SD_X$-module $\DD_X(\SM)(\ast D)$ 
has also a normal form along $D$. 
\end{lemma}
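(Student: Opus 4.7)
To prove $(\SM^\SA)^\ast\simeq(\DD_X(\SM)(\ast D))^\SA$, I would start from the presentation $\SM^\SA=\SD_{\tl X}^\SA\Lotimes{\varpi^{-1}\SD_X}\varpi^{-1}\SM$ and apply the extension--restriction of scalars adjunction to get
\[
\rhom_{\SD_{\tl X}^\SA}(\SM^\SA,\SD_{\tl X}^\SA)\simeq
\rhom_{\varpi^{-1}\SD_X}(\varpi^{-1}\SM,\SD_{\tl X}^\SA),
\]
where on the target $\varpi^{-1}\SD_X$ acts by left multiplication through the inclusion $\varpi^{-1}\SD_X\hookrightarrow\SD_{\tl X}^\SA$. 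Using the decomposition $\SD_{\tl X}^\SA\simeq\SA_{\tl X}\otimes_{\varpi^{-1}\SO_X}\varpi^{-1}\SD_X$, a local free $\SD_X$-resolution of the coherent module $\SM$, and flatness of $\SA_{\tl X}$ over $\varpi^{-1}\SO_X$, I would pull the $\SA_{\tl X}$-factor outside the $\rhom$ and rewrite it as $\SA_{\tl X}\Lotimes{\varpi^{-1}\SO_X}\varpi^{-1}\rhom_{\SD_X}(\SM,\SD_X)$. Tensoring by $\varpi^{-1}\Omega_X^{\otimes-1}[d_X]$ then yields $\SA_{\tl X}\Lotimes{\varpi^{-1}\SO_X}\varpi^{-1}\DD_X\SM=(\DD_X\SM)^\SA$, and since $\DD_X\SM$ is holonomic, the isomorphism $\SN^\SA\simto\SN(\ast D)^\SA$ of \cite[Lem.~7.2.2]{DK16} converts this into $(\DD_X(\SM)(\ast D))^\SA$.

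The second assertion is then formal. Unwinding the definitions,
\[
DR_{\tl X}^\rmE\bigl((\SM^\SA)^\ast\bigr)
=\Omega_{\tl X}^\rmE\Lotimes{\SD_{\tl X}^\SA}
\Bigl(\rhom_{\SD_{\tl X}^\SA}(\SM^\SA,\SD_{\tl X}^\SA)
\otimes_{\varpi^{-1}\SO_X}\varpi^{-1}\Omega_X^{\otimes-1}[d_X]\Bigr),
\]
and $\Omega_{\tl X}^\rmE\simeq\varpi^{-1}\Omega_X\Lotimes{\varpi^{-1}\SO_X}\SO_{\tl X}^\rmE$ cancels the two $\varpi^{-1}\Omega_X$ factors, leaving $\SO_{\tl X}^\rmE\Lotimes{\SD_{\tl X}^\SA}\rhom_{\SD_{\tl X}^\SA}(\SM^\SA,\SD_{\tl X}^\SA)[d_X]$, which is the standard biduality expression for $Sol_{\tl X}^\rmE(\SM^\SA)[d_X]$.

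For the final statement I would verify the three conditions of Definition~\ref{def-A} for $\DD_X(\SM)(\ast D)$. Conditions~(i) and (ii) are automatic: $(\ast D)$ is idempotent, and since $\SM|_{X\bs D}$ is an integrable connection, so is its dual. For (iii), near any $\theta\in\varpi^{-1}(D)$ the normal-form hypothesis on $\SM$ provides a neighborhood $V$ with $\SM^\SA|_V\simeq\bigoplus_i(\SE_{U\bs D|U}^{\varphi_i})^\SA|_V$; applying the first part of the lemma summand-by-summand and using the identity $\DD_X(\SE_{U\bs D|U}^{\varphi_i})(\ast D)\simeq\SE_{U\bs D|U}^{-\varphi_i}$ recalled in Section~\ref{sec:7} gives
\[
\bigl(\DD_X(\SM)(\ast D)\bigr)^\SA\big|_V
\simeq\bigoplus_i\bigl(\SE_{U\bs D|U}^{-\varphi_i}\bigr)^\SA\big|_V,
\]
which is a normal form for $\DD_X(\SM)(\ast D)$ with exponents $-\varphi_i$.

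The main technical obstacle I expect is the step that pulls $\SA_{\tl X}$ outside $\rhom_{\varpi^{-1}\SD_X}$: it rests on the coherence of $\SM$ over $\SD_X$ (to reduce to finitely generated free presentations) and on flatness of $\SA_{\tl X}$ over $\varpi^{-1}\SO_X$, and requires careful bookkeeping of the left/right bimodule structure on $\SD_{\tl X}^\SA=\SA_{\tl X}\otimes_{\varpi^{-1}\SO_X}\varpi^{-1}\SD_X$. Once this is in place, the $DR^{\rmE}/Sol^{\rmE}$ identity and the normal-form persistence come out by routine manipulation from the first isomorphism.
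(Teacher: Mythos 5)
Your argument is correct and is in substance the same as the paper's. The paper computes $(\SM^\SA)^\ast$ by replacing $\SM$ with a local free $\SD_X$-resolution $\SL^\bullet$, noting $(\SL^\bullet)^\SA\simeq\SM^\SA$, then dualizing term-by-term to get $(\SK^\bullet)^\SA\simeq\bigl(\DD_X(\SM)(\ast D)\bigr)^\SA$; your use of the tensor-hom adjunction followed by flatness of $\SA_{\tl X}$ over $\varpi^{-1}\SO_X$ and a free resolution is just a slightly more categorical packaging of that same term-by-term computation, and both approaches invoke the $(\ast D)$-invariance of $(\cdot)^\SA$ at the end. Your explicit verification of conditions (i)--(iii) of Definition~\ref{def-A} via $\DD_X(\SE_{U\bs D|U}^{\varphi_i})(\ast D)\simeq\SE_{U\bs D|U}^{-\varphi_i}$ is a reasonable unpacking of the final assertion, which the paper disposes of by citing \cite[Lemma 6.1.2]{DK16} rather than spelling it out.
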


\begin{proof}
Let 
\[0\to\SD_X^{N_k}\to\SD_X^{N_{k-1}}
\to\cdots\to\SD_X^{N_1}\to\SD_X^{N_0}\to\SM\to0\]
be a (local) free resolution of $\SM$.
Set
\[\SL^\bullet := [0\to\SD_X^{N_k}\to\cdots\to\SD_X^{N_0}\to0]\]
so that we have a quasi-isomorphism $\SL^\bullet \simto \SM$.
Hence we obtain an isomorphism
\[\DD_X(\SM)\simeq\SK^\bullet
:= \mathcal{H}om_{\SD_X}(\SL^\bullet, \SD_X)\otimes_{\SO_X}
\Omega_X^{\otimes-1}[d_X].\]
By applying the exact 
functor $( \cdot )( \ast D)= ( \cdot ) \Dotimes\SO_X(\ast D)$ to it, 
we obtain also a quasi-isomorphism
\[\SN := \DD_X(\SM)(\ast D)\simeq\SK^\bullet(\ast D).\]
Obviously we have an isomorphism
\[(\SL^\bullet)^\SA\simeq\SM^\SA.\]
We thus obtain the desired isomorphism
\begin{align*}
(\SM^\SA)^\ast = \rhom_{\SD_{\tl{X}}^\SA}
((\SL^\bullet)^\SA, \SD_{\tl{X}}^\SA)
\otimes_{\varpi^{-1}  \SO_X }\varpi^{-1}
\Omega_X^{\otimes-1}[d_X]
\simeq
(\SK^\bullet)^\SA
\simeq
\SN^\SA.
\end{align*}
The remaining assertion can be shown easily 
by using this isomorphism and \cite[Lemma 6.1.2]{DK16}.
\end{proof}

A ramification of $X$ along $D$ on a neighborhood $U$ 
of $x \in D$ is a finite map $p : X'\to U$
of complex manifolds 
of the form $z' \mapsto 
z=(z_1,z_2, \ldots, z_n)= 
 p(z') = (z'^{m_1}_1,\ldots, z'^{m_r}_r, z'_{r+1},\ldots,z'_n)$ 
for some $(m_1, \ldots, m_r)\in (\ZZ_{>0})^r$, where 
$(z'_1,\ldots, z'_n)$ is a local coordinate system of $X'$ and 
$(z_1, \ldots, z_n)$ is that of 
$U$ such that $D \cap U=\{z_1\cdots z_r=0\}$. 

\begin{definition}\label{def-B}
Let $X$ be a complex manifold and $D \subset X$ a 
normal crossing divisor in it. 
Then we say that a holonomic $\SD_X$-module $\SM$ has a 
quasi-normal form along $D$ if it satisfies 
the conditions (i) and (ii) above,
and if for any $x \in D$ 
there exists a ramification $p : X'\to U$ 
on a neighborhood $U$ of it such that $\bfD p^\ast(\SM|_U)$
has a normal form along $p^{-1}(D\cap U)$.
\end{definition}

Note that $\bfD p^\ast(\SM|_U)$ as well 
as $\bfD p_\ast\bfD p^\ast(\SM|_U)$
is concentrated in degree zero and $\SM|_U$ is a 
direct summand of $\bfD p_\ast\bfD p^\ast(\SM|_U)$. 
The following fundamental result is due to
Kedlaya and Mochizuki.

\begin{theorem}[\cite{Ked10, Ked11, Mochi09, Mochi11}]
For a holonomic $\SD_X$-module $\SM$ and $x\in X$,
there exist an open neighborhood $U$ of $x$, 
a closed hypersurface $Y\subset U$,
a complex manifold $X'$ and
a projective morphism $f : X'\to U$ such that\\
{\rm (i)} $ \rm{sing.supp} (\SM)\cap U\subset Y$,\\
{\rm (ii)} $D:=f^{-1}(Y)$ is a normal crossing divisor in $X'$,\\
{\rm (iii)} $f$ induces an isomorphism $X'\bs D\simto U\bs Y$,\\
{\rm (iv)} $(\bfD f^\ast\SM)(\ast D)$ has a quasi-normal form along $D$.
\end{theorem}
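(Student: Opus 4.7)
The plan is to combine classical desingularization with the deep analytic input from Kedlaya--Mochizuki on formal structures of meromorphic connections. The result is a combination of \emph{geometric} resolution (making the singular set a normal crossing divisor via Hironaka) and \emph{analytic} resolution of turning points (reducing a general meromorphic connection along a normal crossing divisor to one with a good/quasi-normal form).

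First I would reduce to the meromorphic connection setting. Shrinking $U$ around $x$, we may assume $\mathrm{sing.supp}(\SM) \cap U$ is contained in a closed hypersurface $Y_0 \subset U$, because holonomicity forces the singular support to be a closed analytic subset of codimension one outside of which $\SM$ is locally constant (i.e.\ an integrable connection). Then I would apply Hironaka's desingularization theorem to obtain a projective morphism $f_0 : X_0 \to U$, which is an isomorphism over $U \setminus Y_0$, such that $D_0 := f_0^{-1}(Y_0)$ is a normal crossing divisor in $X_0$. Setting $\SM_0 := (\bfD f_0^\ast \SM)(\ast D_0)$, this is now a holonomic $\SD_{X_0}$-module isomorphic to $\SM_0(\ast D_0)$ with $\mathrm{sing.supp}(\SM_0) \subset D_0$, so (i), (ii), (iii) are arranged; only (iv) (the existence of a quasi-normal form) remains to be achieved, possibly after further modification.

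Next I would invoke the core analytic input. Locally at each point $\theta$ of $\varpi_{X_0}^{-1}(D_0) \subset \tl{X_0}$ the formal completion of $\SM_0$ at the generic point of a component of $D_0$ admits a Hukuhara--Levelt--Turrittin type decomposition after a ramification, producing a finite set of formal exponential factors $\varphi_i$. The obstruction to having a quasi-normal form is encoded by the \emph{turning locus} inside $D_0$, i.e.\ the closed subset of points where the collection of these formal data fails to be ``good'' (the polar orders and dominance relations of the $\varphi_i - \varphi_j$ are not locally constant along $D_0$). Kedlaya's and Mochizuki's theorem on the existence of good formal structures (\cite{Ked10, Ked11, Mochi09, Mochi11}) asserts that by a sequence of projective blow-ups with smooth centers contained in the turning locus one can eliminate it entirely. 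I would therefore iteratively blow up the turning locus: after each blow-up the turning locus either strictly decreases (in an appropriate sense — measured by dimension of the singular stratum of a Newton-polytope-type invariant, or by valuative criteria after ramification) or disappears. Strong induction on dimension of $X$, combined with the one-dimensional case (classical Levelt--Turrittin after ramified covering), controls the induction. The outcome is a projective morphism $g : X' \to X_0$, with $g^{-1}(D_0)$ again a normal crossing divisor and an isomorphism over $X_0 \setminus D_0$, such that $(\bfD g^\ast \SM_0)(\ast g^{-1}D_0)$ has good formal structures everywhere, hence admits a quasi-normal form in the sense of Definition \ref{def-B}.

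Finally I would set $f := f_0 \circ g : X' \to U$ and $D := f^{-1}(Y_0)$; then $f$ is projective as a composition of two projective morphisms, it is an isomorphism over $U \setminus Y_0$, $D$ is a normal crossing divisor, and $(\bfD f^\ast \SM)(\ast D) \simeq (\bfD g^\ast \SM_0)(\ast g^{-1} D_0)$ has a quasi-normal form along $D$. The easy parts of this plan are the Hironaka reduction and the formal bookkeeping that (i)--(iii) survive the two steps. The hard part — and it is in fact the entire content of Kedlaya--Mochizuki — is the analytic resolution of turning points, which requires a careful study of the jumps of irregularity, a valuation-theoretic analysis of good lattices, and a delicate inductive argument on dimension to guarantee that a finite sequence of blow-ups of the turning locus terminates. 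I would not expect to reproduce this step from scratch; instead I would cite \cite{Ked10, Ked11, Mochi09, Mochi11} for exactly this input.
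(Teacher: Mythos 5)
The paper does not prove this theorem at all: it is stated as a black box and attributed directly to Kedlaya and Mochizuki, with no argument supplied. Your proposal is therefore not really comparable to ``the paper's proof'' — there is none — but it does give a correct high-level account of how the cited theorem is actually obtained in \cite{Ked10, Ked11, Mochi09, Mochi11}, and you are right to localize the genuine mathematical content in the resolution-of-turning-points step and to cite those references for it rather than attempt to reproduce it. The overall two-stage structure you describe (first Hironaka to get the singular locus into normal crossing form, then the Kedlaya--Mochizuki elimination of turning points by a sequence of blow-ups, followed by composing the two projective modifications) is the standard shape of the argument and matches how those authors present it.

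One small inaccuracy worth flagging: you assert that holonomicity forces $\mathrm{sing.supp}(\SM)$ to be ``a closed analytic subset of codimension one.'' That is not quite right. The singular support of a holonomic $\SD_X$-module is a proper closed analytic subset, but it can have codimension strictly greater than one (e.g.\ $\bfD i_\ast \SO$ for the inclusion $i$ of a point into $\CC^2$). The correct justification for step one is simply that any proper closed analytic subset of a complex manifold is \emph{locally} contained in a closed hypersurface: after shrinking $U$ one can choose a non-zero holomorphic function vanishing on $\mathrm{sing.supp}(\SM)\cap U$ and take $Y_0$ to be its zero set. This is what the theorem statement needs — $Y$ need only contain the singular support — and it is exactly what makes the subsequent Hironaka step well-posed. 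The rest of your outline stands.
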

This is a generalization of the classical 
Hukuhara-Levelt-Turrittin theorem to higher dimensions. 

\begin{proposition}\label{prop-1}
Let $X$ be a complex manifold and $D\subset X$
a normal crossing divisor in it.
Assume that a holonomic $\SD$-modules $\SM$ has
a quasi-normal form along $D$ for a ramification map
$f : Y\to X$ and set $D':=f^{-1}(D) \simeq D$.
Denote by $\varpi_X : \tl{X}\to X$ $($resp. $\varpi_Y : \tl{Y}\to Y)$
the real blow-up of $X$ $($resp. $Y)$ along $D$ $($resp. $D')$.
For a point $y_0\in\varpi^{-1}_{Y}(D')$, let $W\subset\tl{Y}$
be its sufficiently small open neighborhood for which
there exists an open subset $U$ of $Y$ containing
$\varpi_Y(W)$ and $\varphi_i\in\Gamma(U; \SO_Y(\ast D'))$
$(1\leq i \leq m)$ such that we have an isomorphism 
\begin{equation*}
\bigl(\bfD f^{\ast}\SM\bigr)^\SA|_W
\simeq
\Bigl(\bigoplus_{i=1}^{m}\bigl(\SE_{U\bs 
D'|U}^{\varphi_i})^\SA\bigr)\Bigr) |_W
\end{equation*}
of $\SD_{\tl{Y}}^\SA$-modules on $W$.
Let $V'\subset Y\bs D'$ be an open sector in $Y$
along $D'$ such that $\var{\varpi^{-1}_{Y}(V')}\subset W$
and set $V=f(V')\subset X\bs D$. 
Finally, for $1\leq i \leq m$ let $\tl{\varphi}_i\in\Gamma(V; \SO_X)$
be a holomorphic function on the sector $V$ along $D$ such that
$\tl{\varphi_i}\circ (f|_{V'})=\varphi_i|_{V^{\prime}}$.
Then we have an isomorphism
\begin{equation*}
\pi^{-1}\CC_V\otimes Sol_X^\rmE(\SM)
\simeq
\bigoplus_{i=1}^m\bigl(\pi^{-1}\CC_V\otimes
\EE_{V |X}^{\Re \tl{\varphi_i} }\bigr).
\end{equation*}
\end{proposition}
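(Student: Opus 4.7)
The plan is to apply the enhanced solution functor on $\tl Y$ to the hypothesis, descend through the real blow-up $\varpi_Y$ to the sector $V' \subset Y \bs D'$, and then transport the resulting isomorphism down to $V = f(V') \subset X\bs D$ via the ramification $f$.

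First I apply $Sol^{\rmE}_{\tl Y}$ to the given isomorphism on $W$. By the identity
\[ Sol^{\rmE}_{\tl Y}(\SN^{\SA}) \simeq \bfE \varpi_Y^! \rihom\bigl(\pi^{-1}\CC_{Y\bs D'},\ Sol^{\rmE}_Y(\SN)\bigr) \]
recalled at the start of Section \ref{sec:8} and Theorem \ref{thm-4}(vi), which computes $Sol^{\rmE}_Y(\SE_{U\bs D'|U}^{\varphi_i}) \simeq \EE_{U\bs D'|U}^{\Re \varphi_i}$, one obtains on $W$ an isomorphism
\[ \bfE\varpi_Y^! \rihom\bigl(\pi^{-1}\CC_{Y\bs D'},\ Sol^{\rmE}_Y(\bfD f^\ast\SM)\bigr)|_W \simeq \bigoplus_{i=1}^m \bfE\varpi_Y^! \rihom\bigl(\pi^{-1}\CC_{Y\bs D'},\ \EE_{U\bs D'|U}^{\Re \varphi_i}\bigr)|_W. \]

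Next I restrict to the open subset $\varpi_Y^{-1}(V') \subset W$ (permitted since $\overline{\varpi_Y^{-1}(V')} \subset W$). Because $V' \subset Y \bs D'$, the map $\varpi_Y$ is a homeomorphism over $V'$ and $\pi^{-1}\CC_{Y\bs D'}$ restricts to the constant $\CC$ there, so the composition $\bfE\varpi_Y^! \rihom(\pi^{-1}\CC_{Y\bs D'}, -)$ collapses to the identity up to canonical isomorphisms. This yields, on $V'$,
\[ \pi^{-1}\CC_{V'} \otimes Sol^{\rmE}_Y(\bfD f^\ast \SM) \simeq \bigoplus_{i=1}^m \pi^{-1}\CC_{V'} \otimes \EE_{V'|Y}^{\Re \varphi_i}. \]
By Theorem \ref{thm-4}(ii), $Sol^{\rmE}_Y(\bfD f^\ast\SM) \simeq \bfE f^{-1} Sol^{\rmE}_X(\SM)$. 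Since $V'$ is a sufficiently small open sector and $f|_{Y\bs D'}: Y\bs D' \to X \bs D$ is an unramified finite cover, $f|_{V'}: V' \to V$ is a homeomorphism onto its image. Transporting the above isomorphism through $(f|_{V'})^{-1}$ and using $\tl\varphi_i \circ (f|_{V'}) = \varphi_i|_{V'}$ to identify $\EE_{V'|Y}^{\Re \varphi_i}$ with $\EE_{V|X}^{\Re \tl\varphi_i}$ produces the claimed formula on $V$.

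The principal technical point is the collapse of $\bfE\varpi_Y^! \rihom(\pi^{-1}\CC_{Y\bs D'}, -)$ to the identity over $\varpi_Y^{-1}(V')$: one must verify carefully that, on the open set where $\varpi_Y$ is a homeomorphism and which lies entirely in $\varpi_Y^{-1}(Y\bs D')$, both functors become trivial on the relevant enhanced ind-sheaves. Granted this, the remainder of the proof is a direct application of the functoriality in Theorem \ref{thm-4} and of transport through the sectorial homeomorphism $f|_{V'}$.
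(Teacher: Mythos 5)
Your proof is correct, but it takes a genuinely different route from the paper's. The paper passes to the Verdier dual $\rmD_X^\rmE\bigl(\pi^{-1}\CC_V\otimes Sol_X^\rmE(\SM)\bigr)$ and expresses it via the de Rham complex as $\bfE\varpi_{X\ast}\rihom\bigl(\pi^{-1}\CC_{\varpi_X^{-1}(V)}, DR_{\tl X}^\rmE(\SM^\SA)[d_X]\bigr)$; it then introduces the lift $g:\tl Y\to\tl X$ of the ramification $f$ to the real blow-ups, which is a homeomorphism on $W$, and uses \cite[Theorem 9.1.2(ii) and Corollary 9.2.3]{DK16} to transport the hypothesis into this dual formula, finishing with the projection formula for $f$ and applying $\rmD_X^\rmE$ once more. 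In contrast, you work with $Sol^\rmE$ directly, immediately restrict the $W$-isomorphism of $\SD_{\tl Y}^\SA$-modules to the interior $\varpi_Y^{-1}(V')$ where both $\bfE\varpi_Y^!$ and $\rihom(\pi^{-1}\CC_{Y\bs D'},-)$ trivialize, and then transport through the sectorial homeomorphism $f|_{V'}:V'\to V$, recovering the $\pi^{-1}\CC_V\otimes(\cdot)$ statement via the full faithfulness of extension by zero on an open subset. Your route avoids Verdier duality entirely (and hence the implicit use of biduality $\rmD_X^\rmE\circ\rmD_X^\rmE\simeq\id$ at the final step of the paper's argument), which is a simplification; the trade-off is that the key "collapse" step, which you correctly flag as the principal technical point, does require the base-change compatibility of $\bfE\varpi_Y^!$ and of $\rihom$ with restriction to open bordered subspaces — this does hold, but you would need to spell it out with a reference such as \cite[Proposition 3.2.9]{DK16} or the adjunction identities of Section \ref{sec:4} for the argument to be fully rigorous. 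The paper's choice of the dual/de Rham route is natural because $DR_{\tl X}^\rmE(\SM^\SA)\simeq\bfE\varpi^!DR_X^\rmE(\SM(\ast D))$ holds without an extra $\rihom$, whereas for $Sol^\rmE$ the $\rihom$ is unavoidable, which is precisely what your collapse step is handling.
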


\begin{proof}
Let $g : \tl{Y}\to\tl{X}$ be the lift of $f : Y\to X$
i.e. the unique continuous map for which 
we have a commutative diagram
\begin{equation*}
\xymatrix@M=7pt@C=50pt{
\tl{Y}\ar@{->}[r]^-g\ar@{->}[d]_-{\varpi_Y} & 
\tl{X}\ar@{->}[d]^-{\varpi_X} \\
Y\ar@{->}[r]_-f & X.
}
\end{equation*}
For the sufficiently small $W\subset \tl{Y}$
it induces a homeomorphism $g|_W : W\simto g(W)$.
Then by \cite[Theorem 9.1.2(ii) and Corollary 9.2.3]{DK16}
we have an isomorphism
\begin{equation*}
\bfE(g|_W)_\ast(DR_{\tl{Y}}^\rmE((\bfD f^\ast\SM)^\SA)|_{\pi^{-1}(W)}[d_Y])
\simeq
DR_{\tl{X}}^\rmE(\SM^{\SA})|_{\pi^{-1}(g(W))}[d_X]
\eqno(3.1)
\end{equation*}
(in this case we have $d_X=d_Y$ ).
For the Verdier duality functor
$\rmD_X^\rmE : \BEC(\I\CC_X)^{\op}\to\BEC(\I\CC_X)$
we also obtain a chain of isomorphisms
\begin{align*}
\rmD_X^\rmE(\pi^{-1}\CC_V\otimes Sol_X^\rmE(\SM))
&\simeq
\Prihom(\pi^{-1}\CC_V\otimes Sol_X^\rmE(\SM), \omega_X^\rmE)\\
&\simeq
\rihom(\pi^{-1}\CC_V, \Prihom(Sol_X^\rmE(\SM), \omega_X^\rmE))\\
&\simeq
\rihom(\pi^{-1}\CC_V, DR_X^\rmE(\SM)[d_X])\\
&\simeq
\bfE\varpi_{X\ast}\rihom(\pi^{-1}\CC_{\varpi^{-1}_X(V)}, 
DR_{\tl{X}}^\rmE(\SM^\SA)[d_X]),
\end{align*}
where in the third (resp. forth) isomorphism we used
\cite[Corollary 9.4.9]{DK16} (resp. \cite[Corollary 9.2.3]{DK16}). 
Combining it with $(3.1)$, it follows from our
assumption that there exist isomorphisms
\begin{align*}
\rmD_X^\rmE\bigl(\pi^{-1}\CC_V\otimes Sol_X^\rmE(\SM)\bigr)
&\simeq
\bfE\varpi_{X\ast}\bfE g_\ast
\rihom\Big(\pi^{-1}\CC_{\varpi^{-1}_Y(V')}, DR_{\tl{Y}}^\rmE
\big((\bfD f^{\ast}\SM)^\SA\big)[d_Y]\Bigr)\\
&\simeq
\bfE f_\ast \bfE\varpi_{Y\ast}
\rihom\Big(\pi^{-1}
\CC_{\varpi^{-1}_Y(V')}, \bigoplus_{i=1}^m 
DR_{\tl{Y}}^\rmE\bigl((\SE_{U\bs 
D'|U}^{\varphi_i})^\SA\bigr)[d_Y]\Big)\\
&\simeq
\bigoplus_{i=1}^m\bfE f_\ast \rmD_Y^\rmE\bigl(\pi^{-1}
\CC_{V'}\otimes Sol_Y^\rmE(\SE_{U\bs D'|U}^{\varphi_i})\bigr)\\
&\simeq 
\rmD_X^\rmE\Bigl(\bigoplus_{i=1}^m\bfE f_\ast\big(\pi^{-1}
\CC_{V'}\otimes\EE_{U\bs D'|U}^{\Re\varphi_i}\big)\Bigr)\\
&\simeq 
\rmD_X^\rmE\Bigl(\bigoplus_{i=1}^m\bfE f_\ast(\pi^{-1}
\CC_{V'}\otimes \bfE f^{-1}\EE_{V|X}^{\Re\tl{\varphi}_i})\Bigr)\\
&\simeq
\rmD_X^\rmE\Bigl(\bigoplus_{i=1}^m(\pi^{-1}
\CC_{V}\otimes \EE_{V|X}^{\Re\tl{\varphi}_i})\Bigr)\\
\end{align*}
where in the last step we used the projection formula.
By applying the functor $\rmD_X^\rmE$ to the both sides,
we obtain the desired isomorphism.
\end{proof}

\begin{remark}\label{rem-1}
In Proposition \ref{prop-1} we assumed that $\SM\simto\SM(\ast D)$.
However this condition is not really necessary. 
Indeed by Theorem \ref{thm-4} (v), 
for the holonomic  $\SD_X$-module 
$\SN=\SM(\ast D)$ there exists an isomorphism
\begin{equation*}
\pi^{-1}\CC_V\otimes Sol_X^\rmE(\SM)
\simeq
\pi^{-1}\CC_V\otimes Sol_X^\rmE(\SN).
\end{equation*}
Moreover, obviously we have also an isomorphism
$(\bfD f^\ast\SM)^\SA\simeq(\bfD f^\ast\SN)^\SA$.
\end{remark}

By this Remark \ref{rem-1}, Proposition \ref{prop-1} 
and the classical Hukuhara-Levelt-Turrittin theorem 
we obtain the following corollary.

\begin{corollary}\label{cor-1}
Let $X$ be a Riemann surface and $D\subset X$ a point in it.
Let $\SM$ be a holonomic $\SD_X$-module.
Then for any direction
$\theta\in S_DX=\overset{\circ}{T}_DX/\RR_{>0}\simeq S^1$
there exists its sectorial neighborhood $V_\theta\subset X\bs D$
and some Puiseux series $\psi_i\in\Gamma(V_\theta; \SO_X)$ 
($1 \leq i \leq m$) 
for which we have an isomorphism
\begin{equation*}
\pi^{-1}\CC_{V_\theta}\otimes Sol_X^\rmE(\SM)
\simeq
\bigoplus_{i=1}^m(\pi^{-1}\CC_{V_\theta}\otimes
\EE_{V_{\theta}|X}^{\Re \psi_i }).
\end{equation*}
\end{corollary}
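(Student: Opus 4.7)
The plan is to combine the classical Hukuhara--Levelt--Turrittin theorem in dimension one with Proposition \ref{prop-1} and Remark \ref{rem-1}. First, by Remark \ref{rem-1}, replacing $\SM$ by $\SN := \SM(\ast D)$ does not affect the left-hand side $\pi^{-1}\CC_{V_\theta} \otimes Sol_X^\rmE(\SM)$, so we may assume from the outset that $\SN\simto\SN(\ast D)$ and $\mathrm{sing.supp}(\SN)\subset D=\{pt\}$; in other words, $\SN$ is a meromorphic connection on the Riemann surface $X$ with a single singular point.

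Next, I would invoke the Kedlaya--Mochizuki theorem (stated earlier in the excerpt), which in dimension one reduces to the classical Hukuhara--Levelt--Turrittin theorem: there exists a ramification $f: Y \to X$ of the form $z'\mapsto z=z'^m$ near $D$, together with a decomposition of $(\bfD f^\ast \SN)(\ast D')$ (with $D':=f^{-1}(D)$) into elementary local models. More precisely, on the real blow-up $\tl{Y}$ of $Y$ along $D'$, for any $y_0\in \varpi^{-1}_Y(D')$ there is a small open neighborhood $W\subset\tl{Y}$, an open set $U\subset Y$ containing $\varpi_Y(W)$, and meromorphic functions $\varphi_1,\dots,\varphi_m\in\Gamma(U;\SO_Y(\ast D'))$ such that
\[
\bigl(\bfD f^\ast\SN\bigr)^\SA\big|_W
\simeq
\Bigl(\bigoplus_{i=1}^{m}\bigl(\SE_{U\setminus D'|U}^{\varphi_i}\bigr)^\SA\Bigr)\Big|_W
\]
as $\SD_{\tl Y}^{\SA}$-modules. (Any residual regular part is absorbed since a regular meromorphic connection in dimension one has only log-type growth, hence its $\SA_{\tl Y}$-pullback is locally free; after possibly enlarging $m$ this gives the pure exponential normal form above.) Thus $\bfD f^\ast\SN$ has a normal form along $D'$ in the sense of Definition \ref{def-A}, and $\SN$ has a quasi-normal form along $D$ in the sense of Definition \ref{def-B}.

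Now, given a direction $\theta\in S_DX$, choose a lift $\theta'\in S_{D'}Y$ with $f(\theta')=\theta$, and let $y_0\in\varpi^{-1}_Y(D')$ be the point corresponding to $\theta'$. Shrinking the neighborhood $W\subset\tl{Y}$ above if necessary, pick an open sector $V'_{\theta'}\subset Y\setminus D'$ containing the direction $\theta'$ such that $\overline{\varpi_Y^{-1}(V'_{\theta'})}\subset W$. Define $V_\theta := f(V'_{\theta'})\subset X\setminus D$; since $f$ is a finite covering away from $D'$, this is an open sector in $X$ along $D$. For each $1\leq i\leq m$, the pushforward via $f$ of $\varphi_i|_{V'_{\theta'}}$ is well-defined on $V_\theta$ and, because it is a meromorphic function in $z'=z^{1/m}$, it is a Puiseux series $\psi_i(z) := \varphi_i(z^{1/m})\in\Gamma(V_\theta;\SO_X)$ with $\psi_i\circ f = \varphi_i$ on $V'_{\theta'}$.

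Finally, applying Proposition \ref{prop-1} to the data $(\SN, f, W, V'_{\theta'}, V_\theta, \varphi_i, \tl\varphi_i=\psi_i)$ yields
\[
\pi^{-1}\CC_{V_\theta}\otimes Sol_X^\rmE(\SN)
\simeq
\bigoplus_{i=1}^{m}\bigl(\pi^{-1}\CC_{V_\theta}\otimes \EE_{V_\theta|X}^{\Re\psi_i}\bigr),
\]
and since $\pi^{-1}\CC_{V_\theta}\otimes Sol_X^\rmE(\SM) \simeq \pi^{-1}\CC_{V_\theta}\otimes Sol_X^\rmE(\SN)$ by Remark \ref{rem-1}, this gives the desired statement. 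The main technical point, and really the only substantive step beyond routine bookkeeping, is the passage from the classical sectorial Hukuhara--Levelt--Turrittin decomposition (which a priori includes regular factors $R_i$) to a pure-exponential normal form on the real blow-up as required by Definition \ref{def-A}; this rests on the fact that regular meromorphic connections in dimension one admit moderate (log-type) fundamental solutions on sectors, so that $R_i^{\SA}$ is a free $\SA_{\tl Y}$-module on sufficiently small neighborhoods of points of $\varpi_Y^{-1}(D')$.
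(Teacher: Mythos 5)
Your proof is correct and follows the same route the paper indicates (Remark \ref{rem-1}, Proposition \ref{prop-1}, and the classical Hukuhara--Levelt--Turrittin theorem). The extra detail you supply --- that the regular factors $R_i$ appearing in the sectorial Hukuhara--Levelt--Turrittin decomposition are absorbed because $R_i^{\SA}$ is a free $\SA_{\tl Y}$-module on small enough sectorial neighborhoods, so the decomposition reduces to the pure-exponential form required by Definition \ref{def-A} --- is precisely the point the paper leaves implicit.
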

Note that this result is stated without proof in 
D'Agnolo-Kashiwara \cite{DK17}.

Proposition \ref{prop-1} can be deduced also from the following theorem.
\begin{theorem}\label{thm-10}
Let $X$ be a complex manifold and $D$ a normal crossing divisor in it.
For $\SM\in\BDC_{\rm hol}(\SD_X)$ and
an open subset $W$ of $\tl{X}$ such that
$W\cap\varpi^{-1}(D)\neq\emptyset$,
we set $\mathscr{K} := {\bf E}i_W^{-1}Sol_{\tl X}^{\rmE}(\M^\SA) 
= Sol_W^{\rmE}(\M^\SA|_W)$,
where $i_W : W\xhookrightarrow{\ \ \ } \tl X$ is the inclusion map.
Then for any sector $V\subset X\setminus D$ along $D$ 
such that $\tl{V} := \var{\varpi^{-1}(V)}\subset W$, 
there exists an isomorphism
\[\pi^{-1}\CC_V\otimes Sol_X^{\rmE}(\M)\simeq
{\bf E}\varpi_{\ast}(\pi^{-1}\CC_{\varpi^{-1}(V)}\otimes
{\bf E}i_{\tl{V}\ast}{\bf E} j^{-1}\mathscr{K})\]
in $\BEC(\I\CC_X)$, where
$j : {\tl V}\xhookrightarrow{\ \ \ } W$ and 
$i_{\tl V} : \tl V \xhookrightarrow{\ \ \ } {\tl X}$
are the inclusion maps.
\end{theorem}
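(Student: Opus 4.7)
The plan is to unfold both sides until they reduce to $\pi^{-1}\CC_V \otimes Sol_X^{\rmE}(\M)$, using three facts: (i) $\varpi$ is proper and restricts to a homeomorphism over $X\setminus D$; (ii) for any closed embedding $i_Z : Z \hookrightarrow Y$, the standard distinguished triangle $\pi^{-1}\CC_{Y\setminus Z}\otimes F \to F \to \bfE i_{Z\ast}\bfE i_Z^{-1} F \overset{+1}{\to}$ in $\BEC(\I\CC_Y)$; and (iii) the identity
\[\bfE\varpi_\ast Sol_{\tl{X}}^\rmE(\M^\SA) \simeq \rihom(\pi^{-1}\CC_{X\setminus D}, Sol_X^\rmE(\M)), \]
already recorded in the excerpt and due to \cite[Corollary 9.2.3]{DK16}.

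First, from $i_{\tl V} = i_W \circ j$ one immediately has $\bfE j^{-1}\mathscr{K} \simeq \bfE i_{\tl V}^{-1} Sol_{\tl{X}}^\rmE(\M^\SA)$. Set $F := Sol_{\tl{X}}^\rmE(\M^\SA)$ and $U := \varpi^{-1}(V)$, which is open in $\tl X$ and contained in the closed subset $\tl V\subset\tl X$. Tensoring the triangle in (ii) (for $Z=\tl V$) with $\pi^{-1}\CC_U$ kills the left term, since $U \cap (\tl X \setminus \tl V)=\emptyset$, and yields
\[\pi^{-1}\CC_U \otimes \bfE i_{\tl V \ast}\bfE j^{-1}\mathscr{K} \simeq \pi^{-1}\CC_U \otimes F.\]
Hence the right-hand side of the theorem equals $\bfE\varpi_\ast\bigl(\pi^{-1}\CC_U \otimes F\bigr)$. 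Since $\varpi$ is proper one has $\bfE\varpi_\ast = \bfE\varpi_{!!}$, and since $\pi^{-1}\CC_U \simeq \bfE\varpi^{-1}\pi^{-1}\CC_V$, the projection formula then produces
\[\bfE\varpi_\ast\bigl(\pi^{-1}\CC_U \otimes F\bigr) \simeq \pi^{-1}\CC_V \otimes \bfE\varpi_\ast F \simeq \pi^{-1}\CC_V \otimes \rihom(\pi^{-1}\CC_{X\setminus D}, Sol_X^\rmE(\M))\]
by (iii).

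It remains to observe that, as $V \subset X\setminus D$, the canonical morphism $Sol_X^\rmE(\M) \to \rihom(\pi^{-1}\CC_{X\setminus D}, Sol_X^\rmE(\M))$ becomes an isomorphism after tensoring with $\pi^{-1}\CC_V$: its cone is $\rihom(\pi^{-1}\CC_D, Sol_X^\rmE(\M))[1]$, arising from $\rihom(-,Sol_X^\rmE(\M))$ applied to the triangle $\pi^{-1}\CC_{X\setminus D} \to \pi^{-1}\CC_X \to \pi^{-1}\CC_D \overset{+1}{\to}$, and this cone has support over $D$ and is therefore annihilated by $\pi^{-1}\CC_V$ since $V \cap D=\emptyset$. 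This identifies the right-hand side with the left-hand side $\pi^{-1}\CC_V \otimes Sol_X^\rmE(\M)$. The delicate point will be verifying that the projection formula, the closed-embedding triangle, and these $\rihom/\otimes$ manipulations behave correctly after passing through the quotient $\Q$ in the bordered-space enhanced ind-sheaf setting; however, all of these are routine within the framework of \cite{DK16, KS16-2}.
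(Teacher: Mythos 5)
Your proof is correct and uses the same key ingredients as the paper's — the projection formula for the proper map $\varpi$, the closed-embedding triangle identifying $\pi^{-1}\CC_{\tl V}\otimes(\cdot)$ with $\bfE i_{\tl V\ast}\bfE i_{\tl V}^{-1}(\cdot)$, the support argument killing terms over $D$ after tensoring with $\pi^{-1}\CC_V$ via the triangle coming from $\CC_{X\setminus D}\to\CC_X\to\CC_D$, and the identity $\bfE\varpi_\ast Sol_{\tl X}^\rmE(\M^\SA)\simeq\rihom(\pi^{-1}\CC_{X\setminus D},Sol_X^\rmE(\M))$ from \cite[Corollary 9.2.3]{DK16}. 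The only difference is cosmetic: you unfold the right-hand side down to the left-hand side, while the paper runs the chain in the opposite direction, starting from $\pi^{-1}\CC_V\otimes Sol_X^\rmE(\M)$, proving two auxiliary isomorphisms (that $\pi^{-1}\CC_{X\setminus D}\otimes K\simeq\pi^{-1}\CC_{X\setminus D}\otimes\rihom(\pi^{-1}\CC_{X\setminus D},K)$ and that $\rihom(\pi^{-1}\CC_{X\setminus D},\bfE\varpi_\ast\bfE\varpi^!L)\simeq\rihom(\pi^{-1}\CC_{X\setminus D},L)$) and invoking $Sol_{\tl X}^\rmE(\M^\SA)\simeq\bfE\varpi^!\rihom(\pi^{-1}\CC_{X\setminus D},Sol_X^\rmE(\M))$ rather than its pushed-forward counterpart. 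Your direction is arguably more streamlined, but the two arguments are essentially the same.
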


\begin{proof}
First, we shall prove the isomorphism
\[\pi^{-1}\CC_{X\setminus D}\otimes K\simto
\pi^{-1}\CC_{X\setminus D}\otimes
\rihom(\pi^{-1}\CC_{X\setminus D}, K)\]
for an object $K$ of $\BEC(\I\CC_X)$. 
By \cite[Proposition 3.2.9 (iii)]{DK16} 
there exist isomorphisms 
\[\rihom(\pi^{-1}\CC_X, K)\simeq K,\ \ 
\pi^{-1}\CC_{X\setminus D}\otimes\rihom(\pi^{-1}\CC_{D}, K)\simeq 0.\]
Hence by applying the contravariant functor
\[\pi^{-1}\CC_{X\setminus D}\otimes\rihom(\pi^{-1}(\cdot), K)\]
to the distinguished triangle
\[\CC_{X\setminus D} \longrightarrow \CC_X\longrightarrow \CC_{D}\xrightarrow{+1}\]
we obtain the desired isomorphism
\[\pi^{-1}\CC_{X\setminus D}\otimes K\simto
\pi^{-1}\CC_{X\setminus D}\otimes
\rihom(\pi^{-1}\CC_{X\setminus D}, K).\]
On the other hand, for $L\in\BEC(\I\CC_X)$
we have
\begin{align*}
\rihom(\pi^{-1}\CC_{X\setminus D}, 
{\bf E}\varpi_\ast{\bf E}\varpi^!L)
&\simeq
\rihom(\pi^{-1}({\rm R}\varpi_\ast\varpi^{-1}\CC_{X\setminus D}), L)\\
&\simeq
\rihom(\pi^{-1}\CC_{X\setminus D}, L).
\end{align*}
We thus obtain a sequence of isomorphisms
\begin{align*}
\pi^{-1}\CC_V\otimes Sol_X^{\rmE}(\M)
&\simeq
\pi^{-1}\CC_V\otimes(\pi^{-1}\CC_{X\setminus D}\otimes Sol_X^{\rmE}(\M))\\
&\simeq
\pi^{-1}\CC_V\otimes(\pi^{-1}\CC_{X\setminus D}\otimes
\rihom(\pi^{-1}\CC_{X\setminus D}, Sol_X^{\rmE}(\M)))\\
&\simeq
\pi^{-1}\CC_V\otimes(\pi^{-1}\CC_{X\setminus D}\otimes
\rihom(\pi^{-1}\CC_{X\setminus D}, 
{\bf E}\varpi_\ast{\bf E}\varpi^!Sol_X^{\rmE}(\M)))\\
&\simeq
\pi^{-1}\CC_V\otimes\rihom(\pi^{-1}\CC_{X\setminus D}, 
{\bf E}\varpi_\ast{\bf E}\varpi^!Sol_X^{\rmE}(\M))\\
&\simeq
\pi^{-1}\CC_V\otimes{\bf E}\varpi_\ast{\bf E}\varpi^!
\rihom(\pi^{-1}\CC_{X\setminus D}, Sol_X^{\rmE}(\M)).\\
\end{align*}
Therefore by using the isomorphism
\[Sol_{\tl X}^{\rmE}(\M^\SA)\simeq
{\bf E}\varpi^!\rihom(\pi^{-1}\CC_{X\setminus D}, Sol_X^{\rmE}(\M))\]
(see \cite[p191]{DK16})
we obtain isomorphisms
\begin{align*}
\pi^{-1}\CC_V\otimes Sol_X^{\rmE}(\M)
&\simeq
\pi^{-1}\CC_V\otimes{\bf E}\varpi_\ast Sol_{\tl X}^{\rmE}(\M^\SA)\\
&\simeq
{\bf E}\varpi_{\ast}(\pi^{-1}\CC_{\varpi^{-1}(V)}\otimes Sol_{\tl X}^{\rmE}(\M^\SA))\\
&\simeq
{\bf E}\varpi_{\ast}(\pi^{-1}\CC_{\varpi^{-1}(V)}\otimes
(\pi^{-1}\CC_{\tl V}\otimes Sol_{\tl X}^{\rmE}(\M^\SA)))\\
&\simeq
{\bf E}\varpi_{\ast}(\pi^{-1}\CC_{\varpi^{-1}(V)}\otimes
{\bf E}i_{\tl{V}\ast}{\bf E}i_{\tl{V}}^{-1}Sol_{\tl X}^{\rmE}(\M^{\SA}))\\
&\simeq
{\bf E}\varpi_{\ast}(\pi^{-1}\CC_{\varpi^{-1}(V)}\otimes
{\bf E}i_{\tl{V}\ast}{\bf E} j^{-1}\mathscr{K}).
\end{align*}
\end{proof} 
We can prove the converse of this theorem see Theorem \ref{thm-9}.

\begin{theorem}\label{prop-3}
Let $X$ be a complex manifold,
$D\subset X$  a normal crossing divisor in it
and $\tl{X}$ the real blow-up of $X$ along $D$.
Let ${\delta} : \tl{X}\hookrightarrow\tl{X}\times\tl{X}$ 
be the diagonal map.
Then for $\SM\in\BDC_{\rm hol}(\SD_X)$ 
we have isomorphisms
\begin{align*}
\SM^\SA &\simto\rhom^\rmE\Big(Sol_{\tl{X}}^\rmE
\big(\SM^\SA\big), \SO_{\tl{X}}^\rmE\Big),\\
\SM^\SA &\simto\rhom^\rmE\Big(\CC_{\tl{X}}^\rmE, 
\bfE{\delta}^!\big(DR_{\tl{X}}^\rmE(\SM^\SA)\Pboxtimes
\SO_{\tl{X}}^\rmE\big)\Big)[d_X]
\end{align*}
in $\BEC(\I\SD_{\tl{X}}^\SA)$.
\end{theorem}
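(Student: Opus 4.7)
\textbf{Proof plan for Theorem \ref{prop-3}.}

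The overall strategy is to view the first isomorphism as the $\tl X$-version of the Irregular Riemann-Hilbert Correspondence (Corollary \ref{cor-4}), and then to deduce the second isomorphism from the first by a Verdier-duality manipulation together with a standard external-diagonal identity for enhanced ind-sheaves.

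For the first isomorphism, I would start with Corollary \ref{cor-4} applied to the meromorphic extension $\SM(\ast D)$:
\[\SM(\ast D)\simto \rhom^\rmE\bigl(Sol_X^\rmE(\SM(\ast D)),\SO_X^\rmE\bigr),\]
and then lift it along $\varpi$. On the left-hand side, the identification $\SM^\SA\simeq (\SM(\ast D))^\SA\simeq \SA_{\tl X}\Lotimes{\varpi^{-1}\SO_X}\varpi^{-1}\SM(\ast D)$ gives $\SM^\SA$ after tensoring over $\varpi^{-1}\SO_X$ with $\SA_{\tl X}$. On the right-hand side, Theorem \ref{thm-4}(v) yields $Sol_X^\rmE(\SM(\ast D))\simeq \pi^{-1}\CC_{X\setminus D}\otimes Sol_X^\rmE(\SM)$, and the identity $Sol_{\tl X}^\rmE(\SM^\SA)\simeq \bfE\varpi^!\rihom(\pi^{-1}\CC_{X\setminus D},Sol_X^\rmE(\SM))$ recalled in Section \ref{sec:8} shows that the $\bfE\varpi^!$-pullback of the argument is precisely $Sol_{\tl X}^\rmE(\SM^\SA)$; moreover $\SO_{\tl X}^\rmE$ plays the role on $\tl X$ of $\SO_X^\rmE$. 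Assembling these pieces via the $(\bfE\varpi^{-1},\bfE\varpi_\ast)$-adjunction and the functoriality of Corollary \ref{cor-4} in $\SM$ produces the first isomorphism.

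For the second isomorphism, it follows formally from the first. The key ingredients are: (a) the duality on $\tl X$ analogous to Theorem \ref{thm-4}(i), namely $\rmD_{\tl X}^\rmE\bigl(DR_{\tl X}^\rmE(\SM^\SA)\bigr)\simeq Sol_{\tl X}^\rmE(\SM^\SA)[d_X]$; (b) an enhanced analog of the classical external-diagonal identity $\delta^!(F\boxtimes G)\simeq \rihom(\rmD F,G)$, i.e.
\[\bfE\delta^!(K_1\Pboxtimes K_2)\simeq \Prihom(\rmD_{\tl X}^\rmE K_1,K_2);\]
and (c) the adjunction property $\rhom^\rmE(\CC_{\tl X}^\rmE,\Prihom(K_1,K_2))\simeq \rhom^\rmE(K_1,K_2)$. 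Applied to $K_1=DR_{\tl X}^\rmE(\SM^\SA)$ and $K_2=\SO_{\tl X}^\rmE$, these collapse the right-hand side of the second isomorphism (after the $[d_X]$ shift) to $\rhom^\rmE(Sol_{\tl X}^\rmE(\SM^\SA),\SO_{\tl X}^\rmE)$, which equals $\SM^\SA$ by the first isomorphism.

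The main obstacle is in the first step: promoting the isomorphism of Corollary \ref{cor-4}, which lives in $\BDChol(\SD_X)$, to an isomorphism of $\SD_{\tl X}^\SA$-modules on $\tl X$. While the enhanced ind-sheaves $Sol_X^\rmE(\SM)$ and $\SO_X^\rmE$ transport along $\varpi$ in a controlled way via the isomorphisms recalled in Section \ref{sec:8}, verifying that the $\SD_X$-action on the right-hand side becomes precisely the $\SD_{\tl X}^\SA$-action on $\SM^\SA$ requires careful bookkeeping about how $\varpi^{-1}\SD_X$ interacts with $\SA_{\tl X}$. Once this module-structure compatibility is in place, the remainder of the argument is routine calculus of the six operations for enhanced ind-sheaves.
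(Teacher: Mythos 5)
Your plan for the second isomorphism matches the paper's argument essentially step for step, modulo one inaccuracy: the duality relation you state as (a), $\rmD_{\tl X}^\rmE\bigl(DR_{\tl X}^\rmE(\SM^\SA)\bigr)\simeq Sol_{\tl X}^\rmE(\SM^\SA)[d_X]$, does not hold on $\tl X$. Computing through $\bfE\varpi^!$ and $\bfE\varpi^{-1}$ one gets instead the twisted version $\rmD_{\tl X}^\rmE\bigl(DR_{\tl X}^\rmE(\SM^\SA)\bigr)\simeq \tl{\varpi}^{-1}\pi^{-1}\CC_{X\setminus D}\otimes Sol_{\tl X}^\rmE(\SM^\SA)[d_X]$, and the extra factor is absorbed by adjunction at the end (using that $\SO_{\tl X}^\rmE\simeq\rihom(\tl{\varpi}^{-1}\pi^{-1}\CC_{X\setminus D},\SO_{\tl X}^\rmE)$). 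This is a small fix, not a gap.

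For the first isomorphism, however, there is a genuine gap. You propose to start from Corollary \ref{cor-4}, the irregular Riemann--Hilbert correspondence $\SM\simto\rhom^\rmE(Sol_X^\rmE(\SM),\SO_X^\rmE)$, which is an isomorphism in $\BDC_{\rm hol}(\SD_X)$, and then "lift it along $\varpi$." You yourself flag the obstacle: the target statement is an isomorphism of $\SD_{\tl X}^\SA$-modules in $\BEC(\I\SD_{\tl X}^\SA)$, and Corollary \ref{cor-4} does not carry enough linearity data to produce it after pulling back. This is not mere bookkeeping that one should expect to work out; the point is that you need the isomorphism at the level of $\SD$-linear ind-sheaves \emph{before} applying $\alpha_{\tl X}\rmR\varpi^!$. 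The paper circumvents this exactly by invoking Theorem \ref{thm-5}, the \emph{extended} Riemann--Hilbert correspondence of Kashiwara--Schapira, $\SM\Lotimes{\SO_X}\SO_X^\rmt\simto\rihom^\rmE(Sol_X^\rmE(\SM),\SO_X^\rmE)$ in $\BDC(\I\SD_X)$, which is precisely Corollary \ref{cor-4} prior to applying $\alpha_X$. With this $\SD$-linear statement in hand, the chain of adjunctions and projection formulas, together with $\alpha_{\tl X}\rmR\varpi^!\rihom(\CC_{X\setminus D},\SO_X^\rmt)\simeq\SA_{\tl X}$ (from \cite[Theorem 7.2.7]{DK16}), delivers $\SM^\SA$ on the nose as a $\SD_{\tl X}^\SA$-module. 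A second, smaller omission: the paper does not \emph{construct} the isomorphism abstractly but rather observes that the canonical morphism $\SM^\SA\to\rhom^\rmE(Sol_{\tl X}^\rmE(\SM^\SA),\SO_{\tl X}^\rmE)$ already exists (from the proof of \cite[Lemma 9.6.6]{DK17}) and then shows it is an isomorphism by the displayed chain; your proposal, which builds the isomorphism from scratch, would additionally need an argument that it agrees with (or simply is) the natural one.
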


\begin{proof}
First, we shall prove the isomorphism
\[\SM^\SA \simto
\rhom^\rmE\Big(Sol_{\tl{X}}^\rmE\big(\SM^\SA\big), \SO_{\tl{X}}^\rmE\Big).\]
Recall that we have already the canonical morphism
\[\SM^\SA \to
\rhom^\rmE\Big(Sol_{\tl{X}}^\rmE\big(\SM^\SA\big), \SO_{\tl{X}}^\rmE\Big).\]
in \cite[proof of Lemma 9.6.6]{DK16}.
We shall prove that this morphism is an isomorphism.
Let $\tl{\varpi} : \tl{X}\times
\RR_\infty\to X\times\RR_\infty$
be the natural morphism of bordered spaces.
We may assume that $\M\simeq\M(\ast D)$.
Then by the isomorphisms 
\begin{align*}
\bfE\varpi^{-1}Sol_X^\rmE(\SM)
&\simeq
\tl{\varpi}^{-1}\pi^{-1}\CC_{X\bs D}
\otimes Sol_{\tl{X}}^\rmE(\SM^\SA),\\
\SO_{\tl{X}}^\rmE &\simeq \bfE\varpi^{!}
\rihom(\pi^{-1}\CC_{X\bs D}, \SO_X^\rmE)
\end{align*}
(see \cite[(9.6.6)]{DK16}),
we obtain a sequence of isomorphisms
\begin{align*}
\rhom^\rmE\Big(Sol_{\tl{X}}^\rmE
\big(\SM^\SA\big), \SO_{\tl{X}}^\rmE\Big)
&\simeq
\rhom^\rmE\Big(Sol_{\tl{X}}^\rmE\big(\SM^\SA\big),
 \rihom\big(\tl{\varpi}^{-1}\pi^{-1}\CC_{X\bs D},
  \SO_{\tl{X}}^\rmE\big)\Big)\\
  &\simeq
\rhom^\rmE\Big(\tl{\varpi}^{-1}\pi^{-1}\CC_{X\bs D}\otimes
Sol_{\tl{X}}^\rmE\big(\SM^\SA\big),
  \SO_{\tl{X}}^\rmE\Big)\\
&\simeq
\rhom^\rmE\big(\bfE\varpi^{-1}Sol_X^\rmE(\SM),
 \SO_{\tl{X}}^\rmE\big)\\
&\simeq
\rhom^\rmE\big(\bfE\varpi^{-1}Sol_X^\rmE(\SM), \bfE\varpi^{!}
\rihom(\pi^{-1}\CC_{X\bs D}, \SO_X^\rmE)\big)\\
&\simeq
\alpha_{\tl{X}}\rmR\varpi^{!}\rihom^\rmE\big(Sol_X^\rmE(\SM), 
\rihom(\pi^{-1}\CC_{X\bs D}, \SO_X^\rmE)\big)\\
&\simeq
\alpha_{\tl{X}}\rmR\varpi^{!}\rihom\Big( \CC_{X\bs D}, 
\rihom^\rmE\big(Sol_X^\rmE(\SM), \SO_X^\rmE\big)\Big).
\end{align*}
Moreover, by Theorem \ref{thm-5}
we have an isomorphism
\[
\SM\Lotimes{\SO_X}\SO_X^\rmt
\simto
\rihom^\rmE\big(Sol_X^\rmE(\SM), \SO_X^\rmE\big).
\]
We thus obtain isomorphisms
\begin{align*}
\rhom^\rmE\Big(Sol_{\tl{X}}^\rmE
\big(\SM^\SA\big), \SO_{\tl{X}}^\rmE\Big)
&\simeq
\alpha_{\tl{X}}\rmR\varpi^{!}\rihom( \CC_{X\bs D}, 
\SM\Lotimes{\SO_X}\SO_X^\rmt)\\
&\simeq
\alpha_{\tl{X}}\rmR\varpi^{!}(\rihom( \CC_{X\bs D}, 
\SO_X^\rmt)\Lotimes{\SO_X}\SM)\\
&\simeq
\alpha_{\tl{X}}(\SO_{\tl{X}}^\rmt
\Lotimes{\varpi^{-1}\SO_X}\varpi^{-1}\SM)\\
&\simeq
\alpha_{\tl{X}}\SO_{\tl{X}}^\rmt
\Lotimes{\varpi^{-1}\SO_X}\varpi^{-1}\SM\\
&\simeq
\SA_{\tl{X}}\Lotimes{\varpi^{-1}\SO_X}\varpi^{-1}\SM\\
&\simeq
\SM^\SA
\end{align*}
where in the third isomorphism we used \cite[Theorem 7.2.7]{DK16}.

Next, we shall prove the isomorphism
\[\rhom^\rmE\Big(Sol_{\tl{X}}^\rmE\big(\SM^\SA\big), \SO_{\tl{X}}^\rmE\Big)
\simeq
\rhom^\rmE\Big(\CC_{\tl{X}}^\rmE, \bfE{\delta}^!\big(
DR_{\tl{X}}^\rmE(\SM^\SA)\Pboxtimes\SO_{\tl{X}}^\rmE\big)\Big)[d_X].
\]
By \cite[Proposition 4.9.23]{DK16}
we have an isomorphism
\begin{align*}
\rhom^{\rmE}\Big(\CC_{\tl{X}}^{\rmE},{\bf E}{\delta}^!
\big(DR_{\tl{X}}^{\rmE}(\SM^\SA)\Pboxtimes
\SO_{\tl{X}}^{\rmE}\big)\Big)[d_X]
\simeq
\rhom^\rmE\Big(D_{\tl{X}}^\rmE\big(DR_{\tl{X}}^\rmE(\SM^\SA)\big), 
\SO_{\tl{X}}^\rmE\Big)[d_X].
\end{align*}
Since there exists an isomorphism
\begin{align*}
D_{\tl{X}}^\rmE\big(DR_{\tl{X}}^\rmE(\SM^\SA)\big)
\simeq
\bfE\varpi^{-1}Sol_X^\rmE(\SM)
\simeq
\tl{\varpi}^{-1}\pi^{-1}\CC_{X\bs D}
\otimes Sol_{\tl{X}}^\rmE(\SM^\SA)[d_X]
\end{align*}
we obtain a sequence of isomorphisms 
\begin{align*}
&\rhom^\rmE\Big(D_{\tl{X}}^\rmE\big(DR_{\tl{X}}^\rmE(\SM^\SA)\big), 
\SO_{\tl{X}}^\rmE\Big)[d_X]\\
&\simeq
\rhom^\rmE\Big(\tl{\varpi}^{-1}\pi^{-1}\CC_{X\bs D}
\otimes Sol_{\tl{X}}^\rmE(\SM^\SA), \SO_{\tl{X}}^\rmE\Big)\\
&\simeq
\rhom^\rmE\Big(Sol_{\tl{X}}^\rmE(\SM^\SA), 
\rihom\big(\tl{\varpi}^{-1}\pi^{-1}\CC_{X\bs D}, 
\SO_{\tl{X}}^\rmE\big)\Big)\\
&\simeq
\rhom^\rmE\big(Sol_{\tl{X}}^\rmE(\SM^\SA),\SO_{\tl{X}}^\rmE\big).
\end{align*}
\end{proof}

The following proposition is useful to know 
the exponential types of holonomic $\SD$-modules from 
their enhanced solution complexes. 

\begin{proposition}\label{prop-2}
Let $X$ be a complex manifold, $D\subset X$
a normal crossing divisor in it and
$\SM_i\ (i=1, 2)$ holonomic $\D_X$-modules.
Let $V\subset X\bs D$ be an open sector in $X$ along $D$
and assume that we have an isomorphism
\begin{equation*}
\pi^{-1}\CC_V\otimes Sol_X^\rmE(\SM_1)
\simeq
\pi^{-1}\CC_V\otimes Sol_X^\rmE(\SM_2).
\end{equation*} 
Let $W\subset \tl{X}$ be an open subset of 
the real blow-up $\tl{X}$
such that $W\cap\varpi^{-1}(D)\neq\emptyset$ and 
$\var{W}\subset{\rm Int}\Big(\var{\varpi^{-1}(V)}\Big)$ .
Then we have an isomorphism
\begin{equation*}
\SM_1^\SA|_W
\simeq
\SM_2^\SA|_W
\end{equation*}
 of $\SD_{\tl{X}}^\SA$-modules on W.
\end{proposition}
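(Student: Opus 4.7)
The plan is to reconstruct $\SM^\SA|_W$ directly from the germ $\pi^{-1}\CC_V\otimes Sol_X^\rmE(\SM)$ in two stages: first pass from $\SM^\SA|_W$ to $Sol_{\tl X}^\rmE(\SM^\SA)|_W$ via the Riemann--Hilbert-type reconstruction of Theorem~\ref{prop-3}; then pass from $Sol_{\tl X}^\rmE(\SM^\SA)|_W$ down to $\pi^{-1}\CC_V\otimes Sol_X^\rmE(\SM)$ by a local analysis of the proper inverse image $\bfE\varpi^!$ near $\varpi^{-1}(D)\cap W$.

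First I would apply Theorem~\ref{prop-3} to each $\SM_i$ $(i=1,2)$ to obtain the functorial isomorphism
\[\SM_i^\SA \simto \rhom^\rmE\bigl(Sol_{\tl X}^\rmE(\SM_i^\SA),\,\SO_{\tl X}^\rmE\bigr),\]
which reduces the goal to the construction of a compatible isomorphism $Sol_{\tl X}^\rmE(\SM_1^\SA)|_W\simeq Sol_{\tl X}^\rmE(\SM_2^\SA)|_W$. Next, using the identification recalled at the beginning of Section~\ref{sec:8},
\[Sol_{\tl X}^\rmE(\SM_i^\SA) \;\simeq\; \bfE\varpi^!\,\rihom\bigl(\pi^{-1}\CC_{X\setminus D},\,Sol_X^\rmE(\SM_i)\bigr),\]
matters reduce to showing that the functor $F\mapsto\bfE\varpi^!\,\rihom(\pi^{-1}\CC_{X\setminus D},F)|_W$ on $\BEC(\I\CC_X)$ factors through $F\mapsto\pi^{-1}\CC_V\otimes F$; equivalently, one must prove the vanishing $\bfE\varpi^!\,\rihom(\pi^{-1}\CC_{X\setminus D},F)|_W\simeq 0$ whenever $\pi^{-1}\CC_V\otimes F\simeq 0$.

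The hard part will be this factorization, which is a geometric fact on the real blow-up. Its content is that the hypothesis $\var{W}\subset\Int\bigl(\var{\varpi^{-1}(V)}\bigr)$ forces every $\tl{x}\in W$ to admit a fundamental system of open neighborhoods in $\tl{X}$ contained in $\varpi^{-1}(V\cup D)$; equivalently, every direction of approach to $\varpi(\tl{x})\in D$ visible from $\tl{x}$ lies in the sector $V$. Hence the proper inverse image $\bfE\varpi^!$ combined with $\rihom(\pi^{-1}\CC_{X\setminus D},\cdot)$ only perceives the behaviour of $F$ on $V$ after restriction to $W$, and the complementary part supported in $X\setminus(V\cup D)$ makes no contribution. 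To make this rigorous I would cover $W$ by small open sets $W'$ with $\var{W'}\subset\varpi^{-1}(V\cup D)$ and analyse the costalks of the relevant enhanced ind-sheaf on each $W'$ in terms of sectorial neighborhoods of $\varpi(W')$ lying in $V\cup D$, in a manner parallel to (and essentially dual to) the computation of Theorem~\ref{thm-10}. Once the factorization holds, the hypothesis $\pi^{-1}\CC_V\otimes Sol_X^\rmE(\SM_1)\simeq\pi^{-1}\CC_V\otimes Sol_X^\rmE(\SM_2)$ propagates through the above identifications to give $\SM_1^\SA|_W\simeq\SM_2^\SA|_W$, as required.
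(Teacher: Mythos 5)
Your proposal is correct in spirit, but it follows the \emph{alternative} route the paper explicitly offers rather than the paper's own proof of Proposition~\ref{prop-2}. The paper's proof uses the \emph{second} isomorphism in Theorem~\ref{prop-3}, namely
\[\SM_i^\SA \simeq \rhom^\rmE\Bigl(\CC_{\tl{X}}^\rmE,\ \bfE\delta^!\bigl(DR_{\tl{X}}^\rmE(\SM_i^\SA)\Pboxtimes\SO_{\tl{X}}^\rmE\bigr)\Bigr)[d_X],\]
then reduces to comparing $\Hom(G, DR_{\tl X}^\rmE(\SM_i^\SA)|_{\pi^{-1}(W)})$ for $G\in\BEC(\I\CC_W)$, pushes $G$ down with $\bfE\varpi_{!!}\bfE j_{!!}$, and uses the elementary set-theoretic fact $(X\setminus D)\cap\var{\varpi(W)} = V\cap\var{\varpi(W)}$ (a consequence of $\var{W}\subset\Int\bigl(\var{\varpi^{-1}(V)}\bigr)$) to replace $\CC_{X\setminus D}$ by $\CC_V$; the conclusion then follows by Verdier duality applied to the hypothesis. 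You instead use the \emph{first} isomorphism of Theorem~\ref{prop-3}, $\SM_i^\SA\simeq\rhom^\rmE(Sol_{\tl X}^\rmE(\SM_i^\SA),\SO_{\tl X}^\rmE)$, and aim to show that $Sol_{\tl X}^\rmE(\SM^\SA)|_W \simeq \bfE\varpi^!\,\rihom(\pi^{-1}\CC_{X\setminus D}, Sol_X^\rmE(\SM))|_W$ depends only on $\pi^{-1}\CC_V\otimes Sol_X^\rmE(\SM)$. This is precisely the content of Theorem~\ref{thm-9}, which the paper states immediately after Proposition~\ref{prop-2} with the remark that it yields a second proof. The trade-off: the paper's route to Proposition~\ref{prop-2} isolates the geometric input in a one-line observation and otherwise runs on adjunction and duality; your route pushes all the geometric work into the factorization claim, which is exactly where you leave things at the sketch level. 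The paper proves that factorization (Theorem~\ref{thm-9}) by an explicit chain of inverse-image identities through the commutative square $\tl X\supset W\to\varpi(W)\subset X$, rather than by the costalk/covering analysis you outline; the crucial geometric step there is $\varpi(W)\cap(X\setminus D)\subset V$, the same fact the paper's own proof uses. Also note one small imprecision: your claim that the factorization is ``equivalent'' to the vanishing $\bfE\varpi^!\rihom(\pi^{-1}\CC_{X\setminus D},F)|_W\simeq 0$ when $\pi^{-1}\CC_V\otimes F\simeq 0$ does hold, but only after invoking the distinguished triangle $\pi^{-1}\CC_V\otimes F\to F\to\pi^{-1}\CC_{X\setminus V}\otimes F\to$ and the exactness of the functors involved; making this a genuine natural isomorphism, as required to propagate the hypothesis, is what Theorem~\ref{thm-9} provides explicitly.
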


\begin{proof}
Let $W\subset\tl{X}$ be an open subset of $\tl{X}$ 
such that $W\cap\varpi^{-1}(D)\neq\emptyset$ and 
$\var{W}\subset{\rm Int}\Big(\var{\varpi^{-1}(V)}\Big)$. 
Let $\delta : \tl{X}\hookrightarrow\tl{X}\times\tl{X}$ 
be the diagonal map. 
Then by Theorem \ref{prop-3} there exist isomorphisms 
\begin{equation*}
\SM_i^\SA \simeq \rhom^\rmE\Big(\CC_{\tl{X}}^\rmE, 
\bfE \delta^! \big(DR_{\tl{X}}^\rmE(\SM_i^\SA)
\Pboxtimes\SO_{\tl{X}}^\rmE\big)\Big)[d_X] 
\hspace{20pt}(i=1, 2).
\end{equation*}
Hence it suffices to show that for any $G\in\BEC(\I\CC_W)$
we have an isomorphism
\begin{equation*}
{\rm Hom}_{\BEC(\I\CC_W)}(G, 
DR_{\tl{X}}^\rmE(\SM_1^\SA)|_{\pi^{-1}(W)})
\simeq 
{\rm Hom}_{\BEC(\I\CC_W)}(G, 
DR_{\tl{X}}^\rmE(\SM_2^\SA)|_{\pi^{-1}(W)}).
\end{equation*}
Let $j : W\hookrightarrow\tl{X}$ be the inclusion map.
Then for $i=1, 2$ there exist isomorphisms
\begin{align*}
{\rm Hom}_{\BEC(\I\CC_W)}(G, 
DR_{\tl{X}}^\rmE(\SM_i^\SA)|_{\pi^{-1}(W)})
&\simeq
{\rm Hom}_{\BEC(\I\CC_W)}(G, 
\bfE j^!DR_{\tl{X}}^\rmE(\SM_i^\SA))\\
&\simeq
{\rm Hom}_{\BEC(\I\CC_{\tl{X}})}(\bfE j_{!!}G, 
DR_{\tl{X}}^\rmE(\SM_i^\SA))\\
&\simeq
{\rm Hom}_{\BEC(\I\CC_{\tl{X}})}(\bfE j_{!!}G, 
\bfE\varpi^!DR_X^\rmE(\SM_i(\ast D)))\\
&\simeq
{\rm Hom}_{\BEC(\I\CC_X)}(
\bfE\varpi_{!!}\bfE j_{!!}G, DR_X^\rmE(\SM_i(\ast D))).
\end{align*}
Set $G' := \bfE\varpi_{!!}\bfE j_{!!}G\in\BEC(\I\CC_X)$.
Then for $i=1, 2$ we obtain isomorphisms
\begin{align*}
{\rm Hom}_{\BEC(\I\CC_W)}(G, 
DR_{\tl{X}}^\rmE(\SM_i^\SA)|_{\pi^{-1}W})
&\simeq
{\rm Hom}_{\BEC(\I\CC_X)}(G', 
\rihom(\pi^{-1}\CC_{X\bs D}, DR_X^\rmE(\SM_i))) \\
&\simeq
{\rm Hom}_{\BEC(\I\CC_X)}(\pi^{-1}
\CC_{X\bs D}\otimes G', DR_X^\rmE(\SM_i))\\
&\simeq
{\rm Hom}_{\BEC(\I\CC_X)}(\pi^{-1}
\CC_{V}\otimes G', DR_X^\rmE(\SM_i))\\
&\simeq
{\rm H^0}\rmR\Gamma(X; \rhom^\rmE(\pi^{-1}
\CC_{V}\otimes G', DR_X^\rmE(\SM_i))), 
\end{align*}
where in the third isomorphism we used 
$G' \simeq \pi^{-1} \CC_{\var{\varpi (W)}} 
\otimes G'$ and $(X \bs D) \cap \var{\varpi (W)}
= V \cap \var{\varpi (W)}$
which follows from our assumption $\var{W}\subset{\rm Int}\Big(\var{\varpi^{-1}(V)}\Big)$. 
Since we have 
\begin{align*}
\rhom^\rmE(\pi^{-1}\CC_{V}\otimes G', DR_X^\rmE(\SM_i))
&\simeq
\alpha_X\rmR\pi_\ast\rihom(\pi^{-1}\CC_{V}\otimes G', DR_X^\rmE(\SM_i))\\
&\simeq
\alpha_X\rmR\pi_\ast\rihom(G', \rihom(\pi^{-1}\CC_{V}, DR_X^\rmE(\SM_i))),
\end{align*}
it remains for us to prove the isomorphism
\[\rihom(\pi^{-1}\CC_{V}, DR_X^\rmE(\SM_1))
\simeq\rihom(\pi^{-1}\CC_{V}, DR_X^\rmE(\SM_2)).\]
But this follows immediately by applying the 
Verdier duality functor $\rmD_X^\rmE$ to the isomorphism 
\[\pi^{-1}\CC_V\otimes Sol_X^\rmE(\SM_1)
\simeq
\pi^{-1}\CC_V\otimes Sol_X^\rmE(\SM_2)\]
(see the proof of Proposition \ref{prop-1}).
This completes the proof.
\end{proof}

By Proposition \ref{prop-2} (and the proof of Corollary \ref{cor-1})
we obtain the following result.

\begin{corollary}\label{cor-2}
Let $X$ be a Riemann surface and $D\subset X$ a point in it.
Let $\SM_1$ and $\SM_2$ be holonomic $\SD_X$-modules. 
Assume that for a point $\theta\in S_DX\simeq\varpi^{-1}(D)\simeq S^1$
there exists its sectorial neighborhood $V_\theta\subset X\bs D$
such that we have an isomorphism
\begin{equation*}
\pi^{-1}\CC_{V_\theta}\otimes Sol_X^\rmE(\SM_1)
\simeq
\pi^{-1}\CC_{V_\theta}\otimes Sol_X^\rmE(\SM_2).
\end{equation*}
Then there exists an open neighborhood $W$ of $\theta$ in 
the real blow-up $\tl{X}$ on which we have an isomorphism
\begin{equation*}
\SM_1^\SA|_W
\simeq
\SM_2^\SA|_W
\end{equation*}
of $\SD_{\tl{X}}^\SA$-modules on $W$.
\end{corollary}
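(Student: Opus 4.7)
The plan is to reduce Corollary~\ref{cor-2} directly to Proposition~\ref{prop-2}, applied with the normal crossing divisor taken to be the single point $D=\{p\}\subset X$ in the Riemann surface $X$. Since a point is trivially a normal crossing divisor, the only thing I need to verify is the existence of an open subset $W\subset\tl{X}$ containing the direction $\theta\in\varpi^{-1}(D)\simeq S^1$ satisfying the two geometric hypotheses $W\cap\varpi^{-1}(D)\neq\emptyset$ and $\var{W}\subset{\rm Int}\bigl(\var{\varpi^{-1}(V_\theta)}\bigr)$.

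For the construction of $W$, first I would recall that in the one-dimensional setting the real blow-up $\tl{X}$ near $D=\{p\}$ is homeomorphic to a half-open cylinder $S^1\times[0,\varepsilon)$, with $\varpi^{-1}(D)=S^1\times\{0\}$. In these polar coordinates the sectorial neighborhood $V_\theta\subset X\setminus D$ of the direction $\theta$ corresponds to an open sector of the form $(a,b)\times(0,\varepsilon')$ with $\theta\in(a,b)$, so that $\var{\varpi^{-1}(V_\theta)}$ contains the closed arc $[a,b]\times\{0\}$ and its interior contains the open arc $(a,b)\times\{0\}$. I would then take $W$ to be a strictly smaller half-disk $(a',b')\times[0,\varepsilon'')$ with $\theta\in(a',b')$, $[a',b']\subset(a,b)$ and $\varepsilon''<\varepsilon'$. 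By construction $W$ is open in $\tl{X}$, contains $\theta$, satisfies $W\cap\varpi^{-1}(D)=(a',b')\times\{0\}\neq\emptyset$, and $\var{W}\subset{\rm Int}\bigl(\var{\varpi^{-1}(V_\theta)}\bigr)$.

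With this $W$ in place, the conclusion $\SM_1^\SA|_W\simeq\SM_2^\SA|_W$ is immediate from Proposition~\ref{prop-2}, whose hypotheses are exactly those we have assumed together with the geometric condition we just verified. There is no serious obstacle here: all of the analytic work has already been done in Proposition~\ref{prop-2} (via the reconstruction statement of Theorem~\ref{prop-3}), and the Riemann surface case is simply a clean specialization. This mirrors the way Corollary~\ref{cor-1} is deduced from Proposition~\ref{prop-1}, where an ``open sector along a normal crossing divisor'' is reinterpreted in dimension one as a sectorial neighborhood of a direction $\theta\in S_DX\simeq S^1$.
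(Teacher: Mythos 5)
Your proof is correct and follows the same route as the paper, which simply cites Proposition~\ref{prop-2} (together with the setup from Corollary~\ref{cor-1}) without writing out the details. Your explicit verification that a point is a normal crossing divisor, that the sectorial neighborhood $V_\theta$ qualifies as an open sector along $D$, and that the half-disk $W=(a',b')\times[0,\varepsilon'')$ satisfies $\var{W}\subset{\rm Int}\bigl(\var{\varpi^{-1}(V_\theta)}\bigr)$ is exactly the content the paper leaves implicit.
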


We can prove Proposition \ref{prop-2} also using the following theorem.
\begin{theorem}\label{thm-9}
Let $X$ be a complex manifold and $D$ a normal crossing divisor in it.
For $\SM\in\BDC_{\rm hol}(\SD_X)$ and a 
sector $V\subset X\setminus D$ along $D$
we set $K:=\pi^{-1}\CC_V\otimes Sol_X^{\rmE}(\SM)$.
Then for any open subset $W$ of $\tl{X}$ such that
$W\cap\varpi^{-1}(D)\neq\emptyset, \var{W}\subset \Int\Big(
\var{\varpi^{-1}(V)}\Big)$,
there exists  an isomorphism
\[\SM^\SA|_W\simeq\rhom^\rmE\Big(\big({\bfE}\varpi^!
\rihom(\pi^{-1}\CC_{X\bs D}, K)\big)|_W, \SO_{\tl{X}}^\rmE|_W\Big)\]
in $\BDC(\SD_{\tl{X}}^\SA|_W)$.
\end{theorem}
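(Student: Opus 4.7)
The plan is to combine Theorem \ref{prop-3} with the identity
\[Sol_{\tl X}^{\rmE}(\SM^{\SA})\simeq\bfE\varpi^!\rihom(\pi^{-1}\CC_{X\setminus D},Sol_X^{\rmE}(\SM))\]
recalled just before Definition \ref{def-A}, and then argue that the sectorial truncation from $Sol_X^{\rmE}(\SM)$ to $K=\pi^{-1}\CC_V\otimes Sol_X^{\rmE}(\SM)$ has no effect once we restrict to $W$. Indeed, feeding the above identity into the reconstruction isomorphism of Theorem \ref{prop-3} and restricting to $W$ yields
\[\SM^{\SA}\bigl|_W\simeq\rhom^{\rmE}\!\Bigl(\bigl(\bfE\varpi^!\rihom(\pi^{-1}\CC_{X\setminus D},Sol_X^{\rmE}(\SM))\bigr)\bigl|_W,\ \SO_{\tl X}^{\rmE}\bigl|_W\Bigr),\]
so the theorem is reduced to showing that the canonical morphism $K\to Sol_X^{\rmE}(\SM)$ induces an isomorphism
\[\bfE\varpi^!\rihom(\pi^{-1}\CC_{X\setminus D},K)\bigl|_W\ \simto\ \bfE\varpi^!\rihom(\pi^{-1}\CC_{X\setminus D},Sol_X^{\rmE}(\SM))\bigl|_W.\]

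To prove this, I embed the morphism into the distinguished triangle associated with the open inclusion $V\subset X$,
\[K\longrightarrow Sol_X^{\rmE}(\SM)\longrightarrow\pi^{-1}\CC_{X\setminus V}\otimes Sol_X^{\rmE}(\SM)\xrightarrow{+1},\]
so that the claim is equivalent to the vanishing
\[\bfE\varpi^!\rihom\bigl(\pi^{-1}\CC_{X\setminus D},\ \pi^{-1}\CC_{X\setminus V}\otimes Sol_X^{\rmE}(\SM)\bigr)\bigl|_W\simeq 0.\]
To check this vanishing, I mimic the adjunction argument of Proposition \ref{prop-2}. For an arbitrary $G\in\BEC(\I\CC_W)$ and the open inclusion $j:W\hookrightarrow\tl X$, the adjunctions $(\bfE j_{!!},\bfE j^{!})$, $(\bfE\varpi_{!!},\bfE\varpi^{!})$ and $(\otimes,\rihom)$ identify
\[\Hom_{\BEC(\I\CC_W)}\!\Bigl(G,\ \bfE\varpi^!\rihom(\pi^{-1}\CC_{X\setminus D},\pi^{-1}\CC_{X\setminus V}\otimes Sol_X^{\rmE}(\SM))\bigl|_W\Bigr)\]
with
\[\Hom_{\BEC(\I\CC_X)}\!\Bigl(\pi^{-1}\CC_{X\setminus D}\otimes\bfE\varpi_{!!}\bfE j_{!!}G,\ \pi^{-1}\CC_{X\setminus V}\otimes Sol_X^{\rmE}(\SM)\Bigr).\]

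Since $\bfE\varpi_{!!}\bfE j_{!!}G$ is concentrated on $\varpi(\var W)$, the first factor of this $\Hom$ is concentrated on $\varpi(\var W)\cap(X\setminus D)$, while the second factor is concentrated on $X\setminus V$; hence the vanishing reduces to the emptiness of $\varpi(\var W)\cap(X\setminus D)\cap(X\setminus V)$. This in turn follows from the hypothesis $\var W\subset\Int(\var{\varpi^{-1}(V)})$: for an open sector $V$ in $X$ along $D$ one has $\Int(\var{\varpi^{-1}(V)})\cap(\tl X\setminus\varpi^{-1}(D))=\varpi^{-1}(V)$, so $\var W\cap(\tl X\setminus\varpi^{-1}(D))\subset\varpi^{-1}(V)$, giving $\varpi(\var W)\cap(X\setminus D)\subset V$ and hence the required disjointness. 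Once this is established, one may conclude the $\Hom$-vanishing either by the disjoint-support principle or, equivalently, by the identity $\rihom(\pi^{-1}\CC_V,\pi^{-1}\CC_{X\setminus V}\otimes Sol_X^{\rmE}(\SM))=0$, completing the proof. The main technical obstacle is precisely this last topological step bridging the sectorial hypothesis and the disjointness of supports, but it becomes routine once the standard conventions on open sectors and the behavior of the six operations for enhanced ind-sheaves on bordered spaces are recalled.
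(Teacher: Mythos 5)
Your proof is correct and arrives at the same key reduction as the paper, but then proves that reduction by a genuinely different device. Concretely: both you and the paper first invoke Theorem \ref{prop-3} and the identity $Sol_{\tl X}^\rmE(\SM^\SA)\simeq\bfE\varpi^!\rihom(\pi^{-1}\CC_{X\setminus D},Sol_X^\rmE(\SM))$, reducing the theorem to the isomorphism $\bigl(\bfE\varpi^!\rihom(\pi^{-1}\CC_{X\setminus D},K)\bigr)|_W\simeq\bigl(\bfE\varpi^!\rihom(\pi^{-1}\CC_{X\setminus D},Sol_X^\rmE(\SM))\bigr)|_W$. At that point the paper's proof is a direct chain: push $\bfE i_W^{-1}$ and $\bfE\varpi^!$ past $\rihom$, replace $\bfE\varpi^!$ by $\bfE\varpi^{-1}$ over $X\setminus D$, factor through $j:\varpi(W)\hookrightarrow X$, and observe that $j^{-1}(X\setminus D)=j^{-1}(V)$, which turns $\pi^{-1}\CC_{X\setminus D}\otimes Sol_X^\rmE(\SM)$ into $K$ after restriction. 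You instead complete the comparison morphism to a distinguished triangle and reduce to showing the vanishing $\bfE\varpi^!\rihom(\pi^{-1}\CC_{X\setminus D},\pi^{-1}\CC_{X\setminus V}\otimes Sol_X^\rmE(\SM))|_W\simeq 0$, which you prove by adjunction and disjointness of supports. The underlying topological fact $\varpi(\var W)\cap(X\setminus D)\subset V$ is exactly what the paper uses as $j^{-1}(X\setminus D)=j^{-1}(V)$, so neither route is logically heavier; your version isolates the comparison as the vanishing of an error term, which is conceptually transparent, at the cost of a slightly more delicate support argument (one should pass to the closed set $X\setminus V$ and note the pullback there has support in the empty set $\varpi(\var W)\cap(X\setminus D)\cap(X\setminus V)$, rather than comparing closures of supports directly in $X$, since $\var{\varpi(\var W)\cap(X\setminus D)}$ can meet $\partial V$). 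With that caveat spelled out, your argument is sound and a legitimate alternative to the paper's computation.
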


\begin{proof}
By $\SM^\SA|_W\simeq 
\rhom^\rmE(Sol_{\tl{X}}^\rmE(\SM^\SA)|_W, \SO_{\tl{X}}^\rmE|_W)$
(see Theorem \ref{prop-3}), 
it is enough to show
\[
Sol_{\tl{X}}^\rmE(\SM^\SA)|_W
\simeq
\Big({\bfE}\varpi^!\rihom(\pi^{-1}\CC_{X\bs D}, K)\Big)|_W.
\]
We consider the following diagram
\[\xymatrix@C=40pt@M=11pt{
\tl{X}\ar@{->}[r]^-\varpi & X\\
W\ar@{->}[r]_-{\varpi|_W}\ar@{^{(}->}[u]^-{i_W} & \varpi(W).
\ar@{^{(}->}[u]_-{j}
}\]
Let $\tl{i}_W : W\times
\RR_\infty\to \tl{X}\times\RR_\infty$ 
and $\tl{\varpi} :  \tl{X} \times
\RR_\infty\to X \times\RR_\infty$ 
be the natural morphisms of bordered spaces.
Then we obtain a sequence of isomorphisms:
\begin{align*}
Sol_{\tl{X}}^\rmE(\SM^\SA)|_W
&\simeq
{\bfE}i_W^{-1}{\bfE}\varpi^!\rihom\big(\pi^{-1}\CC_{X\bs D}, 
Sol_X^\rmE(\SM)\big)\\
&\simeq
{\bfE}i_W^{-1}\rihom\big(\tl{\varpi}^{-1}\pi^{-1}\CC_{X\bs D}, 
{\bfE}\varpi^!Sol_X^\rmE(\SM)\big)\\
&\simeq
{\bfE}i_W^{-1}\rihom\big(\tl{\varpi}^{-1}\pi^{-1}\CC_{X\bs D}, 
{\bfE}\varpi^{-1}Sol_X^\rmE(\SM)\big)\\
&\simeq
\rihom\big(\ \tl{i}_W^{-1}\tl{\varpi}^{-1}\pi^{-1}\CC_{X\bs D}, 
{\bfE}i_W^{-1}{\bfE}\varpi^{-1}Sol_X^\rmE(\SM)\big),
\end{align*}
where the third isomorphism follows from the fact that 
$\varpi$ is an isomorphism over $X\bs D$.
Since we may assume $\SM\simto\SM(\ast D)$, 
there exists a sequence of isomorphisms:
\begin{align*}
{\bfE}i_W^{-1}{\bfE}\varpi^{-1}Sol_X^\rmE(\SM)
&\simeq
{\bfE}i_W^{-1}{\bfE}\varpi^{-1}\big(\pi^{-1}\CC_{X\bs D}
\otimes Sol_X^\rmE(\SM)\big)\\
&\simeq
{\bfE}(\varpi|_W)^{-1}{\bfE}j^{-1}\big(\pi^{-1}\CC_{X\bs D}
\otimes Sol_X^\rmE(\SM)\big)\\
&\simeq
{\bfE}(\varpi|_W)^{-1}\big(\pi^{-1}\CC_{j^{-1}(X\bs D)}
\otimes {\bfE}j^{-1}Sol_X^\rmE(\SM)\big)\\
&\simeq
{\bfE}(\varpi|_W)^{-1}\big(\pi^{-1}\CC_{j^{-1}(V)}
\otimes {\bfE}j^{-1}Sol_X^\rmE(\SM)\big)\\
&\simeq
{\bfE}(\varpi|_W)^{-1}{\bfE}j^{-1}\big(\pi^{-1}\CC_V
\otimes Sol_X^\rmE(\SM)\big)\\
&\simeq
{\bfE}i_W^{-1}{\bfE}\varpi^{-1}K.
\end{align*}
Therefore we obtain isomorphisms
\begin{align*}
Sol_{\tl{X}}^\rmE(\SM^\SA)|_W
&\simeq
\rihom\big(\ \tl{i}_W^{-1}\tl{\varpi}^{-1}\pi^{-1}\CC_{X\bs D}, 
{\bfE}i_W^{-1}{\bfE}\varpi^{-1}K\big)\\
&\simeq
\Big({\bfE}\varpi^!\rihom\big(\pi^{-1}\CC_{X\bs D},
 K\big)\Big)|_W.
\end{align*}
\end{proof}

From now, until the end of this section, let $X$
be a smooth algebraic variety and $Z\subset X$ a subvariety in it.
Set $U := X\bs Z$ and let $X^\an, Z^\an, U^\an$
be the underlying complex analytic spaces of $X, Z, U$ respectively.
If there is no risk of confusion, we sometimes denote them
simply by $X, Z, U$ for short.
Let
\[Z\xhookrightarrow{\ i_Z\ } X\xhookleftarrow{\ i_U\ }U\]
be the inclusion maps.
We denote the corresponding morphisms of complex analytic spaces
by the same symbols $i_Z$ and $i_U$.
For an algebraic coherent $\SD_X$-module
$\SM\in\Modcoh(\SD_X)$ on $X$, by using
its analytification $\SM^\an=\SO^\an\otimes_{\SO_X}\SM
\in\Modcoh(\SD_{X^\an})$ we set
$Sol_X(\SM):=Sol_{X^\an}(\SM^\an)\in\BDC(\CC_{X^\an})$.
We define $Sol^{\rmt}_X(\SM)\in{\BDC}({\I}{\CC}_{X^{\an}}),
Sol_X^\rmE(\SM)\in{\BEC}(\I\CC_{X^{\an}})$, etc. similarly.
Recall that there exists an isomorphism
\[Sol_X(\SM)|_Z:=i_Z^{-1}Sol_X(\SM)
\simeq i_Z^{-1}\alpha_Xi_0^!\bfR^\rmE Sol_X^\rmE(\SM).\]

\begin{lemma}\label{llema} 
For $F\in\BEC(\I\CC_{X^\an})$ we have an isomorphism
\[i_Z^{-1}\alpha_Xi_0^!\bfR^\rmE(\bfE i_{Z\ast}\bfE i_Z^{-1}F)
\simeq\alpha_Z\rmR\pi_\ast\rihom
(\CC_{\{t\geq0\}}\oplus\CC_{\{t\leq0\}}, \bfE i_Z^{-1}F).\]
\end{lemma}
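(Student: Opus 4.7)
The plan is to chain together standard functorial identities for enhanced ind-sheaves on bordered spaces, computing the left-hand side step by step until it matches the right-hand side.

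First, I would establish the commutation
\[\bfR^\rmE\circ\bfE i_{Z\ast}\simeq \rmR\tl{i_Z}_\ast\circ\bfR^\rmE,\]
where $\tl{i_Z}:Z\times\RR_\infty\to X\times\RR_\infty$ denotes the morphism of bordered spaces induced by $i_Z$. This is obtained by chaining the adjunction pairs $(\Q,\bfR^\rmE)$, $(\bfE i_Z^{-1},\bfE i_{Z\ast})$, and $(\tl{i_Z}^{-1},\rmR\tl{i_Z}_\ast)$ together with $\bfE i_Z^{-1}\circ \Q\simeq \Q\circ\tl{i_Z}^{-1}$, which is built into the definition of the enhanced inverse image. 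Applying this to $G:=\bfE i_Z^{-1}F$ gives $\bfR^\rmE(\bfE i_{Z\ast}\bfE i_Z^{-1}F)\simeq \rmR\tl{i_Z}_\ast\bfR^\rmE(\bfE i_Z^{-1}F)$.

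Next, I would apply base change to the Cartesian square of bordered spaces
\[\begin{CD}
Z @>{i_{Z,0}}>> Z\times\RR_\infty\\
@V{i_Z}VV @VV{\tl{i_Z}}V\\
X @>{i_0}>> X\times\RR_\infty,
\end{CD}\]
where $i_{Z,0}:Z\to Z\times\RR_\infty$ is the zero section, giving $i_0^!\rmR\tl{i_Z}_\ast\simeq \rmR i_{Z\ast}i_{Z,0}^!$. Combined with the compatibility $\alpha_X\circ\rmR i_{Z\ast}\simeq \rmR i_{Z\ast}\circ\alpha_Z$ from the table in Section \ref{sec:3} and the identity $i_Z^{-1}\circ \rmR i_{Z\ast}\simeq \id$ for the closed embedding $i_Z$, the left-hand side collapses to
\[\alpha_Z\,i_{Z,0}^!\bfR^\rmE(\bfE i_Z^{-1}F).\]

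Finally, it remains to identify $i_{Z,0}^!\bfR^\rmE G$ with $\rmR\pi_{Z\ast}\rihom(\CC_{\{t\geq 0\}}\oplus\CC_{\{t\leq 0\}},K)$ for any representative $K\in\BDC(\I\CC_{Z\times\RR_\infty})$ of $G:=\bfE i_Z^{-1}F$. Starting from the definition $\bfR^\rmE G\simeq \Prihom(\CC_{\{t\geq 0\}}\oplus\CC_{\{t\leq 0\}},K)\simeq \rmR p_{1\ast}\rihom(p_2^{-1}(\CC_{\{t\geq 0\}}\oplus\CC_{\{t\leq 0\}}),\mu^!K)$, I would use base change for the Cartesian square
\[\begin{CD}
Z\times\RR_\infty @>{j}>> Z\times\RR_\infty^2\\
@V{\pi_Z}VV @VV{p_1}V\\
Z @>{i_{Z,0}}>> Z\times\RR_\infty,
\end{CD}\]
with $j(z,t_2):=(z,0,t_2)$. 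This yields $i_{Z,0}^!\rmR p_{1\ast}\simeq \rmR\pi_{Z\ast}j^!$; since $p_2\circ j=\id$ and $\mu\circ j=\id$ on $Z\times\RR_\infty$, the object $j^!\rihom(p_2^{-1}L,\mu^!K)$ simplifies to $\rihom(L,K)$, yielding the required identification. The most delicate step is this last one: although each base change invoked is standard, one must perform them through the multiple $\RR_\infty$-factors appearing in the convolution definition of $\Prihom$, verify that the relevant squares remain Cartesian in the category of bordered spaces, and check that the convention for interpreting $\rihom(\CC_{\{t\geq 0\}}\oplus\CC_{\{t\leq 0\}},\bfE i_Z^{-1}F)$ on the right-hand side (as the direct $\rihom$ on $Z\times\RR_\infty$ applied to a representative of the enhanced ind-sheaf) is compatible with the output of base change, so that the resulting expression is in fact independent of the chosen representative $K$.
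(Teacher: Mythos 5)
Your proof is correct and reaches the result by chaining the same kind of functorial identities that the paper uses, but with a genuinely different ordering and decomposition. The paper proceeds from the outside in: it first expands $\bfR^\rmE$ on $X$ using the convolution formula (after a preliminary change of coordinates via the map $\sigma(x,t_1,t_2)=(x,t_2-t_1)$ so that the two slot maps become $\sigma$ and $p_2$), then base changes $i_0^!$ past $\rmR p_{1\ast}$ via the zero-section $j_0(x,t)=(x,0,t)$, simplifies $j_0^!$, and only afterwards deals with the closed embedding $i_Z$, pushing $\tl{i}_{Z\ast}$ out of $\rihom$, commuting it with $\rmR\pi_\ast$ and $\alpha$, and cancelling $i_Z^{-1}i_{Z\ast}\simeq\id$. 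You instead dispose of the closed embedding first, by the clean commutation $\bfR^\rmE\circ\bfE i_{Z\ast}\simeq\rmR\tl{i_Z}_\ast\circ\bfR^\rmE$ obtained from $\bfE i_Z^{-1}\circ\Q\simeq\Q\circ\tl{i_Z}^{-1}$ via uniqueness of right adjoints, then base change for the $Z\hookrightarrow X$ square, and only at the very end expand $\bfR^\rmE$ on $Z$ and base change through the zero-section there. This isolates the reusable statement $\bfR^\rmE\circ\bfE i_{Z\ast}\simeq\rmR\tl{i_Z}_\ast\circ\bfR^\rmE$ — which the paper never writes out as such — and also avoids the coordinate change by $\sigma$ altogether, since your $j$ satisfies $p_2\circ j=\mu\circ j=\id$ directly. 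The trade-off is that your first step requires one to have all three adjunctions $(\Q,\bfR^\rmE)$, $(\bfE i_Z^{-1},\bfE i_{Z\ast})$, $(\tl{i_Z}^{-1},\rmR\tl{i_Z}_\ast)$ and the compatibility $\bfE i_Z^{-1}\circ\Q\simeq\Q\circ\tl{i_Z}^{-1}$ on the table in the bordered-space setting, which is fine here but is more to invoke up front than the paper's inline manipulations; your final caveat about the right-hand side depending only on the enhanced ind-sheaf (not the chosen representative $K$) is a real issue that the paper also leaves implicit, and your instinct to flag it is sound.
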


\begin{proof}
Let $\sigma : X^\an\times\RR_\infty^2\to X^\an\times\RR_\infty$
be a morphism of bordered spaces induced  by the map
$X^\an\times\RR^2\ni(x, t_1, t_2)\mapsto (x, t_2-t_1)\in X^\an\times\RR$,
We define also a morphism $j_0 : X^\an\times\RR_\infty\to 
X^\an\times\RR_\infty^2$
of bordered spaces by the map $X^\an\times\RR\ni(x, t)
\mapsto(x, 0, t)\in X^\an\times\RR^2$.
Then we have isomorphisms
\begin{align*}
&i_Z^{-1}\alpha_Xi_0^!\bfR^\rmE(\bfE i_{Z\ast}\bfE i_Z^{-1}F)\\
&\simeq
i_Z^{-1}\alpha_Xi_0^!\rmR p_{1\ast}\rihom
(\sigma^{-1}(\CC_{\{t\geq0\}}\oplus\CC_{\{t\leq0\}}), 
p_2^!\bfE i_{Z\ast}\bfE i_Z^{-1}F)\\
&\simeq
i_Z^{-1}\alpha_X\rmR\pi_\ast j_0^!\rihom
(\sigma^{-1}(\CC_{\{t\geq0\}}\oplus\CC_{\{t\leq0\}}), 
p_2^!\bfE i_{Z\ast}\bfE i_Z^{-1}F)\\
&\simeq
i_Z^{-1}\alpha_X\rmR\pi_\ast\rihom
(\CC_{\{t\geq0\}}\oplus\CC_{\{t\leq0\}}, \bfE i_{Z\ast}\bfE i_Z^{-1}F)\\
&\simeq
i_Z^{-1}\alpha_X\rmR\pi_\ast\tl{i}_{Z\ast}\rihom
(\CC_{\{t\geq0\}}\oplus\CC_{\{t\leq0\}},\bfE i_Z^{-1}F)\\
&\simeq
i_Z^{-1}\alpha_Xi_{Z\ast} \rmR\pi_\ast\rihom
(\CC_{\{t\geq0\}}\oplus\CC_{\{t\leq0\}},\bfE i_Z^{-1}F)\\
&\simeq
i_Z^{-1}i_{Z\ast}\alpha_Z\rmR\pi_\ast\rihom
(\CC_{\{t\geq0\}}\oplus\CC_{\{t\leq0\}},\bfE i_Z^{-1}F)\\
&\simeq
\alpha_Z\rmR\pi_\ast\rihom
(\CC_{\{t\geq0\}}\oplus\CC_{\{t\leq0\}},\bfE i_Z^{-1}F).
\end{align*}
\end{proof}

Applying this lemma to $F=Sol_X^\rmE(\SM)\in\BEC(\I\CC_{X^\an})$
and the distinguished triangle
\begin{align*}
\pi^{-1}\CC_{X^\an\bs Z^\an}\otimes F \longrightarrow F \longrightarrow
\pi^{-1}\CC_{Z^\an}\otimes F\simeq \bfE i_{Z\ast}\bfE
 i_Z^{-1}F\overset{+1}{\longrightarrow}
\end{align*}
associated to it, we obtain a distinguished triangle
\begin{align*}
&i_Z^{-1}\alpha_Xi_0^!\bfR^\rmE
(\pi^{-1}\CC_{X^\an\bs Z^\an}\otimes Sol_X^\rmE(\SM))
\longrightarrow Sol_X(\SM)|_Z
\longrightarrow\\
&\alpha_Z\rmR\pi_\ast\rihom
(\CC_{\{t\geq0\}}\oplus\CC_{\{t\leq0\}},\bfE i_Z^{-1}Sol_X^\rmE(\SM))
\overset{+1}{\longrightarrow}.
\end{align*}

From now assume moreover that $Z$ is
a smooth hypersurface $D$ in $X$
and that $\SM\in\Modhol(\SD_X)$.
Then by Theorem\ref{thm-4} (v)
we have isomorphisms
\begin{align*}
i_D^{-1}\alpha_Xi_0^!\bfR^\rmE
(\pi^{-1}\CC_{X^\an\bs D^\an}\otimes Sol_X^\rmE(\SM))
&\simeq i_D^{-1}\alpha_Xi_0^!\bfR^\rmE Sol_X^\rmE(\SM(\ast D))\\
&\simeq Sol_X(\SM(\ast D))|_D.
\end{align*}
We thus obtain a distinguished triangle
\begin{align*}
&Sol_X(\SM(\ast D))|_D\longrightarrow
Sol_X(\SM)|_D\longrightarrow\\
&\alpha_D\rmR\pi_\ast\rihom
(\CC_{\{t\geq0\}}\oplus\CC_{\{t\leq0\}},\bfE i_D^{-1}Sol_X^\rmE(\SM))
\overset{+1}{\longrightarrow}.
\end{align*}
Since we have a distinguished triangle
\[\rmR\Gamma_D(\SM)\longrightarrow\SM
\longrightarrow i_{U\ast} i_U^{-1}\SM 
\simeq \SM(\ast D) 
\overset{+1}{\longrightarrow}\]
for the algebraic $\SD_X$-module 
$\rmR\Gamma_D(\SM) = \bfD i_{D\ast}\bfD i_D^\ast\SM[-1]$ 
(see \cite[Proposition 1.7.1 (iii)]{HTT08}),
in this case there exist also isomorphisms
\begin{align*}
Sol_X(\rmR\Gamma_D(\SM))|_D
&\simeq
Sol_X(\bfD i_{D\ast}\bfD i_D^\ast\SM[-1])|_D\\
&\simeq
i_D^{-1}\alpha_Xi_0^!\bfR^\rmE \big(
Sol_X^\rmE(\bfD i_{D\ast}\bfD i_D^\ast\SM)[1]\big)\\
&\simeq
i_D^{-1}\alpha_Xi_0^!\bfR^\rmE \big(\bfE i_{D\ast}
\bfE i_D^{-1}Sol_X^\rmE(\SM)\big)\\
&\simeq
\alpha_D\rmR\pi_\ast\rihom
(\CC_{\{t\geq0\}}\oplus\CC_{\{t\leq0\}},\bfE i_D^{-1}Sol_X^\rmE(\SM)).
\end{align*}
The following Proposition is useful to calculate
\[Sol_X(\SM(\ast D))|_D
\simeq
i_D^{-1}\alpha_XSol_X^\rmt(\SM(\ast D)).\]

\begin{proposition}\label{prop-5}
Assume that $X$ is a Riemann surface and $D$ is a point in it.
For a meromorphic function $\varphi\in\SO_X(\ast D)$
having a pole of order $k>0$ in $D\in X$,
we consider $\SE_{X\bs D | X}^{\varphi}
\in\Mod_{\rm hol}(\SD_{X^\an})$.
Let $u\in\CC$ be a local coordinate of $X$ at $D$
such that $D=\{u=0\}$ and for $a\gg0$
set
\[Q_a := \overline{\{u\in X\ |\ u\neq0, 
{\Re}(\varphi(u))\geq a\}}\subset X.\]
Then we have isomorphisms
\[
H^jSol_X^\rmt\big(\SE_{X\bs D | X}^{\varphi}\big)
\simeq
\begin{cases}
``\underset{a\to +\infty}{\varinjlim}"\ \CC_{X\bs Q_a} & (j=0)\\
\\
\hspace{30pt} \CC_D^{\oplus k} & (j=1)\\
\\
\hspace{35pt}0 & (\mbox{\rm otherwise}).
\end{cases}
\]
Moreover we have 
\[
\dim H^jSol_X \big(\SE_{X\bs D | X}^{\varphi}\big)_D
=
\begin{cases}
k & (j=1)\\
\\
0 & (\mbox{\rm otherwise}).
\end{cases}
\]
\end{proposition}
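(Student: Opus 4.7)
The plan is to use the presentation $\SE_{X\bs D|X}^\varphi\simeq \SD_X(\ast D)/\SD_X(\ast D)(\partial_u-\varphi')$ and the two-term free resolution over $\SD_X(\ast D)$ that it affords. Combined with the identification $\rhom_{\SD_X}(\SD_X(\ast D),\SO_X^\rmt)\simeq\rihom(\CC_{X\bs D},\SO_X^\rmt)=:\SO_X^\rmt(\ast D)$, this reduces $Sol_X^\rmt(\SE_{X\bs D|X}^\varphi)$ to the two-term complex
\[
Sol_X^\rmt\bigl(\SE_{X\bs D|X}^\varphi\bigr)\simeq\bigl[\SO_X^\rmt(\ast D)\xrightarrow{\partial_u-\varphi'}\SO_X^\rmt(\ast D)\bigr]
\]
placed in degrees $0$ and $1$, so that the task becomes identifying the kernel and cokernel of $\partial_u-\varphi'$ acting on $\SO_X^\rmt(\ast D)$.

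For $H^0$, any holomorphic solution of $(\partial_u-\varphi')f=0$ is locally a constant multiple of $e^\varphi$. For such $c\cdot e^\varphi$ to be tempered on an open $V\subset X\bs D$ near $D$ one needs $|e^{\Re\varphi(u)}|\leq C|u|^{-N}$ on $V$, equivalently $\Re\varphi(u)\leq a$ on $V$ for some $a>0$, i.e., $V\cap Q_a=\emptyset$. This characterization exactly matches the sections of the ind-sheaf $``\underset{a\to+\infty}{\varinjlim}"\,\CC_{X\bs Q_a}$, yielding the stated identification for $H^0$.

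For $H^1$, the cokernel is first seen to be supported on $D$: on $X\bs D$ the ODE $(\partial_u-\varphi')g=f$ is solvable on simply connected opens by direct integration $g=e^\varphi\int e^{-\varphi}f\,du$, with the solution tempered on compact subsets. At the stalk at $D$, the kernel vanishes since no nonzero constant multiple of $e^\varphi$ is tempered throughout a punctured neighborhood of $D$ (it must fail in some growth sector of $\Re\varphi$), while the dimension of the cokernel equals the irregularity $k$ of $\SE_{X\bs D|X}^\varphi$ at $D$ by the classical Komatsu--Malgrange index theorem for meromorphic connections (see Sabbah \cite{Sab93}). Combining these, the cokernel is the skyscraper $\CC_D^{\oplus k}$. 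This $H^1$ computation, and specifically the identification of the stalk of the cokernel with the irregularity in the tempered framework, is the main obstacle of the proof.

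Finally, the dimension claim for $H^j Sol_X(\SE_{X\bs D|X}^\varphi)_D$ is immediate from the first part by applying $\alpha_X$ and passing to stalks at $D$: $(``\underset{a\to+\infty}{\varinjlim}"\,\CC_{X\bs Q_a})_D=0$ since $D\in Q_a$ for all $a$, whereas $(\CC_D^{\oplus k})_D=\CC^k$, yielding the stated dimensions $k$ in degree $1$ and $0$ otherwise.
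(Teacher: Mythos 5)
Your approach is genuinely different from the paper's. The paper goes through the enhanced solution complex: it applies the isomorphism $Sol_X^\rmt(\SN)\simeq \rmR\pi_\ast\rihom(\CC_{\{t\geq0\}}\oplus\CC_{\{t\leq0\}},Sol_X^\rmE(\SN))$ (Lemma~\ref{llema}), substitutes the explicit formula $Sol_X^\rmE(\SE_{X\bs D|X}^{\varphi})\simeq ``\varinjlim"\,\CC_{\{t\geq-\Re\varphi+a\}}$ from Theorem~\ref{thm-4}(vi), and then reads off the cohomology by computing the $\rmR\pi_\ast\rihom$ for each fixed $a$ and passing to $``\varinjlim"$. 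You instead start from the two-term resolution of $\SE_{X\bs D|X}^\varphi$ over $\SD_X(\ast D)$ and try to identify the kernel and cokernel of $\partial_u-\varphi'$ on $\SO_X^\rmt(\ast D)$ directly. That is a legitimate alternative route in principle, and the setup (getting the two-term complex via adjunction $\rihom_{\SD_X}(\cdot,\SO_X^\rmt)\simeq\rihom_{\SD_X(\ast D)}(\cdot,\SO_X^\rmt(\ast D))$) is correct.

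However, there is a genuine gap in the $H^1$ step, and you yourself flag it as ``the main obstacle.'' The argument you give establishes at most that the cokernel is concentrated on $\{D\}$ and that, after applying $\alpha_X$ (so after passing to the ordinary solution sheaf), its stalk at $D$ has dimension $k$ --- this last by the classical Komatsu--Malgrange index theorem, which is a statement about $Sol_X$, not $Sol_X^\rmt$, and is connected to the tempered complex only through the identity $Sol_X=\alpha_XSol_X^\rmt$. Neither of these facts determines the cokernel as an ind-sheaf. An object of $\BDC(\I\CC_X)$ supported on a point $\{D\}$ is not in general a skyscraper: for example $``\varinjlim"_n\,\CC_{\overline{B}_{1/n}}$ (closed discs shrinking to $D$) is supported on $\{D\}$ and has $\alpha_X$-image $\CC_D$, but it is not isomorphic to $\iota_X\CC_D$, since there is no nonzero morphism $\CC_D\to\CC_{\overline{B}_{1/n}}$ for any $n$. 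So ``supported on $D$'' plus ``stalk of $\alpha_X(\cdot)$ has dimension $k$'' does not imply the cokernel is $\CC_D^{\oplus k}$ as an ind-sheaf. To close this gap you would need to show in addition that $H^1Sol_X^\rmt(\SE_{X\bs D|X}^\varphi)$ lies in the essential image of $\iota_X$, which is precisely what the paper obtains for free by carrying out the calculation termwise on the ind-system and observing that each term gives the genuine sheaf $\CC_D^{\oplus k}$ with the transition maps being isomorphisms. A related, lesser issue occurs in $H^0$: the remark that the temperedness condition on $c\cdot e^\varphi$ ``exactly matches the sections of $``\varinjlim"\,\CC_{X\bs Q_a}$'' is a heuristic description of sections/stalks, not a construction of an isomorphism of ind-sheaves together with a check of compatibility with the ind-structure (transition maps). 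Finally, note that the paper's proof derives the dimension count $\dim H^jSol_X(\SE_{X\bs D|X}^\varphi)_D$ \emph{from} the ind-sheaf formula (via $\alpha_X$); your proposal runs the logic in the opposite direction for $H^1$, which makes it unable to produce the finer ind-sheaf statement that the proposition actually asserts.
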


\begin{proof}
We set $\SN:=\SE_{X\bs D | X}^{\varphi}\in\Modhol(\SD_{X^\an})$.
Recall that by (the proof of) Lemma 
\ref{llema} we have isomorphisms
\begin{align*}
Sol_X^\rmt(\SN) &\simeq
i_0^!\bfR^\rmE Sol_X^\rmE(\SN)\\
&\simeq
\rmR\pi_\ast\rihom
(\CC_{\{t\geq0\}}\oplus\CC_{\{t\leq0\}}, Sol_X^\rmE(\SN)).
\end{align*}
Moreover by Theorem \ref{thm-4} (vi)
there exists also an isomorphism
\[Sol_X^\rmE(\SN)
\simeq
``\underset{a\to +\infty}{\varinjlim}"
\ \CC_{\{t\geq-{\Re}\varphi+a\}}.
\]
For any sufficiently large $a\gg0$ we can easily show that
\begin{align*}
\rhom(\CC_{\{t\geq0\}}, \CC_{\{t\geq-{\Re}\varphi+a\}})
\simeq
\rmR\Gamma_{\{t\geq0\}}\CC_{\{t\geq-{\Re}\varphi+a\}}
\simeq
 \CC_{\{t>0,\ t\geq-{\Re}\varphi+a\}}.
\end{align*}
Similarly for $a\gg0$ we have
\[ \rhom(\CC_{\{t \leq0 \}}, \CC_{\{t\geq-{\Re}\varphi+a\}})
\simeq
 \CC_{\{t<0,\ t\geq-{\Re}\varphi+a\}}. \]
Let $\var{\pi} : X\times\var{\RR}\to X$ be the projection
and $j : X\times \RR\xhookrightarrow{\ \ \ }
 X\times \var{\RR}$ the inclusion map. 
Then for $a\gg0$ it is easy to see that 
\begin{align*}
& \rmR\pi_\ast\rihom
( \CC_{\{t\leq0\}},  \CC_{\{t\geq-{\Re}\varphi+a\}} )
\\
&\simeq
\rmR\var{\pi}_\ast\rhom_{\CC_{X\times\var{\RR}}}(
\CC_{X\times \RR}, \CC_{\{t<0,\ t\geq-{\Re}\varphi+a\}})
\\
&\simeq
\rmR\var{\pi}_\ast \rmR j_\ast 
\CC_{\{t<0,\ t\geq-{\Re}\varphi+a\}} \simeq 0. 
\end{align*}
We thus obtain an isomorphism 
\[ \rmR\pi_\ast\rihom
( \CC_{\{t\leq0\}},  Sol_X^\rmE(\SN) ) \simeq 0. \] 
For $a\gg0$ let us calculate 
\[ \rmR\pi_\ast\rihom
( \CC_{\{t\geq0\}},  \CC_{\{t\geq-{\Re}\varphi+a\}} ) 
\simeq \rmR\var{\pi}_\ast \rmR j_\ast 
\CC_{\{t>0,\ t\geq-{\Re}\varphi+a\}}. \]
The stalk of this complex at the point $D\in X$ is isomorphic to 
\[\rmR\Gamma(D\times\var{\RR}; 
\rmR j_\ast\CC_{\{t>0,\ t\geq-{\Re}\varphi+a\}}).\]
We also see that
\[\big(\rmR j_\ast\CC_{\{t>0,\ t\geq-{\Re}\varphi+a\}}\big)_{(D, +\infty)}
\simeq\CC^k[-1].\]
Indeed, for $b \gg 0$ let us set 
\[R_b := \{u\in X\ |\ u\neq0, 
{\Re}(\varphi(u))\geq -b \} \subset X.\]
Then for a sufficiently small open 
neighborhood $U$ of the point $(D, + \infty )$ 
in $X \times \var{\RR}$ the set 
\[ U \cap \{ ( u, t) \in X \times \var{\RR} \ | \ 
t>0,\ u \not= 0, \ t\geq-{\Re}\varphi ( u ) +a\} \]
is homotopic to $R_b$. 
This implies that the stalk at $D \in X$ is isomorphic to 
$\CC^k[-1]$. Moreover its stalk at a point $P \in X \setminus D$ 
is isomorphic to $\CC$ (resp. $0$) if 
$P \in X \setminus Q_a$ (resp. $P \in Q_a$). For $j \in \ZZ$ 
we thus obtain an isomorphism 
\[
H^j \rmR\pi_\ast\rihom
( \CC_{\{t\geq0\}},  \CC_{\{t\geq-{\Re}\varphi+a\}} ) 
\simeq
\begin{cases}
 \CC_{X\bs Q_a} & (j=0)\\
\\
\CC_D^{\oplus k} & (j=1)\\
\\
0 & (\mbox{\rm otherwise}).
\end{cases}
\]
The remaining assertion follows from 
the isomorphism $\alpha_X Sol_X^\rmt 
(\SE_{X\bs D | X}^{\varphi}) 
\simeq Sol_X (\SE_{X\bs D | X}^{\varphi})$. 
\end{proof}

Note that a special case of this proposition was 
proved in Kashiwara-Schapira 
\cite[Proposition 7.3 and Remark 7,4]{KS03}.
In the situation of Proposition \ref{prop-5}, 
for a meromorphic connection $\SN$ on $X$ along 
$D \subset X$ i.e. a holonomic 
$\SD_{X^\an}$-module $\SN$ such that 
$\SN (*D) \simeq \SN$ we define its irregularity 
${\rm irr} ( \SN ) \in \ZZ$ by 
\[ {\rm irr} ( \SN ) = 
\dim H^1 Sol_X ( \SN )_D - \dim H^0 Sol_X ( \SN )_D. \]
We know that it is a non-negative integer 
(see Sabbah \cite[Chapter II, the proof of 
Theorem 1.3.10]{Sab93} etc.). Moreover 
the meromorphic connection $\SN$ is 
regular if and only if ${\rm irr} ( \SN )=0$. 
The last assertion in Proposition \ref{prop-5} 
implies that ${\rm irr} ( \SE_{X\bs D | X}^{\varphi} )
=k$. We can generalize this result as follows. 

\begin{proposition}\label{prop-nn7}
In the situation of Proposition \ref{prop-5}, 
let $\varpi_X : \tl{X}\to X$ be 
the real blow-up of $X$ along $D$. 
For a meromorphic connection $\SN$ on $X$ along 
$D \subset X$ assume that there exist 
meromorphic functions  $\varphi_i \in\SO_X(\ast D)$ 
$(1 \leq i \leq m)$ such that 
for any point $\theta \in S_DX 
\simeq \varpi_X^{-1}(D)\simeq S^1$ 
there exists its sectorial neighborhood $V_\theta\subset X\bs D$
for which we have an isomorphism
\begin{equation*}
\pi^{-1}\CC_{V_\theta}\otimes Sol_X^\rmE( \SN )
\simeq
\pi^{-1}\CC_{V_\theta}\otimes 
\big( \bigoplus_{i=1}^m 
Sol_X^\rmE (\SE_{X\bs D | X}^{\varphi_i}) \big).
\end{equation*}
For $1 \leq i \leq m$ such that 
$\varphi_i$ has a pole along $D$ 
{\rm (}resp. is holomorphic at $D${\rm )} 
we denote by ${\rm ord}_D( \varphi_i )>0$ 
its pole order {\rm (}resp. we set 
${\rm ord}_D( \varphi_i )=0${\rm)}. Then we have 
\[ {\rm irr} ( \SN ) = 
\sum_{i=1}^m {\rm ord}_D( \varphi_i ). \]
In particular, if ${\rm ord}_D( \varphi_i )=0$ 
for any $1 \leq i \leq m$, then 
$\SN$ is regular. 
\end{proposition}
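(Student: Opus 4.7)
The plan is to combine Corollary~\ref{cor-2} with Proposition~\ref{prop-5} and an Euler-characteristic argument on the boundary circle of the real blow-up. First, Corollary~\ref{cor-2} upgrades the sectorial hypothesis to a $\SD_{\tl X}^\SA$-module decomposition: for each $\theta\in S_DX\simeq \partial \simeq S^1$ (where $\partial:=\varpi_X^{-1}(D)$), there is an open neighborhood $W_\theta\subset \tl X$ of $\theta$ with
\[
\SN^\SA\bigl|_{W_\theta}\simeq \bigoplus_{i=1}^m (\SE_{X\bs D|X}^{\varphi_i})^\SA\bigl|_{W_\theta},
\]
and finitely many such $W_\theta$ cover $\partial$ by compactness.

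Second, since $\SN \simeq \SN(\ast D)$ is a meromorphic connection on a Riemann surface, one has $H^0 Sol_X(\SN)_D = 0$: every germ of $\SN$ at $D$ is of the form $s/z^n$ with $n\geq 0$, forcing any $\SD_X$-linear map to $\SO_X$ to send it to a holomorphic function divisible by $z^n$ for every $n$, hence to $0$. The same vanishing applies to each $\SE^{\varphi_i}_{X\bs D|X}$. Therefore ${\rm irr}(\SN) = \dim H^1 Sol_X(\SN)_D$ and ${\rm irr}(\SE^{\varphi_i}_{X\bs D|X}) = {\rm ord}_D(\varphi_i)$, the latter by Proposition~\ref{prop-5} for $\varphi_i$ with a pole and by the regularity of $\SO_X(\ast D)\simeq \SE^{\varphi_i}_{X\bs D|X}$ for $\varphi_i$ holomorphic at $D$ (where both sides equal $0$).

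To assemble these local contributions into a global formula, I use the identities on p.~191 of \cite{DK16} combined with $\SN\simeq\SN(\ast D)$ and proper base change along $\tl\varpi_X: \tl X\times\RR_\infty\to X\times\RR_\infty$ to express
\[
Sol_X(\SN)_D\simeq \rmR\Gamma\bigl(\partial;\ \mathcal G\bigr)
\]
for a constructible complex $\mathcal G$ on $\partial$ whose germs are computed from $Sol_{\tl X}^\rmE(\SN^\SA)|_\partial$. By the first step, on each $W_\theta\cap \partial$ the complex $\mathcal G$ splits as the direct sum of the corresponding complexes $\mathcal G_i$ for the $\SE^{\varphi_i}_{X\bs D|X}$'s. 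The Euler characteristic $\chi\bigl(\rmR\Gamma(S^1;\cdot)\bigr)$ of a constructible complex on $S^1$ depends only on the rank on each open stratum and the stalk dimensions at singular points---purely numerical data that are additive under direct sums regardless of Stokes transformations twisting the gluing on overlaps. Consequently
\[
{\rm irr}(\SN) = \sum_{i=1}^m \dim H^1 Sol_X(\SE^{\varphi_i}_{X\bs D|X})_D = \sum_{i=1}^m {\rm ord}_D(\varphi_i),
\]
and the final regularity assertion is immediate.

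The main obstacle is the last step: rigorously justifying that $\chi(\rmR\Gamma(\partial;\mathcal G))$ is insensitive to the Stokes transition isomorphisms on overlaps of the sectorial cover. This reduces, via the standard stratum-by-stratum formula for constructible Euler characteristics on $S^1$, to checking that the invertible Stokes automorphisms preserve both the finite constructible stratification of $\partial$ induced by the $\varphi_i$'s and the rank on each stratum, so that the numerical invariants computed sectorially coincide with the global ones.
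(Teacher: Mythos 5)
Your route is genuinely different from the paper's, and it is essentially sound, but one of its two shortcuts is heavier than necessary and the other leaves the real work implicit.

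The paper proves this directly at the level of enhanced ind-sheaves: after shrinking $X$ to a small disc, it covers $X\setminus D$ by finitely many sectors $V_{\theta_1},\dots,V_{\theta_l}$ on which the hypothesis holds, and then repeats the explicit stalk computation at $(D,+\infty)$ from the proof of Proposition~\ref{prop-5}, replacing the single exponential by the sectorwise decomposition and assembling via the Mayer--Vietoris sequences for the cover. Since the quantity being computed is the Euler characteristic of the stalk $Sol_X(\SN)_D$, the alternating sum in Mayer--Vietoris makes the Stokes transition data on overlaps cancel out automatically; no cohomology, only $\chi$, survives. You instead (i) first upgrade the sectorial enhanced-ind-sheaf isomorphisms to local $\SD_{\tl X}^\SA$-module isomorphisms via Corollary~\ref{cor-2}, which already invokes Theorem~\ref{prop-3} (the extended Riemann--Hilbert correspondence) and is strictly stronger input than needed; and (ii) reduce the irregularity to a constructible Euler characteristic on the boundary circle $\partial=\varpi_X^{-1}(D)\simeq S^1$. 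Point~(ii) is morally the same device as the paper's (additivity of $\chi$), just packaged on $S^1$ rather than on the cover of $X\setminus D$. Your observation that $H^0Sol_X(\SN)_D=0$ for a meromorphic connection is correct and clean, but the paper does not need it since it compares $\chi$'s directly.

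About the two places where your argument is not yet tight. The ``obstacle'' you flag at the end is not a real one: once you know $\mathcal G$ is constructible on $S^1$ and is, on each $W_\theta\cap\partial$, isomorphic to the corresponding direct sum, then $\chi\bigl(\rmR\Gamma(S^1;\mathcal G)\bigr)=\sum_\alpha \chi_c(S_\alpha)\,\chi(\mathcal G_{x_\alpha})$ for any stratification adapted to $\mathcal G$; this depends only on stalk ranks on strata and is manifestly additive under local direct-sum decompositions, so there is no Stokes subtlety to check. The place where your proposal is actually thin is the identification $Sol_X(\SN)_D\simeq\rmR\Gamma(\partial;\mathcal G)$. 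The identities you cite give $\bfE\varpi_{X\ast}Sol_{\tl X}^{\rmE}(\SN^{\SA})\simeq\rihom(\pi^{-1}\CC_{X\setminus D},Sol_X^{\rmE}(\SN))$, but passing from there to a constructible complex on $\partial$ whose $\rmR\Gamma$ computes the stalk requires essentially the same ``take $\rmR j_\ast$ and read off the link of $(D,+\infty)$'' calculation that appears in the proof of Proposition~\ref{prop-5}; it is not a bare consequence of proper base change. In other words, your plan works, but the genuine content is hidden inside step~3, which is precisely the stalk computation the paper redoes via Mayer--Vietoris, while the step you single out as the obstacle is the easy one.
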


\begin{proof}
The proof is similar to that of Proposition \ref{prop-5}. 
Shrinking $X$ if necessary we may assume that 
$X= \{ u \in \CC \ | \ | u | < 
\varepsilon \}$ for some $\varepsilon >0$, 
$D= \{ u =0 \}$ and $X \setminus D$ is 
covered by some sectors 
$V_{\theta_1}, V_{\theta_2}, \ldots, V_{\theta_l} 
\subset X \setminus D$ for which we have isomorphisms 
\begin{equation*}
\pi^{-1}\CC_{V_{\theta_j}}\otimes Sol_X^\rmE( \SN )
\simeq
\pi^{-1}\CC_{V_{\theta_j}}\otimes 
\Big( \bigoplus_{i=1}^m 
Sol_X^\rmE (\SE_{X\bs D | X}^{\varphi_i}) \Big) 
\qquad (1 \leq j \leq l).
\end{equation*}
Then by the Mayer-Vietoris exact sequences associated to 
the open covering $X \setminus D= \cup_{j=1}^l 
V_{\theta_j}$ of $X \setminus D$ we can modify the proof of 
Proposition \ref{prop-5} to our case. 
\end{proof}

We have also the following result.
We call a finite sum
\[ \varphi( u )= 
\sum_{a \in \QQ,\hspace{1pt} a\leq0} c_{a} u^a \qquad (c_{a} \in \CC ) \]
a Puiseux polynomial of $u^{-1}$.
Here we regard it as a function on a sector $V\subset\CC\setminus\{0\}$
by fixing the branches of the monomials $u^a$.

\begin{proposition}\label{new prop}
In the situation of Proposition \ref{prop-5},
let $\varpi_X : \tl X\to X$ be the real blow-up of $X$ along $D=\{u=0\}$.
Let $\varphi_1, \ldots, \varphi_m, \psi_1, \ldots, \psi_m$
be Puiseux polynomials of $u^{-1}$ without constant terms.
Assume that for a point $\theta\in S_DX\simeq 
\varpi^{-1}_X(D)\simeq S^1$
there exists its open neighborhood $U$ in $\tl X$ 
on which we have an isomorphism
\[\Phi : \bigoplus_{j=1}^m \SA_{\tl X} e^{\varphi_j}\simto
\bigoplus_{i=1}^m \SA_{\tl X} e^{\psi_i}\]
of $\D_{\tl X}^\SA$-modules,
where $\SA_{\tl X} e^{\varphi_j}\simeq \SA_{\tl X}\ (1\leq j\leq m)$
and $\SA_{\tl X} e^{\psi_i}\simeq \SA_{\tl X}\ (1\leq i\leq m)$
are the natural $\D_{\tl X}^\SA$-modules associated to the functions 
$e^{\varphi_j}\ (1\leq j\leq m)$ and $e^{\psi_i}\ 
(1\leq i\leq m)$, respectively.
Then after reordering $\varphi_j$'s and $\psi_i$'s 
for any $1\leq j\leq m$
we have $\varphi_j = \psi_j$.
\end{proposition}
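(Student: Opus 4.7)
My plan is to reduce the statement to a direct matrix computation combined with an elementary asymptotic fact about Puiseux polynomials. Since $\SA_{\tl X} e^{\varphi_j}$ is the $\SD_{\tl X}^\SA$-module generated by a symbol $e^{\varphi_j}$ satisfying $\partial_u(e^{\varphi_j}) = \varphi_j' e^{\varphi_j}$, the isomorphism $\Phi$ is given by a matrix $A = (a_{ij}) \in M_m(\SA_{\tl X}(U))$ via $\Phi(e^{\varphi_j}) = \sum_i a_{ij} e^{\psi_i}$, and $\SD_{\tl X}^\SA$-linearity forces $\partial_u a_{ij} = (\varphi_j' - \psi_i')\, a_{ij}$. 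Shrinking $U$ so that $U \cap (X \setminus D)$ is connected and simply connected, and fixing single-valued branches of the Puiseux polynomials on it, every solution is of the form $a_{ij} = c_{ij}\, e^{\varphi_j - \psi_i}$ for a unique constant $c_{ij} \in \CC$. The requirement $a_{ij} \in \SA_{\tl X}(U)$ then reads: if $c_{ij} \neq 0$, the function $e^{\varphi_j - \psi_i}$ has moderate growth on $U$; I write this relation $\varphi_j \preceq \psi_i$.

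The key asymptotic lemma is that for a non-zero Puiseux polynomial $\eta$ of $u^{-1}$ without constant term one cannot have both $e^\eta$ and $e^{-\eta}$ in $\SA_{\tl X}(U)$. Indeed, if $c_r u^r$ is the leading term ($r<0$), then $\Re \eta(u) \sim |u|^r\, \Re(c_r e^{ir \arg u})$, which tends to $\pm\infty$ far faster than $\log(1/|u|)$ unless $\Re(c_r e^{ir\alpha})$ vanishes identically on the open arc $U \cap \varpi_X^{-1}(D) \subset S^1$; the latter forces $c_r = 0$, a contradiction (if $\Re(c_r e^{ir\alpha})$ does vanish identically along a single Stokes ray one passes to the next Puiseux exponent and argues inductively). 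Consequently $\varphi \preceq \psi$ and $\psi \preceq \varphi$ together imply $\varphi = \psi$ as Puiseux polynomials.

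To extract the reordering I exploit invertibility of $A$. Since every $\sigma \in S_m$ has $\sum_i \psi_{\sigma(i)} = \sum_i \psi_i$, a direct expansion gives
\[
\det A \;=\; \det(c_{ij}) \cdot e^{\sum_{j} \varphi_j - \sum_{i} \psi_i}.
\]
As $\det A$ is a unit in $\SA_{\tl X}(U)$, one concludes $\det(c_{ij}) \neq 0$, so by the Leibniz formula some $\sigma \in S_m$ has $c_{i, \sigma(i)} \neq 0$ for all $i$, i.e.\ $\varphi_{\sigma(i)} \preceq \psi_i$. Applying exactly the same analysis to $\Phi^{-1}$, whose matrix entries necessarily have the dual form $d_{li}\, e^{\psi_i - \varphi_l}$, produces $\tau \in S_m$ with $\psi_{\tau(l)} \preceq \varphi_l$ for every $l$. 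Chaining yields $\psi_{\tau\sigma(i)} \preceq \varphi_{\sigma(i)} \preceq \psi_i$; following a cycle $(i_1 \cdots i_k)$ of $\tau\sigma$ one gets $\psi_{i_1} \succeq \psi_{i_2} \succeq \cdots \succeq \psi_{i_k} \succeq \psi_{i_1}$, so by the lemma all $\psi_{i_s}$ on the cycle coincide, and the pinching then forces $\varphi_{\sigma(i)} = \psi_i$ for every $i$, which is exactly the desired reordering.

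The only delicate step I expect is the asymptotic lemma: both the absence of constant terms and the genuine openness of $U$ in $\tl X$ (equivalently, of its arc of intersection with $\varpi_X^{-1}(D) \simeq S^1$) are essential, since either a single direction or the presence of a non-zero constant difference $\varphi - \psi$ would leave $e^{\varphi - \psi}$ bounded and wreck the conclusion; the rest of the argument is purely formal linear algebra over the ring $\SA_{\tl X}(U)$.
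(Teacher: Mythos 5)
Your proof is correct, and it reaches the conclusion by a route genuinely different from the paper's. The common skeleton is that $\SD_{\tl X}^\SA$-linearity forces the entries of the matrix of $\Phi$ to be $c_{ij}\,e^{\varphi_j-\psi_i}$ for constants $c_{ij}\in\CC$, that membership in $\SA_{\tl X}(U)$ kills $c_{ij}$ whenever $e^{\varphi_j-\psi_i}$ fails to have moderate growth, and that a nonzero Puiseux polynomial $\eta$ of $u^{-1}$ without constant term cannot have both $e^{\eta}$ and $e^{-\eta}$ of moderate growth over an open arc of $\varpi^{-1}_X(D)\simeq S^1$. From this common ground, the paper shrinks to a narrow sector $V$ on which the real parts of all the differences $\varphi_j-\psi_i$ are pairwise comparable (so that the induced order is total), exhibits the matrix as block upper-triangular, peels off the bottom block using invertibility of the determinant, appeals to $\Phi^{-1}$ to see that this block is actually square, and descends by induction on $m$. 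You instead keep the merely partial ``moderate growth'' preorder $\preceq_U$ on the given $U$, factor $\det A=\det(c_{ij})\cdot e^{\sum_j\varphi_j-\sum_i\psi_i}$ to extract from invertibility a permutation $\sigma$ with $\varphi_{\sigma(i)}\preceq_U\psi_i$ for all $i$, repeat with $\Phi^{-1}$ for a permutation $\tau$, and close the loop with a cycle argument together with the antisymmetry of $\preceq_U$ supplied by the asymptotic lemma. Your version is cleaner in that it avoids the explicit block-matrix bookkeeping and the two-level induction, and it isolates the single analytic input (the moderate-growth dichotomy) from the otherwise purely formal permutation argument; the paper's version makes the shape of the matrix visible at each step, which is arguably easier to verify line by line. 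One small remark: the parenthetical about a ``single Stokes ray'' in your asymptotic lemma is superfluous. Since $U$ is open, $U\cap\varpi^{-1}_X(D)$ is an open arc, and as soon as $c_r\neq 0$ and $r<0$ the zero set of $\alpha\mapsto{\Re}(c_r e^{ir\alpha})$ is discrete, so the arc always contains a direction along which ${\Re}\eta$ (or $-{\Re}\eta$) tends to $+\infty$ polynomially fast in $|u|^{-1}$.
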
 

\begin{proof}
For a sector $V\subset X\setminus D$ we define binary relations
$\underset{V}{\succ}$ and  $\underset{V}{=}$
on the set of Puiseux polynomials of $u^{-1}$ by
\[\varphi \underset{V}{\succ} \psi  \ \Longleftrightarrow\ 
{\Re \varphi} > {\Re \psi} \mbox{ on } V,\ \ 
\ \ \varphi \underset{V}{=} \psi  \ \Longleftrightarrow\ 
{\Re \varphi} = {\Re \psi} \mbox{ on } V.\]
We set also
\[\varphi \underset{V}{\succeq} \psi \ \Longleftrightarrow\ 
\varphi \underset{V}{\succ} \psi  \mbox{ or }
\varphi\underset{V}{=} \psi.\]
Note that the condition ${\Re \varphi} = {\Re \psi} \mbox{ on } V$
for two Puiseux polynomials of $u^{-1}$ without constant terms
implies $\varphi = \psi$. Hence the relation 
$\underset{V}{\succeq}$ defines a partial order 
on the set of such Puiseux polynomials. 
We can choose a point $\theta^{\prime} \in S_DX \cap U$ and its 
small and narrow sectorial neighborhood
$V\subset X\setminus D$ so that
after reordering $\varphi_j\ (1\leq j\leq m)$ 
and $\psi_i\ (1\leq i\leq m)$
we have
\[\varphi_1 \underset{V}{\succeq} \varphi_2 \underset{V}{\succeq} \cdots
\underset{V}{\succeq} \varphi_m\underset{V}{\succeq}\psi_m \
\mbox{ and }
\ \psi_1 \underset{V}{\succeq} \psi_2 \underset{V}{\succeq} \cdots
\underset{V}{\succeq} \psi_m.\]
We can also choose $V$ so that for any $1\leq j\leq m$ and $1\leq i \leq m$
one of the conditions $\varphi_j \underset{V}{\succ}\psi_i,
\varphi_j \underset{V}{=}\psi_i$
and $\varphi_j \underset{V}{\prec}\psi_i$ is satisfied.
Then for any $1\leq j\leq m$, $1\leq i\leq m$ and connected open subset
$W\subset \tl X$ such that $W\cap\varpi^{-1}_X(D) \neq \emptyset$, 
$W\subset \Int\Big(\var{\varpi^{-1}_X(V)}\Big)$
we have an isomorphism
\[\Gamma\big(W; \shom_{\D_{\tl X}^\SA}
(\SA_{\tl X} e^{\varphi_j}, \SA_{\tl X} e^{\psi_i})\big)
\simeq
\begin{cases}
 \CC & (\varphi_j \underset{V}{\preceq}\psi_i)\\
\\
0 & (\varphi_j \underset{V}{\succ}\psi_i).
\end{cases}
\]
Moreover if $\varphi_j \underset{V}{\preceq}
\psi_i$ the $1$-dimensional vector space
\[\Gamma\big(W; \shom_{\D_{\tl X}^\SA}
(\SA_{\tl X} e^{\varphi_j}, \SA_{\tl X} e^{\psi_i})\big)
\simeq \CC\]
is generated by the morphism
\[1\cdot e^{\varphi_j}\longmapsto
e^{\varphi_j-\psi_i}\cdot e^{\psi_i}.
\]
This implies that the restriction of the isomorphism
\[\Phi : \bigoplus_{j=1}^m \SA_{\tl X} e^{\varphi_j}\ 
(\simeq \SA_{\tl X}^{\oplus m})\simto
\bigoplus_{i=1}^m \SA_{\tl X} e^{\psi_i}\
(\simeq \SA_{\tl X}^{\oplus m})\]
to $W$ is represented by the invertible matrix
$$F(u) := (f_{ij}(u))_{1\leq i, j\leq m} \in M_m(\SA_{\tl X}(W)),$$
where we have
\[f_{ij}(u) = C_{ij}e^{\varphi_j-\psi_i}\in \SA_{\tl X}(W)\]
for some constants $C_{ij}\in\CC$ such that $C_{ij}=0$
if $\varphi_j \underset{V}{\succ}\psi_i$.
First let us consider the case where we have
$\varphi_{m-1} \underset{V}{\succ}\varphi_m\ 
(\underset{V}{\succeq} \psi_m)$.
Then the matrix $F(u)$ is a block upper 
triangular matrix of the form
\[F(u) = 
\left(
\begin{array}{cccc}
& \ast\\
\tl F(u) &\vdots \\
&\ast \\ 
0\cdots 0 & \ast
\end{array}
\right)\ \in M_m(\SA_{\tl X}(W)).\]
For it to be invertible,
we have  $C_{m\hspace{1pt}m}\neq0$
and hence $\varphi_m=\psi_m$ on $V$.
Since $\det F(u)\in\SA_{\tl X}(W)$ is invertible in $\SA_{\tl X}(W)$,
the same is true also for $\det {\tl F}(u)\in\SA_{\tl X}(W)$.
Then the morphism
\[{\tl \Phi} : \bigoplus_{j=1}^{m-1} \SA_{\tl X} e^{\varphi_j}\simto
\bigoplus_{i=1}^{m-1} \SA_{\tl X} e^{\psi_i}\]
of $\D_{\tl X}^\SA$-modules on $W$ induced by the matrix $\tl F(u)$
is an isomorphism. Hence the induction on the rank $m$ proceeds.

Next let us consider the general case where we have
\[\varphi_{m-k} \underset{V}{\succ}\varphi_{m-k+1} \underset{V}{=}
\cdots\underset{V}{=}\varphi_m\ (\underset{V}{\succeq} \psi_m)\] 
for some $k\geq 1$. Then the matrix $F(u)$ has the form
\[F(u)=
\begin{array}{lcccccccr}
&&\\
\ldelim({3}{4pt}[]   & &  & \rdelim){3}{1pt}[]\\
&\mbox{\smash{\Huge $\ast$}}& \mbox{\smash{\Huge $\ast$}}\\
& 0\cdots 0  & \ast \cdots \ast\\ 
&\multicolumn{8}{l}{\underbrace{
\hspace{2em}}_{\mbox{$m-k$}}\ \ \underbrace{
\hspace{2em}}_{\mbox{$k$}}}
\end{array}
\hspace{20pt} \in M_m(\SA_{\tl X}(W)).\]
For it to be invertible, we have
\[\big(\varphi_{m-k+1} \underset{V}{=}\varphi_{m-k+2} 
\underset{V}{=}
\cdots\underset{V}{=}\big)\varphi_m \underset{V}{=} \psi_m\]
and 
\[(C_{m\hspace{1pt} m-k+1}, C_{m\hspace{1pt}m-k+2},
\ldots, C_{m\hspace{1pt}m})\neq (0, 0, \ldots, 0).\]
Assume that the condition
\[\psi_{m-k+1} \underset{V}{=}\psi_{m-k+2} \underset{V}{=}
\cdots\underset{V}{=}\psi_m\]
is not satisfied. 
Then $\psi_{m-k+1} \underset{V}{\succ} \psi_m$ and 
the isomorphism
\[\Psi := \Phi^{-1} : \bigoplus_{i=1}^m \SA_{\tl X} 
e^{\psi_i}\simto
\bigoplus_{j=1}^m \SA_{\tl X} e^{\varphi_j}\]
is represented by a block upper triangular matrix
\[G(u) := (g_{ij}(u))_{1\leq i, j \leq m}\in 
M_m(\SA_{\tl X}(W))\]
of the form
\[G(u) =
\begin{array}{rcccccccll}
&&\\
\ldelim({4}{4pt}[]  & & &\rdelim){4}{4pt}[]&
\rdelim\}{2}{10pt}[$m-k$] \\
 &\tl G(u)&\mbox{\smash{\Large $\ast$}}&&\\ 
 &       &      &      &\rdelim\}{2}{10pt}[$k$] \\
&O & O\mbox{\smash{ \Large $\ast$}}\\
&\multicolumn{9}{l}{
\underbrace{\hspace{2em}}_{\mbox{$m-k$}}\ \ 
\underbrace{\hspace{2em}}_{\mbox{$k$}}}
\end{array}
\hspace{55pt}\in M_m(\SA_{\tl X}(W)) .\]
This is a contradiction. Hence we have 
\[\psi_{m-k+1} \underset{V}{=}\psi_{m-k+2} \underset{V}{=}
\cdots\underset{V}{=}\psi_m\]
and the matrix $F(u)$ has the form
\[F(u) = 
\left(
\begin{array}{cc}
\tl F(u) &\mbox{{\large $\ast$}}\\
O & B
\end{array}
\right)\in M_m(\SA_{\tl X}(W)).\]
for an invertible constant matrix $B\in M_k(\CC)$.
Similarly to the case $k=1$ the function $\det {\tl F}
(u)\in\SA_{\tl X}(W)$
is invertible in $\SA_{\tl X}(W)$ and the 
induction on the rank $m$ proceeds.
\end{proof}

By this proposition we obtain the following useful result.
In the situation of Proposition \ref{prop-5} 
let $\N$ be a meromorphic connection
of rank $m$ along the point $D=\{u =0\}\subset X$.
Denote by $\widehat{\SO}_{D|X}$ the formal completion
of $\SO_X$ along $D\subset X$. Then by 
the Hukuhara-Levelt-Turrittin theorem
after a ramification the formalization
\[\widehat{\N} := \widehat{\SO}_{D|X}\otimes_{\SO_{X, D}}\N_D
\in\Mod(\widehat{\SO}_{D|X})\]
of $\N$ along $D$ admits a decomposition by some Puiseux polynomials
$\psi_1(u), \ldots, \psi_m(u)$ of $u^{-1}$ without constant terms.
We call them the exponential factors of $\N$.

\begin{corollary}
In the situation as above, 
assume that for Puiseux 
polynomials $\varphi_1(u), \ldots, \varphi_m(u)$
 of $u^{-1}$ without constant terms and a point $\theta\in 
S_DX\simeq\varpi^{-1}_X(D)\simeq S^1$
there exists its open neighborhood $U$ 
in $\tl X$ on which we have an isomorphism
\[\N^\SA \simeq \bigoplus_{j=1}^m \SA_{\tl X} e^{\varphi_j}\]
of $\D_{\tl X}^\SA$-modules.
Then after reordering $\varphi_j$'s and $\psi_i$'s for any $1\leq j\leq m$
we have $\varphi_j = \psi_j$. 
Namely $\varphi_1(u), \ldots, \varphi_m(u)$ are 
the exponential factors of $\N$ counting with 
multiplicities. 
\end{corollary}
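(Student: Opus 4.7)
\medskip
\noindent\textbf{Proof proposal.} The plan is to deduce this corollary from Proposition \ref{new prop} by producing a second decomposition of $\N^\SA$ on a neighborhood of $\theta$ in $\tl X$, this one built directly from the exponential factors $\psi_1,\ldots,\psi_m$ of $\N$.

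First, I would invoke the sectorial version of the Hukuhara-Levelt-Turrittin theorem (the classical Hukuhara-Turrittin-Sibuya-Majima asymptotic lifting of the formal decomposition, or equivalently the reformulation in terms of $\D^\SA_{\tl X}$-modules that appears in \cite[Lemma 6.1.2]{DK16} and in the discussion surrounding Definition \ref{def-A}). After the ramification for which $\psi_1(u),\ldots,\psi_m(u)$ become Laurent polynomials, this theorem provides, for each direction in $S_DX\simeq\varpi_X^{-1}(D)\simeq S^1$, a small open sectorial neighborhood in $\tl X$ on which $\N^\SA$ splits as the direct sum of the rank-one $\D^\SA_{\tl X}$-modules $\SA_{\tl X} e^{\psi_i}$. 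In particular, choosing such a neighborhood of the given $\theta$ and intersecting it with the neighborhood $U$ of the hypothesis, after shrinking $U$ we obtain an open neighborhood of $\theta$ in $\tl X$ (still denoted by $U$) on which \emph{both} isomorphisms
\[
\N^\SA|_U\simeq\bigoplus_{j=1}^m\SA_{\tl X}e^{\varphi_j}\Big|_U
\qquad\text{and}\qquad
\N^\SA|_U\simeq\bigoplus_{i=1}^m\SA_{\tl X}e^{\psi_i}\Big|_U
\]
hold simultaneously.

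Next, composing these two isomorphisms yields an isomorphism
\[
\Phi:\bigoplus_{j=1}^m\SA_{\tl X}e^{\varphi_j}\Big|_U
\simto
\bigoplus_{i=1}^m\SA_{\tl X}e^{\psi_i}\Big|_U
\]
of $\D^\SA_{\tl X}$-modules on $U$. This is exactly the situation of Proposition \ref{new prop}, which produces (after reordering both families) the equalities $\varphi_j=\psi_j$ as Puiseux polynomials of $u^{-1}$ without constant terms. Since by construction the $\psi_j$ are the exponential factors of $\N$ (with their multiplicities, via the rank of the corresponding $\SA_{\tl X}e^{\psi_j}$-summand), this is the required conclusion.

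The only nontrivial input beyond the previous proposition is the existence of the sectorial decomposition of $\N^\SA$ in terms of the exponential factors; depending on which convention the reader takes for ``exponential factor'', this is either essentially the definition (if one defines them as the exponentials occurring in the sectorial decomposition on $\tl X$ after ramification) or it requires invoking the classical asymptotic-existence theorem that lifts the formal Hukuhara-Levelt-Turrittin decomposition of $\widehat{\N}$ to an actual decomposition on sufficiently narrow sectors. In either case the substantive uniqueness content lies entirely in Proposition \ref{new prop}, so no further obstacles arise.
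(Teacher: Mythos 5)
Your proposal is correct and follows essentially the same route as the paper: invoke the sectorial Hukuhara--Levelt--Turrittin decomposition to get $\N^\SA\simeq\bigoplus_i\SA_{\tl X}e^{\psi_i}$ on a (possibly shrunk) neighborhood of $\theta$, compose with the hypothesized decomposition to obtain the isomorphism $\Phi$, and conclude by Proposition \ref{new prop}. The paper's proof is just a terser version of this two-step reduction.
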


\begin{proof}
By the Hukuhara-Levelt-Turrittin theorem after 
shrinking $U$ we obtain an isomorphism
\[\N^\SA\simeq\bigoplus_{i=1}^m \SA_{\tl X}e^{\psi_i}\]
of $\D_{\tl X}^\SA$-modules on $U$.
Then the assertion follows from Proposition \ref{new prop}.
\end{proof}

By this corollary and Corollary \ref{cor-2} we 
obtain the following result.

\begin{theorem}\label{new-thm}
In the situation as above, assume that 
for convergent Laurent Puiseux series
$\varphi_1, \ldots, \varphi_m$ of $u$ and 
a point $\theta \in S_DX 
\simeq \varpi_X^{-1}(D)\simeq S^1$ 
there exists its sectorial neighborhood $V_\theta\subset X\bs D$
for which we have an isomorphism
\begin{equation*}
\pi^{-1}\CC_{V_\theta}\otimes Sol_X^\rmE( \SN )
\simeq
\bigoplus_{j=1}^m\EE_{V_\theta|X}^{{\Re}\varphi_j}.
\end{equation*}
Then after reordering $\varphi_j$'s and $\psi_i$'s for any $1\leq j\leq m$
the pole part of $\varphi_j$ coincides with $\psi_j$.
In particular, we have
\[ {\rm irr} ( \SN ) = 
\sum_{i=1}^m {\rm ord}_D( \varphi_i ). \]
\end{theorem}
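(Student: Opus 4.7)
The plan is to reduce Theorem \ref{new-thm} to the immediately preceding Corollary (which identifies the exponential factors of $\SN$ from the $\SD_{\tl X}^\SA$-module structure near a point $\theta$) by way of the sectorial-to-formal comparison Corollary \ref{cor-2}. The first step is to replace each convergent Laurent Puiseux series $\varphi_j$ by its pole part. Write $\varphi_j = \varphi_j^{\rm pole} + \varphi_j^{\rm hol}$, where $\varphi_j^{\rm pole}$ is a Puiseux polynomial of $u^{-1}$ without constant term and $\varphi_j^{\rm hol}$ is a convergent Puiseux series in $u$ with non-negative exponents. After shrinking $V_\theta$ so that $\varphi_j^{\rm hol}$ converges and is bounded on the closure, the shift $t \mapsto t - \Re \varphi_j^{\rm hol}$ inside the defining inductive limit
\[\EE_{V_\theta|X}^{\Re \varphi_j} = \Q\Bigl(``\underset{a \to +\infty}{\varinjlim}"\ \CC_{\{t + \Re \varphi_j \geq a\}}\Bigr)\]
yields, after tensoring with $\pi^{-1}\CC_{V_\theta}$, an isomorphism $\pi^{-1}\CC_{V_\theta}\otimes \EE_{V_\theta|X}^{\Re\varphi_j} \simeq \pi^{-1}\CC_{V_\theta}\otimes \EE_{V_\theta|X}^{\Re\varphi_j^{\rm pole}}$.

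Combined with Theorem \ref{thm-4}(vi), the hypothesis of the theorem therefore rewrites as
\[\pi^{-1}\CC_{V_\theta}\otimes Sol_X^\rmE(\SN)\simeq \pi^{-1}\CC_{V_\theta}\otimes Sol_X^\rmE\Bigl(\bigoplus_{j=1}^m \SE_{X\setminus D|X}^{\varphi_j^{\rm pole}}\Bigr).\]
Applying Corollary \ref{cor-2} to this identity produces an open neighborhood $W \subset \tl X$ of $\theta$ and an isomorphism
\[\SN^\SA|_W \simeq \bigoplus_{j=1}^m \SA_{\tl X}\, e^{\varphi_j^{\rm pole}}\Big|_W\]
of $\SD_{\tl X}^\SA$-modules, because the right-hand $\SD_X$-module has $\SD_{\tl X}^\SA$-realization exactly $\bigoplus_j \SA_{\tl X} e^{\varphi_j^{\rm pole}}$. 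Now invoke the Corollary immediately preceding Theorem \ref{new-thm}: it asserts precisely that whenever $\SN^\SA$ admits such a direct-sum presentation on a neighborhood of $\theta$ in $\tl X$, the Puiseux polynomials appearing are the exponential factors $\psi_i$ of $\SN$ counted with multiplicity. Hence, after reordering, $\varphi_j^{\rm pole} = \psi_j$ for every $1 \leq j \leq m$, which is the first assertion of the theorem.

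Finally, the irregularity formula is immediate: one has ${\rm ord}_D(\varphi_j) = {\rm ord}_D(\varphi_j^{\rm pole}) = {\rm ord}_D(\psi_j)$ by definition of the pole order, and the classical Hukuhara-Levelt-Turrittin formula (which in the present sectorial setting also follows from Proposition \ref{prop-nn7} applied after a ramification trivializing the $\psi_i$'s) identifies $\sum_{i=1}^m {\rm ord}_D(\psi_i)$ with ${\rm irr}(\SN)$, giving ${\rm irr}(\SN)=\sum_{i=1}^m {\rm ord}_D(\varphi_i)$.

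The only delicate step is the first one, verifying that the holomorphic tail $\varphi_j^{\rm hol}$ can be discarded at the level of the sectorial enhanced ind-sheaf; all other ingredients are essentially citations. The key point is the boundedness of $\Re \varphi_j^{\rm hol}$ on (a possibly shrunk) $V_\theta$, which ensures that the two filtrations $\{t + \Re\varphi_j \geq a\}$ and $\{t + \Re \varphi_j^{\rm pole} \geq a\}$ are cofinal and therefore define the same object in $\BEC(\I\CC_X)$ after tensoring with $\pi^{-1}\CC_{V_\theta}$.
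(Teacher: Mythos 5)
Your overall strategy matches the paper's intended (and terse) proof: discard the holomorphic tail of each $\varphi_j$ at the level of enhanced ind-sheaves, apply the sectorial Riemann--Hilbert comparison (Corollary \ref{cor-2}) to convert the hypothesis into an isomorphism of $\SD_{\tl X}^\SA$-modules near $\theta$, and then invoke the unnamed corollary preceding the theorem (built on Proposition \ref{new prop}) to identify the pole parts with the HLT exponential factors $\psi_i$ counted with multiplicity. The first step (cofinality of the filtrations, so the bounded tail $\varphi_j^{\rm hol}$ can be dropped after tensoring with $\pi^{-1}\CC_{V_\theta}$) and the final irregularity computation are correct.

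However, there is a gap in the middle step that you have not flagged. You write the right-hand side as $Sol_X^\rmE\bigl(\bigoplus_j \SE_{X\setminus D|X}^{\varphi_j^{\rm pole}}\bigr)$ and feed this into Corollary \ref{cor-2}, but both that corollary and Theorem \ref{thm-4}(vi) require the exponents to be genuine meromorphic functions $\varphi\in\SO_X(\ast D)$, i.e.\ single-valued. The $\varphi_j^{\rm pole}$ are Puiseux polynomials of $u^{-1}$, which in general have fractional exponents and are therefore multi-valued; $\SE_{X\setminus D|X}^{\varphi_j^{\rm pole}}$ is then not a $\SD_X$-module and the cited results do not apply as stated. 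The missing step is a ramification $p:X'\to X$, $u=(u')^n$, chosen so that all $\varphi_j^{\rm pole}\circ p$ (and $\psi_i\circ p$) become honest Laurent polynomials in $u'$. One lifts the hypothesis via $\bfD p^\ast$ and Theorem \ref{thm-4}(ii), applies your argument (including Corollary \ref{cor-2}) on $X'$ to get $(\bfD p^\ast\SN)^\SA|_{W'}\simeq\bigoplus_j\SA_{\tl{X'}}e^{\varphi_j^{\rm pole}\circ p}|_{W'}$, and then applies Proposition \ref{new prop} on $X'$, using that the exponential factors of $\bfD p^\ast\SN$ are the $\psi_i\circ p$. This yields $\varphi_j^{\rm pole}\circ p=\psi_j\circ p$ after reordering, hence $\varphi_j^{\rm pole}=\psi_j$. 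Once this ramification is inserted the proof is complete and agrees with the paper's citation of Corollary \ref{cor-2} together with the preceding corollary.
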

\noindent Note that Proposition \ref{prop-nn7} 
is a very special case of this theorem. 
Theorem \ref{new-thm} could be deduced also 
from \cite[Lemma 6.3.4]{DK17}. 
Our arguments can be applied to exponential 
factors of meromorphic connections 
in higher dimensions as follows. 
Let $X$ be a complex manifold and 
$D \subset X$ a normal crossing divisor in it. 
Let us take local coordinates 
$(u_1, \ldots, u_l, v_1, \ldots, v_{d_X-l})$ 
of $X$ such that $D= \{ u_1 u_2 \cdots u_l=0 \}$. 
We define a partial order $\leq$ on the 
set $\ZZ^l$ by 
\[ a \leq a^{\prime} \ \Longleftrightarrow 
\ a_i \leq a_i^{\prime} \ (1 \leq i \leq l).\] 
Then for a meromorphic function $\varphi\in\SO_X(\ast D)$
on $X$ along $D$ by using its Laurent expansion
\[ \varphi = \sum_{a \in \ZZ^l} c_a( \varphi )(v) \cdot 
u^a \ \in \SO_X(\ast D) \]
with respect to $u_1, \ldots, u_{l}$
we define its order 
${\rm ord}( \varphi ) \in \ZZ^l$ to be 
the minimum 
\[ \min \Big( \{ a \in \ZZ^l \ | \
c_a( \varphi ) \not= 0 \} \cup \{ 0 \} \Big) \]
if it exists. In \cite[Chapter 5]{Mochi11} 
Mochizuki defined the notion of 
good sets of irregular values on $(X,D)$ to be 
finite subsets $S \subset \SO_X(\ast D)/ \SO_X$ 
satisfying some properties. 
We do not recall here its precise definition. 
Just recall that for 
any $\varphi \not= \psi$ in such a set $S$ 
the order ${\rm ord}( \varphi - \psi ) \in \ZZ^l$ 
is defined and the leading term 
$c_{{\rm ord}( \varphi - \psi )} ( \varphi - \psi ) 
(v)$ of its Laurent expansion does not vanish 
at any point $v \in Y= \{ u_1=u_2= \cdots =u_l=0 \} 
\subset D$. Let us call a finite subset 
$S \subset \SO_X(\ast D)/ \SO_X$ 
satisfying only this property a quasi-good 
set of irregular values on $(X,D)$. 

\begin{proposition}\label{new-new prop}
In the situation as above,
let $\varpi_X : \tl X\to X$ be the 
real blow-up of $X$ along the normal 
crossing divisor $D$. Assume that 
$\varphi_1, \ldots, \varphi_m$ 
$($resp. $\psi_1, \ldots, \psi_m)$ 
$\in \SO_X(\ast D)/ \SO_X$ form 
a quasi-good set of irregular values on $(X,D)$. 
Assume also that for a point $\theta \in  
\varpi^{-1}_X( Y ) \subset \varpi^{-1}_X( D )$ 
there exists its open neighborhood $U$ in $\tl X$ 
on which we have an isomorphism
\[\Phi : \bigoplus_{j=1}^m \SA_{\tl X} e^{\varphi_j}\simto
\bigoplus_{i=1}^m \SA_{\tl X} e^{\psi_i}\]
of $\D_{\tl X}^\SA$-modules. 
Then after reordering $\varphi_j$'s and $\psi_i$'s 
for any $1\leq j\leq m$
we have $\varphi_j = \psi_j$.
\end{proposition}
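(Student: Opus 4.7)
The approach is to transplant the matrix argument of Proposition \ref{new prop} from one dimension to the normal-crossing setting, with the quasi-good hypothesis taking over from the explicit Puiseux structure of $u^{-1}$. First I would shrink $U$ around $\theta$ so that it is of the form $U = \Int\Big(\var{\varpi_X^{-1}(V)}\Big)$ for an open sector $V \subset X \setminus D$ along $D$. Using the quasi-good hypothesis on each of $\{\varphi_j\}_j$ and $\{\psi_i\}_i$, together with a generic choice of the angular coordinates of $\theta$ inside the torus fibre $\varpi_X^{-1}(\varpi_X(\theta)) \simeq (S^1)^l$, I would arrange that on $V$ the real parts of the differences $\varphi_j - \varphi_{j'}$, $\psi_i - \psi_{i'}$, and $\varphi_j - \psi_i$ all have a determinate asymptotic sign as $u \to 0$, yielding a total preorder $\underset{V}{\succeq}$ on the multiset $\{\varphi_j\}_j \cup \{\psi_i\}_i$. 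The quasi-good condition is used here to ensure that the leading Laurent monomial of each difference is controlled by a coefficient function that does not vanish on a neighborhood of $\varpi_X(\theta) \in Y$, so its real part does not change sign on a sufficiently narrow sector.

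Next I would reproduce the key $\shom$ calculation of Proposition \ref{new prop}: for any connected open $W \subset \tl X$ with $W \cap \varpi_X^{-1}(D) \neq \emptyset$ and $\var{W} \subset \Int\Big(\var{\varpi_X^{-1}(V)}\Big)$, the section space
\[\Gamma\big(W;\ \shom_{\D_{\tl X}^\SA}(\A_{\tl X}\, e^{\varphi},\ \A_{\tl X}\, e^{\psi})\big) \simeq \begin{cases} \CC & (\varphi \underset{V}{\preceq} \psi), \\ 0 & (\varphi \underset{V}{\succ} \psi), \end{cases}\]
is computed exactly as in the one-dimensional case, the nonzero generator being $1 \cdot e^{\varphi} \mapsto e^{\varphi - \psi} \cdot e^{\psi}$, since $e^{\varphi-\psi}$ belongs to $\A_{\tl X}(W)$ precisely when $\Re(\varphi-\psi)$ is bounded above on $W$. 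After reordering both families in $\underset{V}{\succeq}$-decreasing fashion, the isomorphism $\Phi$ is therefore represented on $W$ by a block upper-triangular matrix $F(u,v) = (C_{ij}\, e^{\varphi_j - \psi_i}) \in M_m(\A_{\tl X}(W))$ with $C_{ij} \in \CC$ and $C_{ij} = 0$ whenever $\varphi_j \underset{V}{\succ} \psi_i$.

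The induction on $m$ then proceeds exactly as in the proof of Proposition \ref{new prop}: the lowest diagonal block of $F$ corresponds to a $\underset{V}{=}$-equivalence class in the combined multiset, must be an invertible constant matrix by invertibility of $F$, and therefore pairs each $\varphi_j$ in that class with some $\psi_i$ having the same real part on $V$; by the quasi-good hypothesis any two elements of $\SO_X(\ast D)/\SO_X$ with identical $\Re$ on $V$ must coincide (their difference would otherwise admit a nonvanishing leading Laurent monomial whose real part does not vanish identically on $V$), whence $\varphi_j = \psi_j$ for those indices. Removing the matched rows and columns yields an induced isomorphism between the complementary rank-$(m-1)$ summands, whose matrix is the corresponding principal minor $\tl F$ with invertible determinant in $\A_{\tl X}(W)$, and the induction closes.

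The principal obstacle is the genericity argument needed in the first step to make the cross differences $\varphi_j - \psi_i$ asymptotically comparable, since the quasi-good assumption is imposed only on each family separately. This should be handled by choosing $\theta$ to avoid a finite union of hypersurfaces in $\varpi_X^{-1}(Y) \cap U$ cut out by the vanishing of the leading cross-difference coefficients $c_{{\rm ord}(\varphi_j - \psi_i)}(\varphi_j - \psi_i)(v)$; alternatively one may invoke Mochizuki's theory to replace the union $\{\varphi_j\}_j \cup \{\psi_i\}_i$ by a genuine quasi-good set after a preliminary sequence of blow-ups, reducing to the situation already treated.
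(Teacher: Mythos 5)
The gap in your proposal is exactly the one you flag, and neither of your two remedies closes it. Your strategy needs a total preorder $\underset{V}{\succeq}$ on the combined multiset $\{\varphi_j\}_j \cup \{\psi_i\}_i$, but the quasi-good hypothesis only controls pairwise differences \emph{within} each family. For a cross-family difference $\varphi_j - \psi_i$ the order ${\rm ord}(\varphi_j-\psi_i)$ may simply fail to exist: its Laurent support need not have a unique coordinatewise minimum in $\ZZ^l$. Then no narrowing of the sector and no genericity choice of $\theta$ makes $\Re(\varphi_j-\psi_i)$ asymptotically one-signed (already $u_1^{-1}-u_2^{-1}$ changes sign on every sector along $\{u_1u_2=0\}$). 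Your Remedy~1 presupposes a leading term that may not be defined; your Remedy~2 (Mochizuki resolution) could in principle produce a common quasi-good family upstairs, but you would then have to transport the $\D_{\tl X}^{\SA}$-isomorphism along the blow-up and descend the conclusion, a nontrivial detour you do not carry out.

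The paper's proof avoids needing cross-family comparability altogether. It orders each family separately, writes $\Phi$ as the matrix $F=(C_{ij}\,e^{\varphi_j-\psi_i})$ exactly as you do (the $\shom$ computation is an ODE in the $\SA_{\tl X}$-coefficient, so the Hom is $\CC$ or $0$ regardless of comparability), and then compares $\varphi_m$ with $\psi_m$ by a \emph{weight-vector/curve} device rather than by a sectorial total order. If $\varphi_m\neq\psi_m$ in $\SO_X(\ast D)/\SO_X$, one chooses $b\in\ZZ_{>0}^l$ so that the $b$-minimal exponent in the Laurent support of $\varphi_m-\psi_m$ is unique and has negative weight $L<0$, and restricts to the curve $K=\{(s^{b_1}e^{i\theta_1},\ldots,s^{b_l}e^{i\theta_l},v)\ |\ 0<s<1\}\subset V$. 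Along $K$ the $b$-minimal monomial dominates, so $\Re(\varphi_m-\psi_m)|_K\to\pm\infty$; replacing $\Phi$ by $\Phi^{-1}$ if necessary it goes to $+\infty$, and since $\Re\varphi_j\geq\Re\varphi_m$ on $V$ the entire bottom row of $F$ must vanish, contradicting invertibility. Hence $\varphi_m=\psi_m$ and the induction of Proposition~\ref{new prop} takes over. Your skeleton (the $\shom$ computation, the matrix form, the induction) matches the paper's, but the crucial comparison step is where your argument breaks and the paper supplies a genuinely different idea.
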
 

\begin{proof}
The proof being very similar to that of Proposition 
\ref{new prop}, we shall freely use the 
notations etc. in it. As in the case $l=1$ 
we can choose a sector $V\subset X\setminus D$ 
along $D$ such that $\var{\varpi^{-1}_X(V)} \subset U$ so that 
after reordering $\varphi_j\ (1\leq j\leq m)$ 
and $\psi_i\ (1\leq i\leq m)$ 
we have 
\[\varphi_1 \underset{V}{\succeq} \varphi_2 \underset{V}{\succeq} \cdots
\underset{V}{\succeq} \varphi_m\ 
\mbox{ and }\ \psi_1 \underset{V}{\succeq} \psi_2
 \underset{V}{\succeq} \cdots
\underset{V}{\succeq} \psi_m.\]
Suppose that $\varphi_m \not= \psi_m$ in 
$\SO_X(\ast D)/ \SO_X$. Then there exists a 
weight vector $b=(b_1, \ldots, b_l) \in \ZZ_{>0}^l$ 
such that 
\[ L: = \min \{ \langle a, b \rangle \ | \ a \in \ZZ^l, 
c_a( \varphi_m - \psi_m ) \not=0 \} <0 \]
and the set $\{ a \in \ZZ^l \ | \ 
c_a( \varphi_m - \psi_m ) \not=0, \langle a, b \rangle 
=L \}$ consists of a single point. 
Define a subset $K \subset V$ of the sector $V$ by 
\[ K= \{ (s^{b_1}e^{i \theta_1}, \ldots, 
s^{b_l}e^{i \theta_l}, v_1, \ldots, v_{d_X-l}) \in V 
 \ | \ 0 < s <1 \} \subset V. \] 
Then the restriction of the function 
$\Re ( \varphi_m - \psi_m)$ to $K$ tends to 
$+ \infty$ or $- \infty$ as $s \rightarrow +0$. 
Replacing $\Phi$ with $\Phi^{-1}$ if necessary, 
we may assume that the restriction of the function 
$e^{\varphi_m - \psi_m}$ to $K$ 
increases rapidly as $s \rightarrow +0$. 
Hence we have $C_{m1}=C_{m2}= \cdots = C_{mm}=0$. 
This contradicts to our assumption that 
$\Phi$ is an isomorphism. Then we can 
continue the arguments 
in the proof of Proposition \ref{new prop}. 
\end{proof}

\noindent In the situation of Proposition 
\ref{new-new prop} let $\N$ be a meromorphic connection 
of rank $m$ on $X$ along the the normal 
crossing divisor $D$. If it admits an unramified good lattice 
(at a point $v \in Y= \{ u_1=u_2= \cdots =u_l=0 \} 
\subset D$) in the sense of Mochizuki \cite{Mochi11}, 
we call the elements of $\SO_X(\ast D)/ \SO_X$ 
appearing in the formal decomposition 
the exponential factors of $\N$. Recall that they 
form a good set of irregular values on $(X,D)$. Then by 
Propositions \ref{prop-2} and 
\ref{new-new prop} we obtain the following result. 

\begin{theorem}\label{new-new-thm}
In the situation as above, assume that 
the meromorphic connection $\N$ 
admits an unramified good lattice. Assume also that 
$\varphi_1, \ldots, \varphi_m \in 
\SO_X(\ast D)/ \SO_X$ form 
a quasi-good set of irregular values on $(X,D)$ 
and for a point 
$\theta \in  \varpi^{-1}_X( Y ) \subset \varpi^{-1}_X( D )$ 
there exists its sectorial neighborhood $V_\theta \subset X\bs D$
for which we have an isomorphism
\begin{equation*}
\pi^{-1}\CC_{V_\theta}\otimes Sol_X^\rmE( \SN )
\simeq
\bigoplus_{j=1}^m\EE_{V_\theta|X}^{{\Re}\varphi_j}.
\end{equation*}
Then $\varphi_1, \ldots, \varphi_m \in 
\SO_X(\ast D)/ \SO_X$ are the 
exponential factors of $\N$ 
counting with multiplicities. 
\end{theorem}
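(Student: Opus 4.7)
The plan is to combine the sectorial Riemann--Hilbert type result of Proposition \ref{prop-2} with the higher-dimensional Hukuhara--Levelt--Turrittin theorem (in the unramified case provided by Mochizuki's unramified good lattice) and then invoke the rigidity statement of Proposition \ref{new-new prop} to compare the two resulting decompositions.

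First, by Theorem \ref{thm-4}(vi) the assumption rewrites as
\[
\pi^{-1}\CC_{V_\theta}\otimes Sol_X^{\rmE}(\SN)
\simeq
\pi^{-1}\CC_{V_\theta}\otimes Sol_X^{\rmE}\Big(\bigoplus_{j=1}^{m}\SE_{X\setminus D\mid X}^{\varphi_j}\Big).
\]
Choose an open subset $W\subset\tl{X}$ containing $\theta$ such that $W\cap\varpi_X^{-1}(D)\neq\emptyset$ and $\var{W}\subset\mathrm{Int}(\var{\varpi_X^{-1}(V_\theta)})$; this is possible because $\theta\in\varpi_X^{-1}(Y)$ lies in the closure of $\varpi_X^{-1}(V_\theta)$ by construction of sectorial neighborhoods. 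Applying Proposition \ref{prop-2} to $\SM_1=\SN$ and $\SM_2=\bigoplus_{j=1}^{m}\SE_{X\setminus D\mid X}^{\varphi_j}$, I obtain an isomorphism
\[
\SN^{\SA}\big|_{W}\;\simeq\;\bigoplus_{j=1}^{m}\bigl(\SE_{X\setminus D\mid X}^{\varphi_j}\bigr)^{\SA}\Big|_{W}
\;\simeq\;\bigoplus_{j=1}^{m}\SA_{\tl{X}}\,e^{\varphi_j}\big|_{W}
\]
of $\SD_{\tl{X}}^{\SA}$-modules, where the last identification is the standard one (see \cite[Lemma 7.2.2 and Proposition 9.1.3]{DK16}).

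Next, let $\psi_1,\dots,\psi_m\in\SO_X(\ast D)/\SO_X$ be the exponential factors of $\SN$ in the sense of the unramified good lattice hypothesis; they form by definition a good, hence in particular a quasi-good, set of irregular values on $(X,D)$. Mochizuki's asymptotic analysis \cite{Mochi11} (the higher-dimensional Hukuhara--Levelt--Turrittin theorem in the unramified setting) guarantees that on some open neighborhood $W'\subset\tl{X}$ of $\theta$ there is an isomorphism
\[
\SN^{\SA}\big|_{W'}\;\simeq\;\bigoplus_{i=1}^{m}\SA_{\tl{X}}\,e^{\psi_i}\big|_{W'}
\]
of $\SD_{\tl{X}}^{\SA}$-modules. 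On the intersection $W\cap W'$, which is still an open neighborhood of $\theta\in\varpi_X^{-1}(Y)$, I then obtain an isomorphism
\[
\bigoplus_{j=1}^{m}\SA_{\tl{X}}\,e^{\varphi_j}\;\simeq\;\bigoplus_{i=1}^{m}\SA_{\tl{X}}\,e^{\psi_i}
\]
of $\SD_{\tl{X}}^{\SA}$-modules. Since $\{\varphi_j\}$ is quasi-good by hypothesis and $\{\psi_i\}$ is good (hence quasi-good), Proposition \ref{new-new prop} applies and yields, after reordering, $\varphi_j=\psi_j$ for every $1\leq j\leq m$, which is exactly the conclusion.

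The main obstacle is the compatibility of the two decompositions of $\SN^{\SA}$: the one coming sectorially from the enhanced solution complex via Proposition \ref{prop-2}, and the one coming from Mochizuki's asymptotic theorem for unramified good lattices. Once both are in place on a common open subset of $\tl{X}$ containing $\theta$, the purely algebraic rigidity encoded in Proposition \ref{new-new prop} finishes the proof; so the delicate point is simply arranging that the sectorial neighborhood $V_\theta$ and the open neighborhood furnished by Mochizuki can be shrunk to a common open subset $W\subset\tl{X}$ to which both formal/asymptotic descriptions apply, which is automatic because $\theta\in\varpi_X^{-1}(Y)$ lies in the closure of $\varpi_X^{-1}(V_\theta)$.
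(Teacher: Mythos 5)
Your proof is correct and follows exactly the route the paper intends: rewrite the hypothesis via Theorem \ref{thm-4}(vi), pass to a $\SD_{\tl X}^{\SA}$-module isomorphism on a neighborhood of $\theta$ via Proposition \ref{prop-2}, compare with the decomposition of $\SN^{\SA}$ furnished by Mochizuki's unramified good lattice (asymptotic lifting), and conclude by the rigidity of Proposition \ref{new-new prop}. The paper merely states ``by Propositions \ref{prop-2} and \ref{new-new prop},'' and your argument is a faithful expansion of that outline.
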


\noindent Obviously by ramification maps we can 
generalize this theorem to meromorphic connection 
admitting good lattices in the 
sense of Mochizuki \cite{Mochi11}. 

 The following lemma will be used in 
the proof of Theorem \ref{thm-2}. 
\begin{lemma}\label{newlea} 
 Let $M$ be the complex vector space $\CC^n$ with the standard coordinates
 $x=(x_1, x_2, \ldots, x_n)$ and $N, H\subset M$ its 
linear subspaces defined by
 $$N:=\{x_1=x_2=0\}\subset H:=\{x_1=0\}\subset M.$$ 
 For $\F\in\BDC(\CC_{M\bs H})$ assume that there exists sufficiently small
 $0 < \varepsilon\ll 1$ such that its micro-support 
$SS(\F)\subset T^\ast(M\bs H)$
 does not intersect
\begin{equation*}
U_\varepsilon =
\left\{(x, \xi)\in T^\ast(M\bs H)\ \left|\ 
\xi_1\in\CC,\ \sqrt{\sum_{i=3}^{n}|\xi_i|^2 }\ <\ \varepsilon |\xi_2|
\right.\right\}.
\end{equation*}
 Let $\rho : M^N\to M$ be the {\rm (}complex{\rm )} blow-up of $M$ along $N$, 
 $E\subset M^N$ its exceptional divisor and
 $H'\subset M^N$ the proper transform of $H$ in it. 
 Let $f : M^N\to \PP^1$ be the holomorphic map 
induced by the meromorphic function
 $\displaystyle \frac{x_2}{x_1}$ on $M$ and 
 $\iota : M\bs H\xhookrightarrow{\ \ \ }M^N$ the inclusion map.
 Then for any point $Q\in E\bs H'$ we have the vanishing
 $${\rmR}\Gamma_{({\Re}f)^{-1}(\{t\in\RR | 
t\geq{\Re}f(Q)\})}(\iota_!\F)_Q\simeq 0.$$
 \end{lemma}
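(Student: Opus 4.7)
The plan is to reduce the statement to the microlocal non-characteristicity assertion $(Q, d(\Re f)(Q)) \notin SS(\iota_!\F)$, from which the Kashiwara--Schapira micro-Morse vanishing (\cite[Cor.~5.4.19]{KS90}) immediately gives the desired conclusion. I work in the affine chart of $M^N$ with coordinates $(x_1, y=x_2/x_1, x_3, \ldots, x_n)$, in which $\rho$ reads $(x_1, y, x_3, \ldots, x_n) \mapsto (x_1, x_1 y, x_3, \ldots, x_n)$, the exceptional divisor is $E = \{x_1 = 0\}$, the proper transform $H'$ is disjoint from the chart, and $f = y$. Thus $Q = (0, y_0, x_3^0, \ldots, x_n^0)$ for some $y_0 \in \CC$, and $d(\Re f)(Q)$ is represented (up to a positive scalar) by the complex cotangent vector $\eta_Q = (0, 1, 0, \ldots, 0)$, nonzero only in the $y$-slot.

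Since $\iota$ is an open embedding whose complement near $Q$ coincides with the smooth complex hypersurface $E$, a standard micro-support estimate bounds $SS(\iota_!\F)$ at $Q$ by (the Kashiwara--Schapira sum of) the conormal bundle $N^*_E M^N|_Q$ and the closure in $T^*M^N$ of $SS(\F)$, where $SS(\F)\subset T^*(M\setminus H)$ is transported via the isomorphism $M^N\setminus(E\cup H') \cong M\setminus H$ induced by $\rho$. The conormal $N^*_E M^N|_Q$ is the set of covectors $(\eta_1, 0, 0, \ldots, 0)$ annihilating $T_Q E = \{v_1 = 0\}$, and $\eta_Q$ is plainly not of this form. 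It therefore suffices to exclude every decomposition $\eta_Q = \sigma + \nu$ with $\nu = (\nu_1, 0, \ldots, 0) \in N^*_E M^N|_Q$ and $\sigma$ a limit $\lim \eta^{(n)}$ obtained from a sequence $(Q_n, \eta^{(n)}) \in T^* U$ (with $U := M^N \setminus (E\cup H')$) whose $\rho$-image lies in $SS(\F)$ and with $Q_n \to Q$.

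The cotangent lift under $\rho$ reads $(\xi_1, \xi_2, \xi_3, \ldots, \xi_n) = (\eta_1 - y\eta_y/x_1,\ \eta_y/x_1,\ \eta_3,\ \ldots,\ \eta_n)$. Setting $\sigma = \eta_Q - \nu = (-\nu_1, 1, 0, \ldots, 0)$ and plugging in a sequence $(Q_n, \eta^{(n)}) \to (Q, \sigma)$ with $x_1^{(n)} \to 0$ and $y^{(n)} \to y_0$, one finds $\xi_1^{(n)} \sim -\nu_1 - y_0/x_1^{(n)}$, $\xi_2^{(n)} \sim 1/x_1^{(n)}$, and $\xi_i^{(n)} \to 0$ for $i \geq 3$. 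In particular $\sum_{i\geq 3}|\xi_i^{(n)}|^2 \to 0$ while $|\xi_2^{(n)}| \to +\infty$, so the inequality $\sqrt{\sum_{i\geq 3}|\xi_i^{(n)}|^2} < \varepsilon|\xi_2^{(n)}|$ holds for every sufficiently large $n$. Equivalently $(\rho(Q_n), \xi^{(n)}) \in U_\varepsilon$ for $n \gg 0$, contradicting the hypothesis $SS(\F) \cap U_\varepsilon = \emptyset$. Hence $(Q, \eta_Q) \notin SS(\iota_!\F)$, which completes the argument. The principal subtlety is the precise micro-support bound for $\iota_!\F$ across the complex (real codimension-two) hypersurface $E$; if one wishes to sidestep it, one can instead apply the non-characteristic deformation lemma (\cite[Prop.~2.7.2]{KS90}) to the family $V\cap\{\Re f < c\}$ with $V$ a small polydisc neighborhood of $Q$, verifying on the moving boundary exactly the cotangent-lift computation above.
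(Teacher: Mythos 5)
Your proposal takes a genuinely different route from the paper's, and it is worth separating the parts that are right from the single real gap.

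The cotangent-lift computation is correct and is essentially the same calculation the paper performs in the chart $W$ (there the authors simply assert that the transported micro-support avoids $U'_\varepsilon$, which is your $\xi_2 = \eta_y/x_1$, $\xi_i = \eta_i$ estimate). Your reduction to the microlocal Morse lemma $(Q, d(\Re f)(Q)) \notin SS(\iota_!\F)$ is also a legitimate strategy. The difference is in \emph{how} one gets control of $SS(\iota_!\F)$ at $Q$. The paper does not attempt a direct estimate: it passes to the total real blow-up $\varpi_{\mathrm{tot}}$ of the chart along $E$, identifies $W\setminus E$ with $\{t>0\}$, rewrites the stalk as $\rmR\Gamma$ over the fiber $(\varpi_{\mathrm{tot}})^{-1}(Q)\simeq S^1$, and proves the vanishing at each point of that circle via \cite[Theorem 6.3.1]{KS90}. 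The point of this maneuver is that after the real blow-up the ``boundary'' becomes a \emph{real} hypersurface $\{t=0\}$, where the normal-cone micro-support estimates of KS90 for $j_!$ are effective.

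That is precisely where your argument has a gap. You invoke, as ``standard'', a bound of the form $SS(\iota_!\F)|_Q \subset N^*_E M^N|_Q \mathbin{\hat{+}} \overline{SS(\F)}$. For an open subset whose complement is a smooth \emph{complex} hypersurface (real codimension two), the KS90 estimates built from the normal cone $N^*(\Omega)$ degenerate: at a point of $E$ the strict normal cone of $\Omega = M^N\setminus E$ is $\{0\}$, so its polar is the entire cotangent fiber and the resulting bound is vacuous. The sharper bound you want (with the conormal of $E$ rather than the whole fiber) is true --- one checks it by hand for $\CC_{X\setminus E}$ --- but it is not a direct citation, and establishing it in general is exactly what the total real blow-up buys: it replaces the codimension-two boundary $E$ with the real hypersurface $\{t=0\}$, where the standard $j_!$ estimate is both available and non-vacuous. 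Your fallback via the non-characteristic deformation lemma does not sidestep this either, since verifying its hypothesis on the ``moving boundary'' again requires control of $SS(\iota_!\F)$ near $E$. So a rigorous version of your argument would need to carry out essentially the paper's real-blow-up reduction, or prove the complex-hypersurface bound separately.
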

 
 \begin{proof}
 There exists an affine open subset $W\simeq\CC^n$ of $M^N$ with
 the coordinates $y=(y_1, y_2, \ldots, y_n)$ such that 
the restriction of the morphism
 $\rho : M^N\to M$ to it is given by
 $$(y_1, y_2, \ldots, y_n)\longmapsto (x_1, x_2, 
\ldots, x_n)=(y_1, y_1y_2, y_3,\ldots, y_n)$$
 and $E\bs H'=E\cap W=\{y\in W\ |\ y_1=0\}.$
 Moreover the restriction $f|_W$ of $f$ to it is given by $(f|_W)(y)=y_2$.
 By the isomorphism $W\bs E\simeq M\bs H$ we regard 
$\F$ as an object of $\BDC(\CC_{W\bs E})$.
 Then it is easy to see that its micro-support does not intersect
 \begin{equation*}
U'_\varepsilon =
\left\{(y, \eta)\in T^\ast(W\bs E)\ \left|\ 
\eta_1\in\CC, \sqrt{\sum_{i=3}^n |\eta_i|^2}\ <\ 
\varepsilon\frac{| \eta_2 |}{| y_1 |}
\right.\right\}.
\end{equation*}
In particular,  the micro-support of the 
restriction of $\F\in\BDC(\CC_{W\bs E})$ to
 $(W\bs E)\cap \{|y_1|<1\}$ does not intersect
\begin{equation*}
V_\varepsilon =
\left\{(y, \eta)\in T^\ast((W\bs E)\cap \{|y_1|<1\})\ \left|\ 
\eta_1\in\CC, \sqrt{\sum_{i=3}^n |\eta_i|^2}\ <\ 
\varepsilon| \eta_2 |
\right.\right\}.
\end{equation*} 
Let $\varpi_{\rm tot} : \tl{W}^{\rm tot}_E=
 \{\zeta\in\CC\ |\ |\zeta|=1\}\times \RR_t\times\CC^{n-1}\to W=\CC^n$
 be the total real blow-up of $W=\CC^n$ along $E\cap W=\{y_1=0\}$
 of  D'Agnolo-Kashiwara (see \cite[\S 7.1]{DK16}) defined by 
 $$(\zeta, t, y_2, \ldots, y_n)\mapsto (t\zeta, y_2, \ldots, y_n)$$
 and identify $W\bs E$ with the open subset $\{t>0\}$ 
of $\tl{W}^{\rm tot}_E$.
 Let $j : W\bs E\xhookrightarrow{\ \ \ }\tl{W}_E^{\rm tot}$ 
be the inclusion map and for a point
 $Q=(0, b_2, \ldots, b_n)\in E\bs H'=E\cap W=\{y_1=0\}$ set
 $$Z= ({\Re}f)^{-1}(\{t\in\RR\ |\ t\geq {\Re}f(Q)\})
=\{y\in W\ |\ {\Re}y_2\geq {\Re}b_2\}.$$
 Then we have isomorphisms
 \begin{align*}
 {\rmR}\Gamma_Z(\iota_!\F)_Q &\simeq
 ({\rmR}\Gamma_Z{\rmR}(\varpi_{\rm tot})_\ast(j_!\F))_Q\\
 &\simeq
  ({\rmR}(\varpi_{\rm tot})_\ast {\rmR}\Gamma_{(
\varpi_{\rm tot})^{-1}(Z)}(j_!\F))_Q\\
  &\simeq
  {\rmR}\Gamma((\varpi_{\rm tot})^{-1}(Q) ; {\rmR}\Gamma_{(
\varpi_{\rm tot})^{-1}(Z)}(j_!\F)).
 \end{align*}
 Hence it suffices to show the vanishing
 $$({\rmR}\Gamma_{(\varpi_{\rm tot})^{-1}(Z)}(j_!\F))_P\simeq0$$
 at each point $P\in(\varpi_{\rm tot})^{-1}(Q)\simeq\{
\zeta\in\CC\ |\ |\zeta|=1\}\simeq S^1$.
By using the above estimate of the 
micro-support of the restriction of $\F\in\BDC(\CC_{W\bs E})$
 to $(W\bs E)\cap \{|y_1|<1\}$ we can show 
it by \cite[Theorem 6.3.1]{KS90}.
 This completes the proof.
 \end{proof}

\section{Main Theorems}\label{sec:9}
First we recall Fourier transforms of algebraic $\SD$-modules.
Let $X=\CC_z^N$ be a complex vector space 
and $Y=\CC_w^N$ its dual.
We regard them as algebraic varieties and 
use the notations $\SD_X$ and $\SD_Y$ for 
the rings of ``algebraic'' differential operators on them.
Denote by $\Modcoh(\SD_X)$ (resp. $\Modhol(\SD_X)$, 
$\Modrh (\SD_X)$) 
the category of coherent (resp. holonomic, 
regular holonomic) $\SD_X$-modules. 
Let $W_N := \CC[z, \partial_z]\simeq\Gamma(X; \SD_X)$ and
$W^\ast_N := \CC[w, \partial_w]\simeq\Gamma(Y; \SD_Y)$
be the Weyl algebras over $X$ and $Y$, respectively.
Then by the ring isomorphism
\begin{equation*}
W_N\simto W^\ast_N\hspace{30pt}
(z_i\mapsto-\partial_{w_i},\ \partial_{z_i}\mapsto w_i) 
\end{equation*}
we can endow a left $W_N$-module $M$ with 
a structure of a left $W_N^\ast$-module.
We call it the Fourier transform of $M$
and denote it by $M^\wedge$.
For a ring $R$ we denote by $\Mod_f(R)$
the category of finitely generated $R$-modules.
Recall that for the affine algebraic varieties $X$
and $Y$ we have the equivalences of categories
\begin{align*} 
\Modcoh(\SD_X)
&\simeq
\Mod_f(\Gamma(X; \SD_X)) = \Mod_f(W_N),\\
\Modcoh(\SD_Y)
&\simeq
\Mod_f(\Gamma(Y; \SD_Y)) = \Mod_f(W^\ast_N)
\end{align*}
(see e.g. \cite[Propositions 1.4.4 and 1.4.13]{HTT08}).
For a coherent $\SD_X$-module $\SM\in\Modcoh(\SD_X)$
we thus can define its Fourier 
transform $\SM^\wedge\in\Modcoh(\SD_Y)$.
It follows that we obtain an equivalence of categories
\begin{equation*}
( \cdot )^\wedge : \Modhol(\SD_X)\simto \Modhol(\SD_Y)
\end{equation*}
between the categories of holonomic $\SD$-modules.
Let
\[X\overset{p}{\longleftarrow}X\times 
Y\overset{q}{\longrightarrow}Y\]
be the projections.
Then by Katz-Laumon \cite{KL85}, for a holonomic $\SD_X$-module
$\SM\in\Modhol(\SD_X)$ we have an isomorphism
\[\SM^\wedge\simeq
\bfD q_\ast(\bfD p^\ast\SM\Dotimes \SO_{X\times Y}
e^{- \langle z, w \rangle }),\]
where $\bfD p^\ast, \bfD q_\ast, \Dotimes$ are
the operations for algebraic $\SD$-modules
and $\SO_{X\times Y}
e^{- \langle z, w \rangle }$ is the integral connection
of rank one on $X\times Y$ associated to the canonical
paring $\langle \cdot ,  \cdot \rangle : X\times Y\to\CC$. 
In particular the right hand side is concentrated in degree zero.
Let $\var{X}\simeq\PP^N$ (resp. $\var{Y}\simeq\PP^N$)
be the projective compactification of $X$ (resp. $Y$).
By the inclusion map $i_X : 
X=\CC^N\xhookrightarrow{\ \ \ }\var{X}=\PP^N$
we extend a holonomic $\SD_X$-module $\SM\in\Modhol(\SD_X)$
on $X$ to the one $\tl{\SM} := i_{X\ast}\SM\simeq\bfD i_{X\ast}\SM$
on $\var{X}$.
Denote by $\var{X}^{\an}$ the underlying complex manifold
of $\var{X}$ and define the analytification
$\tl{\SM}^{\an}\in\Modhol(\SD_{\var{X}^{\an}})$
of $\tl{\SM}$ by
$\tl{\SM}^{\an} = \SO_{\var{X}^{\an}}
\otimes_{\SO_{\var{X}}}\tl{\SM}$. 
Then we set 
\[Sol_{\var X}^\rmE(\tl\SM) := 
Sol_{\var{X}^{\an}}^\rmE(\tl{\SM}^{\an})
\in\BEC(\I\CC_{\var{X}^{\an}}).\]
Note that by Theorem \ref{thm-4} (v) there exists an isomorphism
\[Sol_{\var X}^\rmE( \tl{\SM} )
\simeq
\pi^{-1}\CC_{X^{\an}}\otimes Sol_{\var X}^\rmE( \tl{\SM}).\]
Similarly for the Fourier transform $\SM^\wedge\in\Modhol(\SD_Y)$ 
 we define $Sol_{\var Y}^\rmE(\tl{\SM^\wedge})
 \in\BEC(\I\CC_{\var{Y}^{\an}})$.

\begin{remark}
Let $j_X : X_{\infty}^{\an} =(X^{\an}, {\var X}^{\an}) \to 
{\var X}^{\an}$ 
be the canonical morphism of bordered spaces and 
\[Sol^{\rmE}_{X_\infty} : \BDC_{\rm hol}(\D_X) 
\to\BDC_{\rm hol} (\D_{X_{\infty}^{\an}})
\to \BEC(\I \CC_{X_{\infty}^{\an}})\]
the functor defined 
in \cite[Definition 4.14]{KS16-2}. 
Then for $\SM\in\Modhol(\SD_X)$ 
there exists an isomorphism 
\[Sol^{\rmE}_{X_\infty}(\M)\simeq {\rmE}j_X^{-1}
Sol^{\rmE}_{\var X}(\tl \M). \]
\end{remark}

Let
\[\var{X}^{\an}\overset{\var{p}}{\longleftarrow}
\var{X}^{\an}\times\var{Y}^{\an}\overset{\var{q}}{\longrightarrow}
\var{Y}^{\an}\]
be the projections. 
Then the following theorem is essentially due to
Kashiwara-Schapira \cite{KS16-2} and D'Agnolo-Kashiwara \cite{DK17}. 
For $F\in\BEC(\I\CC_{\var{X}^{\an}})$ we set
\[{}^\rmL F := \bfE\var{q}_{*}(\bfE\var{p}^{-1}F
\Potimes\EE_{X\times Y|\var{X}\times\var{Y}}^{
-{\Re} \langle  z, w \rangle }[N])
\in\BEC(\I\CC_{\var{Y}^{\an}} ) \]
(here we denote $X^{\an}\times Y^{\an}$ etc.
by $X\times Y$ etc. for short) and call 
it the Fourier-Sato (Fourier-Laplace) transform of $F$. 

\begin{theorem}\label{thm-1}
For $\SM\in\Mod_{\rm hol}(\SD_X)$ there exists an isomorphism
\[Sol_{\var Y}^\rmE(\tl{\SM^\wedge})
\simeq{}^\rmL Sol_{\var X}^\rmE(\tl{\SM}).\]
\end{theorem}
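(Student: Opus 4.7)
The plan is to lift the Katz-Laumon integral formula $\SM^\wedge \simeq \bfD q_\ast(\bfD p^\ast \SM \Dotimes \SO_{X\times Y} e^{-\langle z,w\rangle})$ to the projective compactification $\var X \times \var Y$ and then translate through the enhanced solution functor using the operational identities in Theorem \ref{thm-4}. First I would establish the compactified formula
\[
\tl{\SM^\wedge}\simeq \bfD \var q_\ast\bigl(\bfD \var p^\ast \tl\SM \Dotimes \SE^{-\langle z,w\rangle}_{X\times Y | \var X \times \var Y}\bigr).
\]
Writing $j : X \times Y \hookrightarrow \var X \times \var Y$ for the open embedding and $D := \var X \times \var Y \setminus X \times Y$, the commutative square $i_Y \circ q = \var q \circ j$ yields
\[
\tl{\SM^\wedge} = \bfD \var q_\ast \bfD j_\ast\bigl(\bfD p^\ast \SM \Dotimes \SO_{X\times Y} e^{-\langle z,w\rangle}\bigr).
\]
The inner $\bfD j_\ast$ is then computed using the standard algebraic identification $\bfD j_\ast j^\ast \SK \simeq \SK(\ast D)$ for open embeddings of algebraic varieties, together with the fact that $\SE^{-\langle z,w\rangle}_{X\times Y | \var X \times \var Y}$ is already its own $(\ast D)$-localization.

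The second step is a largely mechanical application of Theorem \ref{thm-4}. Since $\var X$ is compact, the support of $\bfD \var p^\ast \tl\SM \Dotimes \SE^{-\langle z,w\rangle}_{X\times Y | \var X \times \var Y}$ is contained in $\supp(\tl\SM) \times \var Y$ and is therefore proper over $\var Y$, so part (iii) applies and gives
\[
Sol^{\rmE}_{\var Y}(\tl{\SM^\wedge})[d_{\var Y}] \simeq \bfE \var q_\ast\, Sol^{\rmE}_{\var X \times \var Y}\bigl(\bfD \var p^\ast \tl\SM \Dotimes \SE^{-\langle z,w\rangle}_{X\times Y | \var X \times \var Y}\bigr)[d_{\var X\times \var Y}].
\]
Next, part (iv) distributes $Sol^{\rmE}$ over the $\Dotimes$ to produce a convolution $\Potimes$, part (ii) transforms the first factor into $\bfE \var p^{-1} Sol^{\rmE}_{\var X}(\tl\SM)$, and part (vi) identifies the second factor as $\EE^{-\Re\langle z,w\rangle}_{X\times Y | \var X \times \var Y}$. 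Using $d_{\var X \times \var Y} - d_{\var Y} = N$ to absorb the degree shift, I would then obtain
\[
Sol^{\rmE}_{\var Y}(\tl{\SM^\wedge}) \simeq \bfE \var q_\ast\bigl(\bfE \var p^{-1} Sol^{\rmE}_{\var X}(\tl\SM) \Potimes \EE^{-\Re\langle z,w\rangle}_{X\times Y | \var X \times \var Y}[N]\bigr) = {}^{\rmL}Sol^{\rmE}_{\var X}(\tl\SM).
\]

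The principal technical point will be the verification of the compactified formula in the first step: one must confirm that $\bfD j_\ast$ interacts correctly with the twisted tensor product (the compatibility essentially saying that the algebraic direct image and the meromorphic extension along $D$ agree when applied to modules already carrying an $\SO(\ast D)$-structure), and one must also check holonomicity and goodness of the relevant modules on $\var X \times \var Y$ so that the hypotheses of Theorem \ref{thm-4} are satisfied. Goodness is automatic on the projective compactification, while the tensor-product identification is a standard but delicate piece of bookkeeping near $D$.
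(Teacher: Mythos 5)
Your proposal is correct and follows essentially the same route as the paper: establish the compactified Katz--Laumon formula $\tl{\SM^\wedge}\simeq \bfD\var q_\ast\bigl(\bfD\var p^\ast\tl\SM\Dotimes\SE^{-\langle z,w\rangle}_{X\times Y|\var X\times\var Y}\bigr)$ on $\var X\times\var Y$, then translate it into the statement about enhanced solution complexes via the compatibilities in Theorem \ref{thm-4}. The paper packages the first step into citations of \cite[Theorem 1.7.3, Corollary 1.7.5]{HTT08} (compatibility of algebraic direct/inverse images) and \cite[Proposition 4.7.2]{HTT08} (the GAGA-type comparison of analytification with proper direct image, which your write-up passes over silently but which is needed to feed the algebraic formula into the analytic functor $Sol^\rmE$); you instead spell out the $\bfD j_\ast$-bookkeeping and the localization argument directly, which is fine since $\SE^{-\langle z,w\rangle}_{X\times Y|\var X\times\var Y}$ is indeed its own $(\ast D)$-localization and tensoring with it renders the product localized.
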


\begin{proof}
Let
\[\var{X}\overset{\var{p}}{\longleftarrow}
\var{X}\times\var{Y}\overset{\var{q}}{\longrightarrow}
\var{Y}\]
be the projections and 
$i_Y : Y=\CC^N\xhookrightarrow{\ \ \ }\var{Y}=\PP^N$, 
$i_{X\times Y} : X\times Y\xhookrightarrow{\ \ \ } \var X\times\var Y$ 
the inclusion maps of algebraic varieties. 
Then by \cite[Theorem 1.7.3 and Corollary 1.7.5]{HTT08}
for $\tl{\M^\wedge}\simeq i_{Y\ast}(\M^\wedge)$
we obtain an isomorphism
\[\tl{\M^\wedge}\simeq \bfD \var q_\ast
\big(\bfD \var p^{\ast}\tl{\M}\Dotimes
i_{X\times Y\ast}\SO_{X\times Y}
e^{-\langle z, w\rangle}\big).\]
Moreover we have 
$\big(i_{X\times Y\ast}\SO_{X\times Y}
e^{-\langle z, w\rangle}\big)^{\an}
\simeq
\E_{X\times Y|\var X\times \var Y}^{-\langle z, w\rangle}$.
Then the assertion follows from \cite[Proposition 4.7.2]{HTT08}
and Theorem \ref{thm-4}.
\end{proof}

Form now on, we focus our attention on Fourier transforms 
of regular holonomic $\SD_X$-modules.
For such a $\SD_X$-module $\SM$, 
by \cite[Theorem 7.1.1]{HTT08} we have 
an isomorphism $Sol_{\var X} (\tl{\SM}) 
\simeq i_{X!}Sol_X(\SM)$, where the right hand side 
$i_{X!} Sol_X(\SM)\in D^b(\CC_{\var{X}^{\an}})$ 
is the extension by zero of the classical solution complex 
of $\SM$ to $\var{X}^{\an}$. Moreover 
by \cite[Proposition 9.1.3 and Corollary 9.4.9]{DK16} 
there exists an isomorphism 
\[Sol_{\var X}^\rmE(\tl{\SM})
\simeq
\CC_{\var{X}^{\an}}^\rmE\Potimes\e(i_{X!}Sol_X(\SM)).\]
For an enhanced sheaf $F\in\BEC(\CC_{\var{X}^{\an}})$
on $\var{X}^{\an}$ we define its Fourier-Sato 
(Fourier-Laplace) transform
${}^\rmL F\in\BEC(\CC_{\var{Y}^{\an}})$ by
\[{}^\rmL F := \bfE\var{q}_{*}(\bfE\var{p}^{-1}F\Potimes
\rmE_{X\times Y|\var{X}\times\var{Y}}^{
-{\Re} \langle z, w \rangle}[N])
\in\BEC(\CC_{\var{Y}^{\an}}).\]
Since we have
\[{}^\rmL(\CC^\rmE_{\var{X}^{\an}}\Potimes( \cdot ))
\simeq
\CC^\rmE_{\var{Y}^{\an}}\Potimes{}^\rmL( \cdot )\]
it suffices to study the Fourier-Sato transform of 
the enhanced sheaf $\e(i_{X!}Sol_X(\SM))
\in\BEC( \CC_{\var{X}^{\an}})$
on $\var{X}^{\an}$. 
Fix a regular holonomic $\SD_X$-module $\SM$ 
and denote by $\ch(\SM)\subset T^\ast X\simeq X\times Y$ 
its characteristic variety. 

\begin{definition}\label{defi-1}
We define a (Zariski) open subset $\Omega \subset Y=\CC_w^N$ by:
\begin{equation*}
w\in\Omega  \quad \Longleftrightarrow \quad 
\begin{cases}
\mbox{
there exists an open neighborhood $U$ of $w$ in Y}\\
\mbox{such that the restriction $q^{-1}(U)\cap\ch(\SM)\to U$}\\
\mbox{of $q : X\times Y\to Y$ is 
an unramified covering.
}
\end{cases}
\end{equation*}
\end{definition}

Since $\ch(\SM)$ is $\CC^\ast$-conic,
$\Omega\subset Y=\CC_w^N$ is also $\CC^\ast$-conic.
Note that $\Omega$ is dense in $Y$.
Denote by $k\geq0$ the degree of the covering
$q^{-1}(\Omega)\cap\ch(\SM)\to\Omega$.
For a point $w\in\Omega\subset Y=\CC^N$,
let $\{\mu_1(w), \ldots, \mu_k(w)\} =
q^{-1}(w)\cap\ch(\SM)\subset T^\ast X$ be its
fiber by $q^{-1}(\Omega)\cap\ch(\SM)\to \Omega$. 
Then in a neighborhood of each point $\mu_i(w)\in
q^{-1}(w)\cap\ch(\SM)$ the characteristic variety
$\ch(\SM)$ is smooth and hence there exists a
(locally closed) complex submanifold $S_i\subset X$
such that $\mu_i(w)\in \ch(\SM) = T^\ast_{S_i}X$.
For $1\leq i\leq k$ set
\[\alpha_i(w) := p(\mu_i(w))\in S_i\subset X=\CC^N.\]
By our definition of $\Omega\subset Y=\CC^N$, 
it is easy to see that the restriction of the linear function
\[\ell(w) : X=\CC^N \rightarrow \CC
\hspace{30pt}
(z \mapsto \langle  z, w \rangle  ) \]
to $S_i$ has a non-degenerate (complex Morse)
critical point at $\alpha_i(w)\in S_i$
(see e.g. Kashiwara-Schapira \cite[Lemma 7.2.2]{KS85}).
For $1\leq i\leq k$ denote by
$m_i>0$ the multiplicity of $\SM$ at $\mu_i(w)\in\ch(\SM)$.

\begin{theorem}\label{thm-2}
Let $U\subset\Omega\subset Y=\CC^N$
be a connected and simply connected open subset of $\Omega$.
Then we have an isomorphism
\[\pi^{-1}\CC_U\otimes
\Big(Sol_{\var Y}^\rmE( \tl{\SM^\wedge} )\Big)
\simeq
\bigoplus_{i=1}^k
\pi^{-1}\CC_U\otimes
\Big(\underset{a\to+\infty}{\inj}\ 
\CC_{\{t\geq {\Re} \langle \alpha_i(w), 
w \rangle +a\}}^{\oplus m_i}\Big)\]
in $\BEC(\I\CC_{\var Y^\an})$.
\end{theorem}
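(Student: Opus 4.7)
By Theorem~\ref{thm-1} it suffices to compute the Fourier--Sato transform ${}^\rmL Sol_{\var X}^\rmE(\tl\SM)$ after tensoring with $\pi^{-1}\CC_U$. Since $\SM$ is regular holonomic, the classical Riemann--Hilbert correspondence together with \cite[Proposition~9.1.3 and Corollary~9.4.9]{DK16} gives
$$Sol_{\var X}^\rmE(\tl\SM) \simeq \CC^\rmE_{\var{X}^{\an}} \Potimes \e\bigl(i_{X!}\SF_0\bigr), \qquad \SF_0 := Sol_X(\SM),$$
and since ${}^\rmL$ intertwines $\CC^\rmE_{\var{X}^{\an}}\Potimes(\cdot)$ with $\CC^\rmE_{\var{Y}^{\an}}\Potimes(\cdot)$, the computation reduces to analyzing the Fourier--Sato transform of the enhanced sheaf $\e(i_{X!}\SF_0)$, namely
$$\bfE\var{q}_{\ast}\Bigl(\bfE\var{p}^{-1}\e(i_{X!}\SF_0)\Potimes \rmE^{-\Re\langle z,w\rangle}_{X\times Y\mid\var{X}\times\var{Y}}[N]\Bigr).$$
Unravelling the definitions, modulo $\pi^{-1}\BDC$ this object is represented on $\var{Y}^{\an}\times\RR_\infty$ by a proper direct image of a complex built from $(\var p)^{-1} i_{X!}\SF_0$ and the characteristic functions of the half-spaces $\{t\geq \Re\langle z,w\rangle\}$.

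The bulk of the proof is then a Morse-theoretic stalk computation of this object on $\pi^{-1}U$, modeled on the argument of Esterov--Takeuchi~\cite[Theorem~5.5]{ET15}. Fix $w_0\in U$ and consider the real-valued function $\Re\ell_{w_0}(z):=\Re\langle z,w_0\rangle$ on $X$. By the very definition of $\Omega$ together with the identification of $\ch(\SM)$ with the micro-support of the perverse sheaf $\SF := \SF_0[N]$ (see e.g.\ \cite[Chapter~11]{KS90}), the critical points of $\Re\ell_{w_0}$ with respect to the constructible stratification attached to $\SF$ are exactly $\alpha_1(w_0),\dots,\alpha_k(w_0)$, each of which is a non-degenerate complex Morse critical point of $\ell_{w_0}|_{S_i}$. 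A local Morse calculation at each $\alpha_i(w_0)$ then produces the contribution
$$\underset{a\to+\infty}{\inj}\ \CC_{\{t\geq\Re\langle\alpha_i(w),w\rangle+a\}}^{\oplus m_i},$$
where the rank $m_i$ is identified with the multiplicity of $\SM$ along $T^\ast_{S_i}X$ at $\mu_i(w_0)$ via the microlocal index formula for the local Morse cohomology of $\SF$. Simple connectedness of $U$ allows one to label the sheets of the covering $q^{-1}(\Omega)\cap\ch(\SM)\to\Omega$ consistently, so that the holomorphic functions $\alpha_i(w)$ are single-valued on $U$ and the pointwise decomposition globalizes to the claimed direct sum of $k$ summands over $U$.

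The main obstacle is to control the behaviour at the hyperplane at infinity $H_\infty:=\var X\setminus X$. Although $i_{X!}\SF_0$ vanishes on $H_\infty$, the sublevel sets $\{t\geq\Re\langle z,w\rangle\}$ accumulate on $H_\infty$ as $z\to H_\infty$ in directions where $\Re\langle z,w\rangle\to+\infty$, so the $!$-extension could in principle contribute spurious boundary terms to $\bfE\var q_\ast$. To rule this out I would blow up $\var X$ (and correspondingly $\var X\times\var Y$) along a suitable subvariety of $H_\infty$ so that the meromorphic function $\langle z,w\rangle/h$, with $h$ a local equation of $H_\infty$, extends to a holomorphic map to $\PP^1$, and then apply Lemma~\ref{newlea} to $\SF$: regularity of $\SM$ forces the micro-support of $\SF$ to avoid the conic neighbourhood appearing in the hypothesis of that lemma, which delivers the required vanishing of $\rmR\Gamma_{\{\Re f\geq c\}}(\iota_!\SF)$ at each point of the exceptional divisor lying outside the proper transform of $H_\infty$. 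Combining these vanishings at infinity with the finite Morse-theoretic contributions at $\alpha_1(w),\dots,\alpha_k(w)$ yields the asserted isomorphism. This blow-up/micro-support step at infinity is the technically most delicate ingredient, and is where regularity of $\SM$ enters in an essential way.
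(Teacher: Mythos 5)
Your proposal follows essentially the same route as the paper's proof: reduce via Theorem \ref{thm-1} to the Fourier--Sato transform of $\e(i_{X!}Sol_X(\SM))$, compute stalks as compactly supported cohomology of sublevel sets of $\Re\langle z,w\rangle$, do microlocal Morse theory at the points $\alpha_i(w)$ using $\ms{\SF}=\ch\SM$ and \cite[Theorem 9.5.2]{KS85}, control the boundary at infinity by blowing up along $H(w)$ and applying Lemma \ref{newlea}, and globalize over $U$ by simple connectedness. One small point of attribution: the micro-support avoidance needed in Lemma \ref{newlea} is a consequence of the hypothesis $w\in\Omega$ (which forces $\Lambda_{\ell(w)}$ to meet $\ch\SM$ over only finitely many points), while regularity of $\SM$ enters earlier, in the identification $Sol_{\var X}^\rmE(\tl\SM)\simeq\CC^\rmE_{\var X^{\an}}\Potimes\e(i_{X!}Sol_X(\SM))$ via $Sol_{\var X}(\tl\SM)\simeq i_{X!}Sol_X(\SM)$ for regular holonomic modules.
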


\begin{proof}
It suffices to prove that there exists an isomorphism
\[{}^\rmL(\e( i_{X!} Sol_X(\SM)))
\simeq
\bigoplus_{i=1}^k\CC_{\{t\geq {\Re}
\langle \alpha_i(w), w \rangle\}}^{\oplus m_i}\]
of enhanced sheaves on $U \subset \Omega \subset Y$.
Let 
\[X\times\RR_s\overset{p_1}{\longleftarrow}
(X\times\RR_s)\times (Y\times\RR_t)
\overset{p_2}{\longrightarrow}Y\times\RR_t\]
be the projections.
Then by D'Agnolo-Kashiwara \cite[Lemma 7.2.1]{DK17}
on $Y \subset \var{Y}$ we have an isomorphism
\begin{equation*}
{}^\rmL\big(\e( i_{X!} Sol_X(\SM))\big)
\simeq \Q 
\bigl(\rmR p_{2!}(p_1^{-1}(\CC_{\{s\geq0\}}\otimes
\pi^{-1}Sol_X(\SM))\otimes\CC_{\{t-s-{\Re}
\langle z, w \rangle \geq0\}}[N])\bigr),
\end{equation*}
where $\Q : \BDC(\CC_{\var{Y}^{\an}\times\RR})
\to\BEC(\CC_{\var{Y}^{\an}})$ is the quotient functor.
For a point $(w, t)\in Y^{\an}\times\RR$
we have also isomorphisms
\begin{align*}
&\bigl(\rmR p_{2!}(p_1^{-1}(\CC_{\{s\geq0\}}\otimes
\pi^{-1}Sol_X(\SM))
\otimes\CC_{\{t-s-{\Re} \langle z, w 
\rangle \geq0\}}[N])\bigr)_{(w, t)}\\
&\simeq
\rmR\Gamma_c(\{(z, s)\in X^{\an}
\times\RR\ |\ t-s-{\Re} \langle z, w \rangle \geq0, 
s\geq0\}; \pi^{-1}Sol_X(\SM)[N])\\
&\simeq
\rmR\Gamma_c( X^{\an} ; \rmR \pi_!(
\CC_{\{ t-s-{\Re} \langle z, w \rangle \geq 0, s\geq 0 \}} 
\otimes 
\pi^{-1} Sol_X(\SM)[N])) \\
&\simeq
\rmR\Gamma_c( X^{\an} ; (\rmR \pi_! 
\CC_{\{ t-s-{\Re} \langle z, w \rangle \geq 0, s\geq 0 \}} ) 
\otimes Sol_X(\SM)[N]) \\
&\simeq 
\rmR\Gamma_c(\{  z \in X^{\an}\ |\ {\Re}
\langle  z, w \rangle \leq t\}; Sol_X(\SM)[N]), 
\end{align*}
where we used 
\[ \rmR \pi_! 
\CC_{\{ t-s-{\Re} \langle z, w \rangle \geq 0, s\geq 0 \}} 
\simeq \CC_{ \{ {\Re} \langle  z, w \rangle \leq t \} }
\]
in the last isomorphism. 
Fix $w\in U \subset \Omega\subset Y=\CC^N$. 
Then by an argument similar to the one in the proof 
of Esterov-Takeuchi \cite[Theorem 5.5]{ET15} 
we can prove the vanishing 
\[{\rmR}\Gamma_c(\{ z \in X^{\an}\ |
\ {\Re}
\langle  z, w \rangle \leq t\}; Sol_X(\SM)[N])\simeq0 \]
for $t\ll0$ as follows. 
Let $H_\infty := \var{X}\bs X\simeq\PP^{N-1}$
be the hyperplane at infinity of $\var{X}=\PP^N$ and
\[\ell(w) : X=\CC^N\to\CC\hspace{17pt}(z\longmapsto \langle z, w\rangle)\]
the linear function defined by $w$.
Then the meromorphic extension of $\ell(w)$
to $\var{X}=\PP^N$ has points of indeterminacy
in the complex submanifold $H(w) := \var{\ell(w)^{-1}(0)}
\cap H_\infty\simeq\PP^{N-2}$ of $\var{X}=\PP^N$.
Let $\rho : \var{X}^{H(w)}\to \var{X}$ be the complex blow-up
of $\var{X}$ along $H(w)$ and $\iota :
 X\xhookrightarrow{\ \ \ } \var{X}^{H(w)}$
the inclusion map.
Then the meromorphic extension $f$ 
of $\ell(w)$ to $\var{X}^{H(w)}$
has no point of indeterminacy and we obtain a commutative diagram 
\[\xymatrix@M=7pt@C=37pt{
X\ar@{^{(}->}[r]^-\iota\ar@{->}[d]_-{\ell(w)} & 
\var{X}^{H(w)}\ar@{->}[d]^-f\\
\CC\ar@{^{(}->}[r]_-i & \PP^1
}\]
of holomorphic maps, where $i : 
\CC\xhookrightarrow{\ \ \ }\PP^1$
is the inclusion map. See the proof of 
Lemma \ref{newlea} or \cite[Theorem 3.6]{MT13}. 
Note that $f$ is proper and set 
\[ L( Sol_X(\SM) ) 
:= \rmR p_{2!}\big(p_1^{-1}(\CC_{\{s\geq0\}}\otimes
\pi^{-1}Sol_X(\SM))\otimes
\CC_{\{t-s-{\Re}\langle z,w\rangle\geq0\}}[N]
\big)\in\BDC(\CC_{Y\times\RR}). \]
Then for $t\in\RR$ the stalk
\begin{align*}
& \big(  L( Sol_X(\SM) ) \big)_{(w, t)}\\
&\simeq
\rmR\Gamma_c\big(
\{z\in X^{\an}\ |\ {\Re}\langle z, w\rangle\leq t\};
Sol_X(\SM)[N]
\big)
\end{align*}
is calculated as follows:
\begin{align*}
&\rmR\Gamma_c\big(\{z\in X^{\an}\ |\ {\Re}\langle z,
 w\rangle\leq t\}; Sol_X(\SM)[N]\big)\\
&\simeq
\rmR\Gamma_c\big(\var{X}^{\an}; \CC_{\{z\in X^{\an}\ 
|\ {\Re}\langle z, w\rangle\leq t\}}
\otimes Sol_X(\SM)[N]\big)\\
&\simeq
\rmR\Gamma_c\big(\var{X}^{H(w)}; f^{-1}\CC_{\{\tau\in 
\CC\ |\ {\Re}\tau\leq t\}}
\otimes \iota_!Sol_X(\SM)[N]\big)\\
&\simeq
\rmR\Gamma_c\big(\PP; \CC_{\{\tau\in \CC\ |\ {\Re}\tau\leq t\}}
\otimes \rmR f_!\iota_!Sol_X(\SM)[N]\big)\\
&\simeq
\rmR\Gamma_c\big(\{\tau\in \CC\ |\ {\Re}\tau\leq t\};
\rmR f_\ast\iota_!Sol_X(\SM)[N]\big).
\end{align*}
Since the direct image $\rmR f_\ast\iota_!Sol_X(\SM)[N]$ 
of the perverse sheaf 
$\iota_!Sol_X(\SM)[N]\in\BDC_{\CC-c}(\CC_{\var{X}^{H(w)}})$ 
is constructible, for $t\ll0$ the restrictions of 
its cohomology sheaves to 
the closed half space $\{\tau\in\CC\ | \ 
{\Re}\tau\leq t\}\subset\CC \subset\PP^1$ of $\CC$ are 
locally constant. 
Thus for $t\ll0$ we obtain the vanishing 
\[\rmR\Gamma_c\big(\{z\in X^{\an}\ |\ {\Re}\langle z, 
w\rangle\leq t\}; Sol_X(\SM)[N]\big)\simeq0.\]
Let $H_{\infty}^{\prime}=f^{-1}( \{ \infty \} )
\subset \var{X}^{H(w)}$ be 
the proper transform of $H_{\infty}$ by 
the blow-up $\rho$ and 
$g: \var{X}^{H(w)} \setminus 
f^{-1}( \{ \infty \} ) \rightarrow \CC$ 
the restriction of $f$ to $\var{X}^{H(w)} \setminus 
f^{-1}( \{ \infty \} )$. 
Then by our construction 
of $\var{X}^{H(w)}$ for any $t \in \RR$ 
the real hypersurface $({\Re} g)^{-1}(t)$ 
intersects the exceptional divisor 
$E= \rho^{-1}(H(w)) 
\subset \var{X}^{H(w)}$ of $\rho$ 
transversally (see the proof of 
Lemma \ref{newlea}). Hence it is 
compact and smooth in $\var{X}^{H(w)}$. 
This implies that the morphism 
${\Re} g: \var{X}^{H(w)} \setminus 
f^{-1}( \{ \infty \} ) \rightarrow \RR$ 
is proper. 
Let $\iota^{\prime}: X \hookrightarrow 
 \var{X}^{H(w)} \setminus 
f^{-1}( \{ \infty \} )$ be the inclusion map. 
Then for any $t \in \RR$ we have 
isomorphisms 
\begin{align*}
&\rmR\Gamma_c(\{z\in X^{\an}\ |\ {\Re}
\langle z, w  \rangle   \leq t \}; 
Sol_X(\SM)[N]) \\
& \simeq 
\rmR\Gamma_c(\{ x \in \var{X}^{H(w)} \setminus 
f^{-1}( \{ \infty \} ) 
\ |\ {\Re} g (x) \leq t \}; 
 \iota_! Sol_X(\SM)[N]) \\
& \simeq 
\rmR\Gamma_c\big(\{ s \in \RR \ |\ s \leq t\};
\rmR ( {\Re} g )_\ast \iota^{\prime}_!Sol_X(\SM)[N]\big).
\end{align*}
For $1\leq i \leq k$ set $c_i := 
{\Re} \langle \alpha_i(w), w \rangle 
= {\Re} ( \ell (w)) ( \alpha_i(w))$. 
For the fixed $w\in\Omega$, 
after reordering $\alpha_1(w), \ldots, \alpha_k(w)\in X$ 
we may assume that
\[  c_1 \ \leq c_2 \ \leq \cdots  \cdots \ \leq c_k.\]
If for some $1\leq i<j \leq k$ such that 
$c_i < c_j$ 
the open interval $(c_i, c_j)\subset\RR$
does not intersect the set $\{ c_1,c_2, c_3, \ldots, c_k  \}$ 
(of the stratified critical values of ${\Re} ( \ell (w) ) :  
X^{\an}\to\RR)$,
then by Lemma \ref{newlea} for any $t_1, t_2\in\RR$ such that 
$c_i < t_1 < t_2 < c_j$
we obtain an isomorphism 
\begin{align*}
&\rmR\Gamma_c(\{z\in X^{\an}\ |\ {\Re}
\langle z, w  \rangle   \leq t_2\}; 
Sol_X(\SM)[N]) \\
&\simto
\rmR\Gamma_c(\{z\in X^{\an}\ |\ {\Re}
 \langle z, w \rangle \leq t_1\};
 Sol_X(\SM)[N]).
\end{align*}
Indeed, we can prove the equivalent one 
\begin{align*}
& \rmR\Gamma_c\big(\{ s \in \RR \ |\ s \leq t_2 \};
\rmR ( {\Re} g )_\ast \iota^{\prime}_!Sol_X(\SM)[N]\big) 
 \\
&\simto 
\rmR\Gamma_c\big(\{ s \in \RR \ |\ s \leq t_1 \};
\rmR ( {\Re} g )_\ast \iota^{\prime}_!Sol_X(\SM)[N]\big) 
\end{align*}
as follows. Let 
\[ \Lambda_{\ell (w)} = 
\{ (z, {\rm grad} \ell (w) (z))
\in T^*X \simeq X \times Y \ | \ 
z \in X \} \subset T^*X \]
be the (non-homogeneous) Lagrangian 
submanifold of $T^*X$ associated to 
the function $\ell (w) : X \rightarrow \CC$. 
Since $\Lambda_{\ell (w)} \simeq 
X \times \{ w \} \subset X \times Y$,  
the condition $w \in \Omega$ implies 
that $\Lambda_{\ell (w)}$ intersects 
$\ch\SM$ only over finitely many points 
in $X$. Hence, for any $\tau \in \CC$ the 
conormal bundle of the level set 
$\ell (w)^{-1}( \tau ) \simeq \CC^{N-1} 
 \subset X= \CC^N$ of $\ell (w)$ 
in $X$ intersects the 
micro-support of the perverse sheaf 
$Sol_X(\SM)[N]$ only in 
the zero-section $T^*_XX \simeq X$ of 
$T^*X$ on a neighborhood of 
$H(w) \subset \var{X}$. Note also that 
$\var{\ell(w)^{-1}( \tau )} \simeq \PP^{N-1}$ 
($\tau \in \CC$) are all the projective 
hyperplanes of $\var{X} = \PP^N$ 
containing $H(w) \simeq \PP^{N-2}$ 
and different from 
the special one $H_{\infty} \simeq \PP^{N-1}$. 
The same is true also for any small perturbation of 
$w$ in $\Omega$. This implies that 
the assumption of Lemma \ref{newlea} 
is satisfied for the submanifolds $N=H(w) \subset 
H=H_{\infty}$ in an affine open subset 
$M \simeq \CC^N$ of $\var{X} = \PP^N$ and 
$Sol_X(\SM)|_{M \setminus H_{\infty}} [N] 
\in \BDC_{\CC-c}(\CC_{M \setminus H_{\infty}})$. 
Then we obtain the desired isomorphism 
by applying Kashiwara's non-characteristic 
deformation lemma (see \cite[Proposition 2.7.2]{KS90}) 
to the complex 
$\rmR ( {\Re} g )_\ast \iota^{\prime}_!Sol_X(\SM)[N]$ 
on the real line $\RR$. 
Since $Sol_X(\SM)[N]$ is a perverse sheaf on $X$, 
by Kashiwara-Schapira \cite[Theorem 9.5.2]{KS85}, 
for any  $1\leq i \leq k$ there exists an isomorphism
\[Sol_X(\SM)[N]\simeq\CC_{S_i}^{\oplus m_i}[d_{S_i}]\]
in the localized category 
$\BDC(\CC_X; \mu_i(w))$ of 
$\BDC( \CC_X )$ 
at $\mu_i(w)\in T^\ast X$
(see \cite[Definition 6.1.1]{KS90}).
Moreover the restriction of the function
${\Re}(\ell(w)) : X^{\an}\to\RR$ to the submanifold
$S_i \subset X$ has a non-degenerate 
(real Morse) critical point
of Morse index $d_{S_i}$ at $\alpha_i(w)\in S_i$.
It follows that  for the closed subset
\[Z_i := \{z\in X^{\an}\ |\ {\Re}
 \langle z, w  \rangle \geq c_i \}
\subset X^{\an} \]
of $X^{\an}$ and $j\in\ZZ$ we have
\begin{align*}
H^j\rmR\Gamma_{Z_i}(Sol_X(\SM)[N])_{\alpha_i(w)}
&\simeq
H^j\rmR\Gamma_{Z_i}(\CC_{S_i}^{\oplus m_i}[
d_{S_i}])_{\alpha_i(w)}\\
&\simeq
\begin{cases}
\CC^{m_i} & (j=0)\\
\\
0 & (\mbox{otherwise}).
\end{cases}
\end{align*}
Hence for any $t=c_i 
= {\Re} \langle \alpha_i(w), w \rangle \in\RR$ 
($1\leq i\leq k$)
there exists $0<\e\ll1$ such that
\begin{align*}
&\rmR\Gamma_c(\{z\in X^{\an}\ |\ {\Re} \langle z, w \rangle \leq 
t+\e\}; Sol_X(\SM)[N])\\
&\simeq
\rmR\Gamma_c(\{z\in X^{\an}\ |\ {\Re} \langle z, w \rangle \leq t\}; 
Sol_X(\SM)[N])\\
&\simeq
\rmR\Gamma_c(\{z\in X^{\an}\ |\ {\Re} \langle z, w 
\rangle \leq t-\e\}; 
Sol_X(\SM)[N])\oplus\CC^{m_i}.
\end{align*}
This implies that the restriction of $L( Sol_X(\SM) )$
to the fiber $\pi^{-1}(w) \simeq\RR$ of $w\in U \subset \Omega$
is isomorphic to the sheaf
\[ \bigoplus_{i=1}^k\CC_{\{t\geq {\Re} \langle \alpha_i(w), 
w  \rangle \}}^{\oplus m_i}.\] 
Since the subsets $(U\times\RR)\cap\{t\geq
{\Re}\langle \alpha_i(w), w\rangle\}
\simeq U\times\RR_{\geq0}$ 
of $U\times\RR$
are connected and simply connected,
we can extend this isomorphism to $U \times \RR 
\subset Y^{\an}\times\RR$.
This completes the proof.
\end{proof}

\begin{corollary}\label{cor-3}
The restriction of the Fourier transform
$\SM^\wedge\in\Mod_{\rm hol}(\SD_Y)$ of $\SM$
to $\Omega\subset Y$ is an integrable connection.
Moreover its rank is equal to $\sum_{i=1}^km_i$.
\end{corollary}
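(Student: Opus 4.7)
The plan is to deduce Corollary \ref{cor-3} from Theorem \ref{thm-2} via the irregular Riemann-Hilbert correspondence. Fix a simply connected open subset $U \subset \Omega$. Since the projection $q^{-1}(\Omega) \cap \ch(\SM) \to \Omega$ is an unramified covering of degree $k$ and $U$ is simply connected, it admits $k$ single-valued holomorphic sections $\mu_i : U \to \ch(\SM)$. Consequently $\alpha_i = p \circ \mu_i : U \to X$ is holomorphic and so is the function $\varphi_i(w) := \langle \alpha_i(w), w \rangle$ on $U$.

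I would then construct a candidate analytic $\SD_{U^{\an}}$-module
\[\SN_U := \bigoplus_{i=1}^k \SL_i^{\oplus m_i},\]
where $\SL_i = (\SO_{U^{\an}}, d - d\varphi_i)$ is the rank-one integrable connection associated to the symbol $e^{-\varphi_i}$. By construction $\SN_U$ is an analytic integrable connection of rank $\sum_{i=1}^k m_i$, and a local variant of Theorem \ref{thm-4}(vi) (with empty pole divisor, so that $\varphi_i$ is honestly holomorphic) yields
\[Sol^{\rmE}_{U^{\an}}(\SN_U) \simeq \bigoplus_{i=1}^k \Big(\underset{a\to+\infty}{\inj}\ \CC_{\{t\geq \Re\langle\alpha_i(w),w\rangle+a\}}\Big)^{\oplus m_i}.\]
Applying $\bfE i_U^{-1}$ (for the open immersion $i_U : U^{\an} \hookrightarrow \var Y^{\an}$) to the isomorphism of Theorem \ref{thm-2} and using the compatibility of $Sol^\rmE$ with pull-back (Theorem \ref{thm-4}(ii)) gives an isomorphism
\[Sol^{\rmE}_{U^{\an}}\bigl((\SM^\wedge)^{\an}|_{U^{\an}}\bigr) \simeq Sol^{\rmE}_{U^{\an}}(\SN_U)\]
in $\BEC(\I\CC_{U^{\an}})$.

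Since $Sol^\rmE$ is fully faithful on holonomic $\SD$-modules (Corollary \ref{cor-4}), this isomorphism lifts to an isomorphism $(\SM^\wedge)^{\an}|_{U^{\an}} \simeq \SN_U$ of analytic $\SD_{U^{\an}}$-modules. In particular, $(\SM^\wedge)^{\an}|_{U^{\an}}$ is an integrable connection of rank $\sum_{i=1}^k m_i$. Since $\Omega$ is covered by simply connected opens $U$ of this type, $(\SM^\wedge)^{\an}$ is an analytic integrable connection of the prescribed rank on $\Omega^{\an}$. As $\SM^\wedge$ is an algebraic holonomic $\SD_Y$-module, its singular support is a Zariski closed subset of $Y$; the above analytic statement shows that it does not meet $\Omega$, so $\SM^\wedge|_\Omega$ itself is an algebraic integrable connection of rank $\sum_{i=1}^k m_i$.

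The main technical point is to justify that the isomorphism of Theorem \ref{thm-2}, which is formulated globally on the compactification $\var Y^{\an}$, restricts faithfully along $i_U$ to an isomorphism of enhanced solution complexes on $U^{\an}$, and that Corollary \ref{cor-4} may be applied on the (a priori non-affine) open subset $U^{\an}$. Both points are handled by working consistently in the analytic category on $U^{\an}$, where both sides are holonomic analytic $\SD$-modules and the enhanced Riemann-Hilbert correspondence of \cite{DK16} and \cite{KS16} is available.
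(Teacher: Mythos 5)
Your proposal is correct, but it takes a genuinely different route from the paper's own proof. The paper does not reconstruct the $\SD$-module: instead it passes from the enhanced solution complex to the \emph{classical} one via $Sol_Y(\SM^\wedge)\simeq\alpha_{\var{Y}^{\an}}i_0^!\bfR^{\rmE}Sol_{\var Y}^{\rmE}(\tl{\SM^\wedge})$, computes directly that locally on $\Omega$ this yields the constant sheaf $\CC_\Omega^{\oplus\sum m_i}$ (the key observation being that for $a\gg 0$ the sheaves $\CC_{\{t<\Re\langle\alpha_i(w),w\rangle+a\}}$ are locally constant near $i_0(\Omega)$), and then uses the equality $\ms{Sol_Y(\SM^\wedge)}=\ch(\SM^\wedge)$ to conclude that $\ch(\SM^\wedge)$ lies in the zero section over $\Omega$. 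You instead build the explicit candidate module $\SN_U=\bigoplus_i\bigl(\SE_{U|U}^{-\varphi_i}\bigr)^{\oplus m_i}$, match enhanced solution complexes via Theorem \ref{thm-4}(vi), and then invoke the full faithfulness of $Sol^{\rmE}$ (Corollary \ref{cor-4}) to get an honest isomorphism $(\SM^\wedge)^{\an}|_{U^{\an}}\simeq\SN_U$. Your route gives strictly more information (an explicit description of the Fourier transform on $\Omega$ as a sum of exponential connections, which the paper only extracts later in Theorem \ref{thm-3}), at the cost of relying on the full strength of the irregular Riemann--Hilbert correspondence; the paper's route is lighter, using only the comparison $Sol_Y\simeq\alpha i_0^!\bfR^{\rmE}Sol_Y^{\rmE}$ and the microlocal characterization of the characteristic variety. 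Both are valid, and your auxiliary checks --- single-valuedness of $\alpha_i$ on simply connected $U$, the compatibility $\bfE i_U^{-1}(\pi^{-1}\CC_U\otimes K)\simeq\bfE i_U^{-1}K$, the identification of $(\SO_{U^{\an}},d-d\varphi_i)$ with $\SE_{U|U}^{-\varphi_i}$, and the passage from the analytic to the algebraic integrable connection via Zariski-closedness of the singular support --- are all sound.
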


\begin{proof}
By Theorem \ref{thm-2}, 
locally on $\Omega$ there exists an isomorphism
\begin{align*}
Sol_{\var Y}^\rmE ( \tl{\SM^\wedge} )
&\simeq
\bigoplus_{i=1}^k \underset{a\to+\infty}{\inj}\ 
\CC_{\{t\geq {\Re}  \langle \alpha_i(w), w 
\rangle +a\}}^{\oplus m_i}.
\end{align*}
Let $i_0 : {\var Y}^{\an}\hookrightarrow {\var Y}^{\an}
\times \RR_\infty$ be the map 
given by $y\mapsto(y, 0)$. 
Then locally on $\Omega$ we have isomorphisms
\begin{align*}
Sol_Y(\M^\wedge)
&\simeq
\alpha_{{\var Y}^{\an}}i_0^!{\rmR^\rmE}Sol_{\var Y}^{\rmE}(
\tl{\M^\wedge} )\\
&\simeq
\alpha_{{\var Y}^{\an}}i_0^!{\rmR^\rmE}
\Big(\bigoplus_{i=1}^k \underset{a\to+\infty}{\inj}\ 
\CC_{\{t\geq {\Re}  \langle \alpha_i(w), w 
\rangle +a\}}^{\oplus m_i}\Big)\\
&\simeq
\alpha_{{\var Y}^{\an}}i_0^!
\Bigl(\bigoplus_{i=1}^k \underset{a\to+\infty}{\inj}\ 
\CC_{\{t< {\Re} 
\langle \alpha_i(w), w \rangle +a\}}^{\oplus m_i}[1]\Bigr).
\end{align*}
On the other hand, for $a\gg0$ the sheaf
$\CC_{\{t< {\Re} \langle \alpha_i(w), w \rangle +a\}}$ being
isomorphic to the constant sheaf
$\CC_{Y^{\an}\times\RR}$
locally on a neighborhood of $i_0(\Omega)
\subset\Omega\times\RR$, 
there exist also an isomorphism
\[i_0^!\Bigl(\bigoplus_{i=1}^k \underset{a\to+\infty}{\inj}\ 
\CC_{\{t< {\Re} 
\langle \alpha_i(w), w \rangle +a\}}^{\oplus m_i}[1]\Bigr)
\simeq
\CC_\Omega^{\oplus_{i=1}^km_i}\]
locally on $\Omega$.
We thus obtain an isomorphism
\[Sol_Y(\SM^\wedge)\simeq\CC_\Omega^{\oplus_{i=1}^km_i}\]
locally on $\Omega$.
Then by $\ms{Sol_Y(\SM^\wedge)}=\ch(\SM^\wedge)$
the characteristic variety $\ch(\SM^\wedge)$ of
$\SM^\wedge$ is contained in the zero-section of 
$T^\ast Y^{\an}$ on $\Omega$.
Now all the assertions are clear.
\end{proof}

Next fix a point $w\in\Omega$ such that $w\neq0$ and set
\[\LL := \CC w = \{\lambda w\ |\ \lambda\in\CC\}\subset Y=\CC^N.\]
Then $\LL$ is a complex line isomorphic to $\CC_\lambda$.
By the $\CC^\ast$-conicness of $\Omega$ its open subset
$\LL\bs\{0\}\simeq\CC_\lambda^\ast$ is contained in $\Omega$.
Note that $\alpha_i(\lambda w)=\alpha_i(w)$ ($1\leq i\leq k$)
for any $\lambda\in\CC^\ast$. Since we fixed the point
$w\in\Omega$ we set $\alpha_i=\alpha_i(w)$ 
($1\leq i\leq k$) for short.
Let $\PP := \LL \sqcup \{\infty\} \subset \var{Y} = \PP^N$
be the projective compactification of $\LL$.
We extend $\SM^\wedge|_{\LL\bs\{0\}}$ to a 
meromorphic connection on $\PP$ and denote
it by $\SL\in\Modhol(\SD_\PP)$. 
Note that $\SL$ is isomorphic to 
the restriction of $\tl{\SM^\wedge}$ 
to $\PP \subset \var{Y}$ on a 
neighborhood of the point 
$\infty \in \PP$. 
The following result is a generalization (of some part)
of Esterov-Takeuchi \cite[Theorem 5.6]{ET15}.
Let $\varpi_\PP : \tl{\PP}\to\PP$ 
be the real blow-up of $\PP$ along the divisor 
$\{ \infty \} \subset \PP$. 

\begin{theorem}\label{thm-3}
For any point $\theta\in 
\varpi_{\PP}^{-1}( \{ \infty \} )\simeq S^1$
there exists its open neighborhood $W$ in $\tl{\PP}$
such that we have an isomorphism
\[\SL^\SA|_W
\simeq
\Bigr(\bigoplus_{i=1}^k
\bigl(
(\SE_{\LL|\PP}^{- \langle \alpha_i, w \rangle \lambda})^\SA
\bigr)^{\oplus m_i}
\Bigl)|_W
\]
of $\SD_\PP^\SA$-modules on $W$. In particular, 
the functions $- \langle \alpha_i, w \rangle \lambda$ 
of $\lambda$ are the exponential factors of 
the meromorphic connection $\SL$ at the point 
$\infty \in \PP$ $($the multiplicity of 
$- \langle \alpha_i, w \rangle \lambda$ is equal 
to $m_i)$. Moreover the meromorphic connection $\SL$ 
is regular at the origin 
$0 \in \LL \subset \PP$. 
\end{theorem}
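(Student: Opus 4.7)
The plan is to combine Theorem \ref{thm-2} with Proposition \ref{prop-2}, both applied on the Riemann surface $\PP = \LL \sqcup \{\infty\}$. First, I would exploit the $\CC^\ast$-conicity of $\ch(\SM)$: the parametrization $\{\mu_i\}$ is $\CC^\ast$-invariant, so after appropriate relabeling $\mu_i(\lambda w) = \lambda\cdot\mu_i(w)$ and in particular $\alpha_i(\lambda w) = \alpha_i(w) = \alpha_i$ for every $\lambda\in\CC^\ast$. Consequently
\[\langle \alpha_i(\lambda w), \lambda w\rangle = \lambda\,\langle\alpha_i, w\rangle\]
as a holomorphic (linear) function of $\lambda\in\CC$. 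Since $\LL\setminus\{0\}\subset\Omega$, Theorem \ref{thm-2} combined with Theorem \ref{thm-4}~(vi) identifies the enhanced solution complex on any connected and simply connected open subset $U$ of $\LL\setminus\{0\}$ as
\[\pi^{-1}\CC_U\otimes Sol^\rmE_{\var Y}(\tl{\SM^\wedge})
\simeq
\pi^{-1}\CC_U\otimes Sol^\rmE_{\PP}\Bigl(\bigoplus_{i=1}^k \bigl(\SE^{-\langle\alpha_i,w\rangle\lambda}_{\LL|\PP}\bigr)^{\oplus m_i}\Bigr).\]

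Second, fix $\theta\in\varpi_\PP^{-1}(\{\infty\})\simeq S^1$ and choose a sufficiently narrow, connected and simply connected open sectorial neighborhood $V_\theta\subset\LL\setminus\{0\}$ of $\theta$ at $\infty$. Since $\SL\simeq\SL(\ast\{\infty\})$ coincides with the meromorphic connection obtained by restricting $\tl{\SM^\wedge}$ to a neighborhood of $\infty\in\PP$ inside $\var{Y}$, Theorem \ref{thm-4}~(v) and the display above give
\[\pi^{-1}\CC_{V_\theta}\otimes Sol^\rmE_\PP(\SL)
\simeq
\pi^{-1}\CC_{V_\theta}\otimes Sol^\rmE_\PP\Bigl(\bigoplus_{i=1}^k\bigl(\SE^{-\langle\alpha_i,w\rangle\lambda}_{\LL|\PP}\bigr)^{\oplus m_i}\Bigr).\]
I then pick an open neighborhood $W$ of $\theta$ in $\tl\PP$ small enough that $\overline{W}\subset\Int\bigl(\overline{\varpi_\PP^{-1}(V_\theta)}\bigr)$, and apply Proposition \ref{prop-2} with $\SM_1=\SL$ and $\SM_2=\bigoplus_i(\SE^{-\langle\alpha_i,w\rangle\lambda}_{\LL|\PP})^{\oplus m_i}$ to deduce the desired isomorphism of $\SD_\PP^\SA$-modules on $W$.

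Third, the statement that the $-\langle\alpha_i,w\rangle\lambda$ (with multiplicity $m_i$) are precisely the exponential factors of $\SL$ at $\infty$ follows at once from the corollary after Proposition \ref{new prop}, which matches any sectorial direct-sum decomposition of $\SL^\SA$ by $e^{\varphi_j}$ with the Hukuhara--Levelt--Turrittin decomposition. For the regularity of $\SL$ at $0\in\LL\subset\PP$, I rerun the same argument with $\theta'\in\varpi_\PP^{-1}(\{0\})$ and a sectorial neighborhood of $\theta'$ at $0$ contained in $\LL\setminus\{0\}\subset\Omega$: the resulting candidate exponential factors are again $-\langle\alpha_i,w\rangle\lambda$, but in a local coordinate at $0\in\PP$ these are already holomorphic. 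Proposition \ref{prop-nn7} (equivalently, Theorem \ref{new-thm}) then forces $\mathrm{irr}(\SL)=0$ at $0$, so $\SL$ is regular there.

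The main obstacle will be the bookkeeping required to verify the hypothesis of Proposition \ref{prop-2}: one must choose $V_\theta$ and $W$ compatibly so that $\overline{W}\subset\Int\bigl(\overline{\varpi_\PP^{-1}(V_\theta)}\bigr)$ while keeping $V_\theta$ simply connected and inside the locus $\Omega$ where Theorem \ref{thm-2} is applicable --- a sectorial analysis on the real blow-up of $\PP$ at $\{0,\infty\}$. Once this is set up carefully, the rest of the argument is a direct assembly of the tools already established.
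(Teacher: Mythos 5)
Your proof takes essentially the same route as the paper: apply Theorem \ref{thm-2} restricted to the line $\LL$ to identify the enhanced solutions on a sector at $\infty$ with those of a direct sum of exponential modules $\SE_{\LL|\PP}^{-\langle\alpha_i,w\rangle\lambda}$ (the paper uses the identity $Sol_\PP^\rmE(\SE_{\LL|\PP}^{\varphi})\simeq\EE_{\LL|\PP}^{\Re\varphi}$, which is your Theorem \ref{thm-4}(vi)), then invoke Proposition \ref{prop-2}/Corollary \ref{cor-2} to upgrade this to an isomorphism of $\SD_\PP^\SA$-modules on a small neighborhood in $\tl\PP$, and finally read off the exponential factors from Theorem \ref{new-thm} and the regularity at $0$ from Proposition \ref{prop-nn7}. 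Your extra discussion of $\CC^\ast$-conicity and the sectorial bookkeeping is correct and mirrors what the paper leaves implicit, so there is no substantive difference in method.
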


\begin{proof}
By \cite[Corollary 9.4.12]{DK16}
we have isomorphisms
\begin{align*}
Sol_\PP^\rmE(\SE_{\LL|\PP}^{- \langle \alpha_i, w \rangle \lambda})
\simeq
\EE_{\LL|\PP}^{-{\Re}(  \langle \alpha_i, w \rangle \lambda)}
\simeq
\underset{a\to+\infty}{\inj}\ 
 \CC_{\{t\geq {\Re}( \langle \alpha_i,  w \rangle \lambda)+a\}}.
\end{align*} 
On the other hand, by Theorem \ref{thm-2}
for any $\theta\in 
\varpi_{\PP}^{-1}( \{ \infty \} )\simeq S^1$
there exists its sectorial neighborhood
$V_\theta\subset\PP\bs \{ \infty \}$ such that we have an isomorphism
\[\pi^{-1}\CC_{V_\theta}\otimes Sol_\PP^\rmE(\SL)
\simeq
\bigoplus_{i=1}^k 
\pi^{-1}\CC_{V_\theta}\otimes 
\Big( \underset{a\to+\infty}{\inj}\ 
\CC_{\{t\geq {\Re}( \langle \alpha_i, w \rangle 
\lambda)+a\}}^{\oplus m_i} \Big). 
\]
We thus obtain an isomorphism
\[\pi^{-1}\CC_{V_\theta}\otimes Sol_\PP^\rmE(\SL)
\simeq
\pi^{-1}\CC_{V_\theta}\otimes Sol_\PP^\rmE\Bigl(
\bigoplus_{i=1}^k\big(\SE_{\LL|\PP}^{-
\langle \alpha_i, w \rangle \lambda}\big)^{\oplus m_i}
\Bigr).
\]
Then the first assertion follows from Corollary \ref{cor-2}. 
The second one follows from Theorem \ref{new-thm}. 
We can show the remaining one by using the last part of 
Proposition \ref{prop-nn7}. 
\end{proof}

\begin{remark}
In Esterov-Takeuchi \cite{ET15} for the Fourier transform
$( \cdot )^\wedge : \Modhol(\SD_X)\to\Modhol(\SD_Y)$
the authors used the kernel $\SO_{X\times Y}e^{
\langle z, w \rangle }$
instead of the one $\SO_{X\times Y}e^{-
\langle z, w  \rangle }$ used in this paper.
This is the reason why we obtain 
$\SE_{\LL|\PP}^{- \langle \alpha_i, w \rangle \lambda}$
instead of $\SE_{\LL|\PP}^{ \langle \alpha_i, w \rangle \lambda}$ 
in Theorem \ref{thm-3}. 
\end{remark}

As in Esterov-Takeuchi \cite[Remark 5.7]{ET15}, 
by Theorems  \ref{thm-2} and \ref{thm-3}
we easily obtain the Stokes lines of the meromorphic connection 
$\SL\in\Modhol(\SD_\PP)$ at $\infty \in \PP$ as follows.
We define a subset $J$ of $\{1, 2, \ldots, k\}^2$ by 
\[J = \{(i, j)\in\{1, 2, \ldots, k\}^2  \ |\ 
 \langle \alpha_i, w \rangle 
\neq  \langle \alpha_j, w \rangle \}.\]
Then the union of the Stokes lines is equal to the set
\[\bigcup_{(i, j)\in J}\{\lambda\in\CC\simeq
\LL\ |\ {\Re} \langle \alpha_i-\alpha_j, \lambda w \rangle =0\}\]
in $\LL\simeq\CC$.

\medskip 
Now we shall study the enhanced ind-sheaf 
$Sol_{\var Y}^\rmE( \tl{\SM^\wedge} 
)\simeq{}^{\rm L}Sol_{\var X}^\rmE( \tl{\SM)} $ 
on the remaining set $D:= Y\bs\Omega\subset Y=\CC^N$. 
Fix a point $w\in D= Y\bs\Omega$ such that $w\neq 0$ and define 
$\ell(w) : X=\CC^N\to\CC, H(w) := \var{\ell(w)^{-1}(0)}
\cap H_\infty\simeq\PP^{N-2}, f : \var{X}^{H(w)}\to\PP^1$ etc, 
as before. 
Then there exists finite points $P_1, P_2, \ldots, P_\ell\in\CC$
in $\CC$ such that the restriction 
\[ \var{X}^{H(w)} \setminus f^{-1}( \{ \infty, P_1, P_2, \ldots, 
P_\ell\})\to \PP^1 \bs\{ \infty, P_1, P_2, \ldots, P_\ell\}\]
of $f$ is a stratified fiber bundle with respect to a
stratification associated to the perverse sheaf 
$\iota_!Sol_X(\SM)[N]\in\BDC_{\CC-c}(\CC_{\var{X}^{H(w)}})$ 
(see e.g. Goresky-MacPherson \cite[P43]{GM88}). 
Recall that we set 
\[ L( Sol_X(\SM) ) 
:= \rmR p_{2!}\big(p_1^{-1}(\CC_{\{s\geq0\}}\otimes
\pi^{-1}Sol_X(\SM))\otimes
\CC_{\{t-s-{\Re}\langle z,w\rangle\geq0\}}[N]
\big)\in\BDC(\CC_{Y\times\RR}). \]
For $t\in\RR$ its stalk 
$\big( L( Sol_X(\SM) ) \big)_{(w, t)}$ 
is calculated as in the case $w \in \Omega$. 
Moreover for $t\ll0$ we have also the vanishing 
\[\rmR\Gamma_c\big(\{z\in X^{\an}\ |\ {\Re}\langle z, 
w\rangle\leq t\}; Sol_X(\SM)[N]\big)\simeq0.\]
For $1\leq i\leq\ell$ define a function $h_i : 
\CC_\tau\to \CC$ by $h_i(\tau) = \tau-P_i$
so that we have $h_i^{-1}(0) = \{P_i\}\subset\CC$.
Let us set
\[K_i := \{\tau\in\CC\ |\ {\Re}(h_i(\tau))\geq0\}\subset\CC.\]
Then for $t\in\RR$ and $0<\e\ll 1$ there exists
a distinguished triangle
\begin{align*}
&\bigoplus_{i:{\Re}P_i=t} {\rmR}\Gamma_{K_i}\big(
\rmR f_\ast\iota_! Sol_X(\SM)[N]\big)_{P_i}\ \longrightarrow\\
&
{\rmR}\Gamma_c\big(\{\tau\in\CC\ | \ {\Re}\tau\leq t+\e\};
\rmR f_\ast\iota_! Sol_X(\SM)[N]\big)\ \longrightarrow\\
&{\rmR}\Gamma_c\big(\{\tau\in\CC\ | \ {\Re}\tau\leq t-\e\};
\rmR f_\ast\iota_! Sol_X(\SM)[N]\big)\ \overset{+1}{\longrightarrow}.
\end{align*}
Moreover for each $1\leq i\leq\ell$ such that
${\Re}P_i=t$ we have an isomorphism
\[{\rmR}\Gamma_{K_i}\big(
\rmR f_\ast\iota_! Sol_X(\SM)[N]\big)_{P_i}
\simeq
\phi_{h_i}\big(
\rmR f_\ast\iota_! Sol_X(\SM)[N]\big)[-1],\]
where $\phi_{h_i} : \BDC(\CC_\CC)\to\BDC(\CC_{h_i^{-1}(0)})
=\BDC(\CC_{\{P_i\}})$
is the vanishing cycle functor associated to $h_i$
(see Kashiwara-Schapira \cite{KS90} and Dimca \cite{Dim04} etc.).
Since $f$ is proper, 
we have also an isomorphism
\[\phi_{h_i}\big(\rmR f_\ast\iota_! Sol_X(\SM)[N]\big)[-1]
\simeq
{\rmR}\Gamma\big(f^{-1}(P_i); 
\phi_{h_i\circ f}\big(\iota_! Sol_X(\SM)[N]\big)[-1]\big).
\]
Note that $\phi_{h_i\circ f}\big(\iota_! Sol_X(\SM)[N]\big)[-1]$
is a perverse sheaf on 
$(h_i\circ f)^{-1}(0)=f^{-1}(P_i)\subset\var{X}^{H(w)}$. 
Moreover for the inclusion map 
$i_{\{ w \}} : \{ w \}\xhookrightarrow{\ \ \ } Y=\CC^N$
we have the following result.

\begin{lemma}\label{lemma-conica} 
For $a\gg0$ we have isomorphisms
\begin{align*}
Sol_{\{w\}}(\bfD i_{\{w\}}^\ast\SM^\wedge)
&\simeq
\big( L( Sol_X(\SM) ) \big)_{(w, a)}
\\
& \simeq
{\rm R}\Gamma_c(\{z\in X^{\an}\ |\ {\Re}\langle z,
 w \rangle\leq a\}; Sol_X(\SM)[N]).
\end{align*}
\end{lemma}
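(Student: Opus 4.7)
The second isomorphism is nothing but the stalk formula derived in the course of the proof of Theorem \ref{thm-2} via the Fourier--Sato formula of D'Agnolo--Kashiwara \cite[Lemma 7.2.1]{DK17}; it is valid for every $a\in\RR$. Only the first isomorphism requires genuine argument.

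The plan is to compute $Sol_{\{w\}}(\bfD i_{\{w\}}^\ast\SM^\wedge)$ by passing through its enhanced solution complex and comparing with the fiber of $L(Sol_X(\SM))$ over $w$. Applying $\bfE i_{\{w\}}^{-1}$ to the identification
\[Sol_Y^\rmE(\SM^\wedge)\simeq \Q\bigl(L(Sol_X(\SM))\bigr)\]
on $Y$, obtained in the proof of Theorem \ref{thm-2} from Theorem \ref{thm-1} together with \cite[Lemma 7.2.1]{DK17}, and using the inverse image compatibility of Theorem \ref{thm-4}(ii), one gets
\[Sol_{\{w\}}^\rmE(\bfD i_{\{w\}}^\ast\SM^\wedge)\simeq \Q(G),\qquad G:=L(Sol_X(\SM))|_{\{w\}\times\RR}\in\BDC(\CC_{\RR}).\]
Passing back to the classical solution via the identity $Sol\simeq \alpha\,i_0^!\,\bfR^\rmE\,Sol^\rmE$ (Section \ref{sec:7}) reduces the lemma to the single claim
\[\alpha_{\{w\}}\,i_0^!\,\bfR^\rmE\,\Q(G)\simeq G_a\quad\text{for } a\gg0,\]
since $G_a$ is by definition $\bigl(L(Sol_X(\SM))\bigr)_{(w,a)}$.

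The geometric input is the stabilization of $G$. From the proof of Theorem \ref{thm-2} one has $G_t=0$ for $t\ll 0$; and since $\rmR f_\ast\iota_!Sol_X(\SM)[N]$ is constructible on $\PP^1$ (via the meromorphic extension $f:\var{X}^{H(w)}\to\PP^1$ of $\ell(w)$), the stalks
\[G_t\simeq \rmR\Gamma_c\bigl(\{\tau\in\CC:\Re\tau\leq t\};\,\rmR f_\ast\iota_!Sol_X(\SM)[N]\bigr)\]
are equal to a fixed object $V$ for every $t\geq C$, where $C$ exceeds the real parts of the finitely many stratification points of this direct image sheaf in $\CC\subset\PP^1$.

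The main obstacle is the final identity $\alpha_{\{w\}}\,i_0^!\,\bfR^\rmE\,\Q(G)\simeq V$. Using the formula
\[i_0^!\,\bfR^\rmE\,\Q(G)\simeq \rmR\pi_\ast\rihom(\CC_{\{t\geq 0\}}\oplus\CC_{\{t\leq 0\}},\,G),\]
of the same type as the one employed in the proof of Proposition \ref{prop-5}, the contribution of the $\CC_{\{t\leq 0\}}$-summand vanishes after pushforward along $\pi$ thanks to the vanishing $G_t=0$ for $t\ll 0$, while the $\CC_{\{t\geq 0\}}$-summand extracts the stable value $V$ at $+\infty$ via the constancy of $G|_{[C,+\infty)}$. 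Combined, these computations yield $\alpha_{\{w\}}\,i_0^!\,\bfR^\rmE\,\Q(G)\simeq V\simeq G_a$ for every $a\geq C$, completing the proof.
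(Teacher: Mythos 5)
The main problem lies in the line ``Applying $\bfE i_{\{w\}}^{-1}$ to the identification $Sol_Y^\rmE(\SM^\wedge)\simeq \Q\bigl(L(Sol_X(\SM))\bigr)$''. This identification is missing a crucial factor: because $\SM$ is regular holonomic, the paper obtains $Sol_{\var X}^\rmE(\tl\SM)\simeq \CC^\rmE_{\var{X}^\an}\Potimes\e(i_{X!}Sol_X(\SM))$, and since the Fourier--Sato transform commutes with $\CC^\rmE\Potimes(\cdot)$, the actual consequence of Theorem \ref{thm-1} and D'Agnolo--Kashiwara \cite[Lemma 7.2.1]{DK17} is
\[
Sol_{\var Y}^\rmE(\tl{\SM^\wedge})\ \simeq\ \CC^\rmE_{\var Y^\an}\Potimes\Q\bigl(L(Sol_X(\SM))\bigr),
\]
so that after pulling back to $\{w\}$ one finds $\bfE i_{\{w\}}^{-1}Sol_Y^\rmE(\SM^\wedge)\simeq \text{``}\varinjlim_{a\to+\infty}\text{''}\,\CC_{\{t\geq a\}}\Potimes\Q(G)$, not $\Q(G)$. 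Your claimed reduction to $\alpha_{\{w\}}i_0^!\bfR^\rmE\Q(G)\simeq G_a$ is therefore to a statement that is in fact false: since $i_0^!\bfR^\rmE\Q(G)\simeq\rmR\pi_\ast\rihom(\CC_{\{t\geq0\}}\oplus\CC_{\{t\leq0\}},G)$ computes (roughly) the global cohomology $\rmR\Gamma(\RR;G)$, and not the stable stalk at $+\infty$. For example, a direct summand of $G$ of the form $\CC_{\{c\leq t<d\}}$ contributes $\CC$ to $\rmR\Gamma(\RR;G)$ but contributes $0$ to $G_a$ for $a>d$. The phrase ``the $\CC_{\{t\geq 0\}}$-summand extracts the stable value $V$ at $+\infty$'' is precisely where this goes wrong: $\rmR\pi_\ast$ is a section functor, not a nearby-cycle-at-$+\infty$ functor.

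The mechanism that makes the lemma true is exactly the $\CC^\rmE\Potimes$ convolution you dropped. In the paper's proof, $G$ is decomposed into pieces $(F_i)_{\{t_i\leq t<t_{i+1}\}}$ on bounded intervals and a half-infinite piece $G_{\{t_k\leq t\}}$, and then $``\varinjlim_{a}"\,\CC_{\{t\geq a\}}\Potimes(\cdot)$ is applied. The bounded pieces become $``\varinjlim_{a}"\,(F_i)_{\{t_i+a\leq t<t_{i+1}+a\}}$, whose supports escape to $+\infty$ and whose $\rmR\pi_\ast\rihom(\CC_{\RR},\cdot)$ therefore vanish in the ind-limit; only the half-infinite piece survives and gives $G_{(w,t_k)}$. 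Without the convolution with $\CC^\rmE$, the bounded pieces do not go away, and the argument fails.
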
 

\begin{proof}
First note that we have isomorphisms
\begin{align*}
Sol_{\{w\}}(\bfD i_{\{w\}}^\ast\SM^\wedge)
&\simeq
 \alpha_{\{w\}}i_0^!\bfR^\rmE 
Sol_{\{w\}}^\rmE(\bfD i_{\{w\}}^\ast\SM^\wedge)\\
 &\simeq
 \alpha_{\{w\}}i_0^!\bfR^\rmE \bfE i_{\{w\}}^{-1}
Sol_{Y}^\rmE(\SM^\wedge)
\end{align*}
By the isomorphism
\[Sol_{\var Y}^\rmE(\SM^\wedge)\simeq
``\underset{a\to +\infty}{\varinjlim}"\ 
\CC_{\{t\geq a\}}\Potimes 
{}^\rmL\big(\e( i_{X!} Sol_X(\SM))\big),
\]
we obtain an isomorphism 
\[\bfE i_{\{w\}}^{-1}Sol_{Y}^\rmE(\SM^\wedge)
\simeq
``\underset{a\to +\infty}{\varinjlim}"\ 
\CC_{\{t\geq a\}}\Potimes \Q 
\Big( \big( L( Sol_X(\SM) ) \big)
|_{\{w\}\times\RR} \Big).\]
Let us clarify the structure of the sheaf
$\big( L( Sol_X(\SM) ) \big) |_{\{w\}\times\RR}$
on $\{w\}\times\RR\simeq\RR$.
Denote by $k\ (\leq\ell)$ the cardinality of the subset
$\{{\Re}P_1, {\Re}P_2, \ldots, 
{\Re}P_\ell\}\subset\RR$ of $\RR$ and set
\[\{t_1 < t_2 <\cdots< t_k\} = \{{\Re}P_1, 
{\Re}P_2, \ldots, {\Re}P_\ell\}.\]
For $1\leq i\leq k-1$, we set also $I_i 
:= (t_i, t_{i+1})\subset \RR$.
Then for any $t\in I_i$, there exists an isomorphism
\begin{align*}
&\rmR\Gamma_c\big(\{\tau\in\CC\ |\ {\Re}\tau\leq t\} ;
 \rmR f_\ast\rmR\iota_!Sol_X(\SM)[N]\big)\\
&\simto
\rmR\Gamma_c\big(\{\tau\in\CC\ |\ {\Re}\tau\leq t_i\}
 ; \rmR f_\ast\rmR\iota_!Sol_X(\SM)[N]\big).
\end{align*}
This implies that we have an isomorphism
\[ \big( L( Sol_X(\SM) ) \big)_{(w, t)}
\simeq
\big( L( Sol_X(\SM) ) \big)_{(w, t_i)}
\]
for any $t\in I_i$.
Set $F_i= \big( L( Sol_X(\SM) ) \big)_{(w, t_i)}$ 
($1\leq i\leq k-1$) 
and $G= \big( L( Sol_X(\SM) ) \big)_{(w, t_k)}$.
Then by some distinguished 
triangles associated to the decomposition
$$\{t_1\leq t\}=\{t_1\leq t< t_2\}\sqcup \{t_2\leq t< t_3\}
\sqcup\cdots\sqcup \{t_{k-1}\leq t< t_k\}\sqcup\{t_k\leq t\}$$
of the interval $\{t_1\leq t\}\subset\RR$ 
the support of $\big( L( Sol_X(\SM) ) \big) 
|_{\{w\}\times\RR}$
is divided into those of $(F_i)_{\{t_i
\leq t< t_{i+1}\}}$ and $G_{\{t_k\leq t\}}$.
By (the proof of) Lemma \ref{llema}, 
for $1\leq i\leq k-1$ we have 
\begin{align*}
&i_0^!\bfR^\rmE\Big(
``\underset{a\to +\infty}{\varinjlim}"\ 
\CC_{\{t\geq a\}}\Potimes(F_i)_{\{t_i\leq t< t_{i+1}\}}\Big)\\
&\simeq
{\rm R}\pi_\ast\rihom\Big(
\CC_{\{t\geq0\}}\oplus\CC_{\{t\leq0\}},
``\underset{a\to +\infty}{\varinjlim}"\ 
(F_i)_{\{t_i+a\leq t< t_{i+1}+a\}} \Big)\\
&\simeq
{\rm R}\pi_\ast\rihom\Big(\CC_{\RR},
``\underset{a\to +\infty}{\varinjlim}"\ 
(F_i)_{\{t_i+a\leq t< t_{i+1}+a\}} \Big)\simeq 0.
\end{align*}
We thus obtain an isomorphism
\begin{align*}
&i_0^!\bfR^\rmE\Big(
``\underset{a\to +\infty}{\varinjlim}"\ 
\CC_{\{t\geq a\}}\Potimes
\big( L( Sol_X(\SM) ) \big) 
|_{\{w\}\times\RR}\Big)\\
&\simeq
i_0^!\bfR^\rmE\Big(
``\underset{a\to +\infty}{\varinjlim}"\ 
\CC_{\{t\geq a\}}\Potimes
G_{\{t_k\leq t\}}\Big).
\end{align*}
Moreover there exists an isomorphism
\begin{align*}
&i_0^!\bfR^\rmE\Big(
``\underset{a\to +\infty}{\varinjlim}"\ 
\CC_{\{t\geq a\}}\Potimes
G_{\{t_k\leq t\}}\Big)\\
&\simeq
{\rm R}\pi_\ast\rihom\Big(
\CC_{\{t\geq0\}}\oplus\CC_{\{t\leq0\}},
``\underset{a\to +\infty}{\varinjlim}"\ G_{\{t_k+a\leq t\}}\Big)\\
&\simeq
{\rm R}\pi_\ast\rihom\Big(\CC_{\RR},
``\underset{a\to +\infty}{\varinjlim}"\ G_{\{t_k+a\leq t\}}\Big)
\simeq G.
\end{align*}
Then the assertion immediately follows.
\end{proof}

By the proof of this lemma, 
we see that $Sol_Y(\SM^\wedge)$ 
is monodromic. 
Indeed, for $\lambda \in \RR_+$ we have 
${\Re} \ell ( \lambda w) = \lambda \cdot {\Re} \ell (w)$. 
This implies that $Sol_Y(\SM^\wedge)$ is $\RR_+$-conic 
(see Ito-Takeuchi \cite{IT18} for a precise proof). 
For a general theory of conic ind-sheaves see \cite{Pre11}. 
We can rewrite Lemma \ref{lemma-conica} 
more geometrically as follows. 

\begin{proposition}\label{prop-4}
For any $\tau\in\CC\bs\{P_1, P_2, \ldots, P_\ell\}$
we have
\[
\chi_w\big(Sol_{\{w\}}(\bfD i_{\{w\}}^\ast\SM^\wedge)\big)
= \chi\big(\rmR\Gamma_c(X; Sol_X(\SM)[N])\big)-
\chi\big(\rmR\Gamma_c(\ell(w)^{-1}(\tau); Sol_X(\SM)[N])\big). \] 
\end{proposition}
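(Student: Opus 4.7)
The plan is to combine Lemma \ref{lemma-conica} with the excision distinguished triangle and the stratified fiber bundle structure of $f$ on $\var X^{H(w)}$.

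First I would apply Lemma \ref{lemma-conica} to rewrite
\[\chi_w\bigl(Sol_{\{w\}}(\bfD i_{\{w\}}^\ast\SM^\wedge)\bigr)=\chi\bigl(\rmR\Gamma_c(\{z\in X^{\an}\mid{\Re}\langle z,w\rangle\leq a\};Sol_X(\SM)[N])\bigr)\]
for $a\gg0$. Then the excision distinguished triangle for the open-closed decomposition $X=\{{\Re}\langle z,w\rangle\leq a\}\sqcup\{{\Re}\langle z,w\rangle>a\}$ gives
\[\chi\bigl(\rmR\Gamma_c(\{{\Re}\langle z,w\rangle\leq a\};Sol_X(\SM)[N])\bigr)=\chi\bigl(\rmR\Gamma_c(X;Sol_X(\SM)[N])\bigr)-\chi\bigl(\rmR\Gamma_c(\{{\Re}\langle z,w\rangle>a\};Sol_X(\SM)[N])\bigr),\]
reducing matters to identifying the last term with $\chi\bigl(\rmR\Gamma_c(\ell(w)^{-1}(\tau);Sol_X(\SM)[N])\bigr)$.

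Next I would exploit $f\circ\iota=i\circ\ell(w)$ and the properness of $f$ to identify $(\rmR f_\ast\iota_! Sol_X(\SM)[N])|_{\CC}\simeq\rmR\ell(w)_!(Sol_X(\SM)[N])=:\SG$. By the stratified fiber bundle property of $f$ on $\var X^{H(w)}\setminus f^{-1}(\{\infty,P_1,\dots,P_\ell\})$ and the constructibility of $\iota_! Sol_X(\SM)[N]$ with respect to a compatible stratification, the cohomology sheaves of $\SG$ are locally constant on $\CC\setminus\{P_1,\dots,P_\ell\}$, with stalk Euler characteristic equal to $\chi(\rmR\Gamma_c(\ell(w)^{-1}(\tau');Sol_X(\SM)[N]))$ at any $\tau'$ in this locus (by proper base change). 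For $a\gg0$ the half-plane $U_a=\{{\Re}\tau>a\}$ is contractible and contained in $\CC\setminus\{P_1,\dots,P_\ell\}$, so $\SG|_{U_a}$ is quasi-isomorphic to a finite direct sum of shifts of constant sheaves $\bigoplus_k V_k[-k]$ with $V_k=H^k(\SG)_{\tau_0}$. Combining this with $\rmR\Gamma_c(U_a;\CC)\simeq\CC[-2]$ and the identity $\rmR\Gamma_c(\{{\Re}\langle z,w\rangle>a\};Sol_X(\SM)[N])\simeq\rmR\Gamma_c(U_a;\SG)$ yields
\[\chi\bigl(\rmR\Gamma_c(\{{\Re}\langle z,w\rangle>a\};Sol_X(\SM)[N])\bigr)=\chi_c(U_a)\cdot\chi\bigl(\rmR\Gamma_c(\ell(w)^{-1}(\tau_0);Sol_X(\SM)[N])\bigr),\]
and $\chi_c(U_a)=\chi_c(\RR^2)=1$ completes the key step. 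Independence of the right-hand side on $\tau\in\CC\setminus\{P_1,\dots,P_\ell\}$ is then a consequence of the connectedness of this open set together with the local constancy of the cohomology sheaves of $\SG$ established above.

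The main obstacle is the degeneration step asserting that a bounded constructible complex on the simply-connected space $U_a\simeq\RR^2$ with locally constant cohomology splits in the derived category as a direct sum of shifts of constant sheaves. This rests on the vanishing $H^{>0}(U_a;V)=0$ for any constant sheaf $V$, which kills all the potential extension classes $\mathrm{Ext}^{>0}(H^i(\SG)|_{U_a},H^j(\SG)|_{U_a})$ among the cohomology sheaves. Once this is in place, the rest is bookkeeping via additivity of Euler characteristic over the excision triangle.
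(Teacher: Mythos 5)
Your proposal is correct and essentially coincides with the paper's own (very terse) argument: Lemma \ref{lemma-conica} converts the left-hand side to $\chi\bigl(\rmR\Gamma_c(\{{\Re}\langle z,w\rangle\leq a\};Sol_X(\SM)[N])\bigr)$ for $a\gg0$, and the paper then simply says ``apply the K\"unneth formula'' for the stratified fiber bundle $\ell(w)$ over the half-plane $\{{\Re}\tau>a\}$, which is exactly the excision plus fiber-bundle/local-constancy computation you spell out. The only minor remark is that the derived-category splitting of $\rmR\ell(w)_!(Sol_X(\SM)[N])$ over the contractible half-plane is a harmless but unnecessary detour: additivity of Euler characteristics over the hypercohomology spectral sequence, together with $\chi\bigl(\rmR\Gamma_c(\{{\Re}\tau>a\};\CC)\bigr)=1$, already gives the result without any splitting.
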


\begin{proof}
For $a\gg0$ the restriction of 
$\ell(w) : X=\CC^N\to\CC$ to 
the open subset 
$\{ \tau \in \CC \ | \ 
{\Re} \tau >a \} \subset \CC$ 
is a stratified fiber bundle with respect to a 
stratification associated to the perverse sheaf 
$Sol_X(\SM)[N]\in\BDC_{\CC-c}(\CC_{X})$. 
Then we obtain the assertion by applying 
the K{\"u}nneth formula to 
Lemma \ref{lemma-conica}. 
\end{proof}

Note that a special case of Proposition \ref{prop-4}
was obtained by Brylinski \cite[Corollaire 8.6]{Bry86}. 

\medskip 
From now, let us calculate (some part of) the characteristic cycle 
of the Fourier transform $\SM^\wedge$. 
Let $\cup_{i=1}^m D_i$ be the irreducible 
decomposition of $D=Y \setminus \Omega \subset Y= \CC^N$. 
For $1\leq i\leq m$ such that $d_{D_i}=N-1$
we shall calculate the multiplicity
\[{\rm mult}_{T^\ast_{D_i}Y}\SM^\wedge \geq0\]
of the Fourier transform $\SM^\wedge$ of $\SM$ along $T^\ast_{D_i}Y$. 
For this purpose, first we calculate the local Euler-Poincar\'e index 
\[\chi_v\big(Sol_Y(\SM^\wedge)\big) =
\sum_{j\in\ZZ}(-1)^j\dim H^jSol_Y(\SM^\wedge)_v\]
of $Sol_Y(\SM^\wedge)$ at generic 
smooth points $v\in D_{\rm reg}$ of $D$. 
Fix $1\leq i\leq m$ and let 
$v\in D_i \cap D_{\rm reg}$ be such a generic point.
Let $M\subset Y$ be a subvariety of $Y$ which
intersects $D_i$ at $v$ transversally.
We call it a normal slice of $D_i$ at $v$.
By definition $M$ is smooth on a neighborhood of $v$.
Let $i_M : M\xhookrightarrow{\ \ \ } Y=\CC^N$ be the inclusion map.
Then it is non-characteristic for $\SM^\wedge$ and hence
we obtain an isomorphism
\[Sol_Y(\SM^\wedge)|_M
\simeq
Sol_M(\bfD i_M^\ast\SM^\wedge).\]
Let us consider the special case where $d_{D_i}=N-1$. 
In this case we have $d_M=1$. 
For the holonomic $\SD$-module 
$\SK := {\bfD}i_M^\ast\SM^\wedge\in\Modhol(\SD_M)$ 
on the normal slice 
$M$ of $D_i$ at $v$, consider the distinguished triangle
\[\rmR\Gamma_{\{v\}}(\SK)\longrightarrow
\SK\longrightarrow
\SK(\ast\{v\})\overset{+1}{\longrightarrow}.
\]
Then by the results in Section \ref{sec:8} we obtain
\begin{align*}
\chi_v\big(Sol_Y(\SM^\wedge)\big)
&=\chi_v\big(Sol_M(\SK)\big)\\
&=
\chi_v\Big(Sol_M\big(\rmR\Gamma_{\{v\}}(\SK)\big)\Big)
+\chi_v\Big(Sol_M\big(\SK(\ast\{v\})\big)\Big)\\
&=
\chi_v\Big(Sol_{\{v\}}\big({\bfD}i_{\{v\}}^\ast\SM^\wedge\big)\Big)
+\chi_v\Big(Sol_M\big(\SK(\ast\{v\})\big)\Big).
\end{align*}
Combining this with Proposition \ref{prop-4}
we finally obtain the following theorem.

\begin{theorem}\label{new-thm-6}
Assume that $d_{D_i}=N-1$. Then for $|\tau|\gg0$ we have 
\begin{align*}
\chi_v\big(Sol_Y(\SM^\wedge)\big)
=\ &\chi\big(\rmR\Gamma_c(X; Sol_X(\SM)[N])\big)-
\chi\big(\rmR\Gamma_c(\ell(v)^{-1}(\tau); Sol_X(\SM)[N])\big) 
\\
&+\chi_v\Big(Sol_M\big(\SK(\ast\{v\})\big)\Big).
\end{align*}
\end{theorem}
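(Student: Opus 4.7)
The plan is to assemble the formula by combining three ingredients that have already been developed: (a) the non-characteristic restriction of $\SM^\wedge$ to a transverse slice $M$ at $v$, (b) additivity of local Euler--Poincar\'e indices along the canonical distinguished triangle separating the $v$-supported part of $\SK$ from its meromorphic-at-$v$ part, and (c) Proposition~\ref{prop-4}, which converts the local index of the punctual inverse image of $\SM^\wedge$ into a difference of compactly supported Euler characteristics on $X$ and on a generic affine hyperplane $\ell(v)^{-1}(\tau)$.

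First I would verify that $i_M : M \hookrightarrow Y$ is non-characteristic for $\SM^\wedge$ at the chosen generic smooth point $v \in D_i \cap D_{\rm reg}$. Since $v \in D_{\rm reg}$ and $D_i$ has dimension $N-1$, on a small neighborhood of $v$ in $Y$ the characteristic variety $\ch(\SM^\wedge)$ is contained in $T_Y^\ast Y \cup T_{D_i}^\ast Y$ (by Corollary~\ref{cor-3} applied to the $\CC^\ast$-conic open set $\Omega$, together with the closedness of $\ch(\SM^\wedge)$). Transversality of $M$ to $D_i$ at $v$ then gives $T_M^\ast Y \cap \ch(\SM^\wedge) \subset T_Y^\ast Y$ near $v$, so $i_M$ is non-characteristic and $Sol_Y(\SM^\wedge)|_M \simeq Sol_M(\SK)$.

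Next I would apply $Sol_M$ to the distinguished triangle
\[
\rmR\Gamma_{\{v\}}(\SK) \longrightarrow \SK \longrightarrow \SK(\ast\{v\}) \overset{+1}{\longrightarrow}
\]
and take local Euler--Poincar\'e indices at $v$, which are additive on such triangles. Since $\rmR\Gamma_{\{v\}}(\SK) \simeq \bfD i_{\{v\}\ast} \bfD i_{\{v\}}^\ast \SK[-1]$ and (by a second application of non-characteristic base change through $\{v\} \hookrightarrow M \hookrightarrow Y$) $\bfD i_{\{v\}}^\ast \SK \simeq \bfD i_{\{v\}}^\ast \SM^\wedge$, we get
\[
\chi_v\big(Sol_M(\rmR\Gamma_{\{v\}}(\SK))\big) = \chi_v\big(Sol_{\{v\}}(\bfD i_{\{v\}}^\ast \SM^\wedge)\big),
\]
which yields
\[
\chi_v\big(Sol_Y(\SM^\wedge)\big) = \chi_v\big(Sol_{\{v\}}(\bfD i_{\{v\}}^\ast \SM^\wedge)\big) + \chi_v\big(Sol_M(\SK(\ast\{v\}))\big).
\]

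Finally I would substitute Proposition~\ref{prop-4} (applied at the point $w=v$, for $|\tau| \gg 0$) into the first summand to obtain the stated identity. The only subtle point is the non-characteristicity verification at a generic $v$, and that follows at once from Corollary~\ref{cor-3} together with the fact that in codimension $1$ the outside locus $D$ supports the only non-trivial conormal contributions to $\ch(\SM^\wedge)$ in a neighborhood of a smooth point of $D_i$; no further work beyond assembling the pieces already present in Sections~\ref{sec:8} and~\ref{sec:9} is required.
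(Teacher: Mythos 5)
Your proposal follows the paper's own proof step for step: the paper restricts to a transverse normal slice $M$ via non-characteristicity, applies local Euler--Poincar\'e additivity to the distinguished triangle $\rmR\Gamma_{\{v\}}(\SK) \to \SK \to \SK(\ast\{v\}) \xrightarrow{+1}$, identifies $\chi_v\big(Sol_M(\rmR\Gamma_{\{v\}}(\SK))\big)$ with $\chi_v\big(Sol_{\{v\}}(\bfD i_{\{v\}}^\ast\SM^\wedge)\big)$, and then substitutes Proposition~\ref{prop-4}. Your added verification of non-characteristicity at a generic $v \in (D_i)_{\rm reg}$ is a correct elaboration of what the paper leaves implicit.
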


For the meromorphic connection $\SK(\ast \{v\})$
on the Riemann surface $M$ we can calculate
$\chi_v \big(Sol_M\big(\SK(\ast\{v\})\big)\big)$
by Proposition \ref{prop-nn7} as follows. 
Recall that $-\chi_v\big(Sol_M\big(\SK(\ast\{v\})\big)\big)$ 
is equal to the irregularity 
${\rm irr} ( \SK(\ast\{v\}) ) \geq 0$ of 
$\SK(\ast\{v\})$. 
Let $\varpi_M : \tl{M}\to M$ be 
the real blow-up of $M$ along $\{ v \} \subset M$. 
Shrinking the normal slice $M$ 
if necessary we may assume that 
$M= \{ u \in \CC \ | \ | u | < 
\varepsilon \}$ for some $\varepsilon >0$, 
$\{ v \} = \{ u =0 \}$ and 
$M \setminus \{ v \} \subset \Omega$. 
Then we define Laurent Puiseux series 
$\varphi_i ( u )$ ($1 \leq i \leq k$) 
and their pole orders 
${\rm ord}_{\{ v \}}( \varphi_i ) \geq 0$ 
as in Section \ref{sec:1}. Moreover 
by Theorems \ref{thm-2} and  \ref{thm-4} (ii) 
for any point $\theta \in S_{\{ v \}} M
\simeq \varpi_M^{-1}( \{ v \} )\simeq S^1$
there exists its sectorial 
neighborhood $V_\theta\subset M \bs \{ v \}$
for which we have an isomorphism 
\begin{equation*}
\pi^{-1}\CC_{V_\theta}\otimes Sol_M^\rmE( \SK(\ast\{v\}) )
\simeq \bigoplus_{i=1}^k 
\big(  \EE_{V_\theta | M}^{- \Re\varphi_i}
 \big)^{\oplus m_i}.
\end{equation*}
Then by Theorem \ref{new-thm} we obtain the following result.
\begin{theorem}\label{thm-nn7}
The exponential factors appearing in the 
Hukuhara-Levelt-Turrittin decomposition
of the meromorphic connection $\SK(\ast\{v\})$ at $v\in M$
are the pole parts of $-\varphi_i\ (1\leq i \leq k)$.
Moreover for any $1\leq i\leq k$ the 
multiplicity of the pole part of $-\varphi_i$
is equal to $m_i$.
In particular we have
\[ {\rm irr} ( \SK(\ast\{v\}) ) = 
\sum_{i=1}^k m_i \cdot 
{\rm ord}_{\{ v \}}( \varphi_i ). \]
\end{theorem}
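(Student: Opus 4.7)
The plan is to reduce the claim directly to Theorem \ref{new-thm} (the 1-dimensional reconstruction of exponential factors from enhanced solutions) after transporting the local description of $Sol_{\var Y}^\rmE(\tl{\SM^\wedge})$ given by Theorem \ref{th-A} along the inclusion $i_M : M\xhookrightarrow{\ \ \ }Y$ of the normal slice. Since $v\in D_{\rm reg}$ is generic and $M$ meets $D_i$ transversally, and since $\SM^\wedge|_{\Omega}$ is an integrable connection by Corollary \ref{cor-3}, the map $i_M$ is non-characteristic for $\SM^\wedge$ and $\SK = {\bfD}i_M^\ast\SM^\wedge$ is a holonomic $\SD_M$-module concentrated in degree zero whose singular locus inside $M$ is contained in $\{v\}$.

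First, I would invoke Theorem \ref{thm-4}(ii) to obtain the isomorphism
\[Sol_M^\rmE(\SK)\simeq{\bfE}i_M^{-1}Sol_Y^\rmE(\SM^\wedge).\]
Fix $\theta\in S_{\{v\}}M\simeq\varpi_M^{-1}(\{v\})\simeq S^1$ and choose a small sectorial neighborhood $V_\theta\subset M\setminus\{v\}\subset\Omega$. Then $i_0(V_\theta)=i_M(V_\theta)$ is a connected and simply connected open subset of $\Omega$, so Theorem \ref{th-A} applied over $U=i_0(V_\theta)$ gives, after pulling back by $i_M$ and using $\EE_{V_\theta|M}^{-\Re\varphi_i}\simeq\inj\ \CC_{\{t\geq \Re\varphi_i+a\}}$, the isomorphism
\[\pi^{-1}\CC_{V_\theta}\otimes Sol_M^\rmE(\SK)\simeq\bigoplus_{i=1}^k\bigl(\pi^{-1}\CC_{V_\theta}\otimes\EE_{V_\theta|M}^{-\Re\varphi_i}\bigr)^{\oplus m_i}.\]
Replacing $\SK$ by $\SK(\ast\{v\})$ does not alter the left-hand side on the sector $V_\theta\subset M\setminus\{v\}$, thanks to Theorem \ref{thm-4}(v).

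At this point I would apply Theorem \ref{new-thm} to the meromorphic connection $\SN := \SK(\ast\{v\})$, listing each $-\varphi_i$ with multiplicity $m_i$ so that the total number of exponentials equals $\mathrm{rank}\,\SK(\ast\{v\})=\sum_{i=1}^k m_i$ (this rank is the one predicted by Theorem \ref{th-A}). Theorem \ref{new-thm} then identifies, after a permutation, the exponential factors $\psi_j$ in the Hukuhara-Levelt-Turrittin decomposition of $\SK(\ast\{v\})$ at $v$ with the pole parts of $-\varphi_i$, and preserves the multiplicities $m_i$. The irregularity identity
\[{\rm irr}(\SK(\ast\{v\}))=\sum_{i=1}^k m_i\cdot{\rm ord}_{\{v\}}(\varphi_i)\]
then follows from the last assertion of Theorem \ref{new-thm} (equivalently Proposition \ref{prop-nn7}), since the pole order of $-\varphi_i$ equals that of $\varphi_i$.

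The only mildly delicate point is the bookkeeping of multiplicities: Theorem \ref{new-thm} is stated with pairwise distinct exponentials in mind, but its proof via Propositions \ref{prop-2} and \ref{new prop} (or via Theorem \ref{new-new-thm}) allows repetitions simply by treating $-\varphi_i$ as appearing $m_i$ times in the quasi-good collection. Everything else is a straightforward assembly of Theorem \ref{th-A}, Theorem \ref{thm-4}(ii),(v) and Theorem \ref{new-thm}.
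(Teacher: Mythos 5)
Your proposal is correct and follows essentially the same route as the paper: pull back along the normal slice $i_M$ via Theorem \ref{thm-4}(ii), apply the sectorial description of $Sol^{\rmE}_{\var Y}(\tl{\SM^\wedge})$ from Theorem \ref{thm-2}, and then invoke Theorem \ref{new-thm} to read off the exponential factors and irregularity. The extra care you take (spelling out Theorem \ref{thm-4}(v) to pass from $\SK$ to $\SK(\ast\{v\})$, and noting that the multiplicity bookkeeping in Theorem \ref{new-thm} already accommodates repeated exponentials, which it does since Proposition \ref{new prop} never assumes the Puiseux polynomials are pairwise distinct) is welcome but does not change the argument.
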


\begin{theorem}\label{new-thm-7}
Assume that $d_{D_i}=N-1$ and 
let $v_0 \in \Omega$ be a point of $\Omega$. 
Then for $|\tau|\gg0$ and a generic point 
$v \in D_i \cap D_{\rm reg}$ 
the multiplicity ${\rm mult}_{T^\ast_{D_i}Y}\SM^\wedge \geq0$
of the Fourier transform $\SM^\wedge$ along $T^\ast_{D_i}Y$
is given by 
\begin{align*}
{\rm mult}_{T^\ast_{D_i}Y}\SM^\wedge
=& 
\chi\big(\rmR\Gamma_c(\ell(v)^{-1}(\tau); Sol_X(\SM)[N])\big) \\
&-\chi\big(\rmR\Gamma_c(\ell(v_0)^{-1}(\tau); Sol_X(\SM)[N])\big) 
 + {\rm irr} ( \SK(\ast\{v\}) ).
\end{align*}
\end{theorem}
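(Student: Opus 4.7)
The plan is to combine Theorem \ref{new-thm-6} with Kashiwara's local index formula for perverse sheaves, and eliminate the global term $\chi(\rmR\Gamma_c(X; Sol_X(\SM)[N]))$ by applying the same formula at the reference point $v_0 \in \Omega$.

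First I will identify the characteristic cycle of $\SM^\wedge$ near a generic $v \in D_i \cap D_{\rm reg}$. Write $\ch(\SM^\wedge) = \bigcup_\alpha T^\ast_{W_\alpha}Y$ for its irreducible Lagrangian decomposition. By Corollary \ref{cor-3}, the zero-section $T^\ast_Y Y$ appears with multiplicity equal to the generic rank $r := \sum_{j=1}^k m_j$ of $\SM^\wedge$ on $\Omega$, and by hypothesis the component $T^\ast_{D_i}Y$ appears with multiplicity $n := {\rm mult}_{T^\ast_{D_i}Y}\SM^\wedge$. Since the other $W_\alpha$ are proper closed subvarieties of $D$, a generic point $v \in D_{i,\rm reg}$ can be arranged to avoid $\bigcup_{W_\alpha \notin \{Y, D_i\}} W_\alpha$, the intersections $D_i \cap D_j$ ($j\neq i$), and the singular locus of $D_i$. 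Over a neighborhood of such $v$ one then has
\[{\rm CC}(\SM^\wedge) = r \cdot [T^\ast_Y Y] + n \cdot [T^\ast_{D_i}Y].\]
The perverse sheaf $Sol_Y(\SM^\wedge)[N]$ carries the same characteristic cycle, and the local Euler obstructions at the smooth point $v$ of the smooth strata $Y$ and $D_i$ are $(-1)^N$ and $(-1)^{N-1}$ respectively. Kashiwara's local index theorem then produces
\[\chi_v\bigl(Sol_Y(\SM^\wedge)[N]\bigr) = (-1)^N r + (-1)^{N-1} n, \quad \text{hence}\quad \chi_v\bigl(Sol_Y(\SM^\wedge)\bigr) = r - n.\]

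Next I will apply Theorem \ref{new-thm-6} at the two points $v$ and $v_0$. Using $\chi_v(Sol_M(\SK(\ast\{v\}))) = -{\rm irr}(\SK(\ast\{v\}))$ recalled after Theorem \ref{thm-neww}, and writing
\[\chi_X := \chi\bigl(\rmR\Gamma_c(X; Sol_X(\SM)[N])\bigr), \qquad \chi_w^{\rm fib} := \chi\bigl(\rmR\Gamma_c(\ell(w)^{-1}(\tau); Sol_X(\SM)[N])\bigr),\]
the theorem at $v$ becomes
\[\chi_v\bigl(Sol_Y(\SM^\wedge)\bigr) = \chi_X - \chi_v^{\rm fib} - {\rm irr}(\SK(\ast\{v\})).\]
The identical reasoning applies at $v_0 \in \Omega$: by Corollary \ref{cor-3}, on a normal slice $M_0$ at $v_0$ the restriction $\SK_0 = \bfD i_{M_0}^{\ast}\SM^\wedge$ is a holomorphic integrable connection of rank $r$, hence ${\rm irr}(\SK_0(\ast\{v_0\})) = 0$ and $\chi_{v_0}(Sol_Y(\SM^\wedge)) = r$. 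Substituting into Theorem \ref{new-thm-6} at $v_0$ yields $\chi_X = r + \chi_{v_0}^{\rm fib}$.

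Combining these, the identity at $v$ becomes
\[r - n = r + \chi_{v_0}^{\rm fib} - \chi_v^{\rm fib} - {\rm irr}(\SK(\ast\{v\})),\]
which rearranges to the desired formula $n = \chi_v^{\rm fib} - \chi_{v_0}^{\rm fib} + {\rm irr}(\SK(\ast\{v\}))$. The main obstacle lies in the first step: one must justify carefully that for a generic choice of $v$ the only components of ${\rm CC}(\SM^\wedge)$ contributing to the fiber over $v$ are $T^\ast_Y Y$ and $T^\ast_{D_i}Y$ with the stated multiplicities, and that the local index formula can be invoked with the correct sign conventions for the Euler obstructions of the smooth strata. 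Once this local structure is in place, the remaining derivation is direct substitution using Theorem \ref{new-thm-6} and Corollary \ref{cor-3}.
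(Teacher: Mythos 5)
Your proposal is correct and takes essentially the same route as the paper's own proof: both reduce to Kashiwara's local index theorem (to express $\chi_v(Sol_Y(\SM^\wedge))=r-n$ at a generic $v\in D_i\cap D_{\rm reg}$ and $\chi_{v_0}(Sol_Y(\SM^\wedge))=r$), and then eliminate the global term $\chi\bigl(\rmR\Gamma_c(X; Sol_X(\SM)[N])\bigr)$ by substituting into Theorem \ref{new-thm-6}. The only cosmetic differences are that you apply the index theorem in the ambient $Y$ rather than on the transversal slice $M$, and you rederive the value at $v_0$ from Proposition \ref{prop-4} instead of citing Brylinski's Corollaire 8.6, but these are equivalent.
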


\begin{proof}
By Kashiwara's local index theorem for holonomic $\SD$-modules 
in \cite[Corollary 6.3.4]{Kas83-2}, 
we have 
\[{\rm mult}_{T^\ast_{D_i}Y}\SM^\wedge
=\chi_{v_0}\big(Sol_M(\SM^\wedge)\big)
-\chi_v\big(Sol_M(\SM^\wedge)\big).
\]
Note that for $|\tau|\gg0$ and 
$v_0\in\Omega$ the equality
\[\chi_{v_0}\big(Sol_M(\SM^\wedge)\big)
=
\chi\big(\rmR\Gamma_c(X; Sol_X(\SM)[N])\big)
-
\chi\big(\rmR\Gamma_c(\ell(v_0)^{-1}(\tau); Sol_X(\SM)[N])\big) 
\]
holds (see Brylinski \cite[Corollaire 8.6]{Bry86}).
Then the assertion follows from Theorem \ref{new-thm-6}. 
Recall that we have
${\rm irr} ( \SK(\ast\{v\}) ) =
- \chi_v \big(Sol_M ( \SK(\ast\{v\}) ) \big)$.

\end{proof}

\begin{example}
Let us consider the special case where 
the Fourier transform 
$\SM^\wedge$ is a confluent
$A$-hypergeometric system on $Y=\CC^2$. 
For the subset $A=\{2, 3\}$ of 
the $1$-dimensional lattice $\ZZ 
\subset \RR$ consider the embedding
\[i_T : T=\CC^\ast \xhookrightarrow{\ \ \ } X=\CC_{x, y}^2,\
s\longmapsto (s^2, s^3)\]
of the 1-dimensional torus $T=\CC^\ast$ associated to it.
For a complex number $c$ such that $c\notin\ZZ$ 
set $\SL=\SO_Ts^{c-1}\in\Modrh(\SD_T)$
and $\SM={\bfD} i_{T\ast}\SL$. 
Then $c$ is non-resonant in the sense of Adolphson \cite{Ado94} 
and we have $\SM\in\Modrh(\SD_X)$. 
Set $Z=\var{i_T(T)} = \{x^3=y^2\}\subset X$.
Then we can easily see that 
\[\ch\SM = T^\ast_{\{0\}}X\cup T^\ast_ZX\]
and 
\[{\rm mult}_{ T^\ast_{\{0\}}X}\SM = 2,\hspace{10pt}
{\rm mult}_{ T^\ast_{Z}X}\SM =1.\]
Moreover the open subset $\Omega\subset Y$
of $Y= \CC^2$ is given by
\[\Omega = \{(w_1, w_2)\in Y=\CC^2\ |\ 
w_1 w_2 \not= 0\}\subset Y.\]
Then by Corollary \ref{cor-3} the rank
of $\SM^\wedge|_\Omega$ is equal to
\[{\rm mult}_{ T^\ast_{\{0\}}X}\SM 
+{\rm mult}_{ T^\ast_{Z}X}\SM=2+1=3.\]
This coincides with Adolphson's result in \cite{Ado94}. 
Indeed, the normalized volume 
of the convex hull $\Delta \subset \RR$ of 
$\{ 0 \} \cup A \subset \RR$ is equal to 
$3$. Set $\SF=Sol_X(\SM)[2]\in\BDC_{\CC-c}(\CC_{X^\an}).$ 
Set also $D_1 := \{(w_1, w_2)\in Y\ |\ w_2 =0\}$. 
As a normal slice of $D_1\subset Y=\CC^2$ at the point $(1, 0)\in D_1$
let us consider the submanifold $M := \{(1, u)\ |\ u \in\CC\}
\simeq\CC_{u}$ of $Y=\CC^2$.
Then by Theorem \ref{thm-2} it is easy to 
show that the restriction of $Sol_{\var Y}^\rmE( \tl{\SM^\wedge})
\simeq{}^{\rmL}Sol_{\var X}^\rmE(\tl{\SM} )$ to 
$M \cap \Omega \simeq \CC^*_u$ is isomorphic to
\[\Big(\underset{a\to+\infty}{\inj}\ 
\CC_{\{t\geq a\}}^{\oplus 2}\Big)
\oplus
\Big(\underset{a\to+\infty}{\inj}\ 
\CC_{\{t\geq {\Re}(\varphi(u))+a\}}\Big),\]
where we set $\varphi(\eta) := \frac{4}{27 u^2}.$ 
By Theorem \ref{thm-nn7} this implies that 
the irregularity of the 
meromorphic connection obtained by 
restricting $\SM^\wedge$ to the normal slice $M$ 
is equal to $2$. 
Set $v=(1,0) \in M \cap D_1$ and 
$v_0=(1, \varepsilon ) \in M \cap \Omega = 
M \setminus D_1$ ($\varepsilon \not= 0$). 
Then for $| \tau | \gg 0$ we have 
\[ \chi\big(\rmR\Gamma_c(\ell(v)^{-1}(\tau); \SF)\big) 
-
\chi\big(\rmR\Gamma_c(\ell(v_0)^{-1}(\tau); \SF)\big) 
=(-2)-(-3)=1. \]
By Theorem \ref{new-thm-7} we thus obtain 
\[{\rm mult}_{T^\ast_{D_1}Y} \SM^\wedge=1 +2=3.\]
Similarly for $D_2 = \{(w_1, w_2)\in Y\ |\ w_1 =0\}$ 
we can show 
${\rm multi}_{T^\ast_{D_2}Y} \SM^\wedge=0$. 
Hence $\SM^\wedge$ is an integrable connection 
on $Y \setminus D_1 \supset \Omega$. 
In fact, for $A= \{ 2,3 \}$ 
this follows from Adolphson's result in \cite{Ado94}. 
\end{example}

\begin{example}
For the smooth hypersurface 
$Z= \{ z \in X= \CC^N \ | \ 
z_1^2+ \cdots +z_{N-1}^2+ z_N=1 \} \subset X$ consider 
the perverse sheaf 
$\SF= \CC_Z [N-1] 
\in\BDC_{\CC-c}(\CC_{X^\an})$ on $X= \CC^N$. 
Let $\SM \in\Modrh(\SD_X)$ be 
the regular holonomic $\SD_X$-module 
such that $Sol_X(\SM)[N] = \SF$. 
Then $\SM$ is not monodromic and we have 
\begin{align*}
& \ch\SM  =  T^\ast_ZX
\\
&= \{ (z, \zeta(2z_1, \ldots,  2z_{N-1}, 1))
\in T^*X \simeq X \times Y \ | \ 
\zeta \in \CC, z_1^2+ \cdots +z_{N-1}^2 + z_N=1 \}. 
\end{align*}
It follows that for the projection 
$q: T^*X \simeq X \times Y \rightarrow Y$ 
and a point $w \in Y \setminus \{ 0 \} 
= \CC^N  \setminus \{ 0 \}$ we have 
\[ w \in q ( T^\ast_ZX ) \qquad \Longleftrightarrow \qquad 
w_N \not= 0. \]
Moreover if $w \in q ( T^\ast_ZX ) \setminus \{ 0 \}$ 
then the set $q^{-1}(w) \cap T^\ast_ZX$ 
is explicitly calculated as 
\begin{equation*}
q^{-1}(w) \cap T^\ast_ZX =
\left\{(z , w)\in T^*X \simeq X \times Y \left|
\begin{array}{l}
z_i = \frac{w_i}{2w_N } \ (i = 1, \ldots, N-1),\\
z_N=1-\frac{1}{4w_N^2}(w_1^2+\cdots+w_{N-1}^2) 
\end{array}
\right.\right\}.
\end{equation*}
This shows that the open subset $\Omega \subset 
Y= \CC^N$ is given by 
\[ \Omega = \{ w \in Y= \CC^N \ | \ 
w_N \not= 0 \} \]
and the morphism $q^{-1} ( \Omega ) \cap 
\ch\SM \rightarrow \Omega$ induced by $q$ 
is a covering of degree $1$. 
By Corollary \ref{cor-3} 
the restriction of the Fourier transform 
$\SM^\wedge\in\Mod_{\rm hol}(\SD_Y)$ of $\SM$ 
to $\Omega\subset Y$ is an integrable connection 
of rank $1$. We can also easily see that 
$\SM^\wedge$ has some irregularities at 
infinity by using Theorem \ref{thm-3}. 
Set $D= Y \setminus \Omega = \{ w \in Y= \CC^N \ | \ w_N =0 \}$
 and take its normal slice 
\[  M = \{ (a_1, \ldots, a_{N-1}, u)
 \in Y= \CC^N \ | \ u \in \CC \} \subset Y \]
at the generic point $ (a_1, \ldots, a_{N-1}, 0)
\in D \setminus \{ 0 \}$. Then for 
a  point $v= (a_1, \ldots, a_{N-1}, u)
\in M \setminus D= M \cap \Omega$ $(u \not=0)$ 
we have 
\begin{equation*}
q^{-1}(v) \cap T^\ast_ZX =
\left\{(z , v)\in T^*X \simeq X \times Y \left|
\begin{array}{l}
z_i = \frac{a_i}{2u} \ (i = 1, \ldots, N-1),\\
z_N=1-\frac{1}{4u^2}(a_1^2+\cdots+a_{N-1}^2) 
\end{array}
\right.\right\}
\end{equation*}
Since the Puiseux series 
\[ \varphi(u)= 
\langle z, v \rangle 
= u+\frac{1}{4u}(a_1^2+\cdots+a_{N-1}^2) \]
of $u$ start from the negative degree $-1$, 
by Theorem \ref{thm-nn7} the irregularity of meromorphic 
connection obtained by restricting 
$\SM^\wedge$ to the normal slice $M \subset Y$ 
is equal to $1$. 
Set $v=(a_1, \ldots, a_{N-1}, 0) \in M \cap D$ and 
$v_0=(a_1, \ldots, a_{N-1}, \varepsilon ) \in M \cap \Omega = 
M \setminus D$ ($\varepsilon \not= 0$). 
Note that there exists an isomorphism 
$Z \simeq \CC^{N-1}$ induced by the projection 
$X= \CC^N \rightarrow \CC^{N-1}$, 
$z \mapsto (z_1, \ldots, z_{N-1})$. 
Then for $| \tau | \gg 0$ we have 
$\ell(v)^{-1}(\tau) \cap Z \simeq \CC^{N-2}$, 
$\ell(v_0)^{-1}(\tau) \cap Z \simeq 
\{ ( \lambda_1, \ldots, \lambda_{N-1}) \in \CC^{N-1} 
 \ | \ \lambda_1^2 + \cdots + \lambda_{N-1}^2 =1 \}$ and 
hence 
\begin{align*}
&\chi\big(\rmR\Gamma_c(\ell(v)^{-1}(\tau); \SF)\big) 
-
\chi\big(\rmR\Gamma_c(\ell(v_0)^{-1}(\tau); \SF)\big)\\
&=(-1)^{N-1}-\{(-1)^{N-1}+(-1)^{N-1}(-1)^{N-2}\}=1. 
\end{align*}
By Theorem \ref{new-thm-7} we thus obtain 
\[{\rm mult}_{T^\ast_{D}Y} \SM^\wedge=
1+1=2.\]
\end{example}

We shall rewrite Theorem \ref{new-thm-7} more explicitly. 
For this purpose, we introduce a ``conification" of the perverse sheaf
$\SF = Sol_X(\SM)[N]\in\BDC_{\CC-c}(\CC_{X^\an})$ as follows.
Let $j=i_X : X=\CC^N\xhookrightarrow{\ \ \ }\var{X}=\PP^N$ be the 
projective compactification 
of $X= \CC^N$ and 
$h$ the (local) defining equation of 
the hyperplane at infinity $H_\infty =\var{X}\bs X$ in $\var{X}$
such that $H_\infty = h^{-1}(0)$.
Moreover let $\gamma : X\bs\{0\}=\CC^N\bs\{0\}\twoheadrightarrow
H_\infty=\PP^{N-1}$ be the canonical projection.
Then
\[\SG := 
\gamma^{-1} \psi_h( j_! \SF)\in\BDC_{\CC-c}(\CC_{X\bs\{0\}})\]
is a perverse sheaf on $X\bs\{0\}$.
We call it the conification of $\SF$.
In particular $\SG$ is monodromic in 
the sense of \cite{Ver83} and \cite{Bry86}.
We extend it to a perverse sheaf on the whole $X$ 
and denote it also by $\SG$.
Let $\SN\in\Modrh(\SD_X)$ be the regular holonomic
$\SD_X$-module such that $Sol_X(\SN)[N]\simeq\SG$.
Now we shall recall the well-known relationship between
the characteristic cycle ${\rm CC}(\SN)$ of $\SN$
and that of its Fourier transform $\SN^\wedge$.
For a subvariety $V\subset X$ of $X$ set
\[T_V^\ast X:= \var{T_{V_{\rm reg}}^\ast X}\subset T^\ast X,\]
where $V_{\rm reg}\subset V$ stands for the regular part of $V$ .
Then there exist some $\CC^\ast$-conic subvarieties $V_i\subset X$ 
of $X$ and positive integer $\mi>0\ (1\leq i\leq r)$ such that
\[{\rm CC}(\SN) = \sum_{i=1}^r\mi\cdot[T^\ast_{V_i}X].\]
By the natural identification $T^\ast X \simeq T^\ast Y$ for any
$1\leq i\leq r$ there exists a $\CC^\ast$-conic subvariety
$W_i\subset Y$ of $Y=\CC^N$ such that $T^\ast_{V_i}X=T^\ast_{W_i}Y$.
Note that their projectivizations $\PP(V_i)\subset\PP(X)\simeq\PP^{N-1}$
and $\PP(W_i)\subset\PP(Y)\simeq\PP^{N-1}$ are dual varieties in the 
classical theory of projective duality
(see Gelfand-Kapranov-Zelevinsky \cite[\S1.3]{GKZ94}).
Then we have 
\[{\rm CC}(\SN^\wedge) = \sum_{i=1}^r\mi\cdot[T^\ast_{W_i}Y].\]
This equality can be easily seen also by using the arguments for 
the enhanced micro-supports ${\rm SS}^\rmE(\cdot)$
in D'Agnolo-Kashiwara \cite{DK17}.
We will show also that $W_i\subset D= Y\bs\Omega$
for $1\leq i\leq r$ satisfying some condition 
and $d_{W_i}=N-1$.
By $\SG_0 := \psi_h( j_! \F)\in\BDC_{\CC-c}(\CC_{H_\infty^\an})$ 
we can rewrite our results as follows. 

\begin{definition}\label{new-defc} 
Let $w\neq 0$ be a point of $Y=\CC^N$.
Then we say that $\F\in\BDC_{\CC-c}(\CC_{X^{\an}})$ 
is moderate at infinity over $w$ 
if there exists a (complex analytic) Whitney stratification 
$\overline{X}=\sqcup_{\alpha\in A}S_\alpha$ of $\overline{X}=\PP^N$
adapted to $j_! \F$ and subdividing the one 
$\overline{X}=X\sqcup H_\infty$ 
such that for any stratum $S_\alpha \subset X=\CC^N$ 
the set $\overline{T^\ast_{S_{\alpha}} X}\cap 
T^\ast_{H_{\infty}} \overline{X}$ is contained 
in the zero section of $T^* \overline{X}$ over 
a neighborhood of $H(w)$ in $\overline{X}$. 
Moreover for a subset $B \subset Y \setminus \{ 0 \}$ 
we say that $\F\in\BDC_{\CC-c}(\CC_{X^{\an}})$ 
is moderate at infinity over $B$ if it is so 
over any point $w \in B$ in it. 
\end{definition}

Obviously, if $\F\in\BDC_{\CC-c}(\CC_{X^{\an}})$ is monodromic then it is 
moderate at infinity over any point $w\neq0$ of $Y$. 
Moreover the set of the points $w \not= 0$ over which 
$\F\in\BDC_{\CC-c}(\CC_{X^{\an}})$ is moderate at 
infinity is $\CC^*$-conic in $Y= \CC^N$. 

\begin{example}\label{exm-near} 
Let $f(z) \in \CC [z_1,z_2, \ldots, z_N]$ be a polynomial 
on $X= \CC^N$ such that the hypersurface of 
$\PP^{N-1}$ defined by its top degree part is smooth. 
Then the constructible sheaf 
$\CC_{f^{-1}(0)} \in\BDC_{\CC-c}(\CC_{X^{\an}})$ is moderate at 
infinity over any point $w\neq 0$ of $Y=\CC^N$.
\end{example}

\begin{lemma}\label{l-conic} 
Assume that $\F\in\BDC_{\CC-c}(\CC_{X^{\an}})$ is moderate 
at infinity over a point $w\neq 0$ of $Y=\CC^N$.
Then for $|\tau|\gg0$ 
we have $$\chi({\rmR}\Gamma_c(\ell(w)^{-1}(\tau)\ ;\ \F))
=\chi({\rmR}\Gamma_c(H_\infty\bs H(w)\ ;\ \psi_h( j_! \F))).$$
\end{lemma}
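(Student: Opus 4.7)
My plan is to translate both sides of the identity into Euler characteristics of nearby cycle sheaves on the resolution $\rho : \overline{X}^{H(w)} \to \overline{X}$ introduced before Lemma \ref{newlea}, together with the proper morphism $f : \overline{X}^{H(w)} \to \PP^1$ extending $\ell(w)/h$ and the inclusion $\iota : X \hookrightarrow \overline{X}^{H(w)}$. Since $f$ is proper and $\iota_!\F$ is constructible, the fibrewise Euler characteristic of $\rmR f_*(\iota_!\F)$ is constant for $|\tau|\gg 0$, and proper base change together with the compatibility of nearby cycles with proper pushforward give
\[
\chi(\ell(w)^{-1}(\tau); \F) \;=\; \chi(f^{-1}(\tau); \iota_!\F) \;=\; \chi(\widetilde{H}_\infty;\, \psi_{f,\infty}(\iota_!\F))
\]
for $|\tau|\gg 0$, where $\widetilde{H}_\infty = f^{-1}(\infty)$ is the strict transform of $H_\infty$, canonically isomorphic to $H_\infty$ via $\rho$.

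Setting $E = \rho^{-1}(H(w))$, split $\widetilde{H}_\infty = (\widetilde{H}_\infty\setminus E)\sqcup(\widetilde{H}_\infty\cap E)$. On $\widetilde{H}_\infty\setminus E \simeq H_\infty\setminus H(w)$, the map $\rho$ is a local isomorphism and $1/f = h/\ell(w)$ with $\ell(w)$ a holomorphic unit, so $\psi_{f,\infty}(\iota_!\F)|_{\widetilde{H}_\infty\setminus E}\simeq\psi_h(j_!\F)|_{H_\infty\setminus H(w)}$. Hence the open part contributes exactly the right-hand side of the lemma, and the proof reduces to the vanishing $\chi(\widetilde{H}_\infty\cap E;\,\psi_{f,\infty}(\iota_!\F)) = 0$, which is the only place the moderate-at-infinity hypothesis enters essentially.

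To prove this vanishing I would work in a blow-up chart with $u_0 = w u_1$ (so $\widetilde{H}_\infty = \{w=0\}$, $E = \{u_1=0\}$, $f=1/w$). For $\tilde p\in\widetilde{H}_\infty\cap E$ the stalk $\psi_{f,\infty}(\iota_!\F)_{\tilde p}$ is $\rmR\Gamma(\Sigma;\,j_!\F)$, where $\Sigma = \rho(F_{\tilde p})$ is a small neighbourhood of $\rho(\tilde p)\in H(w)$ inside the smooth slice $\{u_0 = \eta u_1\}$ meeting $H_\infty$ transversally at $\rho(\tilde p)$. The moderate-at-infinity hypothesis asserts that each stratum closure $\overline{S_\alpha}$ with $S_\alpha\subset X$ of the adapted Whitney stratification is transverse to $H_\infty$ near $H(w)$; for a generic choice of the Milnor-fibre parameter $\eta$ this forces $\Sigma$ to be transverse at $\rho(\tilde p)$ to every such $\overline{S_\alpha}$. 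The local Euler characteristic formula for constructible sheaves on Whitney-stratified complex analytic spaces then applies on $\Sigma$ and gives
\[
\chi(\Sigma;\,j_!\F) \;=\; \chi\bigl((j_!\F)_{\rho(\tilde p)}\bigr) \;=\; 0,
\]
since $\rho(\tilde p)\in H_\infty$ and $j_!\F$ vanishes there. Integrating over strata of $\widetilde{H}_\infty\cap E$ yields the required vanishing and completes the proof. The main obstacle is exactly this transversality-plus-genericity step enabling the local Euler characteristic formula: without moderate at infinity the slice $\Sigma$ generically meets some $\overline{S_\alpha}$ with higher intersection multiplicity, the local Euler formula fails (as already in $N=2$ with $\F=\CC_Z$ for $Z$ tangent to $H_\infty$), and the lemma itself fails.
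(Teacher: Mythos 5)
Your proposal takes a genuinely different route from the paper's. The paper never blows up: it decomposes $H_\infty\setminus H(w)$ into a compact ball $K_\e$ and a small collar $U_\e\setminus H(w)$ around $H(w)$, proves an actual isomorphism of cohomologies over $K_\e$ via the microlocal Bertini--Sard theorem and the curve selection lemma, and invokes the moderate-at-infinity hypothesis only to match Euler characteristics over the collar. You instead pass to $\rho:\overline{X}^{H(w)}\to\overline{X}$, use compatibility of nearby cycles with the proper map $f$ to rewrite $\chi(\rmR\Gamma_c(\ell(w)^{-1}(\tau);\F))$ as $\chi(\widetilde{H}_\infty;\psi_{f,\infty}(\iota_!\F))$, and split $\widetilde{H}_\infty=(\widetilde{H}_\infty\setminus E)\sqcup(\widetilde{H}_\infty\cap E)$. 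The open part is canonically identified with the right-hand side of the lemma (this is where the $N=1$ structure near $H_\infty\setminus H(w)$ is used for free), so the whole content of the moderate hypothesis is compressed into the single vanishing $\chi(\widetilde{H}_\infty\cap E;\psi_{f,\infty}(\iota_!\F))=0$. That reorganization is clean and conceptually attractive; it also makes the role of the hypothesis transparent, as your last sentence shows.

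There is, however, a gap in the vanishing argument. You compute the stalk $\psi_{f,\infty}(\iota_!\F)_{\tilde p}$ as $\rmR\Gamma(\Sigma\cap B;j_!\F)$ for a slice $\Sigma=\{u_0=\eta u_1\}$, and invoke the local Euler characteristic formula $\chi(\rmR\Gamma(\Sigma\cap B;j_!\F))=\chi((j_!\F)_{\rho(\tilde p)})$. For that formula the ball $\Sigma\cap B$ must be a Milnor ball for $j_!\F|_\Sigma$ at $\rho(\tilde p)$, with radius uniform in $\eta$; the standard way to get such uniformity is to show $\Sigma$ is transverse to the \emph{ambient} Whitney stratification of $\overline{X}$ near $\rho(\tilde p)$, i.e.\ transverse to the stratum $S_0\subset H_\infty$ containing $\rho(\tilde p)$, not only to the $X$-strata $\bar S_\alpha$. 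Note that as $\eta\to0$ one has $T_{\rho(\tilde p)}\Sigma\to T_{\rho(\tilde p)}H_\infty$, so $\Sigma$ is transverse to $S_0$ precisely when $T_{\rho(\tilde p)}S_0\not\subset T_{\rho(\tilde p)}H(w)$. This fails whenever the $H_\infty$-stratum through $\rho(\tilde p)$ is tangent to, or contained in, $H(w)$ --- e.g.\ at a zero-dimensional stratum of $H_\infty$ lying on $H(w)$, which the moderate-at-infinity condition does \emph{not} forbid, since Definition \ref{new-defc} only constrains the closed conormals $\overline{T^\ast_{S_\alpha}X}$ of the strata $S_\alpha\subset X$. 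Your sentence ``this forces $\Sigma$ to be transverse at $\rho(\tilde p)$ to every such $\bar S_\alpha$'' addresses only these $X$-strata. To close the gap one must either argue that transversality to the $X$-strata alone suffices here because $j_!\F$ vanishes identically on $H_\infty$ (so the stratification of $H_\infty$ is irrelevant to the local structure of $j_!\F|_\Sigma$ --- plausible but not automatic), or replace the slice argument by a microlocal one showing $d(1/f)(\tilde p)\notin\ms{\iota_!\F}$ at every $\tilde p\in\widetilde{H}_\infty\cap E$. As written the vanishing step is not complete. A small secondary remark: the nearby cycle fixes $|\eta|$ small rather than $\eta$ generic, so the ``generic $\eta$'' formulation should be replaced by ``$\eta$ sufficiently small'' (with genericity playing no role once moderation is used).
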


\begin{proof}
For $w \in Y \setminus \{ 0 \}$ and $\tau \not= 0$ let 
$\gamma ( w, \tau ): \ell(w)^{-1}(\tau) \simto 
H_\infty\bs H(w) \simeq \CC^{N-1}$ 
be the isomorphism induced by the 
projection 
$\gamma : X\bs\{0\}=\CC^N\bs\{0\}\twoheadrightarrow
H_\infty=\PP^{N-1}$. For $\e >0$ let $U_{\e}  
\supset H(w)$ be the open neighborhood of $H(w) 
\simeq \PP^{N-2}$ in $H_{\infty} 
\simeq \PP^{N-1}$ consisting of points 
whose distances from $H(w)$ 
(with respect to the Fubini-Study 
metric of $H_{\infty}$) are less that $\e$. 
Then by our assumption there exist $0< \e \ll 1$ and $C \gg 0$ 
such that for any $\tau \in \CC$ 
with $| \tau | \geq C$ we have 
\[ \chi({\rmR}\Gamma_c( U_{\e} \bs H(w) \ ;\ \psi_h( j_! \F))) 
= \chi({\rmR}\Gamma_c( 
\gamma (  w, \tau  )^{-1} (U_{\e} \setminus H(w)) 
\ ;\ \F)). \]
Moreover the compact subset $K_{\e}  : = H_{\infty} \setminus U_{\e} 
\subset H_\infty\bs H(w) \simeq \CC^{N-1}$ 
is a closed ball with real analytic 
boundary $\partial K_{\e}$. 
Take an affine chart $W= \CC^N_x$ of 
$\var{X}=\PP^N$ such that $W \cap H_{\infty} 
= H_\infty\bs H(w) = \{ x_N=0 \} 
\subset W= \CC^N_x$, 
$\ell(w)^{-1}(\tau) = \{ x_N= \frac{1}{\tau} \}$ 
for any $\tau \not= 0$ and 
the restriction $\gamma |_W : W 
\rightarrow W \cap H_{\infty}$ of $\gamma$ 
to it is given by $x \mapsto (x_1, \ldots, x_{N-1})$. 
For the Whitney stratification of $\var{X}=\PP^N$ 
adapted to $j_! \F$, by the microlocal 
Bertini-Sard theorem (see \cite[Proposition 8.3.12]{KS90}) 
if $\e >0$ is generic enough the 
real analytic hypersurface $( \gamma |_W)^{-1} 
( \partial K_{\e} )$ of $W= \CC^N_x$ intersects 
its all strata transversally on a 
neighborhood of $H_{\infty}$. Then by 
the analytic curve selection lemma it is 
easy to show that for $| \tau |$ large enough 
we have an isomorphism 
\[ {\rmR}\Gamma (K_{\e}  ; \psi_h( j_! \F)) \simeq 
{\rmR}\Gamma ( \gamma (  w, \tau  )^{-1}(K_{\e}) ; \F). \]
Then the assertion immediately follows. 
\end{proof}

Now we can rewrite 
Proposition \ref{prop-4} as follows. 

\begin{proposition}\label{new-prop-4}
Assume that perverse sheaf 
$\F=Sol_X(\SM)[N]\in\BDC_{\CC-c}(\CC_{X^{\an}})$ is moderate 
at infinity over a point $w \neq 0$ of $Y=\CC^N$.
Then for $|\tau|\gg0$ we have
\[ 
\chi_w\big(Sol_{\{w\}}(\bfD i_{\{w\}}^\ast\SM^\wedge)\big)= 
\chi\big(\rmR\Gamma_c(X; \SF)\big)-\chi\big(
\rmR\Gamma_c(H_\infty\bs H(w); \psi_h( j_! \F))\big). \] 
\end{proposition}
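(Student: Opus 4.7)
The plan is to deduce this statement immediately from the two results already in hand: Proposition \ref{prop-4} and Lemma \ref{l-conic}. Proposition \ref{prop-4} expresses the local Euler--Poincar\'e index $\chi_w\big(Sol_{\{w\}}(\bfD i_{\{w\}}^\ast\SM^\wedge)\big)$ as the difference
\[
\chi\big(\rmR\Gamma_c(X; \SF)\big)-\chi\big(\rmR\Gamma_c(\ell(w)^{-1}(\tau); \SF)\big)
\]
for any $\tau\in\CC$ avoiding the finite set $\{P_1,\ldots,P_\ell\}$ of stratified critical values of the meromorphic extension of $\ell(w)$ to $\var{X}^{H(w)}$. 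So all that remains is to identify, for $|\tau|\gg 0$, the second term with
\[
\chi\big(\rmR\Gamma_c(H_\infty\bs H(w); \psi_h( j_! \F))\big),
\]
which is exactly the content of Lemma \ref{l-conic} under the moderateness hypothesis.

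Concretely, I would first invoke Proposition \ref{prop-4} with any $\tau$ of sufficiently large absolute value (large enough so that $\tau \notin\{P_1, \ldots, P_\ell\}$ \emph{and} so that the moderateness conclusion of Lemma \ref{l-conic} applies). Then I would substitute the identity of Lemma \ref{l-conic} into the second term to obtain the desired equality. No further argument is required, because both ingredients have already been established: Proposition \ref{prop-4} uses the calculation of $\big(L(Sol_X(\SM))\big)_{(w,a)}$ for $a\gg 0$ together with the stratified fibration of $\ell(w)$ over the half-plane $\{\Re\tau>a\}$, while Lemma \ref{l-conic} supplies the comparison between the generic fiber of $\ell(w)$ and the nearby cycle at infinity under the assumed moderateness condition.

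There is no real obstacle, since the technical work has been carried out in the preceding lemmas. The only point to record carefully is that the two ``$|\tau|\gg 0$'' regimes in Proposition \ref{prop-4} (where $\tau$ must avoid the finite bifurcation set of $\ell(w)$ restricted to the stratification of $\iota_!\SF[N]$) and in Lemma \ref{l-conic} (where $\tau$ must be large enough so that the microlocal Bertini--Sard argument yields the isomorphism ${\rmR}\Gamma(K_\e;\psi_h(j_!\F))\simeq {\rmR}\Gamma(\gamma(w,\tau)^{-1}(K_\e);\F)$) can be simultaneously satisfied by choosing $|\tau|$ large enough. Thus combining the two identities yields the claim.
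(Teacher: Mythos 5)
Your proposal is correct and matches the paper exactly: the paper introduces Proposition \ref{new-prop-4} with the sentence ``Now we can rewrite Proposition \ref{prop-4} as follows,'' so the intended proof is precisely the substitution of Lemma \ref{l-conic} into Proposition \ref{prop-4}, as you carry out. Your remark about simultaneously choosing $|\tau|$ large enough to satisfy both regimes is a sensible precaution and does not conflict with anything in the paper.
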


Finally we obtain the following result. 

\begin{theorem}\label{thm-7}
Assume that $d_{W_i}=N-1$ and perverse sheaf
$\F=Sol_X(\SM)[N]\in\BDC_{\CC-c}(\CC_{X^{\an}})$ is moderate 
at infinity over a neighborhood of a generic point 
$v \in (W_i)_{\rm reg}$ in $Y \setminus \{ 0 \}$. 
Take a normal slice $M$ of $W_i$ 
at $v$ and consider the meromorphic connection 
$\SK(\ast\{v\})$ on it. 
Then the multiplicity 
${\rm mult}_{T^\ast_{W_i}Y}\SM^\wedge \geq0$
of the Fourier transform $\SM^\wedge$ along $T^\ast_{W_i}Y$
is given by
\begin{align*}
{\rm mult}_{T^\ast_{W_i}Y}\SM^\wedge
&=
{\rm mult}_{T^\ast_{V_i}X}\SN
+ {\rm irr} ( \SK(\ast\{v\}) ) 
\\
(&\geq {\rm mult}_{T^\ast_{V_i}X}\SN =\mi > 0).
\end{align*}
In particular, the conormal bundle $T^\ast_{W_i}Y$ is
contained in the characteristic variety $\ch(\SM^\wedge)$
of $\SM^\wedge$ and we have $W_i\subset D= Y\bs\Omega$.
\end{theorem}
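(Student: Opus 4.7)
The plan is to compare two instances of Theorem \ref{new-thm-7}, one applied to $\SM$ and one applied to the regular holonomic module $\SN$ attached to the conification $\SG$. Since ${\rm CC}(\SN^\wedge) = \sum_j n_j [T^\ast_{W_j}Y]$, the hypothesis $d_{W_i}=N-1$ guarantees that $W_i$ is a smooth hypersurface component of $D^\SN := Y \setminus \Omega^\SN$, so Theorem \ref{new-thm-7} can also be invoked for $\SN$ along $W_i$. First I would fix a point $v \in (W_i)_{\rm reg}$ generic enough that $v$ avoids every other component of $D \cup D^\SN$, that $\F$ is moderate at infinity on a neighborhood of $v$, and that the normal slice $M$ of $W_i$ at $v$ is transverse. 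Picking an auxiliary point $v_0 \in M$ sufficiently close to $v$ then places $v_0$ in $\Omega \cap \Omega^\SN$ and, by the openness and $\CC^\ast$-conicity of the moderateness locus, also in the moderate locus of $\F$.

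Applying Theorem \ref{new-thm-7} to $\SM$ at $W_i$ with this choice of $(v,v_0)$ gives, for $|\tau|\gg 0$,
\[ {\rm mult}_{T^\ast_{W_i}Y}\SM^\wedge = \chi\bigl(\rmR\Gamma_c(\ell(v)^{-1}(\tau);\F)\bigr) - \chi\bigl(\rmR\Gamma_c(\ell(v_0)^{-1}(\tau);\F)\bigr) + {\rm irr}(\SK(\ast\{v\})). \]
Applied instead to $\SN$ at the same pair, the monodromicity of $\SG$ makes the moderateness hypothesis automatic, and Brylinski's Theorem \ref{th-Bry} forces $\SN^\wedge$ to be regular, so the corresponding irregularity term vanishes. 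Using $T^\ast_{W_i}Y = T^\ast_{V_i}X$ this yields
\[ n_i = \chi\bigl(\rmR\Gamma_c(\ell(v)^{-1}(\tau);\SG)\bigr) - \chi\bigl(\rmR\Gamma_c(\ell(v_0)^{-1}(\tau);\SG)\bigr). \]

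The key technical step is then to match the Euler characteristics of $\F$ and those of $\SG$ at both $v$ and $v_0$. By construction $\SG|_{X\setminus\{0\}} = \gamma^{-1}\psi_h(j_!\F)$, and for any $w\neq 0$ and any $\tau\neq 0$ the affine hyperplane $\ell(w)^{-1}(\tau) \subset X$ avoids the origin while the restriction $\gamma|_{\ell(w)^{-1}(\tau)} : \ell(w)^{-1}(\tau) \simto H_\infty \setminus H(w)$ is a biholomorphism, so
\[ \chi\bigl(\rmR\Gamma_c(\ell(w)^{-1}(\tau);\SG)\bigr) = \chi\bigl(\rmR\Gamma_c(H_\infty \setminus H(w); \psi_h(j_!\F))\bigr). \]
By Lemma \ref{l-conic}, whose moderateness hypothesis is guaranteed for both $w = v$ and $w = v_0$ by the choice above, the right-hand side also equals $\chi\bigl(\rmR\Gamma_c(\ell(w)^{-1}(\tau);\F)\bigr)$ for $|\tau|\gg 0$. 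Applied to $w=v$ and $w=v_0$ this furnishes the required identities.

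Subtracting the two multiplicity formulas then yields the announced identity ${\rm mult}_{T^\ast_{W_i}Y}\SM^\wedge = {\rm mult}_{T^\ast_{V_i}X}\SN + {\rm irr}(\SK(\ast\{v\}))$. Since $n_i \geq 1$ the multiplicity is strictly positive, so $T^\ast_{W_i}Y \subset \ch(\SM^\wedge)$; combined with Corollary \ref{cor-3} this forces $W_i \subset D$. The main obstacle I anticipate is the genericity step of the first paragraph: one must arrange that the normal slice $M$, the point $v_0$, the parameter $\tau$, and the moderateness locus of $\F$ cooperate simultaneously so that both applications of Theorem \ref{new-thm-7} are legitimate at the \emph{same} pair $(v, v_0)$ and that the irregularity contribution for $\SN$ really vanishes at the generic smooth point $v$ of $W_i$. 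Once the genericity is secured the remainder is a geometric identification driven entirely by the explicit formula $\SG = \gamma^{-1}\psi_h(j_!\F)$.
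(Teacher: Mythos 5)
Your proposal is correct and follows essentially the same route as the paper: apply Theorem \ref{new-thm-7} together with Lemma \ref{l-conic} to both $\SM$ and $\SN$, identify the nearby-cycle Euler characteristics via the conification relation $\psi_h(j_!\F)=\psi_h(j_!\SG)$, note that Brylinski's theorem kills the irregularity term for $\SN^\wedge$, and subtract. Your treatment is slightly more explicit about why $W_i$ is a codimension-one component of $Y\setminus\Omega^\SN$ and why the same pair $(v,v_0)$ may be used in both applications, but these are points the paper leaves implicit rather than a genuine departure.
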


\begin{proof}
By our assumption there exists a point 
$v_0 \in \Omega$ over which 
$\F\in\BDC_{\CC-c}(\CC_{X^{\an}})$ is moderate 
at infinity. Then by 
Theorem \ref{new-thm-7} and Lemma \ref{l-conic} 
we obtain 
\begin{align*}
{\rm mult}_{T^\ast_{W_i}Y}\SM^\wedge
= & \chi \Big(\rmR\Gamma_c\big(H_\infty\bs H(v); \psi_h(
 j_! \F)\big)\Big) 
-\chi\Big(\rmR\Gamma_c\big(H_\infty\bs H(v_0); \psi_h( j_! 
\F)\big)\Big)\\
& + {\rm irr} ( \SK(\ast\{v\}) ).
\end{align*}
Note that for the conification $\SG$ of $\SF$ we have 
$\SG_0 = \psi_h( j_! \SF)=\psi_h( j_! \SG)$. 
Then by replacing $\SM^\wedge$ with
the regular holonomic $\SD_Y$-module $\SN^\wedge$
we obtain also
\[{\rm mult}_{T^\ast_{W_i}Y}\SN^\wedge
=\chi\Big(\rmR\Gamma_c\big(H_\infty\bs H(v); \psi_h(
 j_! \F)\big)\Big)
-\chi\Big(\rmR\Gamma_c\big(H_\infty\bs H(v_0); \psi_h( j_! 
\F)\big)\Big).\]
From this the assertion immediately follows.
\end{proof}

\begin{remark}
We define a $\ZZ$-valued function $\phi : \PP (Y) 
\rightarrow \ZZ$ on $\PP (Y) \simeq \PP^{N-1}$ by
\[ \phi ([w])= 
\chi\Big(\rmR\Gamma \big( H(w); \G_0\big)\Big) 
\qquad ([w] \in \PP (Y) ). \]
This is the topological Radon transform of the 
constructible function $\chi ( \G_0 )$ 
on $\PP (X)= H_{\infty} \simeq \PP^{N-1}$ 
studied by many mathematicians. 
Since the characteristic cycle of $\chi ( \psi_h( j_! \F) )$ 
is a sum of the conormal bundles of 
$\PP (V_i) \subset \PP (X)$ and the 
zero-sections $\PP (X)$, by 
Ernstr{\"o}m \cite[Corollary 3.3]{Ern94} (see also 
Matsui-Takeuchi \cite{MT07} for a new 
proof to it and a generalization to the 
real case) the function 
$\phi$ is constant on $(W_i)_{\rm reg} 
\setminus ( \cup_{j \not= i} W_j) \subset 
(W_i)_{\rm reg}$. This fact would be 
very useful to apply our Theorem \ref{thm-7}. 
\end{remark}

\begin{example}\label{nexam-1} 
For the smooth hypersurface 
$Z= \{ z \in X= \CC^N \ | \ 
z_1^2+ \cdots + z_N^2=1 \} \subset X$ consider 
the perverse sheaf 
$\SF= \CC_Z [N-1] 
\in\BDC_{\CC-c}(\CC_{X^\an})$ on $X= \CC^N$. 
Let $\SM \in\Modrh(\SD_X)$ be 
the regular holonomic $\SD_X$-module 
such that $Sol_X(\SM)[N] = \SF$. 
Then $\SM$ is not monodromic and we have 
\begin{align*}
& \ch\SM  =  T^\ast_ZX
\\
&= \{ (z, ( \zeta z_1, \ldots,  \zeta z_N))
\in T^*X \simeq X \times Y \ | \ 
\zeta \in \CC, z_1^2+ \cdots + z_N^2=1 \}. 
\end{align*}
It follows that for the projection 
$q: T^*X \simeq X \times Y \rightarrow Y$ 
and a point $w \in Y \setminus \{ 0 \} 
= \CC^N  \setminus \{ 0 \}$ we have 
\[ w \in q ( T^\ast_ZX ) \qquad \Longleftrightarrow \qquad 
w_1^2+ \cdots + w_N^2 \not= 0. \]
Moreover if $w \in q ( T^\ast_ZX ) \setminus \{ 0 \}$ 
then the set $q^{-1}(w) \cap T^\ast_ZX$ 
is explicitly calculated as 
\begin{equation*}
q^{-1}(w) \cap T^\ast_ZX =
\left\{\Bigl( \frac{w}{ \zeta} , w \Big)
\in T^*X \simeq X \times Y \ \left|\ 
\zeta^2= w_1^2+ \cdots + w_N^2 
\right.\right\}.
\end{equation*}
This shows that the open subset $\Omega \subset 
Y= \CC^N$ is given by 
\[ \Omega = \{ w \in Y= \CC^N \ | \ 
w_1^2+ \cdots + w_N^2 \not= 0 \} \]
and the morphism $q^{-1} ( \Omega ) \cap 
\ch\SM \rightarrow \Omega$ induced by $q$ 
is a covering of degree $2$. 
By Corollary \ref{cor-3} 
the restriction of the Fourier transform 
$\SM^\wedge\in\Mod_{\rm hol}(\SD_Y)$ of $\SM$ 
to $\Omega\subset Y$ is an integrable connection 
of rank $2$. We can also easily see that 
$\SM^\wedge$ has some irregularities at 
infinity by using Theorem \ref{thm-3}. 
Set $D= Y \setminus \Omega = \{ w \in Y= \CC^N \ | \ 
w_1^2+ \cdots + w_N^2=0 \}$ and take its normal 
slice 
\[  M = \{ (1+u, \sqrt{-1}, 0, \ldots, 0)
 \in Y= \CC^N \ | \ u \in \CC \} \subset Y \]
at the point $(1, \sqrt{-1}, 0, \ldots, 0) 
\in D \setminus \{ 0 \}$. Then for 
a point $w= (1+u, \sqrt{-1}, 0, \ldots, 0) 
\in M \setminus D= M \cap \Omega$ $(u \not=0)$ 
we have 
\begin{equation*}
q^{-1}(w) \cap T^\ast_ZX =
\left\{\Bigl( \frac{w}{ \zeta} , w \Big)
\in T^*X \simeq X \times Y \ \left|\ 
\zeta = \pm \sqrt{u(2+u)} 
\right.\right\}.
\end{equation*}
Since the Puiseux series 
\[ \varphi_{\pm} (u)= 
\Big\langle \frac{\pm w}{\sqrt{u(2+u)}}, w \Big\rangle 
= \pm \sqrt{u(2+u)} \]
of $u$ start from the positive degree $\frac{1}{2}$, 
by Theorem \ref{thm-nn7} the meromorphic 
connection obtained by restricting 
$\SM^\wedge$ to the normal slice $M \subset Y$ 
is regular along the point 
$(1, \sqrt{-1}, 0, \ldots, 0) \in M$. 
On the other hand, by Example \ref{exm-near} 
the perverse sheaf $\SF= \CC_Z [N-1] 
\in\BDC_{\CC-c}(\CC_{X^\an})$ is moderate 
at infinity over any point 
$w \not= 0$ of $Y \setminus \{ 0 \}$. 
Set $V= \{ z \in X= \CC^N \ | \ 
z_1^2+ \cdots + z_N^2=0 \} \subset X$ 
and $W=D= \{ w \in Y= \CC^N \ | \ 
w_1^2+ \cdots + w_N^2=0 \} \subset Y$. 
Then by the natural identification 
$T^*X \simeq T^*Y$ we have $T^*_VX = T^*_WY$. 
Take a conification 
$\SG \in\BDC_{\CC-c}(\CC_{X^\an})$ 
of $\SF$ such that 
$\SG |_{X \setminus \{ 0 \}} \simeq 
\CC_{V \setminus \{ 0 \}} [N-1]$. 
and let $\SN \in\Modrh(\SD_X)$ be 
the (monodromic) regular holonomic $\SD_X$-module 
such that $Sol_X(\SN)[N] = \SG$. 
Then by Theorem \ref{thm-7} we obtain 
\[{\rm multi}_{T^\ast_{W}Y} \SM^\wedge= 
{\rm multi}_{T^\ast_{V}X} \SN +0 =1. \]
\end{example}

\begin{example}\label{nexam-2}
Let us consider the case where 
$\SM^\wedge$ is a confluent 
$A$-hypergeometric system on $Y=\CC^3$. 
Define a subset $A$ of the lattice $\ZZ^2
\subset \RR^2$ by 
\begin{equation*}
A= \Big\{ 
a(1)= \left( \begin{array}{c}
      2 \\
      -1 
    \end{array}  \right), \ 
a(2)= \left( \begin{array}{c}
      1 \\
      1 
    \end{array}  \right), \ 
a(3)= \left( \begin{array}{c}
      -2 \\
      0 
    \end{array}  \right)  
\Big\} \subset \ZZ^2.  
\end{equation*}
Then by the condition $\sum_{i=1}^3 \ZZ a(i) = \ZZ^2$ 
the morphism 
\[i_T : (\CC^\ast)^2\xhookrightarrow{\ \ \ }X=\CC^3,\
s=(s_1, s_2)\longmapsto
(s^{a(1)}, s^{a(2)}, s^{a(3)} )\]
associated to it 
of the 2-dimensional torus $T=( \CC^\ast )^2$ 
is a closed embedding. 
For $c=(c_1, c_2) \in \CC^2$ 
set $\SL=\SO_T s_1^{c_1-1}s_2^{c_2-1} \in\Modrh(\SD_T)$
and $\SM={\bfD} i_{T\ast}\SL \in\Modrh(\SD_X)$. 
In this case, $Z= i_T(T) \subset X= \CC^3$ 
is a closed hypersurface and explicitly given by 
\[ Z = \{ z \in X= \CC^N \ | \ 
z_1^2 z_2^2 z_3^3 =1 \}. \]
Hence we have 
\begin{align*}
& \ch\SM  =  T^\ast_ZX
\\
&= \{ (z, ( 2 \zeta z_1 z_2^2 z_3^3, 
2 \zeta z_1^2 z_2 z_3^3, 
 3 \zeta z_1^2 z_2^2 z_3^2))
\in T^*X \simeq X \times Y \ | \ 
\zeta \in \CC, z_1^2 z_2^2 z_3^3 =1 \}
\end{align*}
and ${\rm mult}_{ T^\ast_{Z}X}\SM =1$. 
It follows that for the projection 
$q: T^*X \simeq X \times Y \rightarrow Y$ 
and a point $w \in Y \setminus \{ 0 \} 
= \CC^N  \setminus \{ 0 \}$ we have 
\[ w \in q ( T^\ast_ZX ) \qquad \Longleftrightarrow \qquad 
w_1w_2w_3 \not= 0. \]
Moreover if $w \in q ( T^\ast_ZX ) \setminus \{ 0 \}$ 
then the set $q^{-1}(w) \cap T^\ast_ZX$ 
is explicitly calculated as 
\[ q^{-1}(w) \cap T^\ast_ZX 
= \Big\{ \big( ( \frac{2 \zeta}{w_1}, \frac{2 \zeta}{w_2}, 
\frac{3 \zeta}{w_3}) , w \big)
\in T^*X \simeq X \times Y \ | \ 
\zeta^7 = \frac{w_1^2 w_2^2 w_3^3}{4 \cdot 4 \cdot 27} 
\Big\}. \]
This shows that the open subset $\Omega \subset 
Y= \CC^N$ is given by 
\[ \Omega = \{ w \in Y= \CC^N \ | \ 
w_1w_2w_3 \not= 0 \} \]
and the morphism $q^{-1} ( \Omega ) \cap 
\ch\SM \rightarrow \Omega$ induced by $q$ 
is a covering of degree $7$. 
By Corollary \ref{cor-3} 
the restriction of the Fourier transform 
$\SM^\wedge\in\Mod_{\rm hol}(\SD_Y)$ of $\SM$ 
to $\Omega\subset Y$ is an integrable connection 
of rank $7$. This coincides with Adolphson's result 
in \cite{Ado94}. Indeed, the normalized volume 
of the convex hull $\Delta \subset \RR^2$ of 
$\{ 0 \} \cup A \subset \RR^2$ is equal to 
$7$. We see also that 
$\SM^\wedge$ has some irregularities at 
infinity by Theorem \ref{thm-3}. 
Set $D= Y \setminus \Omega = \{ w \in Y= \CC^N \ | \ 
w_1w_2w_3 = 0 \}$ and take its normal 
slice 
\[  M = \{ (1, 1, u)
 \in Y= \CC^N \ | \ u \in \CC \} \subset Y \]
at the point $(1, 1, 0) 
\in D \setminus \{ 0 \}$. Set 
\[ \Big\{ \zeta \in \CC \ | \ 
\zeta^7= \frac{1}{4 \cdot 4 \cdot 27} \Big\} 
= \{ \zeta_1, \zeta_2, \ldots, \zeta_7 \}. \]
Then for  
a point $w= (1, 1, u) 
\in M \setminus D= M \cap \Omega$ $(u \not=0)$ 
we have 
\[ q^{-1}(w) \cap T^\ast_ZX 
= \Big\{ \big( ( 
\frac{2 \zeta_i}{w_1} u^{\frac{3}{7}}, 
\frac{2 \zeta_i}{w_2} u^{\frac{3}{7}}, 
\frac{3 \zeta_i}{w_3} u^{\frac{3}{7}}) , w \big)
\in T^*X \simeq X \times Y \ | \ 
1 \leq i \leq 7 \Big\}. \]
Since the Puiseux series 
\[ \varphi_{i} (u)= 
7 \zeta_i u^{\frac{3}{7}} 
\qquad ( 1 \leq i \leq 7 ) \]
of $u$ start from the positive degree $\frac{3}{7}$, 
by Theorem \ref{thm-nn7} the meromorphic 
connection obtained by restricting 
$\SM^\wedge$ to the normal slice $M \subset Y$ 
is regular along the point 
$(1, 1, 0) \in M$. 
On the other hand, we can easily see that 
the perverse sheaf $\SF= Sol_X(\SM) [N] 
\in\BDC_{\CC-c}(\CC_{X^\an})$ is moderate 
at infinity over any point 
$w \not= 0$ of $Y \setminus \{ 0 \}$. 
Set $V_1= \{ z_2=z_3=0 \}, V_2= \{ z_1=z_3=0 \}, 
V_3= \{ z_1=z_2=0 \} \subset X$ and 
$W_1= \{ w_1=0 \}, W_2= \{ w_2=0 \}, 
W_3= \{ w_3=0 \} \subset Y$. 
Then by the natural identification 
$T^*X \simeq T^*Y$ for any $1 \leq i \leq 3$ 
we have $T^*_{V_i}X = T^*_{W_i}Y$. 
Take a conification 
$\SG \in\BDC_{\CC-c}(\CC_{X^\an})$ 
of $\SF$ such that 
$\SG |_{X \setminus \{ 0 \}} \simeq 
\SF |_{X \setminus \{ 0 \}}$ 
and let $\SN \in\Modrh(\SD_X)$ be 
the (monodromic) regular holonomic $\SD_X$-module 
such that $Sol_X(\SN)[N] = \SG$. 
Then for any singular point $z \not= 0$ 
of the normal crossing divisor 
$\{ z_1z_2z_3=0 \} \subset X$ we have 
$\chi_z ( \SG )=0$. This follows from 
the well-known fact that the 
Euler characteristic of the Milnor 
fiber of the function $z_1^2z_2^2z_3^3$ 
at such a point is equal to zero. 
By Kashiwara's local index theorem 
for holonomic $\SD$-modules 
in \cite[Corollary 6.3.4]{Kas83-2}, 
we thus obtain  
${\rm multi}_{T^\ast_{V_1}X} \SN =5$, 
${\rm multi}_{T^\ast_{V_2}X} \SN =5$ 
and ${\rm multi}_{T^\ast_{V_3}X} \SN =4$. 
Moreover by Theorem \ref{thm-7} 
for any $1 \leq i \leq 3$ we have 
\[{\rm multi}_{T^\ast_{W_i}Y} \SM^\wedge= 
{\rm multi}_{T^\ast_{V_i}X} \SN +0 =
{\rm multi}_{T^\ast_{V_i}X} \SN. \]
\end{example}

\end{document}